\newcommand{\FS}{\mathrm{Seq}}
\begin{document}

\title{Euclidean interval objects \\ in categories with finite products}
\author{Mart\'{\i}n H. Escard{\'o} \\[0.5ex]
  {\small School of Computer Science, University of Birmingham, UK} \\[1ex]
 Alex Simpson \\[0.5ex]
{\small Faculty of Mathematics and Physics (FMF), University of Ljubljana, Slovenia} \\
{\small Institute for Mathematics Physics and Mechanics (IMFM), Ljubljana, Slovenia}
}

\maketitle

\begin{abstract}
  Based on the intuitive notion of convexity, we formulate a universal
  property defining \emph{interval objects} in a category with finite
  products. Interval objects are structures corresponding to closed
  intervals of the real line, but their definition does not assume a
  pre-existing notion of real number.  The universal property
  characterises such structures up to isomorphism, supports the
  definition of functions between intervals, and provides a means of
  verifying identities between functions.  In the category of sets,
  the universal property characterises closed intervals of real
  numbers $[a,b]$ with endpoints $a < b$.  In the the category of
  topological spaces, we obtain intervals with the Euclidean topology.
  We also prove that every elementary topos with natural numbers
  object contains an interval object; furthermore, we characterise
  interval objects as intervals $[a,b]$ of real numbers in the Cauchy
  completion of the rational numbers within the Dedekind reals.


\end{abstract}

\section{Introduction}
\label{SecIntro}

We define and study a notion of \emph{interval object}, in any category
with finite products, by a universal property (Section~\ref{notions})\footnote{This
  paper is a full version of the extended
  abstract~\cite{escardo:simpson:interval:lics}.}.  In the category of
sets, interval objects are given by closed intervals of real
numbers. In topological spaces, they are closed intervals with the
Euclidean topology.  In other categories embodying principles of
intuitionistic, constructive or computable mathematics, the interval
objects are closed intervals within a notion of real number
appropriate to the setting.
Importantly, our notion
of interval object is defined without reference to any notion of
real number. Indeed, the definition 
makes sense in any category with finite products (although, of course, an
interval object is not guaranteed to exist). Thus we achieve a
single general definition of interval object which applies
uniformly to characterise closed intervals of real numbers
across the diverse range of example categories we have mentioned.


Our definition of interval object is based on an analysis
of geometric properties of Euclidean space. In this analysis, we take convexity
as a primitive concept, and we axiomatize
the properties of a few simple geometric constructions that can be performed on
an arbitrary bounded convex subset of Euclidean
space (of any dimension).
However, to best appreciate the development that follows,
the  notion of bounded convex set
should at first  be understood intuitively, not (yet) mathematically.
Thus we think of a bounded convex set as a bounded
geometric figure $A$ satisfying the property that,
between any two points $x,y$ in $A$, the unique line segment
between $x$ and $y$ also resides in $A$.
\newcommand{\pointp}{\circle*{16}}
\newcommand{\pointpp}{\circle*{8}}
\newcommand{\pointppp}{\circle*{4}}
\newcommand{\pointpppp}{\circle*{2}}
\begin{center}
\setlength{\unitlength}{0.006cm}
\begin{picture}(1200,700)
\polyline(0,0)(300,600)(1100,0)(0,0)%
\put(200,250){\pointp}
\put(700,100){\pointp}
\put(180,200){\text{$x$}}
\put(680,50){\text{$y$}}
\polyline(200,250)(700,100)
\end{picture}
\end{center}

The task we now undertake is to axiomatize properties of primitive
geometric constructions supported by such objects. Having done
this, we shall take these axiomatized properties as the
{defining properties} of a precise mathematical
notion, the notion of \emph{midpoint-convex body}.
This notion will then be used
to define the notion of \emph{interval object}.

The first geometric construction we consider is the most basic
of all, the bisection of a line.
Given, any two points $x,y$ inside an object $A$,
we bisect the line between them to obtain the midpoint $m(x,y)$.
\begin{center}
\setlength{\unitlength}{0.006cm}
\begin{picture}(1200,700)
\polyline(0,0)(300,600)(1100,0)(0,0)%
\put(200,250){\pointp}
\put(700,100){\pointp}
\put(450,175){\pointp}
\put(180,200){\text{$x$}}
\put(270,105){\text{$m(x,y)$}}
\put(680,50){\text{$y$}}
\polyline(200,250)(700,100)
\end{picture}
\end{center}
By convexity, the midpoint $m(x,y)$ lies in $A$. Thus we have
a midpoint operation $m \colon A \times A \to A$.
This operation satisfies three equations:
\begin{align*}
& \textbf{(Idempotency)} & m(x,x) & =x \, , \\
& \textbf{(Commutativity)} & m(x,y)& =m(y,x) \, , \\
& \textbf{(Transposition)} & m(m(x,y),m(z,w)) & = m(m(x,z),m(y,w)) \, .
\end{align*}
The first two
are immediately clear. The third is illustrated by the
picture below.
\begin{center}
\setlength{\unitlength}{0.006cm}
\begin{picture}(1000,700)
\polyline(150,50)(50,500)(1000,600)(700,50)(150,50)

\put(150,50){\pointp}         
\put(50,500){\pointp}         
\put(1000,600){\pointp}        
\put(700,50){\pointp}         

\put(100,275){\pointp}        
\put(525,550){\pointp}        
\put(850,325){\pointp}        
\put(425,50){\pointp}         

\put(475,300){\pointp}        

\polyline(100,275)(850,325)
\polyline(425,50)(525,550)

\put(100,20){\text{$x$}}
\put(10,520){\text{$y$}}
\put(1015,615){\text{$w$}}
\put(730,20){\text{$z$}}

\put(-110,260){\text{$m(x,y)$}}
\put(325,580){\text{$m(y,w)$}}
\put(875,310){\text{$m(z,w)$}}
\put(375,-10){\text{$m(x,z)$}}

\end{picture}
\end{center}

Using the midpoint operation,
given any point $x \in A$,
we can consider the derived
%
convex set
$m(x,A)$ consisting
of all points $\{m(x,y) \mid y \in A\}$. This is illustrated
by the picture below, where $A$ is the outer triangle, and $m(x,A)$ is
the inner triangle.
\begin{center}
\setlength{\unitlength}{0.006cm}
\begin{picture}(1200,700)
\polyline(0,0)(300,600)(1100,0)(0,0)%

\put(750,150){\pointp}
\put(700,120){\text{$x$}}

\polyline(375,75)(525,375)(925,75)(375,75)
\end{picture}
\end{center}
The operation $y \mapsto m(x,y)$ gives a mapping from $A$ to
$m(x,A)$. By the (assumed) homogeneity of space, this mapping is injective.
Thus the midpoint operation satisfies the following (non-equational)
property.
\begin{align} \label{cancellation}
& \textbf{(Cancellation)}  & &
\text{$m(x,y)=m(x,z)$ implies $y=z$.}
\end{align}

Our final construction is more involved, but nonetheless geometrically
intuitive. Suppose we are given any sequence $(x_i)$ of points in $A$,
for example as in the following picture:
\begin{center}
\setlength{\unitlength}{0.006cm}
\begin{picture}(1200,700)
\polyline(0,0)(300,600)(1100,0)(0,0)%

\put(320,420){\pointp}
\put(200,100){\pointp}
\put(800,100){\pointp}

\put(300,370){\text{$x_{3n}$}}
\put(180,50){\text{$x_{3n+1}$}}
\put(780,50){\text{$x_{3n+2}$}}

\end{picture}
\end{center}
Then we can successively construct a nested sequence
of convex sets
\[\begin{array}{llll}
A_0 & = \; A & & \\
A_1 & = \; m(x_0,A) & = \; m(x'_0,A_0) & \text{where $x'_0 = x_0$,} \\
A_2 & = \; m(x_0,m(x_1,A)) & = \; m(x'_1,A_1) & \text{where $x'_1 = m(x_0,x_1)$,} \\
A_3 & = \; m(x_0,m(x_1,m(x_2,A))) & = \; m(x'_2,A_2) & \text{where $x'_2 = m(x_0,m(x_1,x_2))$,} \\
\dots
\end{array}
\]
Note that we always have $x'_i \in A_i$. The above
corresponds to the iterative construction of the following picture:
\begin{center}
\setlength{\unitlength}{0.006cm}
\begin{picture}(1200,700)
\polyline(0,0)(300,600)(1100,0)(0,0)%


\polyline(160,210)(310,510)(710,210)(160,210)%


\polyline(210,235)(285,385)(485,235)(210,235)


\polyline(310,247.5)(347.5,322.5)(447.5,247.5)(310,247.5)


\polyline(330,273.75)(348.75,311.25)(398.75,273.75)(330,273.75)


\polyline(336.25,276.88)(345.63,295.63)(370.63,276.88)(336.25,276.88)



\put(355,283){\pointpp}

\end{picture}
\end{center}
Here each $A_{i+1}$ is obtained by first relocating
$x_i$ at its corresponding position $x'_i$ within $A_i$, and then
constructing $m(x'_i,A_i)$. As we are assuming that $A$ is bounded,
the sequence $(A_i)$ shrinks to a point, irrespective of
whether the sequence $(x_i)$ is convergent or not.
Thus the above construction determines an infinitary
\emph{iterated midpoint} operation $M\colon A^\omega \to A$.

Intuitively, the
iterated midpoint operation
is the unique solution to
\[
M(x_0,x_1,x_2, \dots) \; = \; m(x_0,\, m(x_1, \, m(x_2, \, \dots))) \, .
\]
Of course, the right-hand side does not make formal sense.
Nevertheless, the idea behind the equation is captured
by two  properties,
which characterize how
the iterated midpoint operation is derived from the binary operation.
\begin{align*}
& \textbf{(Unfolding)}
&  & M (x_0, x_1, x_2, \dots) = m(x_0, M(x_1, x_2, \dots)) \\
& \textbf{(Canonicity)}
& &
\text{If $y_0 = m(x_0,y_1)$, $y_1 = m(x_1,y_2)$, $y_2 = m(x_2,y_3)$, \dots} \\
& & &
\phantom{blah}
\text{then $y_0 = M (x_0,x_1,x_2, \dots)$}
\end{align*}
We shall collectively refer to these two axioms as the
\emph{iteration axioms}.

Having identified this structure on the intuitive concept of bounded
convex set, we now turn the tables and take the above structure as
\emph{defining} an algebraic notion of convex object. Specifically, in
the category of sets, a \emph{midpoint-convex body} (or \emph{m-convex
  body} for short) is a set $A$, together with operations
$m\colon A \times A \to A$ and $M \colon A^\omega \to A$ satisfying
the idempotency, commutativity, transposition, cancellation and
iteration axioms above.  In the technical development, we shall give
various reformulations of this notion, we shall show that each of $m$
and $M$ determine the other, and we shall show that m-convex bodies in
the category of sets are closely related to the \emph{superconvex}
sets of K\"onig~\cite{koenig:superconvex}.

How does all this connect with obtaining a characterization of the
interval? Importantly, in contrast to standard algebraic notions of
convexity, the real numbers play no part in the definition of m-convex
body. The idea is now very straightforward. We can use the above
abstract notion of convexity to \emph{define} the closed interval
$[a,b]$ as the freely-generated m-convex body on two generators, the
endpoints $a$ and $b$.

The definition of the interval as a free structure has
very intuitive content. As maps between m-convex bodies,
we consider functions $f\colon A \to B$ that are homomorphisms
with respect to the operations $m$ and $M$. In fact,
any homomorphism with respect to one of the operations is automatically
a homomorphism with respect to the other, see Proposition~\ref{PropSetIt} and equation~(\ref{m-from-M}) in Section~\ref{notions}.
One can think of such maps as {affine} functions (Proposition~\ref{prop-affine}).
The interval is then formally defined as follows. An \emph{interval set} is given by an m-convex body $(I,m,M)$,
together with two points $a,b \in I$, that
satisfies the
property: for any m-convex body $X$ and points $x_a,x_b \in X$,
there exists a unique homomorphism $f\colon I \to X$ with $f(a)=x_a$ and
$f(b)=x_b$. This definition determines the interval up to (homomorphic)
isomorphism. The definition is validated by the theorem (which appears as Corollary~\ref{ThmSet} in the paper)
that, for any $a < b \in \mathbb{R}$,
the structure
$([a,b], \oplus, \bigoplus)$ is an interval set,
where
\begin{align*}
x \oplus y \; & = \; \frac{x+y}{2}, &
\bigoplus (x_0,x_1,x_2, \dots) \; & = \;
\sum_{i \geq 0} 2^{-(i+1)}x_i \, .
\end{align*}
Furthermore, one can generalise the above procedure
to generate other geometric objects. For example, we shall show
that the free m-convex body over the generators
$\{0,1,\dots, n\}$ is the $n$-dimensional simplex,
see Corollary~\ref{n-simplex}.

From the discussion thus far, it is unclear
why the above approach to defining the interval
gives a general
method applicable in many mathematical settings.
This hinges upon the following fact. It
is possible to formulate all the notions used above
in an arbitrary category with finite products.
Such a formulation is given in Section~\ref{notions},
where we 
define the notions
of \emph{midpoint-convex body} and  \emph{interval object}
in any such category.
In the motivating development above, we merely presented these definitions
in the special case of one particular category, the category of sets.

In Section~\ref{SecIntInTop},  we consider
the category of topological spaces, and
prove that the interval object is the structure $([a,b], \oplus, \bigoplus)$, where the interval is endowed
with the Euclidean topology (Theorem~\ref{ThmTop}).
Then in Section~\ref{SecIntInTopos}, we consider the case of an arbitrary elementary topos with natural numbers object.
Any such topos possesses an interval object, with this being given by
the structure $([a,b], \oplus, \bigoplus)$, where $[a,b]$ is an interval of real numbers within the Cauchy completion of the Cauchy reals within the Dedekind reals (Theorem~\ref{ThmTopos}). Thankfully, this somewhat convoluted description simplifies in many example toposes of interest. For example, in any boolean topos or in any topos validating countable choice, the Cauchy and Dedekind reals coincide, and interval objects are given by closed intervals within them.

The characterisation of interval objects in elementary toposes provides a versatile tool for
identifying interval objects in other categories of interest, since many categories naturally embed in toposes, as discussed in Section~\ref{conclusion}.

\paragraph{Organisation.}
(\ref{notions})~Convex bodies and interval objects in a category with finite products.
(\ref{SecItCan})~Consequences of the iteration and cancellation axioms.
(\ref{SecIntInSet})~Iterative midpoints in the category of sets.
(\ref{SecIntInTop})~Interval objects in the category of topological spaces.
(\ref{SecIntInTopos})~Interval objects in elementary toposes with natural numbers object.
(\ref{conclusion})~Discussion, questions and open problems.

\section{Interval objects} \label{notions}


We define an interval object in a category $\CC$ with finite products
to be an initial two-pointed, iterative and cancellative midpoint
algebra in $\CC$.  Midpoint algebras in $\CC$ and the cancellation
axiom are defined in Section~\ref{midpoint:objects}, generalising the
discussion of the introduction, and the crucial iteration axiom is
defined in Section~\ref{iteration}, again motivated by the discussion
of the introduction. The notion of interval object is defined and
studied in Section~\ref{interval:objects:section}, following the
development of cancellative iterative midpoint objects, called
midpoint-convex bodies, in Section~\ref{m:convex}. In particular, in
Section~\ref{interval:objects:section} we define negation and
multiplication on an interval object, and we also prove their expected
properties, using the universal property that defines interval objects,
where for multiplication we need a stronger notion of a
\emph{parametrised} interval object, also defined in that section.


\subsection{Preliminaries}
\label{SubSecPrelim}

We briefly review the notation and terminology we use for
categories. 

We write $\One$ for the terminal object in $\CC$ and
$X \lTo^{\pi_1} X \times Y \rTo^{\pi_2} Y$ for the binary product
structure.  We say that an object $X$ is \emph{exponentiable} if the
functor $(-) \times X$ has a right adjoint $(-)^X$.  The object $Y^X$
can be thought of as a function space internalising the hom-set
$\CC(X,Y)$.  The category $\CC$ is said to be \emph{cartesian closed}
if every object is exponentiable.


Lawvere's notion of natural numbers object, see e.g.~\cite{lambek:scott}, already mentioned in
Section~\ref{SecIntro}, abstracts the characterising properties of
the set of natural numbers.
\begin{defn}
\label{DefNno}
A \emph{natural numbers object (nno)} in $\CC$
is an object $N$ together with
two maps $0 \colon \One \rTo N$ and
$\Succ \colon N \rTo N$ such that,
for any object $X$ and maps $\One \rTo^a X \lTo^f X$,
there exists a unique map $g \colon N \rTo X$ satisfying
\begin{align*}
g(0) & = a, \\
g(n+1) & = f(g(n)).
\end{align*}
\end{defn}

In the presence of finite products alone, the notion
of nno is not sufficient to implement
definition by primitive recursion in $\CC$. Indeed, even
addition need not be definable. This is rectified by using
the following parametrised notion of nno instead~\cite{MR1059247}.
\begin{defn}
A \emph{parametrised nno}
is given by an object $N$ together with
two maps $0 \colon \One \rTo N$ and
${\Succ} \colon N \rTo N$ such that,
for any objects $X, P$ and maps $P \rTo^a X \lTo^f Z \times P$,
there exists a unique map $g \colon P \times N \rTo X$ satisfying
\begin{align*}
g(p,0) & = a(p), \\
g(p,n+1) & = f(p,g(p,n)).
\end{align*}
\end{defn}
Here $P$ is to be thought of as an object of parameters.

It is easily shown that any parametrised nno is an nno,
and that if a parametrised nno exists then any nno is
parametrised. It is not in general the
case that any nno is parametrised, though the following
sufficient condition for this is well known.
\begin{prop}
\label{PropCccNno}
If $\CC$ is cartesian closed then any nno is parametrised.
\end{prop}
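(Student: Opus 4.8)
The plan is to prove Proposition~\ref{PropCccNno} by showing that the ordinary nno $(N,0,\Succ)$ satisfies the parametrised universal property whenever $\CC$ is cartesian closed. The key idea is standard currying: given parameter data $P \rTo^a X \lTo^f Z \times P$ (where I read $Z$ as $X$, assuming a typo in the statement, so that $f \colon X \times P \rTo X$), I want to produce a unique $g \colon P \times N \rTo X$ with $g(p,0)=a(p)$ and $g(p,n+1)=f(p,g(p,n))$. Rather than attempt a direct recursion with parameters, I would transpose everything across the adjunction $(-)\times P \dashv (-)^P$ and apply the \emph{unparametrised} nno property at the exponential object $X^P$.

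First I would form the transposes of the given data. The map $a \colon P \rTo X$ transposes to a point $\bar a \colon \One \rTo X^P$. For the step map, I would build from $f \colon X \times P \rTo X$ an endomorphism $\bar f \colon X^P \rTo X^P$; concretely $\bar f$ is the transpose of the composite $X^P \times P \rTo X$ obtained by pairing the evaluation map $\mathrm{ev}\colon X^P \times P \rTo X$ with the second projection to $P$ and then applying $f$, i.e. $f \circ \langle \mathrm{ev}, \pi_2\rangle$. Then I would invoke the unparametrised nno property (Definition~\ref{DefNno}) for the object $X^P$ with data $\One \rTo^{\bar a} X^P \lTo^{\bar f} X^P$ to obtain a unique $\bar g \colon N \rTo X^P$ satisfying $\bar g(0)=\bar a$ and $\bar g(n+1)=\bar f(\bar g(n))$. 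Finally I would define $g \colon P \times N \rTo X$ as the transpose of $\bar g$ across the adjunction (precomposing with the swap $P \times N \cong N \times P$ as needed).

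The heart of the argument is then checking that this $g$ satisfies the two parametrised recursion equations and that it is the \emph{unique} such map. For the equations, I would unwind the transposition: $\bar g(0)=\bar a$ untransposes to $g(p,0)=a(p)$, and $\bar g(n+1)=\bar f(\bar g(n))$ untransposes, using the definition of $\bar f$ via evaluation, to $g(p,n+1)=f(p,g(p,n))$. These are routine diagram chases exploiting naturality of the adjunction and the universal property of evaluation. For uniqueness, I would argue contrapositively through the bijection of the adjunction: any $g'$ satisfying the parametrised equations transposes to some $\bar g' \colon N \rTo X^P$ that necessarily satisfies the unparametrised equations for $(\bar a, \bar f)$, so by the uniqueness clause of the nno property $\bar g' = \bar g$, whence $g' = g$ since transposition is a bijection.

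I do not expect any genuine obstacle here: the proposition is a folklore fact and the only real content is bookkeeping with the cartesian-closed structure. The step requiring the most care is the precise construction of $\bar f$ and the verification that its untransposition reproduces $f(p,g(p,n))$ rather than some permuted variant; getting the evaluation-and-pairing composite exactly right, and tracking the isomorphism $P \times N \cong N \times P$ consistently between the equations and the uniqueness argument, is where a sloppy proof would introduce errors. Everything else follows mechanically from the naturality of the adjunction bijection $\CC(Q \times P, X) \cong \CC(Q, X^P)$.
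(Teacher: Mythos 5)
Your proof is correct and is precisely the standard currying argument that the paper leaves implicit (the paper states Proposition~\ref{PropCccNno} as well known and gives no proof). Your reading of $Z$ as a typo for $X$ in the step map's domain is right, and the existence and uniqueness both follow from the naturality of the adjunction bijection exactly as you describe.
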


We have written the definitions  above using ordinary algebraic notation.
Indeed, we shall use such notation freely throughout the paper.
In order to understand such notation in the general setting of an
arbitrary category with finite products, the variables must be
interpreted as \emph{generalised elements}, i.e.\ as maps targeted at
the object in question but sourced at an arbitrary object~$Z$.  For
example, the equation $g(p, n+1) = f(p, g(p, n))$ above, states that,
for all objects $Z$ and generalised elements $p \colon Z \rTo P$ and
$n \colon Z \rTo N$, it holds that
\[g \circ (p,\, (\Succ) \circ n) = f \circ (p, \, (g \circ (p, \, n)))
  \, . \] In this case, as in many others, the condition simplifies to
an unquantified equation between maps, expressed by a diagram:
\[
\begin{diagram}
P \times N & \rTo^{(\pi_1, \, g)\;\;\;} & P \times X \\
\dTo^{\Id_P \times \: (\Succ)} & & \dTo_{f} \\
P \times N & \rTo_g & X.
\end{diagram}
\]


\subsection{Midpoint objects} \label{midpoint:objects}

We begin the reformulation of the notions from Section~\ref{SecIntro}
by defining a notion of \emph{midpoint object} based on
the laws of the binary midpoint operation.
\begin{defn}
  A \emph{midpoint object} in $\CC$ is a pair $(A,m)$, where
  $A$ is an object of $\CC$ and $m \colon A \times A \rTo A$
  is a map
  satisfying the
  idempotency, commutativity and transposition equations of
  Section~\ref{SecIntro}.
  A midpoint object is said to be \emph{cancellative} if it also satisfies
  the cancellation law~(\ref{cancellation}) of Section~\ref{SecIntro}.
  If $(A,m)$ and $(B,m')$ are midpoint objects then
  a \emph{homomorphism} from $(A,m)$ to $(B,m')$ is a map
  $h \colon A  \rTo B$ satisfying $h(m(x,y)) = m(h(x),h(y))$.
\end{defn}
Observe that the transposition law is equivalent to saying that the
midpoint operation $m:A \times A \rTo A$ is itself a homomorphism
with respect to the induced midpoint operation on $A \times A$.

Henceforth, we often use the same notation for a midpoint object and
its underlying object $A$, in which case we
conventionally
use the letter~$m$ for the associated midpoint operation.
We use \emph{map} to refer to arrows in the underlying category
$\CC$, and \emph{homomorphism} for homomorphisms of midpoint objects.
We also adopt the convention of referring to midpoint objects in
categories of particular interest by replacing the word ``object''
with the name of the objects of the category. Thus a \emph{midpoint set}
is simply a midpoint object in the category of sets,
and a homomorphism of midpoint sets is a function preserving
the midpoint operation. Similarly,
a \emph{midpoint (topological) space} is a midpoint object in the
category of topological spaces (thus $m$ is required to
be continuous), and a homomorphism between midpoint spaces is
a continuous function preserving $m$.
Similar naming conventions will be applied to all other notions
of algebraic structure we consider.

Examples of midpoint sets abound. Every convex subset of a vector
space over $\mathbb{R}$ (or $\mathbb{Q}$) is a midpoint set with
$m(x,y)= (x+y)/2$.  Similarly, every convex subset of a topological
vector space is a midpoint space. Moreover, one can obtain further
examples by taking arbitrary subsets (subspaces) of midpoint sets
(spaces) that are closed under the operation $m(x,y)$. Thus, for
example, the (dyadic-)rational points in any convex subspace of
Euclidean space form a midpoint space. These examples are all
cancellative. Semilattices provide examples midpoint sets that are not
cancellative.

As discussed in Section~\ref{SecIntro}, we shall define an interval
object as a free midpoint-convex body on two generators.  To motivate
the need for considering a notion of convexity, let us consider the
analogous free objects in the categories of midpoint sets and
spaces. It is easy to show that the free midpoint set over two
generators~$0$ and~$1$ is the
set~$D = \{\frac{m}{2^n} \! \in [0,1] \mid m,n \in \mathbb{N}\}$ of
dyadic rationals in the closed unit interval endowed with the midpoint
structure $m(x,y)=(x+y)/2$.  Explicitly, this says that for any
midpoint set~$A$ and any $a,b \in A$ there is a unique midpoint
homomorphism $h:D \to A$ with $h(0)=a$ and $h(1)=b$.
%
%
%
In the category of topological spaces,
the free midpoint
space over $\{0,1\}$ is the set~$D$ with the
\emph{discrete} topology.
Here the discrete topology arises because, when all
operators are finitary,
free topological algebras
over discrete spaces
always have discrete topologies.

By considering the example of topological spaces, one sees that the
notion of midpoint space falls short of characterising the interval
$[0,1]$ on two counts. First, the points are wrong --- the object $D$
contains only dyadic rational points. Second, the topology
is wrong --- it is the discrete rather than Euclidean topology.
The iteration property solves both problems at once.

\subsection{Iterative midpoint objects} \label{iteration}

One formulation of
iteration was given in Section~\ref{SecIntro}.
This formulation involves an infinitary operation $M$,
which is not directly interpretable in an arbitrary
category with finite products.
We circumvent this issue by giving here a
concise reformulation of iteration, phrased
in terms of finite products alone.
Moreover, whenever $\CC$ has enough structure to interpret
infinitary operations,  the new
formulation of iteration is equivalent to that
of Section~\ref{SecIntro}.

\begin{defn}
  A midpoint object~$(A,m)$ is said to be \emph{iterative} if for every
  map $c \colon X \rTo A \times X$ there is a
  unique map $u:X \rTo A$ making the diagram below commute.
\begin{equation}
\label{DiagIter}
\begin{diagram}
  A \times X & & \rTo^{\Id \times u} & & A \times A \\
  \uTo^c & & & & \dTo_m \\
  X & & \rTo_u & & A,
\end{diagram}
\end{equation}
\end{defn}
Using category-thoretic terminology, the above definition
says that
$A$ is iterative if the
$(A \times (-))$-algebra $m \colon A \times A \rTo A$ is
final with respect to coalgebra-to-algebra
homomorphisms from abitrary $(A \times (-))$-coalgebras.

To understand the above definition intuitively, one should
consider the map $c \colon X \rTo A \times X$ as exhibiting
$X$ as an \emph{object of sequences over $A$}. Indeed,
any such map has the form $c = (h,t)$ for some  $h \colon X \rTo A$ and
$t \colon X \rTo X$. Think of  $h$ as the map returning the head
$a_0$ of a sequence $a_0a_1a_2\dots$, and of
$t$ as  the map returning the tail $a_1a_2a_3\dots$.
The definition above simply says that $A$ is iterative if
for every such object of sequences, $c = (h,t)\colon X \rTo A \times X$,
there is a unique map $u \colon X \rTo A$ satisfying $u(x) = m(h(x),t(x))$.
The map $u$ implements
the operation $x \mapsto M(x)$ on sequences $x$ in $X$.
The equation $u(x) = m(h(x),t(x))$
corresponds to the unfolding property of $M$. The uniqueness of $u$ corresponds
to the canonicity property of $M$.

To establish a precise correspondence between the definition
of iterativity above and the iteration axioms of Section~\ref{SecIntro},
we need to assume additional structure on $\CC$ sufficient for
interpreting the notion of infinitary operation. Here, the appropriate
structure is an \emph{exponentiable parametrised nno},
see \S\ref{SubSecPrelim}.
An \emph{infinitary operation} $M$ on an object $A$
is then given by a map $M \colon A^N \rTo A$.

In what follows, we shall use the notational convenience of working
with generalised elements of $A^N$ as if they were simply
sequences $(x_i)$ of generalised elements of $A$. Indeed,
if $M$ is an infinitary operation, then we shall
often write $M(x_0,x_1,x_2, \dots)$ or (more honestly)
$M_{\!i} \, x_i$ for
$M((x_i))$. Using the latter  notation, we can formulate the
iteration axioms of Section~\ref{SecIntro} so that they make
sense whenever $\CC$ has an exponentiable parametrised nno.

\pagebreak[3]
\begin{prop}
\label{PropSetIt} \label{unfolding:canonicity}
Suppose that $\CC$ has an exponentiable  parameterised nno $N$.
\begin{enumerate}
\item \label{iter:1}
A midpoint object $A$ is iterative if and only if
there exists an infinitary operation $M :  A^N \to N$ on $A$
satisfying the following properties:
\begin{align*}
& \textbf{(Unfolding)}
&  & M_i \, x_i  \,  =  \, m(x_0, M_i \, x_{i+1}), \\
& \textbf{(Canonicity)}
& &
\text{if $y_i = m(x_i,y_{i+1})$ for all $i$, then $y_0 = M_i \, x_i$.}
\end{align*}
Moreover if $A$ is iterative then there is a unique $M$
satisfying unfolding.

\item \label{iter:2} If $A$ and $A'$ are iterative then any midpoint
  homomorphism $f: A \rTo A'$ is also a homomorphism with respect to
  the unique associated infinitary operations; i.e.\ for all sequences $(x_i)$,
  \[
  f(M_i \, x_i) = M'_i \, f(x_i).
  \]
\end{enumerate}
\end{prop}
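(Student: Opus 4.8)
The plan is to prove both parts by exploiting the correspondence between infinitary operations $M \colon A^N \rTo A$ and coalgebra structures, using the universal property of the exponentiable parametrised nno to pass between sequences and their shifts.

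\textbf{Setup for part (\ref{iter:1}).}
First I would make precise the ``shift'' operation on $A^N$. Since $N$ is an exponentiable parametrised nno, the successor $\Succ \colon N \rTo N$ induces a map $\mathrm{tl} \colon A^N \rTo A^N$ (precomposition with $\Succ$) sending a sequence $(x_i)$ to its tail $(x_{i+1})$, and evaluation at $0$ gives a head map $\mathrm{hd} \colon A^N \rTo A$ sending $(x_i)$ to $x_0$. Together these package as a single coalgebra
\[
c_A \; = \; (\mathrm{hd}, \mathrm{tl}) \colon A^N \rTo A \times A^N,
\]
which is the canonical ``object of sequences'' structure described in the text preceding the proposition. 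I would then argue that this $c_A$ is in fact the \emph{final} $(A \times (-))$-coalgebra, or at least final enough for our purposes: given any coalgebra $c = (h,t) \colon X \rTo A \times X$, the parametrised nno lets us define by primitive recursion the unique map $e \colon X \rTo A^N$ whose $i$-th component is $h \circ t^i$, i.e.\ $e(x) = (h(x), h(t(x)), h(t^2(x)), \dots)$, and this $e$ is the unique coalgebra homomorphism from $(X,c)$ to $(A^N, c_A)$. The existence and uniqueness of $e$ is exactly where the parametrised (rather than bare) nno is needed, since the recursion must be carried out with $X$ as a parameter object.

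\textbf{The equivalence.}
With this in hand, the forward direction of part (\ref{iter:1}) proceeds as follows. Suppose $A$ is iterative. Applying the defining diagram~(\ref{DiagIter}) to the canonical coalgebra $c_A \colon A^N \rTo A \times A^N$ itself yields a unique $M \colon A^N \rTo A$ with $M = m \circ (\Id \times M) \circ c_A$, which unwinds precisely to the Unfolding equation $M_i\,x_i = m(x_0, M_i\,x_{i+1})$; this simultaneously gives the final ``uniqueness of $M$ satisfying unfolding'' clause, since any such $M$ is a $u$ for $c_A$. For Canonicity, suppose $(y_i)$ is a sequence with $y_i = m(x_i, y_{i+1})$ for all $i$. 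I would read the hypothesis as saying that the map $Y \colon Z \rTo A^N$ representing $(y_i)$, when composed with head, equals $m \circ (\Id \times (\text{shift of } Y)) \circ (\text{the coalgebra})$; more concretely, I would exhibit $y_0$ as $u(z)$ for an appropriate coalgebra built from $(x_i)$ and invoke the uniqueness half of iterativity to conclude $y_0 = M_i\,x_i$. Conversely, given $M$ satisfying Unfolding and Canonicity, and an arbitrary coalgebra $c = (h,t) \colon X \rTo A \times X$, I would define $u = M \circ e \colon X \rTo A$ where $e$ is the coalgebra homomorphism above; Unfolding gives that $u$ makes~(\ref{DiagIter}) commute, and Canonicity gives uniqueness of $u$.

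\textbf{Part (\ref{iter:2}) and the main obstacle.}
For part (\ref{iter:2}), let $f \colon A \rTo A'$ be a midpoint homomorphism between iterative objects, with associated operations $M, M'$. I would show $f \circ M = M' \circ f^N$, where $f^N \colon A^N \rTo A'^N$ is the evident postcomposition map. The clean way is to check that $f \circ M$ satisfies the Unfolding equation defining $M'$ \emph{precomposed with} $f^N$: using that $f$ commutes with $m$ and $m'$, one computes $f(M_i\,x_i) = f(m(x_0,M_i\,x_{i+1})) = m'(f(x_0), f(M_i\,x_{i+1}))$, so the sequence $f(x_i)$ and the element $f(M_i\,x_i)$ satisfy the hypotheses of Canonicity for $A'$, forcing $f(M_i\,x_i) = M'_i\,f(x_i)$. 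This is short once the machinery is set up.

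The main obstacle, I expect, is not any single hard idea but the careful translation between the ``generalised element'' notation $(x_i)$, $M_i\,x_i$ and honest maps into $A^N$, together with verifying that the shift/head maps behave correctly and that the recursively-defined $e$ genuinely is a coalgebra homomorphism. In particular, establishing that $c_A \colon A^N \rTo A \times A^N$ is the final coalgebra (so that iterativity of $A$ can be tested against it alone) requires the primitive-recursion argument that leans essentially on $N$ being \emph{parametrised} and exponentiable; getting that existence-and-uniqueness statement right, uniformly in the parameter object $Z$ over which generalised elements range, is the delicate bookkeeping that underlies everything else.
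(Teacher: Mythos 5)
Your proposal is correct and follows essentially the same route as the paper: obtain $M$ and the Unfolding equation by applying iterativity to the canonical head/tail coalgebra on $A^N$, derive Canonicity by comparing two coalgebra-to-algebra maps out of a coalgebra built on $N$ from $(x_i)$, prove the converse by defining $u = M \circ e$ with $e(x) = (h(t^i(x)))_i$ constructed by parametrised primitive recursion, and deduce part~(\ref{iter:2}) by feeding the homomorphism computation into Canonicity for $A'$. The only cosmetic difference is that you frame $c_A$ as a final coalgebra, whereas the paper uses the coalgebra structures directly without asserting finality.
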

\begin{proof}
  For statement~\ref{iter:1}, suppose that $A$ is iterative.
  We derive $M \colon A^N \rTo A$.
  Define $h \colon A^N \rTo A$ by $h((x_i)) = x_0$,
  and $t \colon A^N \rTo A^N$ by $t((x_i)) = (x_{i+1})$.
  This determines $(h,t)\colon A^N \rTo A \times A^N$.
  So, by iterativity, there is a unique
  $M \colon A^N \rTo A$ such that $M((x_i)) = m(h((x_i)),M(t((x_i))))$,
  i.e.\ such that $M_i\, x_i \, = \, m(x_0,\, M_i\, x_{i+1})$.
  Thus there is a unique infinitary operation $M$ satisfying unfolding.

  To show that this unique~$M$ also satisfies canonicity, let
  $(x_i)$ and $(y_i)$ be sequences satisfying $(y_i) =
  (m(x_i,y_{i+1}))$.  Consider the map $c \colon
  N \rTo A \times N$ defined by $c(n) = (x_n,n+1)$.  Then  $n \mapsto y_n\colon
  N \rTo A$ and $n \mapsto M_i\, x_{i+n}: N \rTo A$, if taken as $u$,
  both make diagram~(\ref{DiagIter}) commute. Thus these two maps are
  equal, i.e.\ $y_n = M_i\, x_{i+n}$. Thus, in particular, $y_0 = M_i\, x_i$
  as required.

  Conversely, suppose that there exists $M: A^N \rTo A$ satisfying
  unfolding and canonicity. To show iterativity, take any
  $c = (h,t)\colon X \rTo A \times X$. We must show that there is a unique
  map $u\colon X \rTo A$ such that $u(x) = m(h(x), u(t(x))$.  Such a
  map is defined by
  \begin{gather*}
  u(x) =  M_i \, h(t^i(x)) = M(h(x), h(t(x)), h(t(t(x)), \dots).
  \end{gather*}
  (This is defined formally by using primitive recursion
  to first define $(x,n) \mapsto t^n(x) :  X \times N \rTo X$,
  from which one obtains $(x,n) \mapsto h(t^n(x)) \colon X \times N \rTo A$,
  whence $x \mapsto (h(t^i(x)) \colon X \rTo A^N$, from which
  $u$ is immediately definable.)
  The map $u$ satisfies $u(x) = m(h(x), u(t(x))$ by the unfolding property
  of $M$.  For uniqueness of~$u$, suppose there exists $u': X \to A$ such
  that $u'(x) = m(h(x), u'(t(x))$.  Thus, by the
  uniqueness property of the nno, $(u'(t^i(x))) = (m(h(t^i(x)),
  u'(t^{i+1}(x)))$.
  So, by canonicity, $u'(x) = (u'(t^0(x))) = M_i \, h(t^i(x)) = u(x)$, as required.

  For statement~\ref{iter:2}, if $h: A \rTo A'$ is a homomorphism
  then, by unfolding,
  \begin{gather*}
  (h(M_{j}\, x_{i+j}))\:  = \:  (h(m(x_i,\, M_{j} \, x_{i+j+1} ))) \:
     =  \: (m(h(x_i), \, h(M_{j} \,  x_{i+j+1})).
  \end{gather*}
  So, by canonicity, we have $h(M_j \, x_j) = M_j \, f(x_j)$,
  as required.
\end{proof}
Note that the proof above does not
depend on the midpoint axioms, we use only the properties
of an iterative binary operation.

We end this subsection with the remark that
there are two different ways of making sense of
infinitary operations in even greater generality than above.
The first is to drop the assumption that
$N$ is exponentiable.  At this level of generality, an infinitary operation is
given by a family of functions
$M_X \colon \CC(X \times N, A) \to \CC(X,A)$
natural in $X$. In the special case that the object $N$ is
exponentiable, this notion coincides with the one given above.
It is possible to formulate the iteration axioms using this
more general notion of infinitary operation, and to establish the counterpart of
Proposition~\ref{PropSetIt}.

A second way of generalising the notion of infinitary operation is
to drop the requirement that $\CC$ have an nno, and require instead
that each functor $X \times (-)$ have a final coalgebra $\FS(X)$.
With this assumption, one formulates an infinitary operation on $A$ as
a map $M \colon \FS(A) \rTo A$. Again, in the special case that
$\CC$ has a parametrised
natural numbers object $N$, the new definition   coincides with the one
given, as then $N$ is exponentiable with $A^N$ given by $\FS(A)$.
At this level of generality, one can again formulate the iteration axioms.
However, it appears that the counterpart of Proposition~\ref{PropSetIt} does
\emph{not} hold, as, in the absence of the object~$N$, the unfolding and canonicity axioms
seem strictly stronger than iterativity.

\subsection{Midpoint-convex bodies} \label{m:convex}

\begin{defn}
  A \emph{midpoint-convex body}, or \emph{m-convex body} for short, is
  a cancellative iterative midpoint object.
\end{defn}
By Proposition~\ref{PropSetIt}, in the category of sets this
definition coincides with the definition of midpoint-convex body given in
Section~\ref{SecIntro}.

Many examples of m-convex bodies will be presented in Section~\ref{SecApplySuperconvex}.
Here we just give the following non-example, showing that the cancellation property of
m-convex bodies is not implied by the midpoint and iteration axioms.
\begin{ex}
Consider the midpoint set $A$ defined by:
\begin{eqnarray*}
A & = & \{(x,y) \in [0,1] \times [0,1] \mid
\mbox{$x = 1$ or $y = 1$}\} \\
m((x,y),\, (x',y'))  & = &
\left\{ \begin{array}{ll}
  (1,\, y \oplus y') & \mbox{if $x = x' = 1$} \\
  (x \oplus x',\, 1) & \mbox{otherwise}
    \end{array}\right.
\end{eqnarray*}
This is iterative, as is
easily verified by defining $M$ and
checking the unfolding and canonicity properties.
The failure of cancellation is shown by
$m((1,x),(z,1)) = m((1,y),(z,1))$ whenever
$z \neq 1$.
\end{ex}

The cancellation property of m-convex bodies will serve many technical
purposes in the sequel.  One of them will be established in
Proposition~\ref{PropSetApprox}, where cancellation is proved
equivalent to a useful approximation property of the infinitary
operation $M$.  A further consequence of cancellation is that it
allows iterativity to be obtained via an equationally axiomatized
iterated midpoint operation $M$ (Proposition~\ref{PropSetEqns} below).


\begin{defn}
\label{def:supermidpoint}
 Suppose that $\CC$ has an exponentiable parametrised nno $N$.
  A \emph{supermidpoint object} is an object $A$ together with an infinitary
  operation $M:A^N \rTo A$ subject to the following equations:
  \begin{align*}
  & \textbf{(Idempotency)} & M(x,x,x,\dots)  & = x, \\
  & \textbf{(Commutativity)} & M(x,y,y,y,\dots) & = M(y,x,x,x,\dots), \\
  & \textbf{(Transposition)} & M_i \,  M_j \, x_{ij} & = M_j \, M_i \, x_{ij}, \\
  & \textbf{(Unfolding)}  & M_i \, x_i & = M(x_0, \, M_i \, x_{i+1}, \, M_i \, x_{i+1},
    M_i \, x_{i+1}, \, \dots).
  \end{align*}
A \emph{supermidpoint homomorphism} between supermidpoint objects $A$ and $B$,
is a map $h \colon A \rTo B$ preserving the infinitary operation.
\end{defn}
Given a supermidpoint object $A$, the \emph{associated
binary operation} is defined by
  \begin{equation}
  m(x,y) = M(x,y,y,y,\dots). \label{m-from-M}
  \end{equation}
It is clear that the idempotency, commutativity and unfolding laws for the
infinitary operation coincide with those given earlier,
and that the transposition law for the infinitary case
specializes to that for the binary case.
Thus the binary operation associated to the supermidpoint
operation is a midpoint operation. However, there is no reason for $M$ to
satisfy the canonicity axiom. For a counterexample, take any non-trivial
complete lattice and let $M$ be the countable supremum operation.

\begin{defn}
We say that a supermidpoint object $(A,M)$ is \emph{cancellative} if
the induced binary $m$ satisfies cancellation.
\end{defn}
It turns out that any cancellative supermidpoint object \emph{is} iterative, and hence
an m-convex body. 
\begin{prop} Suppose that $\CC$ has an exponentiable parametrised nno $N$.
\label{PropSetEqns}
\begin{enumerate}
\item \label{iter:eqn} Any (cancellative) iterative midpoint object is a
  (cancellative) supermidpoint object under the associated infinitary operation.
\item \label{eqn:iter} Any cancellative supermidpoint object is an
  m-convex body 
  under the associated binary operation.
\end{enumerate}
\end{prop}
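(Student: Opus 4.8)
The plan is to treat the two statements in turn, the guiding observation being that in each direction most of the algebraic laws transfer immediately and that a single ``interchange between binary and infinitary midpoints'' carries the real content. Throughout I work with $\CC$ having an exponentiable parametrised nno, as in the hypothesis.

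For statement~\ref{iter:eqn} I start from a (cancellative) iterative midpoint object $(A,m)$. By Proposition~\ref{PropSetIt} there is a unique infinitary operation $M$ satisfying unfolding $M_i\, x_i = m(x_0, M_i\, x_{i+1})$, and moreover this $M$ satisfies canonicity. I must verify the four supermidpoint equations. First, applying canonicity to the constant sequence $(x)_i$ with witness $(x)_i$ gives $M(x,x,x,\dots) = x$, i.e.\ idempotency. Unfolding then yields $M(x,y,y,\dots) = m(x, M(y,y,y,\dots)) = m(x,y)$, so the binary operation associated with $M$ via~(\ref{m-from-M}) is exactly the original $m$; consequently commutativity of $M$ reduces to that of $m$, the supermidpoint unfolding law is just binary unfolding rewritten through~(\ref{m-from-M}), and the cancellative case is inherited. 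The only substantial law is transposition.

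The transposition plan proceeds in two moves. First I prove an interchange law, using only binary transposition, unfolding and canonicity: $M_i\, m(x_i,y_i) = m(M_i\, x_i,\, M_i\, y_i)$. Setting $w_i = m(M_j\, x_{i+j},\, M_j\, y_{i+j})$, unfolding each argument and applying binary transposition gives $w_i = m(m(x_i,y_i),\, w_{i+1})$, so canonicity on the sequence $(m(x_i,y_i))_i$ identifies $w_0 = m(M_j\, x_j, M_j\, y_j)$ with $M_i\, m(x_i,y_i)$. With this interchange in hand I bootstrap to the full law $M_i\, M_j\, x_{ij} = M_j\, M_i\, x_{ij}$: writing $R_i = M_j\, M_k\, x_{k+i,\,j}$ and $a_i = M_j\, x_{ij}$, unfolding the inner $M_k$ and then applying the interchange law yields $R_i = m(a_i, R_{i+1})$, whence canonicity gives $R_0 = M_i\, a_i$, which is exactly $M_j\, M_i\, x_{ij} = M_i\, M_j\, x_{ij}$.

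For statement~\ref{eqn:iter} the associated binary $m$ is already a cancellative midpoint operation (idempotency, commutativity and transposition being recorded after~(\ref{m-from-M})), and the supermidpoint unfolding law rewrites through~(\ref{m-from-M}) as the binary unfolding $M_i\, x_i = m(x_0, M_i\, x_{i+1})$. By Proposition~\ref{PropSetIt}(\ref{iter:1}) it therefore suffices to verify that $M$ satisfies canonicity, and this is the main obstacle. I plan to argue as follows. Assume $y_i = m(x_i, y_{i+1})$, i.e.\ $y_i = M(x_i, y_{i+1}, y_{i+1}, \dots)$ by~(\ref{m-from-M}). Applying $M_i$ and invoking transposition on the double family $w_{ij}$ defined by $w_{i0}=x_i$ and $w_{ij}=y_{i+1}$ for $j\ge 1$ (definable by cases on $j$ using the parametrised nno), then computing the inner operation column by column, collapses the left side to $M_i\, y_i = m(M_i\, x_i,\, M_i\, y_{i+1})$. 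Comparing with the unfolding identity $M_i\, y_i = m(y_0,\, M_i\, y_{i+1})$, commuting the arguments and cancelling the common term $M_i\, y_{i+1}$ yields $y_0 = M_i\, x_i$, as required. The crux in both directions is thus the transposition/interchange machinery; cancellation enters exactly once, but decisively, in this last step, where it is precisely what forces $M_i\, x_i$ to be the only solution of the defining recursion.
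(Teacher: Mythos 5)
Your proof is correct and, for statement~2 (the only part the paper proves in detail), follows exactly the paper's argument: unfold $M_i\,y_i$, rewrite $M_i\,m(x_i,y_{i+1})$ as $m(M_i\,x_i,\,M_i\,y_{i+1})$ via transposition, and conclude by cancellation. The paper dismisses statement~1 as a routine verification; your explicit derivation of the interchange law and of infinitary transposition from binary transposition via canonicity is a sound way to fill that in.
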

\begin{proof}
We omit the routine verification of \ref{iter:eqn}.
For statement \ref{eqn:iter}, by the remarks above, it is
sufficient to establish the canonicity axiom for $M$.
Suppose then that $(x_i) =   (m(y_i,x_{i+1}))$.
Using unfolding and transposition, we obtain,
\begin{gather*}
     m(x_0, \, M_i \, x_{i+1})
          =  M_i \, x_i
          =  M_i \, m(y_i,x_{x+1})
          =  m(M_i \, y_i, \, M_i \, x_{i+1}).
  \end{gather*}
Hence, by Cancellation, $x_0 = M_i \, y_i$, as required.
\end{proof}
Using the fact that $m$ and $M$ homomorphisms are the same (Proposition~\ref{PropSetIt}(\ref{iter:2})), the following
conclusion is obtained.
\begin{corollary}
If $\CC$ has an exponentiable parametrised nno $N$ then
 the category of m-convex bodies with midpoint homomorphisms is
  isomorphic to the category of cancellative supermidpoint objects with
  supermidpoint homomorphisms.
\end{corollary}
In Section~\ref{sets} below, we shall see another
similar isomorphism between the category of
convex bodies and the
category of cancellative superconvex sets with superaffine maps
in the sense of K\"onig~\cite{koenig:superconvex}.

One benefit of having simple abstract definitions of
convex body is that it is easy to prove that such objects
are preserved by various categorical constructions and
functors. We briefly discuss such properties here, omitting
the routine proofs.

\newcommand{\Mid}{\operatorname{Mid}}
\newcommand{\Ultra}{\operatorname{CB}}

We write $\Mid(\CC)$ and $\Ultra(\CC)$ for the categories of midpoint
objects and of m-convex bodies over $\CC$, both with midpoint
homomorphisms.
\begin{prop}
  The forgetful functor $\Ultra(\CC) \to \CC$ creates limits.
\end{prop}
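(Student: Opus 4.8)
The plan is to prove that the forgetful functor $U \colon \Ultra(\CC) \rTo \CC$ creates limits by the standard two-part argument: given a diagram $D \colon \mathcal{J} \rTo \Ultra(\CC)$ whose image under $U$ has a limit in $\CC$, I would show that this limit carries a canonical m-convex body structure making the limiting cone a cone in $\Ultra(\CC)$, and that this structure is forced (so the lift is unique) and genuinely limiting in $\Ultra(\CC)$. Write $L = \lim_{\mathcal{J}} U D$ in $\CC$ with limiting cone $(\pi_j \colon L \rTo U D_j)_{j}$. The midpoint operation on $L$ is constructed by the universal property of the limit: the maps $m_j \circ (\pi_j \times \pi_j) \colon L \times L \rTo U D_j$ form a cone over $D$ (here I use that each leg of $D$ is a midpoint homomorphism, so the maps are compatible with the connecting morphisms of the diagram), hence factor uniquely through a map $m_L \colon L \times L \rTo L$ satisfying $\pi_j \circ m_L = m_j \circ (\pi_j \times \pi_j)$. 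This last equation says exactly that each $\pi_j$ is a midpoint homomorphism, so the lift, if it exists as an m-convex body, makes the cone live in $\Ultra(\CC)$; and since the $\pi_j$ are jointly monic, $m_L$ is the unique such operation, giving uniqueness of the lift.

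Next I would verify that $(L, m_L)$ satisfies the m-convex body axioms, namely idempotency, commutativity, transposition, cancellation, and iterativity. For the three midpoint equations, each is an identity between maps into $L$; to check two such composites are equal it suffices, by joint monicity of the $\pi_j$, to postcompose with each $\pi_j$ and use that $\pi_j \circ m_L = m_j \circ (\pi_j \times \pi_j)$ together with the fact that the corresponding axiom holds in each $D_j$. Cancellation is a mono-type condition and transfers similarly: if $m_L(x,y) = m_L(x,z)$ then postcomposing with $\pi_j$ and using that $\pi_j$ is a homomorphism gives $m_j(\pi_j x, \pi_j y) = m_j(\pi_j x, \pi_j z)$, whence $\pi_j y = \pi_j z$ by cancellation in $D_j$ for every $j$, and joint monicity forces $y = z$. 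For iterativity, given a coalgebra $c = (h,t) \colon X \rTo L \times X$, I would use iterativity of each $D_j$ applied to the induced coalgebra $(\pi_j \circ h,\, t) \colon X \rTo U D_j \times X$ to obtain unique maps $u_j \colon X \rTo U D_j$; these cohere into a cone (again using that the diagram legs are homomorphisms, hence preserve $M$ by Proposition~\ref{PropSetIt}(\ref{iter:2})) and therefore factor through a unique $u \colon X \rTo L$, which one checks solves the defining diagram~(\ref{DiagIter}) for $(L, m_L)$, with uniqueness inherited from joint monicity.

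Finally I would confirm the limiting property in $\Ultra(\CC)$: any cone in $\Ultra(\CC)$ over $D$ is in particular a cone in $\CC$, so it factors uniquely through $L$ by a map $v$, and this $v$ is automatically a midpoint homomorphism because $\pi_j \circ v$ is one for all $j$ and the $\pi_j$ are jointly monic, so the equation $\pi_j \circ m_L \circ (v \times v) = \pi_j \circ v \circ m_{\mathrm{apex}}$ holds for each $j$ and hence $v$ commutes with the midpoint operations. Together with the preservation of $M$ that follows automatically from Proposition~\ref{PropSetIt}(\ref{iter:2}), this makes $L$ a limit in $\Ultra(\CC)$, completing the creation-of-limits argument.

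The main obstacle I anticipate is the verification of iterativity of the lifted structure, since unlike the purely equational axioms it is a universal (existence-and-uniqueness) property rather than a diagram identity, so the coherence of the family $(u_j)$ into a cone must be argued carefully using homomorphism-preservation of the infinitary operation; everything else reduces cleanly to joint monicity of the limiting projections.
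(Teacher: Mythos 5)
Your argument is correct and is exactly the routine proof that the paper omits (the paper states this proposition without proof, remarking only that such verifications are routine). One small repair is needed, though: in the iterativity step you justify the coherence of the family $(u_j)$ into a cone by appealing to Proposition~\ref{PropSetIt}(\ref{iter:2}), which presupposes an exponentiable parametrised nno --- structure not assumed in this proposition, which is stated for an arbitrary $\CC$ with finite products. The appeal is unnecessary: for a connecting map $\alpha \colon D_j \rTo D_k$ of the diagram, the composite $U\alpha \circ u_j$ satisfies the defining equation of diagram~(\ref{DiagIter}) for the coalgebra $(\pi_k \circ h,\, t)$ (using only that $\alpha$ is a midpoint homomorphism and $U\alpha \circ \pi_j = \pi_k$), so it equals $u_k$ by the \emph{uniqueness} clause of iterativity in $D_k$. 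Likewise the closing remark about ``preservation of $M$'' can be dropped, since morphisms of $\Ultra(\CC)$ are by definition just midpoint homomorphisms. With those two adjustments the proof is complete and fully general.
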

In particular, if $(A,m)$ and $(A',m')$ are m-convex bodies
then so is
\[
(A \times A', \; (A \times A') \times (A \times A') \rTo^\cong (A
\times A) \times (A' \times A') \rTo^{m \times m'} A \times A').
\]
Midpoint-convex bodies are also closed under internal powers.
\begin{prop}
\label{PropPowers}
If $(A,m)$ is a m-convex object then so is
\[
(A^B, \; A^B \times A^B \rTo^\cong (A \times A)^B \rTo^{m^B} A^B)
\]
for any exponentiable object $B$.
\end{prop}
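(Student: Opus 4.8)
The plan is to transfer each defining axiom of an m-convex body from $A$ to $A^B$ along the adjunction isomorphism $\CC(Z \times B, A) \cong \CC(Z, A^B)$, writing $\hat g \colon Z \times B \rTo A$ for the transpose of a generalised element $g \colon Z \rTo A^B$. The three equational midpoint axioms (idempotency, commutativity, transposition) require no work: since $B$ is exponentiable, the functor $(-)^B$ is a right adjoint and hence preserves finite products, so it carries the midpoint object $(A,m)$ to a model of the same equational theory, the induced operation being exactly the $m^B$ displayed in the statement. Thus $(A^B, m^B)$ is a midpoint object.

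For cancellation I would argue through generalised elements. Given $x,y,z \colon Z \rTo A^B$ with $m^B \circ (x,y) = m^B \circ (x,z)$, transposing across the adjunction yields $\hat x, \hat y, \hat z \colon Z \times B \rTo A$ with $m \circ (\hat x, \hat y) = m \circ (\hat x, \hat z)$, using that the transpose of $m^B \circ (-,-)$ is $m \circ (\widehat{(-)}, \widehat{(-)})$. Cancellation in $A$, applied at the stage $Z \times B$, gives $\hat y = \hat z$, and hence $y = z$. (Equivalently, $(-)^B$ preserves monomorphisms, and so sends the mono $(\pi_1, m)$ witnessing cancellation in $A$ to the corresponding mono for $A^B$.)

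The main work, and the step I expect to be the crux, is iterativity. Given a coalgebra $c = (h,t) \colon X \rTo A^B \times X$, I would build a coalgebra over $A$ by transposing the head while keeping $B$ fixed in the tail: set $\hat c = (\hat h, \, t \times \Id_B) \colon X \times B \rTo A \times (X \times B)$. Iterativity of $A$ then supplies a unique $\hat u \colon X \times B \rTo A$ with $\hat u = m \circ (\Id_A \times \hat u) \circ \hat c$; let $u \colon X \rTo A^B$ be its transpose. The verification I must carry out is that the transpose of the $A^B$-iterativity equation $u = m^B \circ (\Id_{A^B} \times u) \circ c$ is precisely the $A$-iterativity equation for $\hat u$: this reduces to the identities that $m^B$ transposes to $m$ and that $\widehat{u \circ t} = \hat u \circ (t \times \Id_B)$, the latter being exactly why the tail must carry $B$ along unchanged. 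Granting this, $u$ satisfies the required equation, and any other solution $u'$ transposes to a solution $\hat u'$ of the same $A$-equation for $\hat c$, whence $\hat u' = \hat u$ by uniqueness in $A$ and so $u' = u$. The only delicate point is the bookkeeping of transposes so that the two commuting diagrams correspond exactly; once the dictionary $g \leftrightarrow \hat g$ and the identity $\widehat{u \circ t} = \hat u \circ (t \times \Id_B)$ are in place, everything follows from iterativity of $A$.
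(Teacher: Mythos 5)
Your proof is correct, and it supplies exactly the ``routine'' argument that the paper omits (the text surrounding Proposition~\ref{PropPowers} explicitly declines to give proofs of these closure properties). The three steps --- transporting the equational midpoint axioms along the product-preserving right adjoint $(-)^B$, deducing cancellation at stage $Z\times B$ (equivalently, that $(-)^B$ preserves the mono $(\pi_1,m)$), and transposing an $A^B\times(-)$-coalgebra $(h,t)$ on $X$ to the $A\times(-)$-coalgebra $(\hat h,\, t\times\Id_B)$ on $X\times B$ so that existence and uniqueness of the coalgebra-to-algebra map transfer bijectively --- are all sound, with the key identity $\widehat{u\circ t}=\hat u\circ(t\times\Id_B)$ correctly identified as the crux.
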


It is similarly straightforward to establish conditions under which
m-convex bodies are preserved by functors.  Suppose $\CD$ is a
category with finite products, and that $F: \CC \to \CD$ is a functor
preserving finite
products.  Then there is a functor $\Ext{F} : \Mid(\CC) \to \Mid(\CD)$
whose action on objects is
\begin{eqnarray*}
\Ext{F}(A,m) & = &
(FA,\; FA \times FA \rTo^{\cong} F(A \times A) \rTo^{Fm} FA)
\end{eqnarray*}
and whose action on homomorphisms is inherited from $F$.

\begin{prop}
\label{PropAdj}
If $F : \CC \to \CD$ has a left adjoint then:
\begin{enumerate}
\item \label{PropAdj:1} $\Ext{F}$ cuts down to a functor $\Ext{F}:
  \Ultra(\CC) \to \Ultra(\CD)$.

\item \label{PropAdj:2} If $F$ also has a right adjoint $G$ then
    $\Ext{G}: \Ultra(\CD) \to \Ultra(\CC)$ is right adjoint to
    $\Ext{F}: \Ultra(\CC) \to \Ultra(D)$.
\end{enumerate}
\end{prop}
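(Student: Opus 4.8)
The plan is to handle the two parts separately, exploiting that a functor possessing a left adjoint is itself a right adjoint and so preserves all limits. Write $L \dashv F$ for the left adjoint of part~\ref{PropAdj:1}, with unit $\eta \colon \Id \rTo FL$ and counit $\epsilon \colon LF \rTo \Id$. For part~\ref{PropAdj:1}, since $\Ext{F}$ already lands in $\Mid(\CD)$, I would only need to check that it preserves the two further conditions defining an m-convex body. Cancellation~(\ref{cancellation}) for $(A,m)$ is equivalent to the map $\langle \pi_1, m\rangle \colon A \times A \rTo A \times A$ being monic; conjugating $F\langle \pi_1, m\rangle$ by the product-preservation isomorphism $F(A \times A) \cong FA \times FA$ yields exactly the corresponding map for $\Ext{F}(A)$, so since the right adjoint $F$ preserves monos, cancellation is inherited.

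The substantial step is preservation of iterativity, and here the left adjoint does the real work. Given a coalgebra $(h',t') \colon Y \rTo FA \times Y$ in $\CD$, I would transpose the head $h'$ across $L \dashv F$ to $\bar h \colon LY \rTo A$ and pair it with $Lt' \colon LY \rTo LY$, obtaining a coalgebra $(\bar h, Lt') \colon LY \rTo A \times LY$ in $\CC$. Iterativity of $A$ then supplies a unique solution $\bar u \colon LY \rTo A$ of the corresponding instance of diagram~(\ref{DiagIter}), and I would take $u' \colon Y \rTo FA$ to be its transpose. The verification that $u'$ solves the iterativity equation for $\Ext{F}(A)$ comes down to naturality of $\eta$ (to rewrite the composite $u' \circ t'$) together with the standard identity $\phi \circ F\langle f,g\rangle = \langle Ff, Fg\rangle$ for the product isomorphism $\phi$. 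Running the same computation backwards along $\epsilon$ shows that the transpose of any solution over $\Ext{F}(A)$ is a solution over $A$, so transposition is a bijection between the two solution sets and uniqueness transfers. This transpose argument is the part I expect to require the most care.

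For part~\ref{PropAdj:2}, I would first observe that $G$, being a right adjoint, preserves finite products and has the left adjoint $F$; hence part~\ref{PropAdj:1} applied to $G$ already guarantees that $\Ext{G}$ restricts to a functor $\Ultra(\CD) \rTo \Ultra(\CC)$. It then remains to lift the adjunction $F \dashv G$. The clean route is to note that $\Mid(-)$ is the category of models of the finite-product (Lawvere) theory of midpoint algebras, and is therefore $2$-functorial on the $2$-category of finite-product categories and product-preserving functors; applying this $2$-functor to $F \dashv G$ (both preserve products, and $\eta,\epsilon$ are automatically $2$-cells) yields $\Ext{F} \dashv \Ext{G}$ between $\Mid(\CC)$ and $\Mid(\CD)$. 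Unpacked, this is the assertion that the adjunction bijection $\CD(FA,B) \cong \CC(A,GB)$ carries midpoint homomorphisms to midpoint homomorphisms, a check resting on the compatibility of $\eta,\epsilon$ with the product-preservation isomorphisms of $F$ and $G$ --- the one calculation in this part needing genuine attention.

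Finally, since $\Ultra$ is a full subcategory of $\Mid$ and, by the previous paragraphs, both $\Ext{F}$ and $\Ext{G}$ send m-convex bodies to m-convex bodies, the hom-set bijection restricts verbatim, so $\Ext{F} \dashv \Ext{G}$ descends to the full subcategories $\Ultra(\CC)$ and $\Ultra(\CD)$, which is the claim. The main obstacle in both parts is essentially the same piece of bookkeeping: tracking how the adjunction (co)units interact with the canonical product isomorphisms --- once to transport coalgebra-to-algebra solutions in part~\ref{PropAdj:1}, and once to transport midpoint homomorphisms in part~\ref{PropAdj:2}.
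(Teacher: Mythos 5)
Your proof is correct. The paper explicitly omits the proof of this proposition as routine, and your argument --- cancellation as monicity of $\langle \pi_1, m\rangle$ together with preservation of monos by the right adjoint $F$, transposition of coalgebra-to-algebra solutions along $L \dashv F$ for iterativity, and the $2$-functoriality of $\Mid(-)$ on product-preserving functors to lift $F \dashv G$ to the (full) subcategories of m-convex bodies --- supplies exactly the standard details the authors intended.
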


\subsection{Interval objects} \label{interval:objects:section}

We define the notion of interval object by specifying its universal property.
\begin{defn}\label{DefIO}
  An \emph{interval object} is an initial m-convex body with two
  global points, i.e., a structure $(I,\oplus, a,b)$ where
  $(I,\oplus)$ is an m-convex body, and $a: \One \rTo I$ and
  $b: \One \rTo I$, such that, for any other m-convex body with two
  global points $(A, \oplus, x, y)$, there is a unique midpoint
  homomorphism $h:I \rTo A$ with $h(a) = x$ and $h(b) = y$.
\end{defn}
We often denote such an interval by the interval notation $[a,b]$ and we call the global points $a,b$ the \emph{endpoints} of $[a,b]$.
Our usual choices of notation for the endpoints are $a=0$ and $b=1$, or
$a=-1$ and $b=1$, as convenient.
Examples of interval objects will be presented in Sections~\ref{sets}-\ref{topos}.

By idempotency, every global point $x : \One \rTo A$ of a midpoint algebra $A$ is a homomorphism from the terminal midpoint algebra. Because of this,
~\ref{DefIO} defines the interval object as the coproduct $\One + \One$ calculated in
the category of m-convex bodies.
Alternatively, using the notion of free generation defined below,
an interval object is the
m-convex body freely generated
by the coproduct $\One + \One$ in $\CC$. Note, however,
that we have formulated the definition of
interval object in such a way that it makes sense without
assuming that a coproduct $\One + \One$ exists in $\CC$.
\begin{defn}
  An \emph{m-convex body freely generated} by an object $X$ is a
  m-convex body $FX$ together with a map $\eta:X \rTo FX$, called
  the \emph{insertion of generators}, such that for every m-convex
  body $A$ and map $f:X \rTo A$ there is a unique
  homomorphism $\bar{f}:FX \rTo A$ such that
  $f = \bar{f} \circ \eta$.
\end{defn}

Of course, one easily observes that any two interval objects are isomorphic
to each other by isomorphisms that are midpoint homomorphisms.

We now derive the existence of a negation operation and its basic laws.
\begin{prop}[Negation]
  If $[-1,1]$ is an interval object
  then there is a unique map $x \mapsto -x:[-1,1] \rTo \text{$[1,1]$}$
  such that
  \begin{gather}
   -(1)  =  -1, \label{neg:1} \\
   -(-1)=1, \label{neg:2}\\
   -(x \oplus y)=(-x) \oplus (-y).\label{neg:3}
  \end{gather}
  Moreover,
  \begin{gather}
  -(-x)=x, \label{neg:4}\\
  -0=0, \label{neg:5}\\
  x \oplus -x = 0,\label{neg:6}
  \end{gather}
  where we define
  \begin{gather}
  0 = -1 \oplus 1.
  \end{gather}
\end{prop}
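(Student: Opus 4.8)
The plan is to lean entirely on the universal property of Definition~\ref{DefIO}, exploiting the principle that any midpoint homomorphism out of $[-1,1]$ is completely determined by its values on the two endpoints $-1$ and $1$. For existence, I would apply the universal property to the m-convex body $[-1,1]$ regarded with its two endpoints \emph{swapped}: taking $(A,\oplus,x,y) = ([-1,1],\oplus,1,-1)$ yields a unique midpoint homomorphism $h\colon [-1,1] \rTo [-1,1]$ with $h(-1)=1$ and $h(1)=-1$. Writing $-x$ for $h(x)$, equations~(\ref{neg:1}) and~(\ref{neg:2}) record the endpoint conditions, while~(\ref{neg:3}) is exactly the assertion that $h$ is a midpoint homomorphism. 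Uniqueness of a map satisfying~(\ref{neg:1})--(\ref{neg:3}) is then immediate, since~(\ref{neg:3}) says such a map is a homomorphism and~(\ref{neg:1}),~(\ref{neg:2}) fix it on the endpoints, so the universal property forces it to coincide with $h$.

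For~(\ref{neg:4}), I would observe that $x \mapsto -(-x)$ is a composite of midpoint homomorphisms, hence a midpoint homomorphism, and it sends $-1 \mapsto 1 \mapsto -1$ and $1 \mapsto -1 \mapsto 1$. Since $\Id$ is also a midpoint homomorphism fixing both endpoints, the uniqueness clause of the universal property (with $A = [-1,1]$ and its own endpoints) forces the two to agree. Equation~(\ref{neg:5}) is then a direct calculation, $-0 = -(-1 \oplus 1) = (-(-1)) \oplus (-(1)) = 1 \oplus (-1) = -1 \oplus 1 = 0$, using~(\ref{neg:3}),~(\ref{neg:2}),~(\ref{neg:1}) and commutativity.

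The substantive step is~(\ref{neg:6}), and it is where I expect the only real work. The idea is to show that the map $f(x) = x \oplus (-x)$ is itself a midpoint homomorphism: by~(\ref{neg:3}) and then the transposition law, $f(x \oplus y) = (x \oplus y) \oplus ((-x) \oplus (-y)) = (x \oplus -x) \oplus (y \oplus -y) = f(x) \oplus f(y)$. Evaluating at the endpoints gives $f(-1) = -1 \oplus 1 = 0$ and, by commutativity, $f(1) = 1 \oplus -1 = 0$. On the other hand, the constant map at $0$, namely $[-1,1] \rTo^{!} \One \rTo^{0} [-1,1]$, is a midpoint homomorphism — a global point is a homomorphism by idempotency, as remarked just after Definition~\ref{DefIO} — and it too sends both endpoints to $0$. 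Since $f$ and this constant map are midpoint homomorphisms agreeing on the endpoints, the uniqueness clause forces $f$ to be constantly $0$, which is~(\ref{neg:6}). The only point requiring care is the recognition that $f$ is a homomorphism, for which the transposition (medial) law is precisely what is needed; everything else reduces to comparing homomorphisms on the two generating endpoints.
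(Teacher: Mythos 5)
Your proposal is correct and follows essentially the same route as the paper: existence and uniqueness from the universal property with swapped endpoints, $-(-x)=x$ by comparing two homomorphisms on the endpoints, $-0=0$ by direct calculation, and $x \oplus -x = 0$ by showing $x \mapsto x \oplus -x$ is a homomorphism agreeing with the constant $0$ on the endpoints. Your explicit transposition computation for the last step is just the unwinding of the paper's remark that transposition makes $\oplus$ itself a homomorphism.
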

\begin{proof}
  The existence and uniqueness of $-$ follow directly from the definition
  of interval object, because equation~(\ref{neg:3}) says
  that $-$ is an endohomorphism.  For (\ref{neg:4}), the map
  $h(x)=-(-x)$ is easily seen to satisfy the equations $h(-1)=-1$ and
  $h(1)=1$. Being a composition of homomorphisms, $h$ itself is a
  homomorphism. Hence, by the uniqueness part of the defining
  property of $[-1,1]$, $h$ must be the identity.  For (\ref{neg:5}), $-0=-(-1 \oplus
  1)=1 \oplus -1 = -1 \oplus 1 = 0$.  For (\ref{neg:6}), recalling that
  the midpoint operation is a homomorphism, we conclude that so is the
  map $h(x)= x \oplus -x$.  Since $h(-1)=h(1)=0$, the universal
  property shows that $h$ must be the constant function with
  value~$0$.
\end{proof}

The universal property of an interval object is not strong enough
to allow us to define functions of two or more variables such as
multiplication. The situation is exactly analogous to that for
natural numbers objects, discussed in  \S\ref{SubSecPrelim}.
As in that case, the solution is to define a stronger
\emph{parametrised} notion of interval object.

\begin{defn}
  A \emph{parametrised interval object} is an m-convex body~$[a,b]$
  together with two global points $a$ and $b$ such that for every
  object~$P$ 
  and every m-convex body~$A$ with maps 
  $x_a, x_b: P \to A$ there is a unique map $h:P \times [a,b] \rTo A$
  such that the equations below hold.
  \begin{align*}
  h(p,a) & =x_a(p), \\
  h(p,b) & =x_b(p), \\
  h(p,x \oplus y) & = m(h(p,x),h(p,y)).
 \end{align*}
\end{defn}
The last equation says that $h$ is a homomorphism in its second
argument.  As in the non-parametric case, we use the operator
symbol~$\oplus$ to denote the midpoint operation of a parametrised interval object.
Easily, every parametrised interval object is an interval object.
Hence if both objects exist then they are isomorphic (via
midpoint homomorphisms).
Thus, a category with binary products may or may not have an interval
object, and if it does, the interval object may or may not have the
property of being parametrised.
The result below is analogous to Proposition~\ref{PropCccNno}.
\begin{prop}
\label{PropParam}
If $\CC$ is cartesian closed then any interval object is parametrised.
\end{prop}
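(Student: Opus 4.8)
The plan is to follow exactly the strategy used for parametrised natural numbers objects (the analogue Proposition~\ref{PropCccNno}), exploiting cartesian closedness to internalise the parameter object $P$ as an exponent. Let $[a,b] = (I,\oplus,a,b)$ be an interval object, let $P$ be any object, let $(A,m)$ be any m-convex body, and let $x_a,x_b \colon P \rTo A$ be given. Since $\CC$ is cartesian closed, every object is exponentiable, so the exponential $A^P$ exists, and by Proposition~\ref{PropPowers} it carries the structure of an m-convex body with the pointwise midpoint $m^P \colon A^P \times A^P \rTo A^P$. The whole argument consists in transporting the required data back and forth across the exponential adjunction $(-)\times P \dashv (-)^P$.

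First I would transport the data $x_a,x_b$ to global points of $A^P$. Using $\CC(\One, A^P) \cong \CC(\One \times P, A) \cong \CC(P,A)$, the maps $x_a$ and $x_b$ correspond to global points $\widehat{x_a}, \widehat{x_b} \colon \One \rTo A^P$. This exhibits $(A^P, m^P, \widehat{x_a}, \widehat{x_b})$ as an m-convex body with two global points. The universal property of the interval object then supplies a unique midpoint homomorphism $\bar h \colon I \rTo A^P$ with $\bar h(a) = \widehat{x_a}$ and $\bar h(b) = \widehat{x_b}$. Uncurrying $\bar h$ (and composing with the product symmetry so as to put the arguments in the stated order) yields the desired map $h \colon P \times [a,b] \rTo A$.

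To finish I would verify the three defining equations and uniqueness, most conveniently in the language of generalised elements. Evaluating at a generalised element $p \colon Z \rTo P$, the identities $\bar h(a) = \widehat{x_a}$ and $\bar h(b) = \widehat{x_b}$ unwind, by naturality of the adjunction, to $h(p,a) = x_a(p)$ and $h(p,b) = x_b(p)$. For the homomorphism equation, because the midpoint on $A^P$ is pointwise (Proposition~\ref{PropPowers}), the assertion that $\bar h$ preserves $\oplus$, namely $\bar h(x \oplus y) = m^P(\bar h(x), \bar h(y))$, reads pointwise as $h(p, x \oplus y) = m(h(p,x), h(p,y))$, which is exactly the statement that $h$ is a homomorphism in its second argument. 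Uniqueness is obtained by running the correspondence backwards: any $h' \colon P \times [a,b] \rTo A$ satisfying the three equations curries to a map $\bar{h'} \colon I \rTo A^P$ which, by the same pointwise reading, is a midpoint homomorphism fixing $\widehat{x_a}$ and $\widehat{x_b}$; the uniqueness clause in the universal property of $[a,b]$ then forces $\bar{h'} = \bar h$, hence $h' = h$.

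The one step demanding care---and the real content of the argument---is the bookkeeping of the exponential adjunction: checking that the pointwise midpoint structure on $A^P$ furnished by Proposition~\ref{PropPowers} makes ``$\bar h$ is a midpoint homomorphism into $A^P$'' coincide, under uncurrying, with ``$h$ is a homomorphism in its second argument'', and likewise that global points of $A^P$ correspond bijectively and naturally to maps $P \rTo A$. Once this correspondence is pinned down (which amounts to routine naturality of the adjunction together with the explicit description of $m^P$), the existence and uniqueness of $h$ transfer directly from those of $\bar h$.
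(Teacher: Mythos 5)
Your proof is correct and is exactly the argument the paper has in mind: the paper omits the proof, remarking only that it is ``routine given Proposition~\ref{PropPowers}'', and your transport of the data across the exponential adjunction to the pointwise m-convex body $A^P$ is precisely that routine argument, carried out carefully.
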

The proof, which is routine given Proposition~\ref{PropPowers}, is omitted.

\begin{prop}[Multiplication]
  If $I=[-1,1]$ is a parametrised interval object
  then there is a unique map $(x,y) \mapsto x \cdot
  y:I \times I \rTo I$ such that
  \begin{gather}
    x \cdot -1 = -x, \label{mul:1} \\
    x \cdot  1  = x, \label{mul:2} \\
    x \cdot (y \oplus z)  = x \cdot y \oplus x \cdot z. \label{mul:3} \\
      x \cdot 0 = 0, \label{mul:4} \\
      x \cdot y = y \cdot x, \label{mul:6} \\
     (x \cdot y) \cdot z = x \cdot (y \cdot z). \label{mul:7}
  \end{gather}
\end{prop}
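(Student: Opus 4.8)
The plan is to obtain the existence and uniqueness of multiplication directly from the parametrised universal property, exactly as negation was obtained from the ordinary one, and then to derive the remaining laws by repeated appeals to uniqueness. Concretely, I would instantiate the defining property of the parametrised interval object $I = [-1,1]$ with parameter object $P = I$, target m-convex body $A = I$, and the two maps $x_a, x_b : I \to I$ given by $x_a(x) = -x$ (negation) and $x_b(x) = x$ (identity). This yields a unique $h : I \times I \to I$, homomorphic in its second argument, with $h(x,-1) = -x$ and $h(x,1) = x$. Writing $x \cdot y := h(x,y)$ gives precisely (\ref{mul:1}), (\ref{mul:2}) and (\ref{mul:3}), while the uniqueness clause of the universal property delivers the uniqueness of $\cdot$, since any map satisfying all the stated equations in particular satisfies the first three. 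Equation (\ref{mul:4}) is then immediate: $x \cdot 0 = x \cdot (-1 \oplus 1) = (x \cdot -1) \oplus (x \cdot 1) = (-x) \oplus x = 0$, using (\ref{mul:3}), (\ref{mul:1}), (\ref{mul:2}), and (\ref{neg:6}) together with commutativity of the midpoint.

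The remaining laws I would prove by the same ``two maps agreeing via uniqueness'' technique, but each needs a preparatory step. The first is \emph{distributivity in the first argument}, $(x \oplus x') \cdot y = (x \cdot y) \oplus (x' \cdot y)$. I would prove this by applying the parametrised universal property with parameter $P = I \times I$ and main variable $y$, checking that both sides, regarded as functions of $y$, are homomorphic and agree at the endpoints: at $y = -1$ both give $(-x) \oplus (-x')$ by (\ref{neg:3}), and at $y = 1$ both give $x \oplus x'$. The crucial point making the right-hand side homomorphic in $y$ is the \textbf{transposition} axiom of the midpoint operation, which rearranges $\bigl((x \cdot y) \oplus (x \cdot z)\bigr) \oplus \bigl((x' \cdot y) \oplus (x' \cdot z)\bigr)$ into $\bigl((x \cdot y) \oplus (x' \cdot y)\bigr) \oplus \bigl((x \cdot z) \oplus (x' \cdot z)\bigr)$. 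Separately, I would record the two boundary identities $1 \cdot x = x$ and $-1 \cdot x = -x$: by (\ref{mul:3}) the maps $x \mapsto 1 \cdot x$ and $x \mapsto -1 \cdot x$ are homomorphisms $I \to I$ sending the endpoints to $-1,1$ and to $1,-1$ respectively, so by the \emph{ordinary} universal property of $I$ (Definition~\ref{DefIO}) they coincide with the identity and with negation.

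With these in hand, commutativity (\ref{mul:6}) follows by uniqueness: the map $g(x,y) = y \cdot x$ is homomorphic in $y$ precisely by distributivity in the first argument, and its endpoint values $g(x,-1) = -1 \cdot x = -x$ and $g(x,1) = 1 \cdot x = x$ agree with those of $x \cdot y$, so $g = {\cdot}$. For associativity (\ref{mul:7}) I would first establish the auxiliary identity $x \cdot (-y) = -(x \cdot y)$, again by uniqueness in the variable $y$: both sides are homomorphisms sending $-1 \mapsto x$ and $1 \mapsto -x$, using (\ref{mul:1}), (\ref{mul:2}), (\ref{neg:2}) and (\ref{neg:4}). Then, applying the parametrised universal property with parameter $(x,y)$ and main variable $z$, both $(x \cdot y) \cdot z$ and $x \cdot (y \cdot z)$ are homomorphic in $z$ (by two uses of (\ref{mul:3})) and agree at $z = 1$ (value $x \cdot y$) and at $z = -1$ (value $-(x \cdot y)$, where for the second map the auxiliary identity is needed), whence they are equal.

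I expect the main obstacle to be commutativity. The naive attempt of swapping arguments and invoking uniqueness stalls at the endpoints, because matching them demands exactly the facts $-1 \cdot x = -x$ and $1 \cdot x = x$, and verifying that $y \mapsto y \cdot x$ is a homomorphism demands distributivity in the \emph{first} argument, which is not among the defining equations. Thus the real content lies in the first-argument distributivity lemma, and there the essential ingredient is the transposition law; everything else is bookkeeping driven by the universal property.
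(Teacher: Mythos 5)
Your proposal is correct and follows essentially the same route as the paper: existence and uniqueness come from the parametrised universal property with negation and identity as the parametric points, the key lemma is distributivity in the first argument (proved via the transposition axiom and the boundary identities $-1 \cdot y = -y$, $1 \cdot y = y$ obtained from the ordinary universal property), and commutativity then follows by uniqueness. The paper only writes out commutativity in detail, remarking that the other equations are proved by the same method, so your explicit treatments of $x \cdot 0 = 0$ and of associativity (via the auxiliary identity $x \cdot (-y) = -(x \cdot y)$) simply fill in steps the paper leaves to the reader.
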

\begin{proof}
  Existence and uniqueness follow from the universal property by
  considering the negation and identity functions as the parametric
  points.  All equations are proved using the same method.  We show
  the commutativity of multiplication, which is the most interesting.
  By the uniqueness condition of the free property of an interval
  object, it suffices to show that the defining equations are
  satisfied when the arguments are swapped, i.e.\ that
\begin{gather*}
-1 \cdot  y   =  -y, \qquad
1 \cdot  y   =  y, \qquad
(x \oplus y) \cdot z   =  (x \cdot z) \oplus (y \cdot z).
\end{gather*}
For the first equation, we have that $-1 \cdot -1 = -(-1)$ and $-1 \cdot
1 = -1$ and $-1 \cdot (z \oplus z') = (-1 \cdot z) \oplus (-1 \cdot
z')$, all by the definition of multiplication. Hence the map $z
\mapsto -1 \cdot z : I \to I$ is a homomorphism satisfying the
defining properties of ${-}: I \to I$.  Thus indeed $-1 \cdot z = -z$.
The second equation is established similarly.  To show the third
equation, which says that multiplication is a left-homomorphism, we
show that the maps $(x,y,z) \mapsto (x \oplus y) \cdot z$ and $(x,y,z)
\mapsto (x \cdot z) \oplus (y \cdot z)$ from $I \times I \times I$ to
$I$ are both right-homomorphisms agreeing on $-1$ and $1$, and thus
equal by the parametrised initiality of~$I$.  The map $(x,y,z)
\mapsto (x \oplus y) \cdot z$ is trivially a right-homomorphism,
because multiplication is.  For $(x,y,z) \mapsto (x \cdot z) \oplus (y
\cdot z)$, we have that
\begin{eqnarray*}
(x \cdot (z \oplus z')) \oplus (y \cdot (z \oplus z'))
  & = & ((x \cdot z) \oplus (x \cdot z'))
 \oplus ((y \cdot z) \oplus (y \cdot z')) \\
  & = & ((x \cdot z) \oplus (y \cdot z))
 \oplus ((x \cdot z') \oplus (y \cdot z)),
\end{eqnarray*}
where the first equation holds because $\cdot$ is a right-homomorphism
and the second by transposition.  For agreement on $-1$, we have that
\begin{gather*}
(x \cdot -1) \oplus (y \cdot -1)
 =  {-x} \oplus {-y}
 =  - (x \oplus y)
 =  (x \oplus y) \cdot -1
\end{gather*}
where the first and last equations hold by definition of
multiplication and the second by definition of negation.
And agreement on $1$ is similar.
\end{proof}
Note that the proofs of the above propositions do not make any use of
iterativity.

The above results  are just some first steps in using
the universal property of a parametrised interval object
to define and prove equalities between functions on the interval.
For another example,
given any midpoint-convex body $X$ and a parametrised interval
object $[0,1]$, we can define a binary convex combination function
$c:[0,1] \times X \times X \to X$ as the unique map which is a
homomorphism in its first argument and $m(0,x_0,x_1)=0$ and
$m(1,x_0,x_1)=x_1$. The intuition is that
$m(\lambda,x_0,x_1)=(1-\lambda) x_0 + \lambda x_1$.
This operation, in the case that $X$ is itself the interval, plays
a significant role in defining the \emph{primitive interval functions} in~\cite{escardo:simpson:interval:lics},
and in an extension of G\"odel's system T with a primitive type for the interval of real numbers~\cite{escardo:simpson:tlca2014}.




We end this subsection with a brief summary of how
Proposition~\ref{PropAdj} relates to interval objects.
We first observe that any functor $F : \CC \to \CD$ with both left and
right adjoint preserves interval objects. By
Proposition~\ref{PropAdj}(\ref{PropAdj:2}), the functor
$\Ext{F}: \Ultra(\CC) \to \Ultra(D)$ has a right adjoint.
Easily $\Ext{F}$ preserves $\One$, so, as it also
preserves coproducts,
it preserves interval objects.

Further, an immediate consequence of Proposition~\ref{PropAdj}(\ref{PropAdj:1})
is that  if $\CC$ is
a full reflective subcategory of~$\CD$ and if $\CD$ has an interval
object $I$, which is also an object of~$\CC$, then
$I$ is also an interval object in~$\CC$.

\section{Consequences of iteration and cancellation}
\label{SecItCan}

In this section, we establish a couple of technical consequences
of the iteration and cancellation properties.

The first result says that,
in an iterative midpoint object, nested applications
of the infinitary $M$ can be reduced to a single
application over a sequence of terms,
each  expressed solely in terms
of the binary $m$.
This property is conveniently formulated
using $(n+1)$-ary operations, $m_n$,  derived
from the midpoint operation by
\begin{align*}
m_0(x) \: & = \: x ,\\
m_{n}(x_0, \dots, x_{n}) \: & = \: m(x_0, m_{n-1} (x_1, \dots,
x_{n})) && \text{$n \geq 1$}.
\end{align*}
Thus $m_1$ is just $m$ itself.

As in Section~\ref{notions}, we state and prove the
proposition below using the convenient language
of set theory. However,
with
a little care,
both the statement and proof generalize to
apply to iterative midpoint objects in any
category with finite products and parametrised nno.

\begin{prop}[Flattening]
\label{PropFlatten}
In any iterative midpoint object,
\begin{align}
\label{equation:flatten}
M_i(M_j \,x_{ij}) \: = \:
M_l \: m(\, &
   m_{(l+1)}(x_{0 \, (l+1)}, 
           \dots , x_{l\, (l+1)}, x_{ll}), \,
\\ &
   m_{(l+1)}(x_{(l+1) \, 0}, 
            \dots , x_{(l+1) \, l}, x_{ll}) \: ).
\end{align}
\end{prop}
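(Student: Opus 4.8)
The plan is to recast the statement entirely in terms of the infinitary operation $M$ and then close the argument with a single appeal to canonicity. By Proposition~\ref{PropSetEqns} the given iterative midpoint object is a supermidpoint object, so throughout I may use the infinitary idempotency, commutativity, transposition and unfolding laws, together with the canonicity law of Proposition~\ref{PropSetIt}. Three elementary identities will form the toolbox. First, $m(x,y)=M(x,y,y,\dots)$, immediately from unfolding and idempotency (this is equation~(\ref{m-from-M})). Second, the finite iterate is an eventually-constant infinitary midpoint, $m_n(a_0,\dots,a_n)=M(a_0,\dots,a_{n-1},a_n,a_n,\dots)$, by an easy induction on $n$ using unfolding and idempotency. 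Third, $M$ is itself a midpoint homomorphism, $M_i\,m(a_i,b_i)=m(M_i a_i, M_i b_i)$, which follows from the first identity and transposition; consequently $M$ commutes with every fixed-arity iterate $m_n$ as well.

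With these in hand I read the right-hand side as the single iterated midpoint $M_l z_l$ of the hooks $z_l=m(A_l,B_l)$. I would then exhibit a sequence $(Y_l)_{l\ge 0}$ satisfying the coalgebraic recursion $Y_l=m(z_l,Y_{l+1})$ for every $l$ together with $Y_0=M_i M_j x_{ij}$; canonicity then forces $Y_0=M_l z_l$, which is exactly the claimed identity. The intended $Y_l$ is the flattened double midpoint of the \emph{residual} family obtained from $(x_{ij})$ by deleting the corner block $\{(i,j): i,j\le l\}\setminus\{(l,l)\}$; equivalently it is supported on the bottom-right block $(x_{(p+l+1)(q+l+1)})_{p,q}$, on the two infinite arms where exactly one index exceeds $l$, and on the single retained diagonal entry $x_{ll}$. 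Although this residual region is not a product, each of the block, the two arms and the corner is a genuine (single or double) iterated midpoint, and the toolbox identities let them be recombined, with the correct dyadic weights, into one element $Y_l$. A direct weight count shows the residual carries total weight $2^{-l}$, so after the built-in renormalisation $Y_l$ is a legitimate element; and for $l=0$ the residual is the whole family, so $Y_0$ collapses to $M_i M_j x_{ij}$ essentially by definition, giving the initial condition.

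The real work, and where I expect the main obstacle, is the recursion $Y_l=m(z_l,Y_{l+1})$. Concretely one unfolds the block component $M_p M_q x_{(p+l+1)(q+l+1)}$ once in each index: the outer unfolding exposes the new row $l+1$ and the inner unfolding the new column $l+1$, and these are precisely the arms repackaged by $B_l$ and $A_l$. One must then move the diagonal entry $x_{ll}$ into position and reassociate the resulting nest of binary midpoints so that the head splits off as $z_l=m(A_l,B_l)$ while the remainder reorganises into the residual family at level $l+1$. This reassociation is driven by the transposition law in both its binary and infinitary forms, together with the homomorphism property of $M$, while the identity $m_n(a_0,\dots,a_n)=M(a_0,\dots,a_{n-1},a_n,a_n,\dots)$ is exactly what keeps the dyadic weights under control. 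The delicate point is that the hook $z_l$ simultaneously consumes a finite prefix of a row, a finite prefix of a column and the single diagonal entry $x_{ll}$, so peeling it off and re-exposing a residual of the \emph{same} shape at level $l+1$ forces the two arms and the corner to be treated in lockstep rather than one index at a time; checking that the weights on $x_{ll}$, on the arm entries and on the block entries all land in their prescribed places is the one genuinely laborious computation, after which canonicity closes the argument.
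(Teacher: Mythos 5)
Your proposal is correct and follows essentially the same route as the paper: your residual $Y_l$ (bottom-right block, two arms, retained diagonal entry $x_{ll}$, total weight $2^{-l}$) has exactly the support and weights of the paper's explicitly defined sequence $z_l$, the initial condition $Y_0 = M_iM_j\,x_{ij}$ and the one-step recursion $Y_l = m(z_l, Y_{l+1})$ are precisely the paper's equations, and the final appeal to canonicity/iterativity is the paper's closing step. The only difference is that the paper actually carries out the ``genuinely laborious computation'' you defer --- it isolates the simultaneous one-step unfolding of the block in both indices as a displayed auxiliary identity and then verifies the recursion by a long chain of transpositions and reassociations --- so the verification you flag as the delicate point is indeed where essentially all of the work of the proof resides.
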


\begin{proof}
Define
\begin{align*}
z_l \: = \:
  m( \:
   &  m_l(M_{i \geq (l+1)}\, x_{0i}, \: \dots \: ,
                \, M_{i \geq (l+1)}\, x_{\, (l-1) \, i}, \,
         M_{i \geq l} M_{j \geq l}\, x_{ij}), \\
   &  m_l(M_{i \geq (l+1)} \, x_{i0}, \: \dots \: , \,
         M_{i \geq (l+1)} \, x_{\, i \, (l-1)}, \,
         M_{i \geq l} M_{j \geq l} \,x_{ij}) \,
   ).
\end{align*}
Thus $z_0 = m(M_iM_j x_{ij}, M_iM_j x_{ij}) = M_iM_j x_{ij}$.
We show below that, for all $l \geq 0$,
\begin{equation}
\label{EqnFlatstep}
\begin{split}
z_l \: = \: m(\,
  & m(\, m_{(l+1)}(x_{0 \, (l+1)}, 
           \dots , x_{l\, (l+1)}, x_{ll}), \,
         m_{(l+1)}(x_{(l+1) \, 0}, 
            \dots , x_{(l+1) \, l}, x_{ll})), \\
  & z_{l+1} \, ),
\end{split}
\end{equation}
whence the desired equality,
\[
z_0 \: = \:
M_l \, m(\,
   m_{(l+1)}(x_{0 \, (l+1)}, 
           \dots , x_{l\, (l+1)}, x_{ll}), \,
   m_{(l+1)}(x_{(l+1) \, 0}, 
            \dots , x_{(l+1) \, l}, x_{ll}) \: ),
\]
follows immediately from iterativity.

It remains to show that~(\ref{EqnFlatstep}) holds.
First, observe the straightforward:
\begin{equation}
\label{EqnStraight}
\begin{split}
M_{i \geq l} M_{j \geq l} \,x_{ij} \: = \:
m(\,&   m(\, m(x_{\,l\,(l+1)},\, x_{\,(l+1) \, l}), \, x_{ll} \,) ,  \\
&   m( \, m(M_{i \geq (l+2)} x_{li},\, M_{i \geq (l+2)} x_{il}), \,
      M_{i \geq (l+1)} M_{j \geq (l+ 1)} \,x_{ij} \,  ) \, ).
\end{split}
\end{equation}
Now, equation~(\ref{EqnFlatstep}) is shown as follows, using~(\ref{EqnStraight})
to obtain the second equality.
\begin{align*}
z_l \:  & = \:
  m( \, m_l(M_{i \geq (l+1)}\, x_{0i}, \: \dots \: ,
                \, M_{i \geq (l+1)}\, x_{\, (l-1) \, i}, \,
         M_{i \geq l} M_{j \geq l}\, x_{ij}), \\
& \phantom{= \: m( \, } \,
        m_l(M_{i \geq (l+1)} \, x_{i0}, \: \dots \: , \,
                 M_{i \geq (l+1)} \, x_{\, i \, (l-1)}, \,
                 M_{i \geq l} M_{j \geq l} \,x_{ij}) \, )
\\
\displaybreak[0]
\\
& = \:
  m( \, m_l( m(x_{\, 0 \,(l+1)}, \, M_{i \geq (l+2)}\, x_{0i}), \:
        \dots \: , \,
    m (x_{\,(l-1) \, (l+1)}, \, M_{i \geq (l+2)}\, x_{\, (l-1) \, i}), \\
& \phantom{ = \: m( \, m_l( } \,
    m(\, m( \, m(x_{\,l\,(l+1)},\, x_{\,(l+1) \, l}),\, x_{ll} \,) , \\
&\phantom{ = \: m(\, m_l ( \, m( } \,
   m ( \, m(M_{i \geq (l+2)} x_{li},\, M_{i \geq (l+2)} x_{il}), \,
      M_{i \geq (l+1)} M_{j \geq (l+ 1)} \,x_{ij} \, ) \, ) \, ), \\
& \phantom{ = \: m ( \, } \,
   m_l(m(x_{\,(l+1) \, 0}, \, M_{i \geq (l+2)}\, x_{i0}), \:
         \dots \: ,
           m (x_{\, (l+1)\,(l-1)}, \, M_{i \geq (l+2)}\, x_{\, i \, (l-1) }), \\
& \phantom{ = \: m( \, m_l( } \,
    m(\, m( \, m(x_{\,l\,(l+1)},\, x_{\,(l+1) \, l}), \, x_{ll} \, ) , \\
&\phantom{ = \: m(\, m_l ( \, m( } \,
     m (\, m(M_{i \geq (l+2)} x_{li},\, M_{i \geq (l+2)} x_{il}), \,
        M_{i \geq (l+1)} M_{j \geq (l+ 1)} \,x_{ij}  \, ) \, ) \, ) \, )
\\
\displaybreak[0]
\\
& = \: m(\,  m( \,m_l(\, x_{\, 0 \,(l+1)},\: \dots \: , \, x_{\, (l-1) \, (l+1)}, \,
        m(\, m(x_{\,l\,(l+1)},\, x_{\,(l+1) \, l}), \, x_{ll} \,) \, ), \\
& \phantom{= \: m(\,  m( \,} \,
   m_l( \, M_{i \geq (l+2)}\, x_{0i}, \: \dots \:
        ,\, M_{i \geq (l+2)}\, x_{\, (l-1) \, i}, \, \\
& \phantom{= \: m(\,  m( \, m_l( \,} \,
   m (\, m(M_{i \geq (l+2)} x_{li},\, M_{i \geq (l+2)} x_{il}), \,
         M_{i \geq (l+1)} M_{j \geq (l+ 1)} \,x_{ij} \, ) \, ) \, ), \\
& \phantom{= \: m(\,} \,
m( \, m_l(\, x_{\,(l+1)\, 0 },\: \dots \: , \,
          x_{\, (l+1)\, (l-1) }, \,
        m(\, m(x_{\,l\,(l+1)},\, x_{\,(l+1) \, l}),\, x_{ll} \, )\,), \\
& \phantom{= \: m(\,  m( \,} \,
   m_l( \, M_{i \geq (l+2)}\, x_{i0}, \: \dots \:
        ,\, M_{i \geq (l+2)}\, x_{\, i\, (l-1) }, \, \\
& \phantom{= \: m(\,  m( \, m_l( \,} \,
   m (\, m(M_{i \geq (l+2)} x_{li},\, M_{i \geq (l+2)} x_{il}), \,
      M_{i \geq (l+1)} M_{j \geq (l+ 1)} \,x_{ij} \, ) \, ) \, ) \, )
\\
\displaybreak[0]
\\
& = \: m(\,  m( \,m_l(\, x_{\, 0 \,(l+1)},\: \dots \: , \, x_{\, (l-1) \, (l+1)}, \,
        m(m(x_{\,l\,(l+1)},\, x_{\,(l+1) \, l}), \, x_{ll}) \, ), \\
& \phantom{= \: m(\,  m( \,} \,
m_l(\, x_{\,(l+1)\, 0 },\: \dots \: , \,
          x_{\, (l+1)\, (l-1) }, \,
        m(m(x_{\,l\,(l+1)},\, x_{\,(l+1) \, l}), \, x_{ll}) \, ) \, ), \\
& \phantom{= \: m(\,} \,
m(\, m_l( \, M_{i \geq (l+2)}\, x_{0i}, \: \dots \:
        ,\, M_{i \geq (l+2)}\, x_{\, (l-1) \, i}, \, \\
& \phantom{= \: m(\,  m( \, m_l( \,} \,
   m (\, m(M_{i \geq (l+2)} x_{li},\, M_{i \geq (l+2)} x_{il}), \,
      M_{i \geq (l+1)} M_{j \geq (l+ 1)} \,x_{ij} \, ) \, ), \\
& \phantom{= \: m(\,  m( \,} \,
   m_l( \, M_{i \geq (l+2)}\, x_{i0}, \: \dots \:
        ,\, M_{i \geq (l+2)}\, x_{\, i\, (l-1) }, \, \\
& \phantom{= \: m(\,  m( \, m_l( \,} \,
   m (\, m(M_{i \geq (l+2)} x_{li},\, M_{i \geq (l+2)} x_{il}), \,
      M_{i \geq (l+1)} M_{j \geq (l+ 1)} \,x_{ij} \, ) \, ) \, ) \, )
\\
\displaybreak[0]
\\
& = \: m(\,  m( \,m_l(\, x_{\, 0 \,(l+1)},\: \dots \: , \, x_{\, (l-1) \, (l+1)}, \,
        m(x_{\,l\,(l+1)},\, x_{ll}) \,), \\
& \phantom{= \: m(\,  m( \,} \,
m_l(\, x_{\,(l+1)\, 0 },\: \dots \: , \,
          x_{\, (l+1)\, (l-1) }, \,
        m(x_{\,(l+1) \, l}, \, x_{ll}) \, ) \, ), \\
& \phantom{= \: m(\,} \,
m(\, m_l( \, M_{i \geq (l+2)}\, x_{0i}, \: \dots \:
        ,\, M_{i \geq (l+2)}\, x_{\, (l-1) \, i}, \,
\\ & \phantom{= \: m(\,  m( \, m_l( \,} \,
   m (M_{i \geq (l+2)} x_{li},\, M_{i \geq (l+1)} M_{j \geq (l+ 1)} \,x_{ij}) \,), \\
& \phantom{= \: m(\,  m( \,} \,
   m_l( \, M_{i \geq (l+2)}\, x_{i0}, \: \dots \:
        ,\, M_{i \geq (l+2)}\, x_{\, i\, (l-1) }, \,
\\ & \phantom{= \: m(\,  m( \, m_l( \,} \,
   m (M_{i \geq (l+2)} x_{il}, \, M_{i \geq (l+1)} M_{j \geq (l+ 1)}) \, ) \, ) \, )
\\
\displaybreak[0]
\\
& = \: m(\,  m( \,m_{l+1}(\, x_{\, 0 \,(l+1)},\: \dots \: ,
        x_{\,l\,(l+1)},\, x_{ll} \,), \,
m_{l+1}(\, x_{\,(l+1)\, 0 },\: \dots \: , \,
          x_{\,(l+1) \, l}, \, x_{ll}) \, ), \\
& \phantom{= \: m(\,} \,
m(\, m_{l+1}( \, M_{i \geq (l+2)}\, x_{0i}, \: \dots \: , \,
   M_{i \geq (l+2)} x_{li},\, M_{i \geq (l+1)} M_{j \geq (l+ 1)} \,x_{ij}), \\
& \phantom{= \: m(\,  m( \,} \,
   m_{l+1}( \, M_{i \geq (l+2)}\, x_{i0}, \: \dots \: , \,
   M_{i \geq (l+2)} x_{il}, \, M_{i \geq (l+1)} M_{j \geq (l+ 1)}) \, ) \, )
\\
\displaybreak[0]
\\
& = \: m(\,  m( \,m_{l+1}(\, x_{\, 0 \,(l+1)},\: \dots \: ,
        x_{\,l\,(l+1)},\, x_{ll} \,), \,
m_{l+1}(\, x_{\,(l+1)\, 0 },\: \dots \: , \,
          x_{\,(l+1) \, l}, \, x_{ll}) \, ), \\
& \phantom{= \: m(\,} \,
z_{l+1} ).
\end{align*}
\end{proof}
\noindent
We do not know if the equation established by the proposition
is more generally true for arbitrary supermidpoint objects.

The second result in this section  illuminates the
power of the cancellation axiom for supermidpoint objects in the special case of the category of sets---cancellation is
equivalent to
an important and useful approximation property.

\begin{prop}[Approximation]
\label{PropSetApprox}
For any supermidpoint set  $X$, the following are equivalent.
\begin{enumerate}
\item \label{PropSetApprox:1} $X$ is cancellative.

\item \label{PropSetApprox:2} $X$ satisfies the following
``approximation'' property.

For all sequences $(x_i)$, $(y_i)$, $(z_i)$, $(w_i)$
of points of $X$, if,
for all $n \geq 0$,
\begin{eqnarray*}
m_n(x_0, \dots, x_{n-1}, z_n) & = & m_n(y_0, \dots, y_{n-1}, w_n)
\end{eqnarray*}
then $M(x_0,x_1,x_2, \dots) = M(y_0,y_1,y_2, \dots)$.
\end{enumerate}
\end{prop}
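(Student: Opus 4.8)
The plan is to prove the two implications separately, leaning on two facts that hold in any iterative midpoint object. The first is a \emph{finite-approximation identity}
\[
M_i\,\alpha_i \;=\; m_n(\alpha_0,\dots,\alpha_{n-1},\,M_i\,\alpha_{n+i}),
\]
proved by induction on $n$ from unfolding together with the evident ``associativity'' $m_n(\beta_0,\dots,\beta_{n-1},m(\beta_n,\gamma))=m_{n+1}(\beta_0,\dots,\beta_n,\gamma)$. The second is \emph{prefix cancellation}: $m_n(\beta_0,\dots,\beta_{n-1},s)=m_n(\beta_0,\dots,\beta_{n-1},t)$ implies $s=t$, obtained by induction on $n$, peeling one $m$ at a time and applying the cancellation law.

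For \ref{PropSetApprox:2}$\Rightarrow$\ref{PropSetApprox:1} I would assume the approximation property and, given $m(a,b)=m(a,c)$, propagate this single coincidence along the left-folded sequence $v_0=a$, $v_{k+1}=m(v_k,b)$. First I would prove by induction on $k$ the amplified coincidence $m(v_k,b)=m(v_k,c)$: the base case is the hypothesis, and in the step I rewrite $m(v_{k+1},b)=m(m(v_k,c),b)$ and $m(v_{k+1},c)=m(m(v_k,b),c)$, using idempotency and transposition, into $m(m(v_k,b),m(c,b))$ and $m(m(v_k,c),m(b,c))$ respectively, which agree by commutativity together with the induction hypothesis $m(v_k,b)=m(v_k,c)$. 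Since $v_n=m_n(b,\dots,b,a)$, a second easy induction on these amplified coincidences gives $m_n(b,\dots,b,a)=m_n(c,\dots,c,a)$ for every $n$ (both sides equal $v_n$). Feeding the constant sequences $x_i=b$, $y_i=c$ and the constant tails $z_n=w_n=a$ into the approximation property, whose hypothesis is exactly these equalities, then yields $M(b,b,\dots)=M(c,c,\dots)$, i.e.\ $b=c$ by idempotency. This direction is elementary and uses only the midpoint axioms.

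For the converse \ref{PropSetApprox:1}$\Rightarrow$\ref{PropSetApprox:2} I would assume cancellation, so that $X$ is an m-convex body and $M$ is canonical (Proposition~\ref{PropSetEqns}). Writing $C_n=m_n(x_0,\dots,x_{n-1},z_n)=m_n(y_0,\dots,y_{n-1},w_n)$ and $p_i=m(x_i,y_i)$, a short calculation (midpoints distribute over $m_n$ and the diagonal prefix collapses by idempotency) produces a third presentation $C_n=m_n(p_0,\dots,p_{n-1},m(z_n,w_n))$, while the finite-approximation identity gives $M_i\,p_i=m_n(p_0,\dots,p_{n-1},M_i\,p_{n+i})$. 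Because $M_i\,p_i=m(M_i\,x_i,M_i\,y_i)$, it suffices to prove $M_i\,x_i=M_i\,p_i$, since cancelling against $M_i\,x_i=m(M_i\,x_i,M_i\,x_i)$ then forces $M_i\,x_i=M_i\,y_i$. The tool for the last step is a \emph{uniqueness lemma}: if $\alpha=m_n(\beta_0,\dots,\beta_{n-1},\gamma_n)$ for all $n$ with $\gamma_n\in X$, then $\alpha=M_i\,\beta_i$; its proof is clean, for prefix cancellation makes each $\gamma_n$ unique, comparing levels $n$ and $n+1$ forces $\gamma_n=m(\beta_n,\gamma_{n+1})$ with $\gamma_0=\alpha$, and canonicity identifies $\alpha=\gamma_0=M_i\,\beta_i$.

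The hard part will be \emph{securing the witnesses}: to apply the uniqueness lemma one needs tails $\gamma_n\in X$ with $M_i\,x_i=m_n(p_0,\dots,p_{n-1},\gamma_n)$, yet the shared approximations $C_n$ only place one single-level approximant in each image of $m_n(p_0,\dots,p_{n-1},-)$, and the naive candidate for $\gamma_n$ is an affine---not convex---combination that need not lie in $X$ unless one already knows the conclusion. This is the genuine content of the proposition: an Archimedean-type assertion that a ``$2^{-n}$-small, boundedly witnessed'' discrepancy must vanish. I would attempt to realise the witnesses internally as canonical solutions of coalgebras built, for each fixed $n$, from the nested tails $m_k(x_n,\dots,x_{n+k-1},z_{n+k})$; should a purely internal construction prove elusive, the fallback is to invoke the embedding of cancellative supermidpoint sets into a real vector space afforded by the superconvex picture of Section~\ref{sets}, where $M$ becomes an honest infinite convex combination and the statement collapses to the estimate that $M_i\,x_i-M_i\,y_i$ is bounded by $2^{-n}$ times a fixed bounded quantity for every $n$.
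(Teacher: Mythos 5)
Your direction (\ref{PropSetApprox:2})$\Rightarrow$(\ref{PropSetApprox:1}) is correct and essentially the paper's argument: both amplify the single coincidence $m(a,b)=m(a,c)$ to $m_n(b,\dots,b,a)=m_n(c,\dots,c,a)$ for all $n$ by an induction using idempotency, commutativity and transposition, then feed constant sequences into the approximation property. Your ``uniqueness lemma'' is likewise exactly the paper's one-sided approximation property, proved the same way via repeated cancellation and canonicity.

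The converse direction, however, has a genuine gap, and it is the one you flag yourself. You reduce the problem to showing $M_i\,x_i = M_i\,p_i$ with $p_i=m(x_i,y_i)$, which requires witnesses $\gamma_n\in X$ satisfying $M_i\,x_i = m_n(p_0,\dots,p_{n-1},\gamma_n)$, and you have no construction of them. Neither fallback closes the gap: the ``canonical solutions of coalgebras'' idea is not carried out, and the vector-space embedding of cancellative superconvex sets is proved nowhere in the paper (Theorem~\ref{ThmSuperconvex} identifies iterative midpoint sets with iterative superconvex sets, not with subsets of vector spaces), so it would import a nontrivial external theorem plus a boundedness argument, and would in any case forfeit the internalizability in a topos that the paper explicitly relies on later. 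The missing idea is to target a different equation and interleave the $z$-sequence rather than the $y$-sequence. The paper proves, by induction on $n$ using unfolding, the ``skewing'' identity
\[
M_i\,m(x_i,z_i) \;=\; m\bigl(\,m_{n+1}(x_0,\dots,x_n,z_n),\; M(z_0,\dots,z_{n-1},\,m(x_{n+1},z_{n+1}),\,m(x_{n+2},z_{n+2}),\dots)\bigr).
\]
Substituting the hypothesis into the first component and folding back with the same identity on the $y,w$-side exhibits $m(M_i\,x_i,\,M_i\,z_i)$ as $m_n(m(y_0,z_0),\dots,m(y_{n-1},z_{n-1}),\,w'_n)$ for every $n$, where $w'_n$ is an honest $M$ of elements of $X$ --- precisely the witnesses your approach lacks. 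One-sided approximation then yields $m(M_i\,x_i,\,M_i\,z_i)=M_i\,m(y_i,z_i)=m(M_i\,y_i,\,M_i\,z_i)$, and a single cancellation of $M_i\,z_i$ finishes the proof.
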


\begin{proof}
To prove (\ref{PropSetApprox:2}) implies (\ref{PropSetApprox:1}),
assume $M: X^\omega \to X$ satisfies approximation.
To show that $(X,m)$ is cancellative, assume $m(x,z) = m(y,z)$.
We prove by induction that, for all $n \geq 0$,
\begin{eqnarray}
\label{EqnShowCanc}
m_n(\overbrace{x, \dots, x}^{n \;\mathit{times}}, z)  & = &
m_n(\overbrace{y, \dots ,y}^{n \;\mathit{times}}, z)
\end{eqnarray}
The $n = 0$ and $n = 1$ cases
are
immediate. For $n \geq 2$, assume the equality holds for
lower $n$.
Write $w =
m_{n-2}(x, \dots,x,z)$. By the induction hypothesis,
$w = m_{n-2}(y, \dots,y,z)$
and $m(x,w) = m_{n-1}(x, \dots,x,z) = m_{n-1}(y, \dots,y,z) =
m(y,w)$. So:
\[
\begin{array}{lcll}
m_n(x,\dots,x,z) & = & m(x,m(x,m_{n-2}(x, \dots,x,z)))  \\
& = & m(x,m(x,w))  \\
& = & m(m(x,x),m(x,w)) &  \\
& = & m(m(x,x),m(y,w)) & \text{induction hypothesis} \\
& = & m(m(x,y),m(x,w)) & 
                                                \\
& = & m(m(x,y),m(y,w)) & \text{induction hypothesis} \\
& = & m(m(y,y),m(x,w)) & 
                                                \\
& = & m(m(y,y),m(y,w)) & \text{induction hypothesis} \\
& = & m(y,m(y,w)) & \\
& = & m(y,m(y,m_{n-2}(y, \dots,y,z))) \\
& = & m_n(y,\dots,y,z).
\end{array}
\]
Thus (\ref{EqnShowCanc}) holds. Hence, by approximation,
$M(x,x,x,\dots) = M(y,y,y,\dots)$, i.e. $x=y$.
This proves cancellation.

For the converse implication, assume
$(X,m)$ is cancellative and hence iterative.
We first observe that the following
``one-sided approximation'' property holds:
if, for all $n \geq 0$, it holds 
that
$x  =  m_n(y_0, \dots, y_{n-1}, w_n)$
then $x = M(y_0,y_1,y_2, \dots)$.
Indeed, if the premise is satisfied then, by
repeated applications of cancellation,
$w_0 = m(y_0, w_1)$, $w_1 = m(y_1, w_2)$, etc. Hence
$x = M(y_0,y_1,y_2, \dots)$, by iterativity.

Next, we prove the following equality. For all $(x_i)$, $(y_i)$
and $n \geq 0$:
\begin{equation}
\label{EqnSkew}
\begin{split}
M_i(m(x_i,y_i)) \: = \:
m(\, & m_{n+1}(x_0, \dots, x_n, y_n), \\
     & M(y_0, \dots, y_{n-1}, m(x_{n+1},y_{n+1}), m(x_{n+2},y_{n+2}), \dots) \, ).
\end{split}
\end{equation}
The proof is by induction on $n$. When $n = 0$, we have:
\begin{align*}
\text{r.h.s.} \: & = \:
m(m(x_0,y_0), M(m(x_1,y_1), m(x_2,y_2), \dots))  \\
& = \:  M(m(x_0,y_0),m(x_1,y_1), m(x_2,y_2), \dots).
\end{align*}
For $n > 0$, we have:
\begin{align*}
\text{r.h.s.} \: & \:  =
m( \, m(x_0, m_{n}(x_1, \dots, x_n, y_n)),\, \\
& \phantom{\: = m(\, } \,
m(y_0, M(y_1, \dots, y_{n-1}, m(x_{n+1},y_{n+1}), m(x_{n+2},y_{n+2}), \dots)) \, ) \\
& = \:
m(\, m(x_0,y_0),\,  \\
& \phantom{\: = m(\, } \,
 m(\, m_{n}(x_1, \dots, x_n, y_n),\, \\
& \phantom{\: = m(\, m(\,  } \,
                  M(y_1, \dots, y_{n-1},
                   m(x_{n+1},y_{n+1}), m(x_{n+2},y_{n+2}), \dots)\, ) \, \\
& = \:
m(\, m(x_0,y_0),\, M(m(x_1,y_1), m(x_2,y_2), \dots) \,)
  & & \text{ind.\ hyp.} \\
& = \:  M(m(x_0,y_0),m(x_1,y_1), m(x_2,y_2), \dots).
\end{align*}

Finally, to prove approximation, suppose that,
for all $n \geq 0$,
\begin{eqnarray}
\label{EqnAppAss}
m_n(x_0, \dots, x_{n-1}, z_n)  & = &
 m_n(y_0, \dots, y_{n-1}, w_n).
\end{eqnarray}
We show below that
\begin{equation}
\label{EqnApproxZ}
m(M_i\, x_i,  \, M_i\, z_i ) \: = \:
m(M_i \, y_i, \, M_i \, z_i),
\end{equation}
from which the desired equation,
$M_i x_i = M_i y_i$, follows
by cancellation.

To verify (\ref{EqnApproxZ}), we have:
\begin{align*}
\text{l.h.s.}  \:
&  = \:
  M_i(m(x_0,z_0), m(x_1,z_1), m(x_2,z_2), \dots)  \\
& = \:
m(\, m_{n+1}(x_0, \dots, x_n, z_n),\, \\
& \phantom{= \: m(\, } \,
M(z_0, \dots, z_{n-1}, m(x_{n+1},z_{n+1}), m(x_{n+2},z_{n+2}), \dots)\,)
& &  \text{by (\ref{EqnSkew})} \\
& = \:
m(\, m_{n+1}(y_0, \dots, y_n, w_n),\, \\
& \phantom{= \: m(\, } \,
M(z_0, \dots, z_{n-1}, m(x_{n+1},z_{n+1}), m(x_{n+2},z_{n+2}), \dots) \,)
& & \text{by (\ref{EqnAppAss})} \\
& =  \:
M( \, m(y_0,z_0), \dots, m(y_{n-1},z_{n-1}), m(y_n, w_n), \\
& \phantom{= \: M(\, } \,
m(x_{n+1},z_{n+1}), m(x_{n+2},z_{n+2}), \dots)
& & \text{by (\ref{EqnSkew})} \\
& = \: m_n(m(y_0,z_0), \dots, m(y_{n-1},z_{n-1}), w'_n),
\end{align*}
where $w'_n = M(m(y_n, w_n),
m(x_{n+1},z_{n+1}), m(x_{n+2},z_{n+2}), \dots)$. But the above is
true for any $n \geq 0$. Hence, by one-sided approximation,
we can continue:
\begin{align*}
\text{l.h.s.} \: &
= \:  M_i(m(y_0,z_0), m(y_1,z_1), m(y_2,z_2), \dots)  \\
& =  \: m(M(y_0,y_1,y_2, \dots), M_i(z_0,z_1,z_2, \dots)),
\end{align*}
as required.
\end{proof}
\noindent
In contrast to all previous results, although
the statement of the above proposition can be formulated in any
category with finite products and parametrised
natural-numbers object, its proof does not
go through at this level of generality. In particular,
the ``repeated applications of cancellation'', used to establish
one-sided approximation, cannot be performed in
an arbitrary such category.
Nevertheless, the proof above
can be directly formalized within the internal logic of a
locally-cartesian-closed category,
and hence in any elementary topos. Indeed
Proposition~\ref{PropSetApprox} will be fundamental to the
development in Section~\ref{topos}, where the fact that it holds
in any topos is crucial.

\section{Iterative midpoints in the category of sets}
\label{SecIntInSet} \label{sets}

In this section we study iterative midpoint objects in the category
of sets, establishing a connection with so-called superconvexity,
and proving that the standard interval of real numbers is indeed
the interval object in the category.

\subsection{Superconvex sets}

\newcommand{\superind}{\operatorname{w}}
\newcommand{\conab}{$\operatorname{\mathsf{K}}$}
\newcommand{\interp}{\operatorname{val}}

Superconvexity, in the sense of Rod\'e~\cite{rode:superconvex} (see
also K\"onig \cite{koenig:superconvex}), is an algebraically formulated notion
of closure under countable convex combinations.

A \emph{superconvex weight function}, over a set $X$,
is a pair $(I, \lambda)$ where $I$ is an arbitrary
index set, and $\lambda \colon
I \to [0,1]$ satisfies $\sum_{i \in I} \lambda_i = 1$ (which implies
that its  \emph{support} $\{i \in I \mid \lambda_i \neq 0\}$
is countable).
A \emph{convex weight function} is a superconvex weight function
with finite support. A \emph{dyadic convex weight function}
is a convex weight function in which each $\lambda_i$ is a
dyadic rational. We shall simultaneously develop
notions of superconvex set, convex set and dyadic convex set.
To do this conveniently, we use {\conab} to stand for
any one of the terms: superconvex, convex or dyadic convex.

A \emph{{\conab} set} is given by a set $X$ together
with a \emph{{\conab} combination} operation
$\supersym^X$ that maps each triple
$(I, \lambda, x)$, where $(I, \lambda)$ is a
{\conab} weight function and $x \colon I \to X$ is a function,
to an element $\supersym^X(I,\lambda, x) \in X$,
written $\lsupercom{X}{i}{I}{\lambda_i}{x_i}$.
The operation $\supersym^X$ must satisfy:
\begin{gather}
\label{EqnSuperA}
\dlsupercom{X}{i}{\One}{1}{x} = x, \\
\label{EqnSuperB}
\dlsupercom{X}{i}{I}{\lambda_i}
         {\left(\dlsupercom{X}{j}{J}{\mu_{ij}}{x_j}\right)} =
       \dlsupercom{X}{j}{J}{\left(\sum_{i \in I} \lambda_i \mu_{ij}\right)}{x_j},
\end{gather}
It is easily seen that,
for each of the three cases for {\conab}, in the right-hand-side of
equation (\ref{EqnSuperB}), $\supersym^X$  is indeed applied to
a {\conab} weight function.
In the case of convex combinations, it will
be convenient to write
$\lambda_1 \cdot x_1 + \dots + \lambda_k \cdot x_k$
for the value $\supersym^X(\{1, \dots, k\}, \lambda, x)$.

Lemma~\ref{LemKoenig} below shows that the equations of
{\conab} convexity,~(\ref{EqnSuperA}) and~(\ref{EqnSuperB}), are
complete
in a natural sense. In its formulation, the  \emph{indicator}
of a {\conab} combination $(I, \lambda , x)$ is the
function $\superind(I, \lambda, x): X \to [0,1]$
defined by
\[
\superind(I, \lambda, x)(y) \: = \:
\sum_{i \in I} \, \lambda_i  \delta_{yx_i}
\]
where $\delta_{yx_i}$ is the Kronecker delta.
\begin{lemma}(Completeness)
\label{LemKoenig}
For any two {\conab} combinations $(I, \lambda, x)$ and
$(J, \mu, y)$ over a {\conab} set $X$, if
$\superind(I, \lambda, x) = \superind(J, \mu, y)$ then
$\supercom{i}{I}{\lambda_i}{x_i} = \supercom{j}{J}{\mu_j}{y_j}$.
\end{lemma}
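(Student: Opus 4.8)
The plan is to prove the lemma by showing that the value of a {\conab} combination depends only on its indicator, which I would do by reducing every combination to a canonical representative determined by the indicator alone. Given a combination $(I,\lambda,x)$, write $w=\superind(I,\lambda,x)$ for its indicator, and note that $w$ is itself a {\conab} weight function over the index set $X$: it sums to $1$, and its support is contained in the image $x(I)$, hence is countable in the superconvex case and finite in the convex and dyadic cases. The central claim I would establish is a \emph{normal form},
\[
\supercom{i}{I}{\lambda_i}{x_i} \;=\; \supercom{y}{X}{w(y)}{y},
\]
where the right-hand combination is indexed by $X$ with the identity indexing function and the indicator as weight. Granting this, the lemma is immediate: if $(I,\lambda,x)$ and $(J,\mu,y)$ share the indicator $w$, then both reduce to the single combination $\supercom{y}{X}{w(y)}{y}$ and are therefore equal.

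To prove the normal form I would apply the associativity axiom~(\ref{EqnSuperB}) with the grouping that collects indices according to the value they name. Take the outer combination to range over $X$ with weights $w(y)$, and for each $y$ with $w(y)\neq 0$ take the inner combination over $I$ with the rescaled weights $\mu_{y,i}=\lambda_i/w(y)$ when $x_i=y$ and $0$ otherwise (choosing $\mu_{y,\cdot}$ arbitrarily when $w(y)=0$, since the outer weight vanishes there). One checks that each $\mu_{y,\cdot}$ is a {\conab} weight function and that $\sum_y w(y)\,\mu_{y,i}=\lambda_i$, so the right-hand side of~(\ref{EqnSuperB}) is exactly the original combination. The left-hand side is then the combination over $X$ with weights $w(y)$ whose $y$-th entry is $\supercom{i}{I}{\mu_{y,i}}{x_i}$, a combination concentrated on the fibre $x^{-1}(y)$. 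It remains to collapse each such inner combination to $y$. The division-free engine here is the \emph{genuinely constant law} $\supercom{i}{I}{\nu_i}{y}=y$, which follows from~(\ref{EqnSuperA}) and~(\ref{EqnSuperB}): expand the repeated element $y$ as the one-point combination on $y$ using~(\ref{EqnSuperA}), flatten via~(\ref{EqnSuperB}), and apply~(\ref{EqnSuperA}) again.

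The step I expect to be the main obstacle is reducing the fibre combinations, which are only \emph{supported} on copies of $y$, to genuinely constant ones, since axiom~(\ref{EqnSuperB}) keeps the inner index set fixed across all outer indices and so forbids literally cutting each inner combination down to $x^{-1}(y)$: zero-weight entries naming other generators survive. One therefore needs the auxiliary structural facts that the value is invariant under relabelling of indices and under insertion or deletion of zero-weight terms, equivalently the projection law $\supercom{i}{I}{\delta_{i_0 i}}{x_i}=x_{i_0}$; these must be set up from~(\ref{EqnSuperA}) and~(\ref{EqnSuperB}) before the normal-form computation can be completed. A second, case-specific subtlety is that in the dyadic setting the rescaled weights $\lambda_i/w(y)$ need not be dyadic, so the single-step regrouping above is illegal there; in that case I would instead merge equal-valued indices incrementally using the genuinely constant law, which keeps every intermediate weight dyadic. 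Once this structural toolkit and the constant law are in hand, the remainder is the bookkeeping indicated above.
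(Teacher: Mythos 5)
The paper gives no proof of this lemma at all --- it simply cites K\"onig's Lemma~1.1 --- so there is no internal argument to compare against; I am assessing your proposal on its own terms. Your overall strategy (reduce every combination to the canonical form $\supercom{y}{X}{w(y)}{y}$ indexed by $X$ itself) is the right idea, and your derivation of the genuinely constant law $\supercom{i}{I}{\nu_i}{y}=y$ from~(\ref{EqnSuperA}) and~(\ref{EqnSuperB}) is correct. But the step you yourself flag as ``the main obstacle'' is a genuine gap, and moreover one that cannot be bridged in the way you propose: the projection law $\supercom{i}{I}{\delta_{i_0 i}}{x_i}=x_{i_0}$ (equivalently, invariance of the value under insertion or deletion of zero-weight terms) is \emph{not} derivable from~(\ref{EqnSuperA}) and~(\ref{EqnSuperB}) alone. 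Countermodel: take $X=\{0,1\}$ and let every combination evaluate to $\sup_{i\in I}x_i$, ignoring the weights. The one-point law~(\ref{EqnSuperA}) holds trivially, and in~(\ref{EqnSuperB}) every inner combination has the same value $\sup_{j\in J}x_j$, so both sides agree. Yet the dyadic combinations $(\{1,2\},(1,0),(0,1))$ and $(\One,1,0)$ have identical indicators while evaluating to $1$ and $0$ respectively. So no manipulation of the two displayed equations yields the projection law, your normal-form computation cannot be completed from them, and indeed the lemma read literally against only those two equations is false; the projection law has to be taken as an axiom (as it is in K\"onig's own formulation, where it replaces the one-point law) or otherwise supplied.

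It is worth noting that once the projection law is available your argument not only closes but collapses to a single application of~(\ref{EqnSuperB}), and the dyadic-rescaling difficulty you worry about evaporates. Take outer data $(I,\lambda)$, inner index set $X$ with the identity family, and inner weights $\mu_{iy}=\delta_{x_i y}$. Each $\mu_{i\cdot}$ is a weight function with one-point support, the left-hand side reduces by projection to $\supercom{i}{I}{\lambda_i}{x_i}$, and the right-hand side is $\supercom{y}{X}{w(y)}{y}$ with $w=\superind(I,\lambda,x)$, since $\sum_i\lambda_i\delta_{x_i y}=w(y)$. This yields your normal form directly, with no division by $w(y)$, no fibre-collapsing bookkeeping, and inner weights valued in $\{0,1\}$, so the same computation is legal verbatim in the convex and dyadic cases.
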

For a proof see~\cite[Lemma~1.1]{koenig:superconvex}.
The reader is also referred to~\cite{koenig:superconvex} for a discussion
of examples and applications of superconvex sets, although
we shall also review a range of examples
in~{\S}\ref{SecApplySuperconvex}.

A function $f \colon X \to Y$, between superconvex sets, is
said to be \emph{superaffine} if it preserves superconvex
combinations, i.e.
\[
f\left(\dlsupercom{X}{i}{I}{\lambda_i}{x_i}\right) \: = \:
\dlsupercom{Y}{i}{I}{\lambda_i}{f(x_i)}.
\]
Similarly, a function between (dyadic) convex sets is said to be
\emph{(dyadic) affine} if it preserves (dyadic) convex combinations.

A \emph{proper binary {\conab} combination} is one of the
form $\lambda \cdot x + \mu \cdot y$ where the weights $\lambda,\mu$ are taken from
the open interval $(0,1)$. A {\conab} set is said to be \emph{cancellative} if,
whenever
$\lambda \cdot x + \mu \cdot z = \lambda \cdot y + \mu \cdot z$ for a proper
binary combination, then
$x = y$.
\begin{lemma}
\label{LemSuperCan}
A {\conab} set $X$ is cancellative if and only if,
for all $x,y,z \in X$, it holds that \linebreak[3]
$\:\frac{1}{2} \cdot x + \frac{1}{2} \cdot z = \frac{1}{2} \cdot y + \frac{1}{2} \cdot z$
implies $x = y$
\end{lemma}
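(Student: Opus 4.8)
The plan is as follows. One direction is immediate: if $X$ is cancellative then the displayed implication is just the instance $\lambda = \mu = \tfrac12$ of the cancellation law, so there is nothing to prove. The whole content lies in the converse, and I would establish it by reducing cancellation at an \emph{arbitrary} proper binary weight to cancellation at the weight $\tfrac12$. So assume $X$ satisfies the half-cancellation hypothesis and suppose $\lambda \cdot x + \mu \cdot z = \lambda \cdot y + \mu \cdot z$ with $\lambda,\mu \in (0,1)$ and $\lambda + \mu = 1$; the goal is $x = y$.

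There are two basic moves, both instances of the associativity equation~(\ref{EqnSuperB}). The first move \emph{lowers} the weight carried by the cancelled variable: combining each side of the given identity with $z$ in the ratio $s:(1-s)$ turns it into $s\lambda \cdot x + (1-s\lambda)\cdot z = s\lambda \cdot y + (1-s\lambda)\cdot z$. Taking $s = \tfrac{1}{2\lambda}$, which lies in $(0,1]$ precisely when $\lambda \geq \tfrac12$, this yields exactly $\tfrac12 \cdot x + \tfrac12 \cdot z = \tfrac12 \cdot y + \tfrac12 \cdot z$, whence $x = y$ by hypothesis. The second move \emph{raises} the weight: when $\lambda < \tfrac12$ one rewrites $\lambda \cdot x + \mu \cdot z = \tfrac12\cdot\bigl(2\lambda\cdot x + (1-2\lambda)\cdot z\bigr) + \tfrac12\cdot z$, and similarly with $y$ in place of $x$. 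The two right-hand sides share the summand $\tfrac12 \cdot z$, so applying the half-cancellation hypothesis strips it off and delivers $2\lambda\cdot x + (1-2\lambda)\cdot z = 2\lambda\cdot y + (1-2\lambda)\cdot z$, an identity of the same shape but with the weight doubled.

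Combining the two moves completes the argument. Starting from any $\lambda \in (0,1)$, apply the raising move finitely many times until the weight $2^{k}\lambda$ first enters $[\tfrac12,1)$; since $\lambda > 0$ this occurs after $k = \lceil \log_2(1/(2\lambda))\rceil$ steps, and for every $j < k$ one has $2^{j}\lambda < \tfrac12$, so the raising move genuinely applies at each stage while all intermediate weights remain in $(0,1)$. A single application of the lowering move then reduces the resulting weight to $\tfrac12$ and cancellation finishes. Each raising step is where the half-cancellation hypothesis is actually used, so the reduction is driven entirely by the assumed special case.

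The step I expect to be delicate is the legitimacy of the weights produced by these moves, and this is exactly where the three readings of {\conab} diverge. In the superconvex and convex cases every real number in $(0,1)$ is an admissible weight, so the lowering move with $s = \tfrac{1}{2\lambda}$ is unproblematic and the argument runs verbatim. The purely dyadic case is the genuine obstacle: there $\tfrac{1}{2\lambda}$ need not be dyadic, so reaching the weight $\tfrac12$ from a weight above $\tfrac12$ by a single rescaling is not available, and one cannot in general manufacture a common summand to which half-cancellation can be applied. I would therefore develop the reduction first for superconvex and convex sets as above, isolating the raising move (which uses only dyadic weights together with the half-cancellation law) as the robust core, and treat the dyadic reading separately with the extra care this mismatch demands.
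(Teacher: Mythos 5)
The paper offers no proof of this lemma at all --- it simply cites \S 3 of K\"onig's paper --- so your proposal is being measured against that reference rather than against an in-text argument. For the superconvex and convex readings your proof is correct and is essentially the standard one: the forward direction is a special case of cancellativity, and in the converse both of your moves are legitimate instances of equation~(\ref{EqnSuperB}) with real weights; the raising move strictly doubles $\lambda$ while preserving the shape of the identity, so after finitely many steps the weight lands in $[\frac{1}{2},1)$, at which point the single lowering step with $s = \frac{1}{2\lambda} \in (0,1]$ reduces everything to the assumed half-weight case. Nothing more is needed for those two readings.

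The dyadic reading, which you flag as delicate and leave open, is a genuine gap in the proposal as written --- but it is not one you could have closed, because the dyadic instance of the lemma is in fact false. Take $X = \mathbb{Z}/3$ with $m(a,b) = -(a+b)$, so that the midpoint of two distinct points is the third point. This satisfies idempotency, commutativity and transposition (the last because $m(m(a,b),m(c,d)) = (a+b)+(c+d)$ is symmetric in all four arguments), and it satisfies half-cancellation, since $-(a+c) = -(b+c)$ forces $a = b$. Its unique dyadic convex structure (Proposition~\ref{PropDyadic}) interprets $\frac{3}{4}\cdot a + \frac{1}{4}\cdot c$ as $m(m(a,a),m(a,c)) = m(a,m(a,c)) = -(a-(a+c)) = c$, independently of $a$; hence $\frac{3}{4}\cdot 1 + \frac{1}{4}\cdot 0 = 0 = \frac{3}{4}\cdot 2 + \frac{1}{4}\cdot 0$ while $1 \neq 2$, so cancellation fails at the proper dyadic weight $\frac{3}{4}$. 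Your diagnosis that the obstruction is precisely the unavailability of the rescaling $s = \frac{1}{2\lambda}$ over the dyadic rationals was therefore exactly right, and the correct resolution is not ``extra care'' but restricting the statement to the convex and superconvex readings --- which is consistent with how the paper actually uses the lemma, namely only for superconvex sets in Theorem~\ref{ThmSuperconvex} and Proposition~\ref{prop:one-extension}.
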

For a proof, see~{\S}3 of~\cite{koenig:superconvex}.

A superconvex set is said to be \emph{iterative} if,
for every sequence of proper binary combinations
satisfying $x_i = \lambda_i \cdot y_i + \mu_i \cdot x_{i+1}$, for which
the weights $\mu_i$ satisfy the condition $\lim_{n \to \infty} \left(\prod_{i < n} \mu_i \right)= 0$,
the equality
\begin{equation}
\label{equation:iterative-superconvex}
x_0 \: = \:
\supercom{i}{\SetNat}{\lambda_i \! \left(\prod_{j < i} \mu_j\right)}{y_i}
\end{equation}
holds. In this definition, the limit condition is necessary for the right-hand side~\eqref{equation:iterative-superconvex} to be a superconvex combination.
Any cancellative
superconvex set is automatically iterative, but not vice versa; a fact which follows from
the analogous statement for midpoint objects (Proposition~\ref{PropSetEqns}(\ref{eqn:iter})) together with the lemma below.
\begin{lemma}
\label{LemSuperIt}
A superconvex set is iterative
if and only if, for all sequences $(x_i)_i$ and $(y_i)_i$ satisfying
\begin{equation}
  \label{LemSuperIt:assumption}
  x_i \: = \: \frac{1}{2} \cdot y_i + \frac{1}{2} \cdot x_{i+1},
\end{equation}
  it holds
that $x_0 = \supercom{i}{\SetNat}{2^{-(i+1)}}{ y_i}$.
\end{lemma}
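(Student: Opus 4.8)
The plan is to treat the two directions asymmetrically, since the forward direction is a direct specialisation while the converse carries all the content.

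For the ``only if'' direction, suppose $X$ is iterative, and take sequences $(x_i),(y_i)$ with $x_i = \frac12 \cdot y_i + \frac12 \cdot x_{i+1}$. I would simply read this as the instance $\lambda_i = \mu_i = \frac12$ of the defining condition for iterativity. The limit condition holds because $\prod_{i<n}\mu_i = 2^{-n}\to 0$, and the value asserted in~(\ref{equation:iterative-superconvex}) specialises to $\supercom{i}{\SetNat}{\lambda_i(\prod_{j<i}\mu_j)}{y_i} = \supercom{i}{\SetNat}{2^{-(i+1)}}{y_i}$. So the conclusion is immediate.

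For the ``if'' direction I would first fix notation: write $t_n = \prod_{j<n}\mu_j$ (so $t_0=1$, $t_{n+1}=\mu_n t_n$, and $t_n\downarrow 0$ by hypothesis) and $w_i = \lambda_i t_i$, so that the target is $x_0 = \supercom{i}{\SetNat}{w_i}{y_i}=:S$. A first observation, obtained purely from reassociation~(\ref{EqnSuperB}) (using that $\prod_{i\le k<n}\mu_k = t_n/t_i\to 0$, so each is a legitimate superconvex combination), is that the tail combinations $S_i := \supercom{j \geq i}{}{\lambda_j(\prod_{i\le k<j}\mu_k)}{y_j}$ satisfy the \emph{same} recurrence $S_i = \lambda_i\cdot y_i + \mu_i\cdot S_{i+1}$, with $S_0 = S$. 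Thus what must be shown is precisely that the given solution $(x_i)$ and the canonical solution $(S_i)$ share an initial value.

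The core idea is to refine the given proper-binary recurrence into an honest halving recurrence, to which the hypothesis then applies. Each step $x_i = \lambda_i\cdot y_i + \mu_i\cdot x_{i+1}$ can be rewritten, using only reassociation, as a \emph{finite} block of $\frac12$-steps: while the weight $\mu$ on the tail point is below $\frac12$, repeatedly apply $\lambda\cdot y_i + \mu\cdot x_{i+1} = \frac12\cdot y_i + \frac12\cdot\big((1-2\mu)\cdot y_i + 2\mu\cdot x_{i+1}\big)$ (each application emits a copy of $y_i$ and doubles the tail weight), and once the tail weight lies in $[\frac12,1)$ hand over via $\lambda\cdot y_i + \mu\cdot x_{i+1} = \frac12\cdot q_i + \frac12\cdot x_{i+1}$ with $q_i = 2\lambda\cdot y_i + (2\mu-1)\cdot x_{i+1}$. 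Since $\mu_i>0$ each block is finite, so concatenating the blocks yields a single halving sequence $u_k = \frac12\cdot p_k + \frac12\cdot u_{k+1}$ with $u_0 = x_0$ passing through $x_0,x_1,x_2,\dots$ at the block boundaries; the cumulative number of steps tends to infinity, so by the hypothesis $x_0 = \supercom{k}{\SetNat}{2^{-(k+1)}}{p_k}$. Evaluating this via reassociation~(\ref{EqnSuperB}) and the completeness Lemma~\ref{LemKoenig}, the value depends only on the total dyadic mass accruing to each emitted point, and the geometric weights within each block telescope so that the $y_i$-mass comes out as $w_i$.

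I expect the main obstacle to be exactly this last identification, rather than the routine block bookkeeping: the blocks emit not only the generators $y_i$ but also residual points $q_i$ that still refer to the tail points $x_{i+1}$, and one must show these residual contributions wash out. This is where the hypothesis genuinely bites. Concretely, I would run the identical construction on the canonical solution $(S_i)$ — whose blocks differ from those of $(x_i)$ only by replacing $x_{i+1}$ with $S_{i+1}$ inside the residuals $q_i$ — and exploit that the residual weights along block boundaries are comparable to $t_n = \prod_{j<n}\mu_j\to 0$, so that the one-sided limiting argument underlying iterativity forces $x_0 = S_0$. Controlling this vanishing tail cleanly, so that the completeness computation closes rather than regressing into an infinite re-expansion of the $x_{i+1}$, is the crux of the proof.
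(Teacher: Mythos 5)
Your ``only if'' direction is exactly right and matches the paper. The ``if'' direction, however, has a genuine gap, and it sits precisely at the point you yourself flag as the crux. Your per-step block decomposition turns each $x_i = \lambda_i\cdot y_i + \mu_i\cdot x_{i+1}$ into finitely many halving steps, but the last emitted point of each block is the residual $q_i = 2(1-2^{d_i}\mu_i)\cdot y_i + (2^{d_i+1}\mu_i-1)\cdot x_{i+1}$, which in general carries positive weight on the unexpanded tail point $x_{i+1}$. After applying the hypothesis and flattening with~(\ref{EqnSuperB}), you therefore obtain $x_0$ as a superconvex combination of the $y_i$ \emph{and} the $x_{i+1}$, with less than the target weight $\lambda_i\prod_{j<i}\mu_j$ on each $y_i$. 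Completeness (Lemma~\ref{LemKoenig}) only identifies combinations with \emph{equal} indicators, so it cannot close this gap; and your proposed remedy --- running the construction on the canonical solution $(S_i)$ and letting the residuals ``wash out'' because their weights are comparable to $t_n$ --- has no foundation in this purely algebraic setting. There is no topology or limit structure on $X$, and no axiom lets you conclude that two superconvex combinations with different (however ``small'') weights on different points are equal. Trying instead to eliminate the residuals by expanding $x_{i+1}$ inside $q_i$ regresses infinitely, as you note.

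The fix, which is what the paper does, is to choose the block boundaries by the \emph{cumulative} tail weight rather than one original step at a time: let $n_j$ be least with $\prod_{i\le n_j}\mu_i\le 2^{-(j+1)}$. Then in the expansion of the current remainder, the entire tail term $x_{n_j+1}$ has relative weight at most $\tfrac12$, so it can be pushed wholesale into the next remainder $z_{j+1}$, and the emitted point $w_j$ is a convex combination of $y_{n_{j-1}},\dots,y_{n_j}$ \emph{only} (one splits the weight of $y_{n_j}$ between $w_j$ and $z_{j+1}$ so that both sides total $\tfrac12$). The hypothesis then gives $x_0$ as the halving combination of the $w_j$, which flattens by~(\ref{EqnSuperB}) and Lemma~\ref{LemKoenig} to exactly the combination assigning weight $\lambda_i\prod_{j<i}\mu_j$ to $y_i$, with no extraneous points. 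Your framework --- halving refinement, reassociation, completeness --- is the right one; what is missing is this choice of grouping, which is the single idea that makes the residuals disappear rather than needing to be washed out.
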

\begin{proof}
For the non-trivial right-to-left implication, assume the right-hand side of the if-and-only-if holds and
consider any sequence of proper binary convex combinations
satisfying
\begin{equation}
\label{EqnEasySoFar}
x_i \: = \: \lambda_i \, \cdot \, y_i  \, +  \, \mu_i \, \cdot \, x_{i+1},
\end{equation}
for which the condition $\lim_{n \to \infty} \left(\prod_{i < n} \mu_i \right)= 0$ applies.
We need to prove equation~\eqref{equation:iterative-superconvex}.

Let $n_0 \geq 0$, be the smallest number for which
$\prod _{i \leq n_0} \mu_i \leq \frac{1}{2}$, which exists by the limit assumption.
Using~\eqref{EqnEasySoFar} for $i = 0,\dots, n_0$ in combination, expand $x_0$ as a convex combination involving $y_0, \dots, y_{n_0}$ and $x_{n_0+1}$:
\begin{align}
\nonumber
x_0 ~ & = ~ \sum_{j \leq n_0} \left(\lambda_j \prod_{i < j} \mu_i   \right)  \cdot \, y_j + \left(\prod_{i \leq n_0} \mu_i\right) \cdot x_{n_0 +1} \\
\nonumber
& = ~ \sum_{j < n_0} \left(\lambda_j \prod_{i < j} \mu_i   \right)  \cdot \, y_j  ~ +~  (\gamma_1 + \delta_1)  \cdot \, y_{n_0} ~ +~
\left(\prod_{i \leq n_0} \mu_i \right) \cdot x_{n_0 +1} \, ,
\\
\intertext{where $\delta_1 :=  \frac{1}{2} - \prod_{i \leq n_0} \mu_i$ and $\gamma_1 := \left(\lambda_j \prod_{i < n_0} \mu_i\right) - \delta_1$,}
\label{equation:XZWO}
& = ~  \frac{1}{2} \cdot w_0 +  \frac{1}{2} \cdot z_1 \, ,
\end{align}
where
\begin{align*}
w_0 ~ & := ~ \sum_{j < n_0} \left(2 \lambda_j \prod_{i < j} \mu_i\right) \cdot \, y_j ~ + ~  2\gamma_1 \cdot y_{n_0} \\
\intertext{and}
z_1 ~ & := ~ 2\delta_1 \cdot y_{n_0} ~ + ~  \left(2\prod_{i \leq n_0} \mu_i \right)\cdot x_{n_0 +1}\, .
\end{align*}

Continuing in the same vein, for each $i \geq 1$ in turn, define $n_i$ to be the least number such that
$\prod _{i \leq n_1} \mu_i \leq 2^{-i-1}$, and expand $z_i$ as
\[
z_i ~ = ~ \frac{1}{2} \cdot w_i  \; + \;  \frac{1}{2} \cdot z_{i+1} \enspace .
\]
where each $w_i$ is given by  a convex combination of $y_{n_{i-1}}, \dots, y_{n_i}$.
With the inclusion of \eqref{equation:XZWO},
we obtain a system of equations that, by the main assumption, has a unique solution
\begin{equation}
\label{equation:superinterative:final}
x_0 ~ =  ~
\supercom{i}{\SetNat}{2^{-(i+1)}}{w_i}
\end{equation}
By the derivation of
each $w_i$ as a convex combination of $y_{n_{i-1}}, \dots, y_{n_i}$, it holds that
the weight of $y_i$ in the superconvex combination on the right-hand side of~\eqref{equation:superinterative:final} is
$\lambda_i \prod_{j < i} \mu_i$. So by the superconvex instance of completeness (Lemma~\ref{LemKoenig}),
equation~\eqref{equation:iterative-superconvex} indeed holds.
\end{proof}

One consequence of the lemma above is an analogue of
Proposition~\ref{PropSetEqns}(\ref{eqn:iter}) for superconvex sets.
\begin{prop}
\label{prop:css-iterative}
Every cancellative superconvex set is iterative.
\end{prop}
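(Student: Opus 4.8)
The plan is to recognise a cancellative superconvex set as a cancellative supermidpoint object, apply Proposition~\ref{PropSetEqns}(\ref{eqn:iter}) to deduce it is an m-convex body, and then read off iterativity through Lemma~\ref{LemSuperIt}. The key observation is that Lemma~\ref{LemSuperIt} reduces superconvex iterativity to a single dyadic statement --- precisely the canonicity axiom for an associated binary/infinitary midpoint pair --- so that we never need to handle arbitrary proper binary combinations directly.

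Concretely, let $X$ be a cancellative superconvex set. Define an infinitary operation $M \colon X^\omega \rTo X$ by $M(x_i) = \supercom{i}{\SetNat}{2^{-(i+1)}}{x_i}$ and a binary operation $m(x,y) = \frac{1}{2} \cdot x + \frac{1}{2} \cdot y$. First I would check that $(X,M)$ is a supermidpoint object in the sense of Definition~\ref{def:supermidpoint}, verifying idempotency, commutativity, transposition and unfolding. Each of these follows mechanically from the superconvex axioms~(\ref{EqnSuperA}) and~(\ref{EqnSuperB}) together with the completeness Lemma~\ref{LemKoenig}: idempotency and commutativity are immediate from a weight computation, transposition for a double sequence $x_{ij}$ reduces by~(\ref{EqnSuperB}) to a single combination carrying weight $2^{-(i+1)}2^{-(j+1)}$ in either order, and unfolding follows by flattening the repeated argument $M_i\,x_{i+1}$ in all slots $\geq 1$ via~(\ref{EqnSuperB}) and checking $\sum_{i \geq 1} 2^{-(i+1)} = \tfrac{1}{2}$. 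The associated binary operation of~(\ref{m-from-M}) is exactly $M(x,y,y,\dots) = m(x,y)$, and by Lemma~\ref{LemSuperCan} cancellativity of the superconvex set is precisely cancellativity of $m$. Hence $(X,M)$ is a cancellative supermidpoint object.

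Now Proposition~\ref{PropSetEqns}(\ref{eqn:iter}) applies, since the category of sets has an exponentiable parametrised nno, so $(X,m)$ is an m-convex body and, as established within the proof of that proposition, the given $M$ satisfies canonicity: whenever $x_i = m(y_i, x_{i+1})$ for all $i$, then $x_0 = M_i\,y_i$. Rewriting $m(y_i, x_{i+1})$ as $\frac{1}{2} \cdot y_i + \frac{1}{2} \cdot x_{i+1}$ and $M_i\,y_i$ as $\supercom{i}{\SetNat}{2^{-(i+1)}}{y_i}$, this is exactly the hypothesis and conclusion of Lemma~\ref{LemSuperIt}, so $X$ is iterative as a superconvex set. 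I expect the only genuine work to be the routine-but-fiddly verification that $M$ is a supermidpoint operation, where the index bookkeeping in the transposition and unfolding computations is the main place care is needed; everything after that is a direct appeal to the already-established propositions and lemmas.
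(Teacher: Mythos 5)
Your proposal is correct and follows essentially the same route as the paper: define the supermidpoint structure $M(x_0,x_1,\dots) = \sum_{i\geq 0} 2^{-(i+1)}\cdot x_i$, apply Proposition~\ref{PropSetEqns}(\ref{eqn:iter}) to get iterativity of the associated midpoint structure, and transfer back to superconvex iterativity via Lemma~\ref{LemSuperIt}. The paper states the verification that $M$ is a cancellative supermidpoint structure without elaboration, whereas you spell it out; otherwise the arguments coincide.
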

\begin{proof}
If $X$ is a cancellative superconvex set, then
\[
M(x_1,x_2,\dots) ~ := ~ \sum_{i = 0}^\infty 2^{-(i+1)} \cdot x_i \, ,
\]
defines a cancellative supermidpoint structure on $X$.  By Proposition~\ref{PropSetEqns}(\ref{eqn:iter})
the corresponding  midpoint structure is iterative.
Hence, by Lemma~\ref{LemSuperIt}, the superconvex structure is also iterative.
\end{proof}

The main theorem of this section states that the
iterative midpoint sets
are exactly the iterative superconvex sets, and the same holds for the cancellative case.
Moreover, the associated categories, of midpoint homomorphisms
and superaffine maps respectively,
are isomorphic. In particular the category of m-convex bodies
is isomorphic to the category of cancellative
superconvex sets.
\pagebreak[3]
\begin{theorem}
\label{ThmSuperconvex}
\leavevmode
\begin{enumerate}
\item \label{ThmSuperconvex:1} If $X$ is an iterative superconvex set then $(X,m)$
is an iterative midpoint set, where
$m(x,y) = \frac{1}{2} \cdot x + \frac{1}{2} \cdot y$.
Moreover, if $X$ is cancellative then so is $(X,m)$.

\item \label{ThmSuperconvex:2} If $(X,m)$ is an iterative midpoint
set then there exists
a unique superconvex structure $\supersym$ on $X$ such that
$\frac{1}{2} \cdot x + \frac{1}{2} \cdot y = m(x,y)$. This
superconvex structure is iterative.
Moreover, if $(X,m)$ is cancellative then so is $X$.

\item \label{ThmSuperconvex:3} Let $(X,m)$ and $(X',m')$ be iterative midpoint
sets with their unique superconvex structures, then a function
$f \colon X \to X'$ is a midpoint homomorphim
if and only if it is superaffine.

\end{enumerate}
\end{theorem}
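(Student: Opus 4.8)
The plan is to route all three statements through the single bridge $m(x,y) = \frac{1}{2}\cdot x + \frac{1}{2}\cdot y$ between the binary and superconvex worlds, treating the iterated midpoint $M$ (available on any iterative midpoint set by Proposition~\ref{PropSetEqns}(\ref{eqn:iter})) as the value of the canonical dyadic combination $M_j z_j = \sum_j 2^{-(j+1)} z_j$. For statement~\ref{ThmSuperconvex:1} I would simply verify the midpoint axioms. Idempotency and commutativity are immediate from~\eqref{EqnSuperA} and the symmetry of the weights, and transposition follows from the nesting law~\eqref{EqnSuperB}, since both $m(m(x,y),m(z,w))$ and $m(m(x,z),m(y,w))$ contract to the one combination giving weight $\frac{1}{4}$ to each of $x,y,z,w$. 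For iterativity I take $M(x_0,x_1,\dots) := \sum_i 2^{-(i+1)} x_i$: unfolding is an instance of~\eqref{EqnSuperB}, and canonicity is exactly the hypothesis-to-conclusion implication of Lemma~\ref{LemSuperIt}. Cancellation transfers by Lemma~\ref{LemSuperCan}, which identifies half-half cancellation with the midpoint cancellation law.

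The substance lies in statement~\ref{ThmSuperconvex:2}, and there in existence. Given a superconvex weight $\lambda$ with its (countable) support reindexed on $\SetNat$, I would pick a sequence $(z_j)$ that \emph{dyadically realises} $\lambda$, meaning $\sum_{j \,:\, z_j = x_i} 2^{-(j+1)} = \lambda_i$ for each $i$ (such a sequence exists by distributing the dyadic masses $2^{-(j+1)}$ according to the binary expansions of the partial sums of $\lambda$), and then define $\sum_i \lambda_i x_i := M_j z_j$. One must then show this is well-defined: two realisations $(z_j),(z'_j)$ of the same $\lambda$ give the same $M$. I would derive this from the Approximation property (Proposition~\ref{PropSetApprox}), matching the finite dyadic partial combinations $m_n(z_0,\dots,z_{n-1},p_n)$ and $m_n(z'_0,\dots,z'_{n-1},q_n)$ by absorbing the discrepancy in mass distribution, which is bounded by the tail $2^{-n}$, into the remainder terms $p_n,q_n$. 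Alternatively one can fix a canonical realisation at the definition stage and recover independence of the choice a posteriori from completeness (Lemma~\ref{LemKoenig}) once the axioms are known.

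Checking the axioms, \eqref{EqnSuperA} is immediate by idempotency, while the nesting law~\eqref{EqnSuperB} is the main obstacle: expanding a combination of combinations produces a doubly-indexed iterated midpoint, and reconciling it with the single realising sequence of the contracted weight is precisely the reindexing carried out by the Flattening proposition (Proposition~\ref{PropFlatten}), again used in tandem with Approximation to match partial sums. I expect this verification to carry essentially all the technical weight of the theorem. Iterativity of the resulting structure is then free: with the identity realisation $(z_j)=(y_j)$ the combination $\sum_i 2^{-(i+1)} y_i$ equals $M_i y_i$, and $m$-canonicity turns the hypothesis $x_i = m(y_i,x_{i+1})$ into $x_0 = M_i y_i$, which is the criterion of Lemma~\ref{LemSuperIt}. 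Cancellation transfers back via Lemma~\ref{LemSuperCan}. For uniqueness I observe that any superconvex structure whose half-half reduct is $m$ satisfies unfolding for its own infinite dyadic combination (by~\eqref{EqnSuperB}), so by the uniqueness of $M$ in Proposition~\ref{PropSetIt}(\ref{iter:1}) it must agree with $M$ on every combination $\sum_j 2^{-(j+1)} z_j$; since \eqref{EqnSuperB} together with a realising sequence reduces an arbitrary combination to this form, the whole structure is forced.

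Finally, statement~\ref{ThmSuperconvex:3} is short. A superaffine map preserves $m$ because $m$ is the half-half combination, hence is a midpoint homomorphism. Conversely, a midpoint homomorphism between iterative midpoint sets automatically preserves $M$ by Proposition~\ref{PropSetIt}(\ref{iter:2}); since every superconvex combination was defined as $M_j z_j$ for a realising sequence, preservation of $M$ yields preservation of all superconvex combinations, i.e.\ superaffineness. Throughout, the genuine difficulty is the verification of~\eqref{EqnSuperB} in part~\ref{ThmSuperconvex:2}, which is exactly the reason the Flattening and Approximation propositions of Section~\ref{SecItCan} were established.
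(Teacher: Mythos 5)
Your overall architecture is the right one, but the existence argument in part~\ref{ThmSuperconvex:2} has a fatal flaw at its very first step: a ``dyadically realising'' sequence $(z_j)$ with $\sum_{j : z_j = x_i} 2^{-(j+1)} = \lambda_i$ need not exist. Take three distinct points with weights $\lambda_1=\lambda_2=\lambda_3=\tfrac13$. The sets $S_i = \{ j : z_j = x_i\}$ would have to partition $\mathbb{N}$ with $\sum_{j \in S_i} 2^{-(j+1)} = \tfrac13$ for each $i$; but whichever $S_i$ contains the position $j=0$ receives mass at least $\tfrac12 > \tfrac13$, so no such partition exists. (Equivalently: the binary expansion of $\tfrac13$ is unique, so all three $S_i$ would have to be the same set of positions.) A single outer application of $M$ with one point per slot is simply not expressive enough to realise an arbitrary superconvex weight. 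This is exactly why the paper works instead with a two-level normal form: Lemma~\ref{LemFindNormal} decomposes $\lambda_i = \sum_l 2^{-(l+1)} \rho^l_i$ where each $\rho^l$ is a \emph{finitely supported dyadic convex} weight function, and the combination is defined as $M_l$ applied to the finite dyadic combinations $\sum_i \rho^l_i \cdot x_i$ (interpreted via the midpoint structure by Proposition~\ref{PropDyadic}). The finite dyadic trees sitting inside each slot of $M$ provide precisely the flexibility your single-level realisation lacks. This defect propagates to your uniqueness argument and to your proof of part~\ref{ThmSuperconvex:3}, both of which quantify over ``realising sequences.''

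A second, independent problem: you propose to establish well-definedness and the nesting law~(\ref{EqnSuperB}) ``in tandem with Approximation'' (Proposition~\ref{PropSetApprox}). But that proposition states that approximation is \emph{equivalent to cancellativity}, whereas part~\ref{ThmSuperconvex:2} assumes only that $(X,m)$ is iterative; the paper's own non-example after the definition of m-convex body is an iterative, non-cancellative midpoint set, and there approximation fails. The paper instead routes all such coherence checks through Proposition~\ref{PropPhew} (via Lemmas~\ref{LemNormalize} and~\ref{LemNormalComplete}), whose proof matches partial dyadic expansions and invokes only the \emph{uniqueness clause of iterativity} --- no cancellation. Your use of Flattening for~(\ref{EqnSuperB}) is in the right spirit (it is Lemma~\ref{LemNormalize}), and parts~\ref{ThmSuperconvex:1} and the skeleton of part~\ref{ThmSuperconvex:3} (via Lemmas~\ref{LemSuperCan}, \ref{LemSuperIt} and Proposition~\ref{PropSetIt}(\ref{iter:2})) are fine, but the existence construction and its coherence proof need to be rebuilt on the two-level decomposition and on an iterativity-only completeness lemma.
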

The proof of the theorem occupies
{\S}{\S}\ref{SubSecDyadic}--\ref{SubSecProofA}, after which applications
of it are discussed in Section~\ref{SecApplySuperconvex}.

\subsection{Dyadic convex sets and midpoints}
\label{SubSecDyadic}

We first establish an analogue of Theorem~\ref{ThmSuperconvex},
which expresses a coincidence between dyadic convex sets
and midpoint sets.
\begin{prop}
\label{PropDyadic}
\leavevmode
\begin{enumerate}
\item \label{PropDyadic:1} If $X$ is a dyadic convex set then $(X,m)$
is a midpoint set, where
$m(x,y) = \frac{1}{2} \cdot x + \frac{1}{2} \cdot y$.

\item \label{PropDyadic:2} If $(X,m)$ is a midpoint set then there exists
a unique dyadic convex structure $\supersym$ on $X$ such that
$\frac{1}{2} \cdot x + \frac{1}{2} \cdot y = m(x,y)$.

\item \label{propdyadic:3} \label{PropDyadic:3}
Let $(X,m)$ and $(X',m')$ be midpoint sets with
their unique dyadic convex structures, then a function
$f \colon X \to X'$ is a midpoint homomorphim
if and only if it
is dyadic affine.

\end{enumerate}
\end{prop}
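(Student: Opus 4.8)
The plan is to establish the three-way correspondence between dyadic convex sets and midpoint sets by exhibiting the structures as mutually inverse and then checking that homomorphisms coincide. The heart of the matter is Proposition~\ref{PropDyadic:2}: given a midpoint operation $m$, I must reconstruct a full dyadic convex combination operation and prove it is the unique one compatible with $m$. The key observation is that every dyadic rational weight can be built up from halvings, so any dyadic convex combination should be expressible purely in terms of iterated midpoints. This is exactly what forces uniqueness, and it is the route I would take for existence as well.

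For part~\ref{PropDyadic:1}, I would start from a dyadic convex set $X$ and simply verify that $m(x,y) = \frac{1}{2}\cdot x + \frac{1}{2}\cdot y$ satisfies idempotency, commutativity and transposition. Idempotency is immediate from~(\ref{EqnSuperA}) (the trivial one-point combination) together with the normalisation $\frac{1}{2}+\frac{1}{2}=1$; commutativity is the symmetry of the weight assignment; and transposition is a direct instance of the associativity law~(\ref{EqnSuperB}), since both $m(m(x,y),m(z,w))$ and $m(m(x,z),m(y,w))$ collapse via~(\ref{EqnSuperB}) to the single four-fold combination assigning weight $\frac{1}{4}$ to each of $x,y,z,w$, and then completeness (Lemma~\ref{LemKoenig}) identifies them. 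This part is routine.

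Part~\ref{PropDyadic:2} is the main work. Given $(X,m)$, I would \emph{define} the dyadic convex combination of $x_1,\dots,x_k$ with dyadic weights $\lambda_1,\dots,\lambda_k$ by first writing each $\lambda_j = a_j / 2^n$ over a common denominator $2^n$, so that $\sum_j a_j = 2^n$, and then setting the value to be $m_{2^n - 1}$ applied to the list in which each $x_j$ is repeated $a_j$ times (using the derived $(n{+}1)$-ary operations $m_n$ from Section~\ref{SecItCan}; in fact here I want the balanced $2^n$-fold midpoint, the $n$-fold iterate of $m$ over $2^n$ arguments). The substantive lemma is that this value is \emph{independent} of the chosen common denominator, which reduces to showing that duplicating every argument and passing from $2^n$ to $2^{n+1}$ slots leaves the balanced midpoint unchanged; this follows from idempotency and transposition by an induction on $n$. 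Once well-definedness is established, verifying~(\ref{EqnSuperA}) is trivial and verifying~(\ref{EqnSuperB}) is a bookkeeping computation on weights combined with transposition. Uniqueness is then forced: any dyadic convex structure restricting to $m$ at weight $\frac{1}{2}$ must, by repeated use of~(\ref{EqnSuperB}) to decompose $a_j/2^n$ into halvings, agree with the balanced midpoint formula, so it equals the structure just constructed.

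Finally, part~\ref{PropDyadic:3} follows almost immediately from the normal form: a dyadic affine map visibly preserves $m$ (the weight-$\frac{1}{2}$ case), and conversely a midpoint homomorphism preserves every balanced $2^n$-fold midpoint by induction, hence preserves every dyadic convex combination once these are written in the normal form above. \textbf{The main obstacle} I anticipate is the well-definedness argument in part~\ref{PropDyadic:2}: making precise that the balanced-midpoint value does not depend on the representation of the dyadic weights, and in particular that the duplication-invariance holds, is where transposition does real work and where careful induction on the denominator exponent is needed; everything else is then either routine equational checking or an appeal to completeness (Lemma~\ref{LemKoenig}).
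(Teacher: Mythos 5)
Your proposal is correct and follows essentially the same route as the paper's (sketched) proof: put all dyadic weights over a common denominator $2^t$, interpret the combination as a balanced full binary midpoint tree, and prove by induction (using idempotency and transposition) that the value depends only on the total weight each point receives, from which well-definedness, the two superconvex equations, uniqueness, and part 3 all follow as you describe. The paper gives only this outline, noting that the details are routine, so there is nothing further to compare.
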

We  give only an outline of the proof of this proposition, since the details are routine to fill in, and similar results appear in the literature~\cite{stone_postulates_1949,staton:icalp:2018}. For future reference, notice that the proof outlined below is constructive.

Statement~\ref{PropDyadic:1} is straightforward. One only needs to verify the
midpoint equations.
For statement~\ref{PropDyadic:2}, we first
show how to represent any dyadic combination,
\[q_1 \cdot x_1 + \dots + q_k \cdot x_k,\] using the midpoint
operation $m$. Let $t \geq 0$ be the smallest integer such that every
$q_l$ (where $1 \leq l \leq k$) can be expressed as an integer
fraction with denominator $2^l$ . Then the dyadic convex combination
can be equivalently expressed as
\[\sum_{d = 1}^{2^t} \, 2^{-t} \,  \cdot  \, y_d \]
with each $y_d \in \{x_1, \dots, x_k\}$.
Such a combination is easily represented in terms of the binary midpoint operation $m$,
using an expression that is a full binary tree of height $t$.
Next, one proves, by induction on the height of trees, that
if two full term trees assign the same total weight to
each element $x_n$ then the terms are
provably equal using the midpoint equations.
This fact suffices to
verify that  equations~(\ref{EqnSuperA}) and~(\ref{EqnSuperB})
hold between the interpretations of dyadic convex combinations.
Finally, statement~\ref{PropDyadic:3} follows directly from the mutual
interpretations of dyadic convex combinations and midpoints.

Proposition~\ref{PropDyadic} allows one to use the convenient notation
of dyadic convex combinations instead of manipulating cumbersome
midpoint expressions.  We shall make frequent use of such notation in
our proof of Theorem~\ref{ThmSuperconvex}.

\subsection{Term algebras}
\label{SubSecTerms}

\newcommand{\binop}{\mathsf{o}}
\newcommand{\infop}{\mathsf{O}}
\newcommand{\algset}{\mathcal{T}}
\newcommand{\algsetf}{\mathcal{T}^{f}}

In the outline proof of Proposition~\ref{PropDyadic},
we made reference to term trees over the binary operation $m$.
To establish properties of iterative midpoints, we
again need to consider terms, but allowing also the infinitary
operation $M$. We introduce such terms formally.

For an index set $I$, we define
the free term algebra over a single binary operator
and a single infinitary operator. This
is the smallest set $\algset_{I}$ satisfying:
\begin{enumerate}
\item $I \subseteq \algset_{I}$,
\item if $s,t \in \algset_I$ then the pair $(s,t)$ is in
       $\algset_I$, and
\item if $(t_l)_l$ is a sequence of elements of $\algset_I$
      then the sequence $(t_l)_l$ is itself an
      element of $\algset_I$.
\end{enumerate}
Equivalently, $\algset_I$ is the set of well-founded
trees with the following
properties: each internal node is either binary branching or
$\omega$-branching, and each leaf is labelled with an element of $I$.
We write $\algsetf_I$ for the smallest subset of $\algset_I$ closed
under 1 and 2 above (i.e.\ the set of finite binary trees
with $I$-labelled leaves).

Given a function $\sigma \colon I \to \algset_J$, the
substitution function $[\sigma] \colon \algset_I \to \algset_J$
is defined by
\begin{align*}
[\sigma]i & = \sigma(i)\\
[\sigma](s,t)  & = ([\sigma]s,[\sigma]t)  \\
[\sigma](t_l)_l  & = ([\sigma]t_l)_l.
\end{align*}
If $X$ is a supermidpoint set,
with operations $M$ and
$m$, then the interpretation function
$\interp \colon \algset_I \times X^I \to X$
is defined by
\begin{align*}
\interp(i,x) & = x_i \\
\interp((s,t),x)  & = m(\interp(s,x), \interp(t,x))  \\
\interp((t_l)_l,x)  & = M_l \, \interp(t_l, x).
\end{align*}
\begin{lemma}
\label{LemSemSub}
For any $t \in \algset_I$, $\;\sigma \colon I \to \algset_J$,
and $y \colon J \to X$,
\[
\interp([\sigma]t,y) =
\interp (t, \, i \mapsto \interp(\sigma(i), y)).\]
\end{lemma}
\begin{proof}
A straightforward induction on $t$.
\end{proof}

Analogously to the earlier definition for {\conab} combinations,
for $t \in \algset_I$, define
the \emph{weight}
$\superind(t) \colon I \to [0,1]$ by
\begin{align*}
\superind(i)(j) & = \delta_{ij}   \\
\superind((s,t))(j) & = \frac{1}{2} \superind(s)(j) +
                      \frac{1}{2} \superind(t)(j) \\
\superind((t_l)_l)(j)  & =
\sum_{l =0}^\infty \, 2^{-(l+1)} \superind(t_l)(j).
\end{align*}
Note that $\superind(t)$ is zero on all but countably many
elements of $I$ and also that $\sum_{i \in I} \, \superind(t)(i)\, = \, 1$.
\begin{lemma}
\label{LemIndSub}
For any $t \in \algset_I$ and
$\sigma \colon I \to \algset_J$ it holds that
\[
\superind([\sigma]t)(j) \: = \:
\sum_{i \in I} \, \superind(t)(i) \cdot \superind(\sigma(i))(j).
\]
\end{lemma}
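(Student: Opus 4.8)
The plan is to prove the identity by \emph{structural induction} on the term $t \in \algset_I$, following exactly the same pattern as the proof of Lemma~\ref{LemSemSub} but with the weight function $\superind$ in place of the interpretation $\interp$. Since both $\superind$ and the substitution operation $[\sigma]$ are defined by the same case split on the three clauses generating $\algset_I$ (a leaf $i\in I$, a binary pair $(s,t)$, and an $\omega$-sequence $(t_l)_l$), the induction has one base case and two inductive steps; in each, both sides of the claimed equation reduce to a common expression once the induction hypothesis is applied.

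For the base case $t = i_0 \in I$, I would unfold $[\sigma]i_0 = \sigma(i_0)$, so the left-hand side is $\superind(\sigma(i_0))(j)$, while the right-hand side $\sum_{i\in I}\superind(i_0)(i)\,\superind(\sigma(i))(j)$ collapses via $\superind(i_0)(i)=\delta_{i_0 i}$ to the single surviving term $\superind(\sigma(i_0))(j)$, so the two agree. For the binary step $t=(s,t')$, I would use $[\sigma](s,t')=([\sigma]s,[\sigma]t')$ and the defining clause $\superind((s,t'))(j)=\tfrac12\superind([\sigma]s)(j)+\tfrac12\superind([\sigma]t')(j)$, apply the induction hypothesis separately to $s$ and to $t'$, and then factor $\superind(\sigma(i))(j)$ out of the two resulting sums, recognising $\tfrac12\superind(s)(i)+\tfrac12\superind(t')(i)=\superind((s,t'))(i)$.

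The only step needing genuine care is the infinitary case $t=(t_l)_l$. Here $\superind([\sigma](t_l)_l)(j)=\sum_{l}2^{-(l+1)}\superind([\sigma]t_l)(j)$, and applying the induction hypothesis termwise turns this into the double series $\sum_{l}2^{-(l+1)}\sum_{i\in I}\superind(t_l)(i)\,\superind(\sigma(i))(j)$. To reach the required right-hand side I must \emph{interchange the order of summation}, pulling out $\superind(\sigma(i))(j)$ and recognising $\sum_{l}2^{-(l+1)}\superind(t_l)(i)=\superind((t_l)_l)(i)$. This interchange is the crux of the argument. The hard part is therefore justifying the rearrangement, and it is resolved by non-negativity: every weight lies in $[0,1]$, so all terms are non-negative and the Tonelli/Fubini theorem for non-negative series makes the double sum unconditionally summable with value independent of the summation order. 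The remark preceding the lemma---that each $\superind(t)$ is supported on countably many indices and sums to $1$---ensures that only countably many $i$ contribute (the support of $\superind((t_l)_l)$ is the countable union of the supports of the $\superind(t_l)$), so the rearrangement is over a countable family and no further subtlety arises.
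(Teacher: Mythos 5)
Your proof is correct and follows exactly the route the paper intends: the paper's own proof is just the one-line remark ``A straightforward induction on $t$'', and your three cases (Kronecker delta at leaves, linearity at binary nodes, and an interchange of summation justified by non-negativity at $\omega$-nodes) are precisely the details being elided. The Tonelli-style justification of the interchange in the infinitary case is the only point of substance, and you handle it correctly.
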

\begin{proof}
A straightforward induction on $t$.
\end{proof}

The main goal of the section
is to prove Proposition~\ref{PropPhew} below, which
is an analogue
of Lemma~\ref{LemKoenig} for terms $t \in \algset$ interpreted
in iterative midpoint sets.
Accordingly, for the remainder of this section, we
assume that $X$ is an
iterative midpoint set.

As a first step, we show that every tree is semantically equal to one in which the infinitary operation is used once only, as the top-level constructor.
By a \emph{normal form} we mean
a sequence $(u_i)_i$ with each $u_i \in \algsetf_I$. The following generalises Proposition~\ref{PropFlatten} to trees with arbitrary nestings of the infinitary operation.
\begin{lemma}
\label{LemNormalize}
For any $s \in \algset_I$ there exists a normal form $t$
such that $\superind(s) = \superind(t)$ and,
for all $x \colon I \to X$, it holds that
$\interp(s,x) = \interp(t,x)$.
\end{lemma}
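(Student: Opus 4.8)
The plan is to prove the value-equality $\interp(s,x)=\interp(t,x)$ by well-founded induction on the tree $s\in\algset_I$, constructing the normal form $t$ \emph{syntactically} (so that the same $t$ works uniformly in every iterative midpoint set), and only afterwards to read off the weight-equality $\superind(s)=\superind(t)$ by instantiating at a single convenient model. The three cases of the induction follow the three clauses in the definition of $\algset_I$, and in each case the goal is to replace the outermost constructor together with the (already normalised) immediate subtrees by a \emph{single} infinitary node sitting over finite binary trees.

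For a leaf $i\in I$, idempotency of the associated infinitary operation (available since an iterative midpoint set is a supermidpoint set, Proposition~\ref{PropSetEqns}(\ref{iter:eqn})) gives $\interp((i,i,i,\dots),x)=M(x_i,x_i,\dots)=x_i=\interp(i,x)$, so $t=(i,i,i,\dots)$, whose components are the finite tree $i\in\algsetf_I$, is the required normal form. For an $\omega$-branching $s=(s_l)_l$, the induction hypothesis supplies normal forms $s_l\cong(u_{lk})_k$, whence $\interp(s,x)=M_l\,M_k\,\interp(u_{lk},x)$; this is a doubly-nested application of $M$, so Proposition~\ref{PropFlatten} (Flattening) rewrites it as $M_{l'}\,m\bigl(m_{(l'+1)}(\cdots),m_{(l'+1)}(\cdots)\bigr)$, where each argument of the finite combinations $m_{(l'+1)}$ is one of the $\interp(u_{lk},x)$. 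Since $m_{(l'+1)}$ applied to finite binary trees is again a finite binary tree, each term of this outer sequence is $\interp(t_{l'},x)$ for some $t_{l'}\in\algsetf_I$, and $(t_{l'})_{l'}$ is the desired normal form.

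The crux is the binary case $s=(s_1,s_2)$, which I expect to be the main obstacle, since here the top constructor is \emph{not} an $M$ and Flattening does not apply directly. The induction hypothesis gives normal forms $s_1\cong(u_k)_k$ and $s_2\cong(v_k)_k$, so $\interp(s,x)=m\bigl(M_k\,\interp(u_k,x),\,M_k\,\interp(v_k,x)\bigr)$. The key move is to reintroduce the infinitary operation via equation~(\ref{m-from-M}): writing $p=M_k\,\interp(u_k,x)$ and $q=M_k\,\interp(v_k,x)$, we have $m(p,q)=M(p,q,q,q,\dots)=M_i\,(M_k\,c_{ik})$ with $c_{0k}=\interp(u_k,x)$ and $c_{ik}=\interp(v_k,x)$ for $i\ge 1$. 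This is now a genuine $M_i\,M_k$, so Flattening again collapses it to a single $M_{l}$ over terms $m\bigl(m_{(l+1)}(\cdots),m_{(l+1)}(\cdots)\bigr)$ built from the values $\interp(u_k,x),\interp(v_k,x)$ of finite trees; as before each such term is the value of a finite binary tree, yielding a normal form. The work here is just to check that the finite combinations really remain finite and that the indices of the $c_{ik}$ are substituted correctly into the Flattening identity.

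Since the normal form $t$ produced by these three steps is assembled purely from the right-hand sides of idempotency and of Proposition~\ref{PropFlatten}, its defining value-equality holds verbatim in \emph{every} iterative midpoint set. To obtain $\superind(s)=\superind(t)$ it then suffices to instantiate at one specific such set: let $W$ be the set of countably-supported probability distributions on $I$, with $m(\lambda,\mu)=\tfrac12\lambda+\tfrac12\mu$ and $M((\lambda_l)_l)=\sum_{l}2^{-(l+1)}\lambda_l$. A direct check shows $W$ is a cancellative supermidpoint set, hence iterative by Proposition~\ref{PropSetEqns}(\ref{eqn:iter}); and for the assignment $\delta\colon i\mapsto\delta_i$ one has $\interp_W(r,\delta)=\superind(r)$ for all $r$, by comparing the two inductive definitions. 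Applying the value-equality in $X=W$ with $x=\delta$ therefore gives $\superind(s)=\interp_W(s,\delta)=\interp_W(t,\delta)=\superind(t)$, completing the proof.
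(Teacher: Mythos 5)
Your argument is correct, but it diverges from the paper's proof in two ways. First, in the binary case $s=(s_1,s_2)$ the paper does not need Flattening at all: it takes the interleaved normal form $((u_l,v_l))_l$ and justifies it with the single interchange identity $m(M_l\,y_l,\,M_l\,z_l)=M_l\,m(y_l,z_l)$, an instance of transposition for the associated supermidpoint structure. Your detour through $m(p,q)=M(p,q,q,\dots)$ followed by Proposition~\ref{PropFlatten} is sound --- the rows $c_{0k}=\interp(u_k,x)$ and $c_{ik}=\interp(v_k,x)$ for $i\geq 1$ do form a legitimate double sequence, and each term of the flattened outer sequence is a finite midpoint combination of values of finite trees, hence the value of a finite tree --- but it yields a needlessly complicated normal form; the case you flag as the main obstacle is in fact the easiest one. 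Second, for the weight equality the paper simply computes $\superind(s)=\superind(t)$ in each case directly from the recursive definition of $\superind$, whereas you derive it uniformly by evaluating the (syntactically constructed, hence model-independent) identity in the iterative midpoint set $W$ of countably supported probability distributions on $I$, where $\interp_W(-,\delta)=\superind(-)$. This is a legitimate and rather elegant observation, and it is not circular: cancellativity of $W$ is elementary, its iterativity then follows from Proposition~\ref{PropSetEqns}(\ref{eqn:iter}), and neither fact depends on the present lemma. What your route buys is a single uniform justification of the weight equality together with an explicit record that the normal form does not depend on $X$; what the paper's route buys is brevity, especially in the binary case.
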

\begin{proof}
By induction on $s$.

If $s = i \in I$, then the required normal form $t$ is the constantly $s$ sequence.

If $s = (s_1,s_2)$ then let $t_1 =
(u_l)_l$ and $t_2 = (v_l)_l$ be the normal forms of
$s_1$ and $s_2$ respectively. Define
$t = ((u_l,v_l))_l$. Easily $\superind(s) = \superind(t)$.
That $\interp(s,x) = \interp(t,x)$ follows immediately from the supermidpoint
identity $m(M_l \, y_l, \, M_l \, z_l)\, = \, M_l \,m(y_l,z_l)$.

If $s = (s_m)_m$ then let $t_m = (u_{mn})_n$ be the normal form of $s_m$.
Following Proposition~\ref{PropFlatten},
define $t = (v_l)_l$ where, for each $l$,
\[
v_l  \: = \: (\, ( u_{0\, (l+1)}, \,
                   (u_{1\, (l+1)}, \dots (u_{l\, (l+1)},\, u_{ll}))), \:
                 ( u_{(l+1) \, 0}, \,
                   (u_{(l+1) \, 1}, \dots (u_{(l+1) \, l},\, u_{ll}))) \, ).
\]
Again, one easily calculates that $\superind(s) = \superind(t)$.
That  $\interp(s,x) = \interp(t,x)$ follows immediately from
Proposition~\ref{PropFlatten}.
\end{proof}

\begin{lemma}
\label{LemNormalComplete}
If $s$ and $t$ are normal forms with
$\superind(s) = \superind(t)$ then,
for all $x \colon I \to X$, it holds that
$\interp(s,x) = \interp(t,x)$.
\end{lemma}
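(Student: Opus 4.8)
The plan is to exploit that, after the normalisation already established in Lemma~\ref{LemNormalize}, everything reduces to the infinitary operation applied to sequences of \emph{finite} binary trees, whose interpretations are controlled by the finitary theory. Concretely, write $s = (u_i)_i$ and $t = (v_j)_j$ with each $u_i, v_j \in \algsetf_I$, and set $a_i = \interp(u_i,x)$ and $b_j = \interp(v_j,x)$, so that $\interp(s,x) = M_i\, a_i$ and $\interp(t,x) = M_j\, b_j$. Since each $a_i$ is the interpretation of a finite binary tree, it is a dyadic convex combination of the $x_k$ depending only on $\superind(u_i)$, by the dyadic completeness built into Proposition~\ref{PropDyadic}; thus at the finite level ``equal weight implies equal value'' is already available, and the only genuinely new content concerns the infinitary $M$. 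Throughout I would work with the supermidpoint axioms that $X$ satisfies by Proposition~\ref{PropSetEqns}(\ref{iter:eqn}) --- in particular the identity $m(M_l\, y_l,\, M_l\, z_l) = M_l\, m(y_l,z_l)$ already used in Lemma~\ref{LemNormalize}, together with transposition and idempotency --- and, crucially, the \emph{uniqueness} (canonicity) half of iterativity.

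The main idea is a coinductive peeling argument closed off by canonicity. Let $\rho = \superind(s) = \superind(t)$. I would maintain, for each step $i$, two values $P_i$ and $Q_i$ (obtained from $s$ and $t$ respectively) having the \emph{same} weight $\rho_i := 2^{i}\bigl(\rho - \sum_{j<i}2^{-(j+1)}\superind(c_j)\bigr)$, starting from $P_0 = \interp(s,x)$ and $Q_0 = \interp(t,x)$, both of weight $\rho$. At each step I would choose a single finite tree $c_i$, dominated by $2\rho_i$ and picked so that the accumulated weights $\sum_{j<i}2^{-(j+1)}\superind(c_j)$ exhaust $\rho$, and peel it simultaneously off the front of both, obtaining
\[ P_i = m(\interp(c_i,x),\, P_{i+1}), \qquad Q_i = m(\interp(c_i,x),\, Q_{i+1}), \]
with $P_{i+1}, Q_{i+1}$ again of a common weight $\rho_{i+1}$. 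The weights of $P_i$ and $Q_i$ coincide at every stage precisely because $\superind(s)=\superind(t)$ and the peeled chunks $c_0,\dots,c_{i-1}$ are common to both. Once the construction runs to infinity, canonicity applied to the sequence $(\interp(c_i,x))_i$ forces both $P_0 = M_i\,\interp(c_i,x)$ and $Q_0 = M_i\,\interp(c_i,x)$, whence $\interp(s,x) = \interp(t,x)$, as required.

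I expect the main obstacle to be the \emph{peeling step}: given a value $P$ that is the interpretation of \emph{some} normal form of weight $\omega$, and a finite tree $c$ with $\superind(c) \le 2\omega$ pointwise, one must produce a normal form of weight $2\omega - \superind(c)$ whose interpretation $P'$ satisfies $P = m(\interp(c,x),\, P')$. This is where the rigidity of the construction bites: the positional masses in a normal form are forced to be $2^{-(i+1)}$, so one cannot simply regroup positions, and the chunk $c$ must instead be extracted by repartitioning the internal tree structure using the supermidpoint identities (splitting heads via $m(M,M)=M\,m$, rebracketing by transposition, and duplicating by idempotency, much as in Proposition~\ref{PropFlatten}). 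I would prove this peeling lemma by induction on the height of $c$, the single-leaf base case being the crux. A tempting shortcut --- approximating by the finite prefixes $m_n(a_0,\dots,a_{n-1},\text{tail})$ and passing to a limit --- is exactly what one must avoid, since the partial weight distributions of $s$ and $t$ need not agree at any finite stage, and converting ``agreement of all finite prefixes'' into an identity of $M$-values is precisely the approximation property, which by Proposition~\ref{PropSetApprox} is equivalent to cancellation and hence unavailable for a general iterative midpoint set. The whole force of the argument must therefore be routed through canonicity rather than through any finitary limiting process; this is both the reason the statement holds without cancellation and the source of its difficulty. The resulting statement is the iterative-midpoint analogue of K\"onig's completeness result (Lemma~\ref{LemKoenig}), to which it specialises once the superconvex structure of Theorem~\ref{ThmSuperconvex} is in place.
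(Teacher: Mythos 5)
Your overall architecture matches the paper's: reduce to normal forms, peel finite dyadic blocks off the front one at a time, and close the argument with the uniqueness half of iterativity (canonicity) rather than with the approximation property --- and you are right that approximation is off-limits here, since by Proposition~\ref{PropSetApprox} it is equivalent to cancellation, which is not assumed. The only structural difference is that you peel a freshly chosen common sequence $(c_i)$ off \emph{both} $\interp(s,x)$ and $\interp(t,x)$ and identify each with $M_i\,\interp(c_i,x)$, whereas the paper peels the actual blocks $v_n$ of $t$ off $z_0=\interp(s,x)$, maintaining the invariant~(\ref{EqnToMaintain}) that the residual weight of $z_n$ matches the tail weight of $t$, and concludes $z_0 = M_l\,\interp(v_l,x)=\interp(t,x)$ with a single appeal to iterativity. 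Your symmetric variant is legitimate and even buys some freedom in choosing the chunks; it costs a second run of the peeling construction on the $t$ side.

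The gap is that the peeling lemma --- from a normal form of weight $\omega$ and a finite tree $c$ with $\superind(c)\le 2\omega$ pointwise, produce $P'$ with $P=m(\interp(c,x),P')$ and the residual weight $2\omega-\superind(c)$ --- is where essentially all of the work lies, and your sketch of it does not go through as stated. The paper's proof of this step is the entire displayed computation from~(\ref{EqnFindlprime}) to~(\ref{EqnUgly}): choose a finite prefix $l'$ of the normal form whose blocks already dominate (pointwise) half the weight of the block to be extracted, refine every dyadic coefficient in that prefix into uniform grains of size $2^{-t}$, and regroup the grains across the first $l'+1$ positions of the infinitary $M$, leaving the tail beyond $l'$ untouched. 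Your proposed induction on the height of $c$ offers no mechanism for this redistribution: already in the single-leaf base case, the mass that $\omega$ assigns to a generator $x_k$ may be scattered in varying dyadic amounts over infinitely many positions of the normal form, and gathering weight $\frac{1}{2}$ of it to the front is precisely the finite-prefix-plus-grain problem, since the equational identities only permit rearrangement within a finite prefix of the $M$-expansion before the remainder must be handed back to iterativity. The height induction reduces the compound case to the leaf case, but the leaf case is the whole difficulty, so the induction does not simplify anything. To complete your argument you would need to prove the peeling lemma by the paper's grain-refinement method (or an equivalent), at which point the two proofs essentially coincide.
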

\begin{proof}
We have $s = (u_l)_l$ and $t = (v_l)_l$ with each
$u_l$ and $v_l$ in $\algsetf_I$.
Then
\begin{equation}
\label{EqnExpands}
\begin{split}
\interp(s, x)
 & = M_l\, \interp(u_l, x) \\
 & = M_l \, (q_{l1} \cdot x_{i_{l1}} + \dots + q_{l {k_l}} \cdot x_{i_{lk_l}}),
\end{split}
\end{equation}
using the dyadic affine structure of $X$ to interpret each $u_l$.
Similarly,
\begin{align*}
\interp(t, x)
 & = M_l\, \interp(v_l, x) \\
 & = M_l \, (q'_{l1} \cdot x_{j_{l1}} + \dots + q'_{l {m_l}} \cdot x_{j_{lm_l}}).
\end{align*}
Below, we shall construct a sequence $(z_n)_n$  satisfying $z_0 = \interp(s, x)$
and, for every $n \geq 0$,
\begin{equation}
\label{EqnComplicated}
z_n \: = \:
m ( \, q'_{n 1} \cdot  x_{j_{ n1}} + \dots +
            q'_{n m_n}  \cdot  x_{j_{ n m_{\!n}}},  \, z_{n+1} \, )
\end{equation}
Whence, using iterativity for the second equality,
\[
\interp(s,x)
 = z_0
 = M_l \, (q'_{l1} \cdot x_{j_{l1}} + \dots + q'_{l {m_l}} \cdot x_{j_{lm_l}})  = \interp(t,x).
\]

It remains to find the $z_n$ and
verify equation~(\ref{EqnComplicated}). In doing so, we shall
ensure that each $z_n$ is an iterated midpoint of dyadic combinations,
\begin{equation}
\label{EqnZForm}
z_n \: = \: M_l \,
(q_{nl1} \cdot x_{i_{nl1}} + \dots + q_{nl k_{nl}} \cdot x_{i_{nlk_{\!nl}}})
\end{equation}
(where $q_{abc}$, $i_{abc}$ and $k_{ab}$ are distinguished from $q_{ab}$, $i_{ab}$ and
$k_a$ by the number of indices), for which the dyadic weights satisfy the equality, for all $i \in I$,
\begin{equation}
\label{EqnToMaintain}
\sum_{l = 0}^\infty \sum_{d = 1}^{k_{nl}} \, 2^{-(l+1)} \,  q_{nld}
\, \delta_{\, i \, i_{nld}} \: = \:
\sum_{l = 0}^\infty \sum_{d = 1}^{m_{n+l}} \, 2^{-(l+1)} \, q'_{\,(n+l)\, d} \,
\delta_{\, i \, j_{\,(n+l) \, d}} \, .
\end{equation}

For $z_0$, we have that $z_0 = \interp(s, x)$ is,
by~(\ref{EqnExpands}), explicitly of the
form~(\ref{EqnZForm}) by setting $k_{0l} := k_l$,
$\; i_{0 l d} := i_{ld}$ and
$q_{0ld} := q_{ld}$. Moreover,~(\ref{EqnToMaintain}) holds
because
\begin{align*}
\sum_{l = 0}^\infty \sum_{d = 1}^{k_{0l}}\, 2^{-(l+1)} \, q_{0ld}
\, \delta_{\, i \, i_{0ld}}
& = \: \sum_{l = 0}^\infty \sum_{d = 1}^{k_l} \,2^{-(i+1)}\,
        q_{ld}\, \delta_{\,i \, i_{ld}} \\
& = \:\superind(s)(i)  \\
& =  \: \superind(t)(i) \\
& = \: \sum_{l = 0}^\infty \sum_{d = 1}^{m_l} \, 2^{-(i+1)} \,
        q'_{ld} \, \delta_{\,i \, j_{ld}}.
\end{align*}

Next, given $z_n$ satisfying~\eqref{EqnZForm} and~\eqref{EqnToMaintain}, we construct $z_{n+1}$.
By considering just the $l = 0$ summand on the right-hand side of~\eqref{EqnToMaintain}, there exists $l' \geq 0$ such that, for
all $i \in I$,
\begin{equation}
\label{EqnFindlprime}
\sum_{l=0}^{l'}
\sum_{d = 1}^{k_{nl}} \, 2^{-(l+1)} \,  q_{nld}
\, \delta_{\, i \, i_{nld}} \: \geq \:
\sum_{d = 1}^{m_{n}} \, 2^{-1} \, q'_{n d} \,
\delta_{\, i \, j_{nd}} \, .
\end{equation}
Expanding the right-hand side of~(\ref{EqnZForm}),
we have:
\begin{equation}
\label{EqnLumpy}
\begin{split}
z_n \: = \:
& 2^{-1}\, \cdot \,
(q_{n01} \cdot x_{i_{n01}} + \dots + q_{n0 k_{n0}} \cdot x_{i_{n0k_{\!n0}}})
\, + \\
& 2^{-2} \, \cdot \,
(q_{n11} \cdot x_{i_{n11}} + \dots + q_{n1 k_{n1}} \cdot x_{i_{n1k_{\!n1}}})
\, + \\
& \dots \\
& 2^{-(l'+1)} \, \cdot \,
(q_{nl'1} \cdot x_{i_{nl'1}} + \dots + q_{nl' k_{nl'}} \cdot x_{i_{nl'k_{\!nl'}}})
\,
+ \\
& 2^{-(l'+1)} \, \cdot \,
  (M_{l > l'} \, \,
(q_{nl1} \cdot x_{i_{nl1}} + \dots + q_{nl k_{nl}} \cdot x_{i_{nlk_{\!nl}}})
).
\end{split}
\end{equation}
Let $t \geq 0$ be the smallest integer such that:
\begin{align}
\label{IneqA}
2^{-t} & \: \leq  \: 2^{-(l+1)} \, q_{nld} & & \text{for all $l \leq l'$ and
                                                   $1 \leq d \leq k_{nl}$, and} \\
\label{IneqB}
2^{-t} & \: \leq  \: 2^{-(l+1)} \, q'_{nd} & & \text{for all
                                                   $1 \leq d \leq m_n$.}
\end{align}
By~(\ref{IneqA}),
equation ~(\ref{EqnLumpy}) can be reformulated as:
\[
z_n \: = \:
\sum_{d = 1}^{{h_{l'}}} \, 2^{-t} \, \cdot \, x_{i'_d} \, + \,
{2^{-(l'+1)}} \,  \cdot \,
  (M_{l > l'} \, \,
(q_{nl1} \cdot x_{i_{nl1}} + \dots + q_{nl k_{nl}} \cdot x_{i_{nlk_{\!nl}}})\, ) \, ,
\]
for suitable $i'_d$, where, for any $l \geq 0$, we define
$h_{l} := 2^{t}(1 - 2^{-(l+1)})$ (the  case for general $l$ is used below)
But, by~(\ref{EqnFindlprime}) and~(\ref{IneqB}), this can, without
loss of generality,
reformulated as
\begin{equation}
\label{EqnUgly}
\begin{split}
z_n \:
= \: & 2^{-1}\,  \cdot\,
       (q'_{n1} \cdot x_{j_{n1}} + \dots + q'_{n {m_n}} \cdot x_{j_{nm_{\!n}}}) \, + \\
&
2^{-2}\, \cdot  \,
\left({\sum}_{d = h_0 + 1}^{h_1} \, 2^{2-t} \, \cdot \, x_{i'_d}\right)  \, + \\
&   \dots  \\
& 2^{-(l'+1)}\, \cdot  \,
\left({\sum}_{d = h_{l'-1}\! + \! 1}^{h_{l'}} \, 2^{l'+1-t}
\,  \cdot \, x_{i'_d}\right)  \, + \\
&
{2^{-(l'+1)}} \, \cdot \,
  (M_{l > l'} \, \,
(q_{nl1} \cdot x_{i_{nl1}} + \dots + q_{nl k_{nl}} \cdot x_{i_{nlk_{\!nl}}})\, ) .
\end{split}
\end{equation}
Thus we define
\[
z_{n+1} \: = \:
M_{l \geq 1} \, \left(
\begin{array}{ll}
{\sum}_{d = h_{l}\! + \! 1}^{h_{l+1}} \, 2^{l+1 -t} \, . \, x_{i'_d}
& \text{if $l \leq l'$} \\
q_{nl1}.x_{i_{nl1}} + \dots + q_{nl k_{nl}}.x_{i_{nlk_{\!nl}}}
& \text{otherwise}
\end{array}
\right)\, .
\]
This explicitly exhibits $z_{n+1}$ in the form required by~(\ref{EqnZForm}).
Equation~(\ref{EqnExpands}) is immediate consequence of~(\ref{EqnUgly})
and the equality~(\ref{EqnToMaintain}) for $n+1$ follows
from~(\ref{EqnToMaintain}) for $n$ and equation~(\ref{EqnUgly}).
\end{proof}

\begin{prop}
\label{PropPhew}
If $X$ is an iterative midpoint set and
$\superind(s) = \superind(t)$
then,
for all $x \colon I \to X$, it holds that
$\interp(s,x) = \interp(t,x)$.
\end{prop}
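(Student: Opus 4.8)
The plan is to obtain Proposition~\ref{PropPhew} as an essentially immediate corollary of the two preceding lemmas, which between them carry all of the genuine content. First I would invoke Lemma~\ref{LemNormalize} twice: applied to $s$ it produces a normal form $s'$ with $\superind(s') = \superind(s)$ and $\interp(s',x) = \interp(s,x)$ for every $x \colon I \to X$, and applied to $t$ it produces a normal form $t'$ with $\superind(t') = \superind(t)$ and $\interp(t',x) = \interp(t,x)$. In this way the statement for arbitrary trees is reduced to the corresponding statement for normal forms, at the price only of recording that both the weight function and the interpretation are preserved under normalisation.

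Next I would combine the hypothesis $\superind(s) = \superind(t)$ with these identities to conclude $\superind(s') = \superind(s) = \superind(t) = \superind(t')$, so that $s'$ and $t'$ are two normal forms of equal weight. Lemma~\ref{LemNormalComplete} then applies directly and gives $\interp(s',x) = \interp(t',x)$ for all $x$. Chaining the three equalities yields, for every $x \colon I \to X$,
\[
\interp(s,x) = \interp(s',x) = \interp(t',x) = \interp(t,x),
\]
which is precisely the desired conclusion.

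The main obstacle is not located in this argument at all, but in the two lemmas on which it rests, where the real work has already been done. The genuine difficulty was surmounted in Lemma~\ref{LemNormalize}, whose inductive normalisation step for an $\omega$-branching node is exactly an appeal to the Flattening Proposition~\ref{PropFlatten}, and in Lemma~\ref{LemNormalComplete}, whose construction of the witnessing sequence $(z_n)$ is where iterativity is invoked to reconcile two normal forms of equal weight. Once those results are in hand, the present proof is a purely formal concatenation of equalities; I expect it to occupy only a few lines and to require no further use of the midpoint or cancellation axioms beyond what the cited lemmas already consume.
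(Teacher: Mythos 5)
Your proposal is correct and is exactly the paper's argument: the paper proves Proposition~\ref{PropPhew} as an immediate consequence of Lemmas~\ref{LemNormalize} and~\ref{LemNormalComplete}, composed precisely as you describe. You have merely written out the two-line chain of equalities that the paper leaves implicit.
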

\begin{proof}
Immediate from Lemmas~\ref{LemNormalize}
and~\ref{LemNormalComplete}.
\end{proof}

\subsection{Proof of Theorem~\ref{ThmSuperconvex}}
\label{SubSecProofA}

Statement~\ref{ThmSuperconvex:1}  of the theorem is obvious: every superconvex set is \emph{a fortiori} a midpoint set, and the
notions of cancellativity and  iterativity  for midpoint and superconvex sets coincide by
Lemmas~\ref{LemSuperCan} and~\ref{LemSuperIt} respectively. In fact, only the easy directions of these lemmas is needed to obtain statement \ref{ThmSuperconvex:1}.

For statement \ref{ThmSuperconvex:2}, we need to show that every iterative midpoint set
carries a unique iterative superconvex structure extending the midpoint structure.
Suppose that $(X,m)$ is an
iterative midpoint set.
We write $M$ for its supermidpoint operation, and we shall
also use the dyadic convex structure on $X$ given in
{\S}\ref{SubSecDyadic}.
We need to define an operation
$\supersym$ on superconvex weight functions,
and to show that it satisfies the equalities demanded
of superconvex combinations.

For later convenience, the next lemma is stated simultaneously for
superconvex, convex and dyadic convex weight functions.
\begin{lemma}
\label{LemFindNormal}
For any {\conab} weight function $(I, \lambda)$,
one can find a dyadic convex weight
function $(I, \rho)$, and a
{\conab} weight function $(I, \mu)$ satisfying, for
all $i \in I$,
\[
\lambda_i \: = \: \frac{1}{2} \rho_i + \frac{1}{2} \mu_i.
\]
\end{lemma}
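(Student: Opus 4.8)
The plan is to reformulate the problem as a domination statement and then construct the dyadic part by rounding. Setting $\mu_i := 2\lambda_i - \rho_i$ makes the required identity $\lambda_i = \tfrac{1}{2}\rho_i + \tfrac{1}{2}\mu_i$ hold automatically, so everything reduces to exhibiting a dyadic convex weight function $\rho$ (dyadic values, finite support, summing to $1$) that is \emph{dominated} by $2\lambda$, i.e.\ $\rho_i \le 2\lambda_i$ for all $i$. Given such a $\rho$, I would check that $\mu$ is a \conab weight function of the same flavour as $\lambda$: nonnegativity $\mu_i \ge 0$ follows from domination, the total is $\sum_i \mu_i = 2\sum_i\lambda_i - \sum_i\rho_i = 2-1 = 1$, and the support of $\mu$ is contained in that of $\lambda$ (since $\mu_i = 2\lambda_i$ wherever $\rho_i = 0$), so $\mu$ inherits countable support in the superconvex case and finite support in the convex and dyadic cases. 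In the dyadic case $\mu_i = 2\lambda_i - \rho_i$ is moreover dyadic, since both $2\lambda_i$ and $\rho_i$ are. Thus the whole content lies in building $\rho$.

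To construct $\rho$, I would first pass to a finite index set. Since $\sum_{i}\lambda_i = 1$, I can choose a finite $F \subseteq I$ with $\sum_{i \in F}\lambda_i > \tfrac{1}{2}$ (in the convex and dyadic cases $\lambda$ already has finite support, so I simply take $F$ to be that support). Then $\sum_{i\in F} 2\lambda_i > 1$. Next I round: for a large integer $t$, set $r_i := \lfloor 2^{t+1}\lambda_i\rfloor\,2^{-t}$ for $i \in F$ and $r_i := 0$ otherwise. Each $r_i$ is a nonnegative multiple of $2^{-t}$ with $2\lambda_i - 2^{-t} < r_i \le 2\lambda_i$, hence
\[
\sum_{i\in F} r_i \;>\; \sum_{i\in F}2\lambda_i \;-\; |F|\,2^{-t} \;>\; 1
\]
once $t$ is chosen so large that $|F|\,2^{-t} < \bigl(\sum_{i\in F}2\lambda_i\bigr) - 1$, which is possible because the last quantity is strictly positive.

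Finally I would trim the total down to exactly $1$. The excess $E := \bigl(\sum_i r_i\bigr) - 1$ is a nonnegative multiple of $2^{-t}$ and satisfies $E < \sum_i r_i$, so I may repeatedly subtract single units of $2^{-t}$ from individual $r_i$ (never taking any entry below $0$) until the total reaches $1$; call the result $\rho$. Since this process only decreases entries, $\rho_i \le r_i \le 2\lambda_i$ is preserved, $\rho$ stays dyadic and finitely supported, and $\sum_i\rho_i = 1$, as required. The main point to be careful about is precisely the finiteness of the support of $\rho$ in the superconvex case, which is exactly why I restrict to the finite set $F$ using $\sum_{F}\lambda_i > \tfrac12$; the remaining verifications (nonnegativity and domination surviving the trimming, and the correct type of $\mu$) are then routine bookkeeping.
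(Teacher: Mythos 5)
Your argument is correct, and the reduction you start from --- everything comes down to producing a finitely supported dyadic $\rho$ with $\sum_i \rho_i = 1$ and $\rho_i \le 2\lambda_i$, after which $\mu := 2\lambda - \rho$ does the rest --- is exactly the invariant the paper also maintains. Where you diverge is in how $\rho$ is built. The paper runs a greedy peeling algorithm: starting from $\mu^0 = 2\lambda$, it repeatedly transfers the largest available power $2^{-t}$ from some coordinate of $\mu^l$ into $\rho^l$, and the bulk of its proof is a termination argument (the accumulated weights are non-increasing powers of two, so the running total cannot jump over $1$; and if the process never reached $1$ one would get $\mu^\infty = 0$ pointwise and hence $\sum_i(\tfrac12\rho^\infty_i + \tfrac12\mu^\infty_i) \le \tfrac12$, contradicting the pointwise invariant $\tfrac12\rho^l_i + \tfrac12\mu^l_i = \lambda_i$). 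You instead do a one-shot construction: pick a finite $F$ with $\sum_{i\in F}\lambda_i > \tfrac12$, round $2\lambda$ down to multiples of $2^{-t}$ on $F$ with $t$ large enough that the rounded total still exceeds $1$, and trim the (dyadic) excess in steps of $2^{-t}$. This avoids the limit-and-contradiction termination argument entirely and is the more elementary of the two; the only point where you must be careful is the one you flag, namely securing finite support in the superconvex case via the choice of $F$. The paper's greedy construction yields nothing extra that is used downstream (the later definition of $t_{(I,\lambda)}$ only needs \emph{some} decomposition of the stated form), so your version would serve equally well as a proof of the lemma.
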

\begin{proof}
We define two finite
sequences $\rho^0, \dots, \rho^{k}: I \to [0,1]$, of functions valued in the dyadic rationals, and
$\mu^0, \dots, \mu^{k} \colon
I \to [0,2]$,
where $k \geq 1$, such that:
each $\rho^l$ has finite support;
for all $i \in I$,
$\: \frac{1}{2}\rho^l_i + \frac{1}{2}\mu^{l}_i = \lambda_i$;
and $\sum_{i \in I} \rho^l_i \leq 1$
(hence $\sum_{i \in I} \mu^l_i \geq 1$).
Define $\mu^0_i := 2\lambda_i$, and
$\rho^0_i = 0$. If $\sum_{i \in I} \rho^l_i = 1$ then define $k := l$ and stop the construction of the sequences.
Otherwise if $\sum_{i \in I} \rho^l_i < 1$ then,
to define $\rho^{l+1}$ and $\mu^{l+1}$ from
$\rho^l$ and $\mu^l$, let $t\geq 0$ be the smallest integer
for which there exists $i \in I$ with $2^{-t} \leq \mu^l_i$, and let
$i_l$ be one such $I$. Now define
\begin{align*}
\rho^{l+1}_j & \:  = \:
\left\{ \begin{array}{ll}
        \rho^l_j + 2^{-t} & \text{if $j = i_l$,} \\[2pt]
        \rho^l_j          & \text{otherwise,}
\end{array} \right.
\\ \\
\mu^{l+1}_j & \:  = \:
\left\{ \begin{array}{ll}
        \mu^l_j - 2^{-t} & \text{if $j = i_l$,} \\[2pt]
        \mu^l_j          & \text{otherwise.}
\end{array} \right.
\end{align*}
It is not obvious from the above definition that $\rho^{l+1}_j \leq 1$.
But, in fact, a stronger property holds: we are guaranteed to eventually
arrive at $k$ with $\sum_{i \in I} \rho^k_i = 1$. Hence, for all $l \leq k$, we have
$\sum_{i \in I} \rho^l_i \leq 1$, hence $\rho^{l+1}_j \leq 1$.

To see that there exists $k$ with
$\sum_{i \in I} \rho^k_i = 1$, suppose otherwise.
It cannot be the case that one ever arrives at $l$ with
$\sum_{i \in I} \rho^l_i > 1$, because the successive $\rho_l$ are produced by
adding a (non-strictly) decreasing sequence of weights of the form
$2^{-t}$, and if such a sequence sums to greater than $1$ then a finite
prefix of it sums to $1$. Thus the assumed non-existence of $k$
must be because one obtains infinite sequences $\rho^l$, $\mu^l$
for which $\sum_{i \in I} \rho^l_i < 1$ always holds.
Define $\rho^{\infty}_i = \lim_{l = 0}^\infty \rho^l_i$ and
$\mu^{\infty}_i = \lim_{l = 0}^\infty \mu^l_i$ (both are monotone
bounded sequences).
Clearly, $\sum_{i \in I} \rho^\infty_i \leq 1$.
Because each $\rho_{l+1}$ is defined by
taking the largest possible $2^{-i}$ from $\mu^l$, we have
that $\mu^\infty_i = 0$ for all $i$.
So $\sum_{i \in I} \frac{1}{2}\rho^\infty_i + \frac{1}{2}\mu^\infty_i
\leq \frac{1}{2}$.
However, as
$\sum_{i \in I} \frac{1}{2}\rho^l_i + \frac{1}{2}\mu^l_i = 1$,
it must hold that
$\sum_{i \in I} \frac{1}{2} \rho^\infty_i + \frac{1}{2}\mu^\infty_i = 1$,
a contradiction.

We have shown that there does indeed exist $k$ as claimed.
The $\rho$ and $\mu$ required by the lemma
are defined by $\rho = \rho^k$
and $\mu = \mu^k$. If the original $\lambda$ is a {\conab} weight
function then so is $\mu$.
\end{proof}

Let $(I, \lambda)$ be a superconvex weight function. We
define
an associated term, in the sense of~{\S}\ref{SubSecTerms},
$\: t_{(I, \lambda)} = (u_l)_l \in \algset_I$,
as follows.
First, using Lemma~\ref{LemFindNormal}, we construct a sequence
$(I, \lambda^l)_l$ of superconvex weight functions, and
a sequence $(I, \rho^l)_l$ of dyadic convex weight functions
satisfying, for all $i \in I$,
\begin{align}
\label{EqnLamZero}
\lambda^0_i \:  & = \: \lambda_i\, , \\
\label{EqnLamL}
\lambda^l_i \: & = \: \frac{1}{2}\rho^l_i + \frac{1}{2}\lambda^{l+1}_i \, .
\end{align}
Thus, for all $i \in I$,
\begin{equation}
\label{EqnInfExpandWeight}
\lambda_i \: = \: \sum_{l = 0}^\infty 2^{-(l+1)} \, \rho^l_i.
\end{equation}
Finally, define $u_l$ to be any element of $\algsetf_I$ satisfying
\begin{equation}
\label{EqnGluck}
\superind(u_l) = \rho^l.
\end{equation}
Such an element
exists by Proposition~\ref{PropDyadic}.
Observe that, by its definition,
$t_{(I, \lambda)}$ is  a normal form
in the sense of~{\S}\ref{SubSecTerms}.

\begin{lemma}
\label{LemIndCalc}
For any superconvex weight function $(I, \lambda)$,
we have  $\superind(t_{(I,\lambda)}) = \lambda$.
\end{lemma}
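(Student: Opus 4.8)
The plan is to prove the equality $\superind(t_{(I,\lambda)}) = \lambda$ by a direct computation, evaluating both sides pointwise at each index $j \in I$ and chaining together the definition of $\superind$ on a sequence with the defining properties of the term $t_{(I,\lambda)}$. The essential observation is that $t_{(I,\lambda)}$ was constructed as a \emph{normal form}, namely a sequence $(u_l)_l$ with each $u_l \in \algsetf_I$, so the weight function $\superind$ acts on it via the sequence clause of its inductive definition, reducing everything to the finite-tree weights $\superind(u_l)$.

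First I would unfold the sequence clause in the definition of $\superind$, which gives, for each $j \in I$,
\[
\superind(t_{(I,\lambda)})(j) \: = \: \superind((u_l)_l)(j) \: = \: \sum_{l=0}^\infty 2^{-(l+1)} \, \superind(u_l)(j).
\]
Next I would invoke the defining condition~(\ref{EqnGluck}) of each $u_l$, namely $\superind(u_l) = \rho^l$, to rewrite the summands as $\rho^l_j$. Finally I would apply the expansion~(\ref{EqnInfExpandWeight}) established during the construction of $t_{(I,\lambda)}$, which states precisely that $\lambda_j = \sum_{l=0}^\infty 2^{-(l+1)} \rho^l_j$. Concatenating these three steps yields $\superind(t_{(I,\lambda)})(j) = \lambda_j$ for every $j$, hence the claimed equality of weight functions.

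There is no genuine obstacle here: the lemma is a bookkeeping verification that the weight function of the constructed term agrees, by design, with the given superconvex weight $\lambda$. The only point deserving a moment's care is ensuring the infinite sum is well-defined and may be manipulated termwise, but this is guaranteed because the $\rho^l$ are weight functions (so each $\sum_{i} \rho^l_i = 1$) and the geometric coefficients $2^{-(l+1)}$ make the double sum absolutely convergent; the earlier remark that $\superind(t)$ sums to $1$ over $I$ for any $t \in \algset_I$ confirms the consistency. Thus the proof is essentially the single displayed chain of equalities, cited to~(\ref{EqnGluck}) and~(\ref{EqnInfExpandWeight}), and I expect it to occupy only a few lines.
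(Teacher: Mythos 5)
Your proposal is correct and is essentially identical to the paper's proof: both unfold the sequence clause of $\superind$, substitute $\superind(u_l) = \rho^l$ via~(\ref{EqnGluck}), and conclude with~(\ref{EqnInfExpandWeight}). No further comment is needed.
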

\begin{proof}
We have, for any $i \in I$,
\begin{align*}
\superind(t_{(I,\lambda)})(i) &
\: = \: \superind((u_l)_l)(i) \\
& \: = \: \sum_{l = 0}^\infty 2^{-(l+1)} \superind(u_l)(i) \\
& \: = \: \sum_{l = 0}^\infty 2^{-(l+1)} \rho^l_i
& & \text{by (\ref{EqnGluck})} \\
& \: = \: \lambda_i
& & \text{by (\ref{EqnInfExpandWeight}).}
\end{align*}
\end{proof}

The required superconvex combination operation, $\supersym$,
can now be defined by
\begin{equation}
\label{equation:define-superconvex}
\supercom{i}{I}{\lambda_i}{x_i} \: = \:
 \interp(t_{(I,\lambda)},x).
\end{equation}
\begin{lemma}
$\supersym$ is an iterative superconvex structure on $X$ satisfying
the identity $\frac{1}{2} \cdot x + \frac{1}{2} \cdot y = m(x,y)$.
Moreover, if $(X,m)$ is cancellative then so is
$\supersym$.
\end{lemma}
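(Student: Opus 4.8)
The plan is to make the completeness result Proposition~\ref{PropPhew} do almost all of the work, since it guarantees that the interpretation $\interp(t,x)$ of a term $t \in \algset_I$ in the iterative midpoint set $X$ depends only on its weight $\superind(t)$. Combined with Lemma~\ref{LemIndCalc}, which says $\superind(t_{(I,\lambda)}) = \lambda$, this first of all shows that $\supersym$ is \emph{well defined}: although $t_{(I,\lambda)}$ was built using arbitrary choices (the decompositions from Lemma~\ref{LemFindNormal} and the finite trees $u_l$ with $\superind(u_l) = \rho^l$), any two admissible choices produce terms of the same weight $\lambda$, hence equal interpretations. The same principle immediately delivers the identity $\frac{1}{2} \cdot x + \frac{1}{2} \cdot y = m(x,y)$: the single binary node $(i_1,i_2)$ has weight $\frac{1}{2}\delta_{1\,\cdot} + \frac{1}{2}\delta_{2\,\cdot}$, agreeing with the two-point weight function $(\frac{1}{2},\frac{1}{2})$, so Proposition~\ref{PropPhew} forces $m(x,y)$ and $\frac{1}{2}\cdot x + \frac{1}{2}\cdot y$ to coincide.

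Next I would verify the superconvex axioms. Axiom~(\ref{EqnSuperA}) is handled just as the identity, comparing $t_{(\One,1)}$ with the bare leaf $i$, whose weight is the constant $1$. The substantive axiom is the associativity law~(\ref{EqnSuperB}), where the substitution machinery of {\S}\ref{SubSecTerms} is essential. I would take $\sigma(i) = t_{(J,\mu_{i\,\cdot})}$, so that $\interp(\sigma(i),x)$ is exactly the inner combination $\dlsupercom{X}{j}{J}{\mu_{ij}}{x_j}$. By the semantic substitution Lemma~\ref{LemSemSub}, the left-hand side of~(\ref{EqnSuperB}) equals $\interp([\sigma]t_{(I,\lambda)}, x)$; by the syntactic substitution Lemma~\ref{LemIndSub} together with Lemma~\ref{LemIndCalc}, the weight of $[\sigma]t_{(I,\lambda)}$ is $j \mapsto \sum_{i} \lambda_i\,\mu_{ij}$, which is precisely the weight of the right-hand side. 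A final appeal to Proposition~\ref{PropPhew} closes the argument. This associativity step, with its double index bookkeeping, is the only place requiring genuine attention, and is where I expect the main (though still routine) difficulty to lie.

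For iterativity I would use the reduction of Lemma~\ref{LemSuperIt}: it suffices to treat sequences with $x_i = \frac{1}{2}\cdot y_i + \frac{1}{2}\cdot x_{i+1}$ and derive $x_0 = \supercom{i}{\SetNat}{2^{-(i+1)}}{y_i}$. By the identity already established, the hypothesis reads $x_i = m(y_i,x_{i+1})$, so the canonicity clause of Proposition~\ref{PropSetIt}(\ref{iter:1}) yields $x_0 = M_i\, y_i$. It then remains to identify $M_i\, y_i$ with $\supercom{i}{\SetNat}{2^{-(i+1)}}{y_i}$, which again follows from Proposition~\ref{PropPhew}: the infinitary term given by the sequence of bare leaves $(i)_i$ interprets as $M_i\, y_i$ and has weight $j \mapsto 2^{-(j+1)}$, matching the weight function $i \mapsto 2^{-(i+1)}$.

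Finally, for cancellativity I would invoke Lemma~\ref{LemSuperCan}, reducing to the implication that $\frac{1}{2}\cdot x + \frac{1}{2}\cdot z = \frac{1}{2}\cdot y + \frac{1}{2}\cdot z$ entails $x = y$. Rewriting both sides via the identity as $m(x,z)$ and $m(y,z)$ and commuting arguments, this is exactly the cancellation law~(\ref{cancellation}) for $(X,m)$, so $x = y$ is immediate. In this way all four clauses reduce, through the halving instances supplied by Lemmas~\ref{LemSuperCan} and~\ref{LemSuperIt} and the completeness Proposition~\ref{PropPhew}, to properties already available for the midpoint structure $(X,m)$.
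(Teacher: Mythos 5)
Your proposal is correct and follows essentially the same route as the paper: Proposition~\ref{PropPhew} together with Lemmas~\ref{LemSemSub}, \ref{LemIndSub} and~\ref{LemIndCalc} reduces the superconvex axioms and the identity $\frac{1}{2}\cdot x + \frac{1}{2}\cdot y = m(x,y)$ to equalities of weight functions, and iterativity and cancellativity are obtained via the halving reductions of Lemmas~\ref{LemSuperIt} and~\ref{LemSuperCan}. You spell out a few details the paper leaves implicit (well-definedness of $t_{(I,\lambda)}$ up to interpretation, and the identification of $M_i\,y_i$ with $\supercom{i}{\SetNat}{2^{-(i+1)}}{y_i}$), but the argument is the same.
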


\begin{proof}
To show that $\supersym$ gives superconvex structure,
equation~(\ref{EqnSuperA}) holds easily so we verify
equation~(\ref{EqnSuperB}).
For the left-hand side we have:
\begin{align*}
\supercom{i}{I}{\lambda_i}
         {\left(\supercom{j}{J}{\mu_{ij}}{x_j}\right)}
& = \interp(t_{(I,\lambda)}, \, i \mapsto \interp(t_{(J, \mu_i)}, x)), \\
& = \interp([i \mapsto t_{(J, \mu_i)}]\,t_{(I,\lambda)}, x),
& \text{by Lemma~\ref{LemSemSub}}.
\end{align*}
For the right-hand side:
\begin{align*}
\supercom{j}{J}{\left(\sum_{i \in I} \lambda_i \mu_{ij}\right)}{x_j}
& = \interp\left(t_{\left(J, \, j \mapsto \sum_{i \in I} \lambda_i \mu_{ij}\right)}, x\right).
\end{align*}
By Proposition~\ref{PropPhew}, to show the two are equal, it
suffices to show the equality:
\[
\superind([i \mapsto t_{(J, \mu_i)}]\, t_{(I,\lambda)}) \: = \:
\superind\left(t_{\left(J, \, j \mapsto \sum_{i \in I} \lambda_i \mu_{ij}\right)}\right).
\]
For this, we have:
\begin{align*}
\superind([i \mapsto t_{(J, \mu_i)}]\, t_{(I,\lambda)})(j)
& = \sum_{i \in I} \, \superind(t_{(I,\lambda)})(i) \cdot
          \superind(t_{(J, \mu_i)})(j),
& \text{by Lemma~\ref{LemIndSub},} \\
& = \sum_{i \in I}  \lambda_i \mu_{ij},
& \text{by Lemma~\ref{LemIndCalc},} \\
& = \superind\left(t_{\left(J, \, j \mapsto \sum_{i \in I} \lambda_i \mu_{ij}\right)}\right)(j),
& \text{by Lemma~\ref{LemIndCalc}.}
\end{align*}

To show that
$\frac{1}{2} \cdot x_0 + \frac{1}{2} \cdot x_1 = m(x_0,x_1)$, we first observe that
$\frac{1}{2} \cdot x_0 + \frac{1}{2} \cdot x_1 =
\interp\left(t_{\left(\{0,1\}, \, i \mapsto \frac{1}{2}\right)}, x\right)$, by definition of
$\supersym$,
and that $m(x_0,x_1) = \interp((0,1), x)$,
for the term $(0,1) \in \algsetf_{\{0,1\}}$, by definition
of $\interp$. Thus, by Proposition~\ref{PropPhew},
it suffices to show that
$\superind\left(t_{\left(\{0,1\}, \, i \mapsto \frac{1}{2}\right)}\right) =
 \superind((0,1))$. But this is immediate from
Lemma~\ref{LemIndCalc}.

The claimed iteration and cancellation
properties of $\supersym$ now follow immediately from
Lemmas~\ref{LemSuperIt} and~\ref{LemSuperCan} respectively.
\end{proof}

Next we address the uniqueness part of
Theorem~\ref{ThmSuperconvex}(\ref{ThmSuperconvex:2}). In fact, we
simultaneously establish the uniqueness of the associated convex
structure on $X$.

\begin{lemma}
\label{LemUniqueConab}
If $X$ is an
an iterative midpoint set
then there is a unique {\conab} structure $\supersym$ on
$X$ satisfying
$\frac{1}{2} \cdot x_0 + \frac{1}{2} \cdot x_1 = m(x_0,x_1)$.
\end{lemma}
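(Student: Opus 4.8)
The plan is to treat existence and uniqueness separately, with uniqueness being the substantive part. Existence is already in hand: for the superconvex case it is the content of the preceding lemma, for the dyadic case it is Proposition~\ref{PropDyadic}(\ref{PropDyadic:2}), and for the convex case one simply restricts the superconvex structure to finitely-supported weight functions. So I would concentrate on showing that any \conab{} structure $\supersym$ on $X$ satisfying $\frac{1}{2}\cdot x_0 + \frac{1}{2}\cdot x_1 = m(x_0,x_1)$ assigns to each combination a value forced by $m$ alone. Since the right-hand side of~\eqref{equation:define-superconvex} is built purely from $m$ and the iterated midpoint $M$ (itself determined by $m$ via Proposition~\ref{PropSetIt}), it suffices to prove that $\supercom{i}{I}{\lambda_i}{x_i} = \interp(t_{(I,\lambda)},x)$ for every \conab{} weight function $(I,\lambda)$ and every $x\colon I\to X$.

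First I would reuse the data produced in the existence construction: applying Lemma~\ref{LemFindNormal} repeatedly yields a sequence $(\lambda^l)_l$ of \conab{} weight functions and a sequence $(\rho^l)_l$ of dyadic convex weight functions with $\lambda^0 = \lambda$ and $\lambda^l = \frac{1}{2}\rho^l + \frac{1}{2}\lambda^{l+1}$, as in equations~\eqref{EqnLamZero} and~\eqref{EqnLamL}, together with $t_{(I,\lambda)} = (u_l)_l$ where $\superind(u_l) = \rho^l$. Setting $z_n = \supercom{i}{I}{\rho^n_i}{x_i}$ and $w_n = \supercom{i}{I}{\lambda^n_i}{x_i}$, both evaluated in $\supersym$, I would note that, since $\rho^n$ is dyadic, the restriction of $\supersym$ to dyadic weight functions is a dyadic convex structure compatible with $m$; by the uniqueness clause of Proposition~\ref{PropDyadic}(\ref{PropDyadic:2}) this restriction is forced, so $z_n = \interp(u_n,x)$ is determined by $m$.

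The crux is the relation $w_n = m(z_n, w_{n+1})$. This follows directly from the associativity axiom~\eqref{EqnSuperB} of $\supersym$ together with~\eqref{EqnLamL} and compatibility: writing $w_n$ as the combination over $\{0,1\}$ with weights $(\frac{1}{2},\frac{1}{2})$ of the inner combinations $z_n$ and $w_{n+1}$, the axiom~\eqref{EqnSuperB} collapses it to the single combination with weights $\frac{1}{2}\rho^n_i + \frac{1}{2}\lambda^{n+1}_i = \lambda^n_i$, so $w_n = \frac{1}{2}\cdot z_n + \frac{1}{2}\cdot w_{n+1} = m(z_n,w_{n+1})$. The point I would stress is that this computation takes place entirely within the axioms of the given \conab{} structure — using the infinitary instance of~\eqref{EqnSuperB} in the superconvex case and the finite instance in the convex and dyadic cases — and in particular makes no appeal to cancellation. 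This is precisely what allows the argument to cover the non-cancellative case, where the one-sided approximation property established in the proof of Proposition~\ref{PropSetApprox} is not available.

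Having $w_n = m(z_n, w_{n+1})$ for all $n$, canonicity (the uniqueness half of iterativity, Proposition~\ref{PropSetIt}) gives $w_0 = M_l\, z_l$. Since $w_0 = \supercom{i}{I}{\lambda_i}{x_i}$ and $M_l\, z_l = M_l\,\interp(u_l,x) = \interp(t_{(I,\lambda)},x)$, the value of the combination is determined by $m$, whence any two compatible \conab{} structures coincide. I expect the only genuinely delicate bookkeeping to be verifying that the tail weight functions $\lambda^n$ remain legitimate \conab{} weight functions in each of the three cases (finite support being preserved in the convex and dyadic cases), so that $w_n$ and the relation $w_n = m(z_n,w_{n+1})$ are meaningful; but this is inherited directly from the construction underlying Lemma~\ref{LemFindNormal}.
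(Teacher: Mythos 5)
Your proposal is correct and follows essentially the same route as the paper's proof: expand $\lambda$ via Lemma~\ref{LemFindNormal} into dyadic pieces $\rho^l$ and tails $\lambda^l$, derive the recurrence $w_n = m(z_n, w_{n+1})$ from axiom~(\ref{EqnSuperB}) and compatibility with $m$, pin down $w_0 = M_l\, z_l$ by canonicity, and note that the dyadic combinations $z_l$ are forced by Proposition~\ref{PropDyadic}. The only cosmetic difference is that the paper compares two candidate structures directly while you show each equals the canonical value $\interp(t_{(I,\lambda)},x)$, which is equivalent.
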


\begin{proof}
Existence has already been established. For uniqueness,
suppose that $\supersym$ and $\supersym'$ are
{\conab} structures satisfying
$\frac{1}{2} \cdot x_0 + \frac{1}{2} \cdot x_1 = m(x_0,x_1) =
\frac{1}{2} \cdot x_0 +' \frac{1}{2} \cdot x_1$.
Consider any two {\conab} combinations of the form
$\supercom{i}{I}{\lambda_i}{x_i}$
and
$\lsupercom{\prime}{i}{I}{\lambda_i}{x_i}$.
Use Lemma~\ref{LemFindNormal} to construct
a sequence $(I, \lambda^l)$ of {\conab}
weight functions and a sequence
$(I, \rho^l)$ of dyadic convex weight functions
satisfying equations~(\ref{EqnLamZero}) and~(\ref{EqnLamL}).
Then, using Lemma~\ref{LemKoenig} to obtain the first equality,
\begin{align*}
\supercom{i}{I}{\lambda^l_i}{x_i}
\:
&  = \:
\frac{1}{2} \,  \cdot \,
\supercom{i}{I}{\rho^l_i}{x_i} \: +' \:
\frac{1}{2} \, \cdot \,
\supercom{i}{I}{\lambda^{l+1}_i}{x_i} \\
& = \: m\left(\supercom{i}{I}{\rho^l_i}{x_i} \, , \,
         \supercom{i}{I}{\lambda^{l+1}_i}{x_i}\right).
\end{align*}
Thus, by iteration for $m$, and because $\lambda = \lambda^0$,
we have
\begin{equation}
\label{EqnReduceSuperconvex}
\supercom{i}{I}{\lambda_i}{x_i}
 \:
   = \:
M_l \,\left(\,  \supercom{i}{I}{\rho^l_i}{x_i} \, \right) \, .
\end{equation}
By the same argument for $\supersym'$,
\[
\dlsupercom{\prime}{i}{I}{\lambda_i}{x_i}
 \:
   = \:
M_l \,\left(\,  \dlsupercom{\prime}{i}{I}{\rho^l_i}{x_i} \, \right) \, .
\]
But, because
$\frac{1}{2} \cdot x_0 + \frac{1}{2} \cdot x_1 = m(x_0,x_1) =
\frac{1}{2} \cdot x_0 +' \frac{1}{2} \cdot x_1$, it follows from
Proposition~\ref{PropDyadic} that, for every $l \geq 0$,
\[ \supercom{i}{I}{\rho^l_i}{x_i}
\: = \:
\dlsupercom{\prime}{i}{I}{\rho^l_i}{x_i}\, .
\]
Thus indeed
$\supercom{i}{I}{\lambda_i}{x_i}
= \lsupercom{\prime}{i}{I}{\lambda_i}{x_i} $.
\end{proof}

It remains to prove
statement~(\ref{ThmSuperconvex:3}) of Theorem~\ref{ThmSuperconvex}.
\begin{prop} \label{prop-affine}
Let $X$ and $Y$ be iterative midpoint sets, considered together with
their unique associated (super)convex structures. Then,
for any function $f \colon X \to Y$,
the following are equivalent:
\begin{enumerate}
\item $f$ is a midpoint homomorphism,
\item $f$ is affine,
\item  $f$ is superaffine.
\end{enumerate}
\end{prop}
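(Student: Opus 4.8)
The plan is to prove the three conditions equivalent by running the cycle $(3) \Rightarrow (2) \Rightarrow (1) \Rightarrow (3)$, in which only the last implication carries any real content. The first two steps are immediate specializations. A convex combination is exactly a superconvex combination whose weight function has finite support, so any superaffine $f$ is \emph{a fortiori} affine, giving $(3) \Rightarrow (2)$. Likewise the midpoint $m(x,y) = \frac{1}{2} \cdot x + \frac{1}{2} \cdot y$ is itself a (dyadic) convex combination, so any affine $f$ preserves midpoints and is therefore a midpoint homomorphism, giving $(2) \Rightarrow (1)$.

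The substance is $(1) \Rightarrow (3)$, and I would approach it by first recording two consequences of $f$ being a midpoint homomorphism between \emph{iterative} midpoint sets. By Proposition~\ref{PropSetIt}(\ref{iter:2}), $f$ automatically commutes with the associated iterated midpoint operations, i.e.\ $f(M_l\, z_l) = M_l\, f(z_l)$. By Proposition~\ref{PropDyadic}(\ref{PropDyadic:3}), $f$ is dyadic affine, i.e.\ it preserves every dyadic convex combination. Given an arbitrary superconvex weight function $(I,\lambda)$ and a family $x \colon I \to X$, I would then invoke the decomposition~(\ref{EqnReduceSuperconvex}), which writes the superconvex combination as an iterated midpoint of dyadic convex combinations,
\[
\supercom{i}{I}{\lambda_i}{x_i} \: = \: M_l\left(\,\supercom{i}{I}{\rho^l_i}{x_i}\,\right),
\]
where the dyadic convex weight functions $(I,\rho^l)_l$ are those produced from $(I,\lambda)$ via Lemma~\ref{LemFindNormal}. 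Applying $f$, commuting it past $M$, and then past each dyadic combination yields
\[
f\!\left(\supercom{i}{I}{\lambda_i}{x_i}\right) \: = \: M_l\left(\,\supercom{i}{I}{\rho^l_i}{f(x_i)}\,\right) \: = \: \supercom{i}{I}{\lambda_i}{f(x_i)},
\]
the last equality being precisely~(\ref{EqnReduceSuperconvex}) read in $Y$. This is exactly superaffineness.

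The one point I expect to require care, and which I regard as the crux, is that \emph{the same} dyadic decomposition $(\rho^l)_l$ must serve in both $X$ and $Y$: the first display above is an identity in $X$, while the closing equality is an identity in $Y$, and the argument only works if both use identical $\rho^l$. This holds because the construction of Lemma~\ref{LemFindNormal} depends solely on the numerical weight function $(I,\lambda)$ and not on the ambient midpoint set, so the sequence $(\rho^l)_l$ is determined before either $X$ or $Y$ is consulted. Once this is observed, the remainder is a direct chaining of the already-established homomorphism properties, and no further computation is needed.
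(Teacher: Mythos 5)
Your proposal is correct and follows essentially the same route as the paper's proof: the implications $3 \Rightarrow 2 \Rightarrow 1$ are dismissed as immediate, and $1 \Rightarrow 3$ is obtained by expanding the superconvex combination via equation~(\ref{EqnReduceSuperconvex}) as an iterated midpoint of dyadic convex combinations, then pushing $f$ through $M$ (Proposition~\ref{PropSetIt}) and through the dyadic combinations (Proposition~\ref{PropDyadic}) before reassembling in $Y$. Your explicit observation that the decomposition $(\rho^l)_l$ from Lemma~\ref{LemFindNormal} depends only on the weight function and hence serves identically in $X$ and $Y$ is a point the paper leaves implicit, and it is correct.
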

\begin{proof}
The implications $3 \Longrightarrow 2 \Longrightarrow 1$ are
trivial, so we prove $1 \Longrightarrow 3$. To this end,
consider any superconvex combination
$\lsupercom{X}{i}{I}{\lambda_i}{x_i}$ in $X$.
As in the proof of Lemma~\ref{LemUniqueConab},
equation~(\ref{EqnReduceSuperconvex}),
we expand this as
\[
\dlsupercom{X}{i}{I}{\lambda_i}{x_i}
 \:
   = \:
M^X_l \,\left(\dlsupercom{X}{i}{I}{\rho^l_i}{x_i} \right),
\]
where each $\lsupercom{X}{i}{I}{\rho^l_i}{x_i}$ is a dyadic convex
combination. But then,
\begin{align*}
f\left(\dlsupercom{X}{i}{I}{\lambda_i}{x_i} \right)
\: & = \: f\left(\, M^X_l \left( \dlsupercom{X}{i}{I}{\rho^l_i}{x_i} \right) \right)
\\
& = \: M^Y_l \, f\!\left( \dlsupercom{X}{i}{I}{\rho^l_i}{x_i}  \right)
& & \text{by Proposition~\ref{PropSetIt}(\ref{iter:1})} \\
& = \: M^Y_l  \left(  \dlsupercom{Y}{i}{I}{\rho^l_i}{f(x_i)}  \right)
& & \text{by Proposition~\ref{PropDyadic}(\ref{propdyadic:3})} \\
& = \: \dlsupercom{Y}{i}{I}{\lambda_i}{f(x_i)} \, ,
\end{align*}
where the last equality is again
equation~(\ref{EqnReduceSuperconvex}) from the proof of
Lemma~\ref{LemUniqueConab}.
\end{proof}

Since every cancellative superconvex set is iterative (Proposition~\ref{prop:css-iterative}),
a special case of Proposition~\ref{prop-affine} is that every midpoint homomorphism between
cancellative superconvex sets is superaffine. In contrast, it is not in general the case that
 midpoint homomorphisms between cancellative convex sets are automatically affine. For example,
 if $\mathbb{R}$ is considered as a vector space over $\mathbb{Q}$ then, it is a well-known consequence of the axiom of choice (using which one obtains a Hamel basis for $\mathbb{R}$) that there exist $\mathbb{Q}$-linear functions that are not continuous. So \emph{a fortiori} there exist
 midpoint homomorphisms that are not affine.

\subsection{Applications of Theorem~\ref{ThmSuperconvex}}
\label{SecApplySuperconvex}

\newcommand{\sreal}{\mathbb{R}}
\newcommand{\qreal}{(\mathbb{R}/{\!\sim})}
\newcommand{\dcl}[1]{\overline{#1}}

In is immediate from Theorem~\ref{ThmSuperconvex} that the forgetful functor from the category of iterative superconvex sets (with superaffine maps as morphisms) to the category of iterative midpoint objects (with midpoint homomorphisms as morphisms), defined by taking $m(x,y) := \frac{1}{2} \cdot x + \frac{1}{2} \cdot y$ as the midpoint structure, is an isomorphism of categories. Furthermore, this cuts down to an isomorphism of categories between the cancellative objects on either side.
We now show that, for every set $X$, there exists a free iterative superconvex set $FX$ generated by $X$, which, moreover, is cancellative. Furthermore, we give an explicit description of $FX$. By the isomorphism of categories noted above, $FX$
is also the free iterative midpoint set over a set $X$.

\begin{theorem}
The free iterative superconvex set over a set $X$ is the set
\[
FX \: = \:
\{ w \colon X \to [0,1] \mid
\text{$(X, w)$ is a superconvex weight function} \} \, ,
\]
with the superconvex combination operation defined pointwise
\begin{equation}
\label{EqnDefFreeSuper}
\supercom{i}{I}{\lambda_i}{w_i} \: = \:
x\, \mapsto \, \sum_{i \in I} \, \lambda_i\cdot {w_i(x)} \, .
\end{equation}
and
the insertion of generators
$\eta \colon X \to FX$ given by $\eta(x)\,  =  \, y \mapsto \delta_{xy}\,$.
Moreover, $FX$ is cancellative.
\end{theorem}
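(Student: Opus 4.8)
The plan is to check, in order, that $FX$ is a well-defined superconvex set, that it is cancellative (whence iterative), and finally that it carries the stated universal property; the last part splits into defining the extension $\bar f$, verifying it is superaffine and agrees with $f$ on generators, and proving uniqueness.

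First I would confirm that the pointwise operation~(\ref{EqnDefFreeSuper}) lands in $FX$: the value $\sum_{i\in I}\lambda_i w_i(x)$ lies in $[0,1]$ as a convex combination of elements of $[0,1]$, and interchanging the nonnegative double sum gives $\sum_{x\in X}\sum_{i\in I}\lambda_i w_i(x) = \sum_{i\in I}\lambda_i\sum_{x\in X}w_i(x) = \sum_{i\in I}\lambda_i = 1$, so $(X,w)$ is again a superconvex weight function. Axiom~(\ref{EqnSuperA}) is immediate, and, evaluated at each $x\in X$, axiom~(\ref{EqnSuperB}) is exactly the interchange $\sum_i\lambda_i(\sum_j\mu_{ij}w_j(x)) = \sum_j(\sum_i\lambda_i\mu_{ij})w_j(x)$, valid by nonnegativity. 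For cancellativity I would invoke Lemma~\ref{LemSuperCan}: if $\frac12\cdot w + \frac12\cdot u = \frac12\cdot w' + \frac12\cdot u$ in $FX$, then evaluating at each $x$ and cancelling in $[0,1]\subseteq\sreal$ yields $w(x)=w'(x)$, so $w=w'$. By Proposition~\ref{prop:css-iterative}, $FX$ is then automatically iterative, so no separate iterativity computation is needed.

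Next I would establish the universal property, taking the target $A$ to be an arbitrary superconvex set; iterativity of $A$ is never used, so the property holds in particular for every iterative superconvex $A$, which is what freeness in the category of iterative superconvex sets demands. Given $f\colon X\to A$, define $\bar f\colon FX\to A$ by $\bar f(w)=\supercom{x}{X}{w(x)}{f(x)}$, a legitimate combination since $(X,w)$ is a superconvex weight function. For agreement on generators, $\bar f(\eta(x)) = \supercom{y}{X}{\delta_{xy}}{f(y)}$ has indicator concentrated at $f(x)$, so it equals $f(x)$ by completeness (Lemma~\ref{LemKoenig}). That $\bar f$ is superaffine is precisely the associativity axiom~(\ref{EqnSuperB}) in $A$:
\begin{align*}
\bar f\!\left(\supercom{i}{I}{\lambda_i}{w_i}\right)
&= \supercom{x}{X}{\left(\textstyle\sum_{i\in I}\lambda_i w_i(x)\right)}{f(x)} \\
&= \supercom{i}{I}{\lambda_i}{\left(\supercom{x}{X}{w_i(x)}{f(x)}\right)}
= \supercom{i}{I}{\lambda_i}{\bar f(w_i)}.
\end{align*}

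For uniqueness the key observation is that every $w\in FX$ is the canonical combination of its generators, $w=\supercom{x}{X}{w(x)}{\eta(x)}$, which is immediate by evaluating the right-hand side pointwise via $\sum_{x\in X}w(x)\,\delta_{xy}=w(y)$. Hence any superaffine $g\colon FX\to A$ with $g\circ\eta=f$ satisfies $g(w)=\supercom{x}{X}{w(x)}{g(\eta(x))}=\supercom{x}{X}{w(x)}{f(x)}=\bar f(w)$, so $g=\bar f$. I expect no step to be genuinely hard: the argument is essentially bookkeeping, and the only points needing care are the interchanges of order of summation in~(\ref{EqnSuperA})--(\ref{EqnSuperB}) (all sums are of nonnegative reals bounded by $1$, so Tonelli for series applies) and the reduction of the singleton-supported combination defining $\bar f\circ\eta$, handled by Lemma~\ref{LemKoenig}. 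The conceptual heart is the representation $w=\supercom{x}{X}{w(x)}{\eta(x)}$, expressing that $FX$ is generated by the Dirac weights and driving the uniqueness half of the proof.
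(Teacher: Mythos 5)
Your proposal is correct and follows essentially the same route as the paper: define $\bar f(w)=\supercom{x}{X}{w(x)}{f(x)}$, verify superaffineness via axiom~(\ref{EqnSuperB}), and obtain uniqueness from the representation $w=\supercom{x}{X}{w(x)}{\eta(x)}$, proving freeness over all superconvex sets rather than just iterative ones. The only cosmetic differences are that you spell out the ``easily verified'' cancellativity and well-definedness checks, and you invoke Lemma~\ref{LemKoenig} for $\bar f\circ\eta=f$ where the paper cites~(\ref{EqnSuperB}); both are fine.
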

\begin{proof}
First, it is easily verified that $FX$ is indeed a cancellative iterative superconvex set.
To prove that $FX$ is the free iterative superconvex set over $X$,
we verify, more generally, that it is the free
superconvex set.

Let $Y$ be an arbitrary superconvex set, and let
$f \colon X \to Y$ be any function. Define $h \colon FX \to Y$
by
\begin{equation}
\label{EqnDefh}
h(w) \: = \: \dlsupercom{Y}{x}{X}{w(x)}{f(x)} \, .
\end{equation}
Easily, by~(\ref{EqnSuperB}),  $f = h \circ \eta$. To verify that $h$ is
superaffine, calculate
\begin{align*}
h\left(\supercom{i}{I}{\lambda_i}{w_i}\right)
& = \dlsupercom{Y}{x}{X}{\left(\supercom{i}{I}{\lambda_i}{w_i}\right)(x)}{f(x)}
&& \text{by~(\ref{EqnDefh})}
\displaybreak[0] \\
& = \dlsupercom{Y}{x}{X}{\left(\sum_{i \in I}\, \lambda_i \times w_i(x)\right)}{f(x)}
& & \text{by~(\ref{EqnDefFreeSuper})}
\displaybreak[0] \\
& = \dlsupercom{Y}{i}{I}{\lambda_i}
         {\left(\dlsupercom{Y}{x}{X}{w_i(x)}{f(x)}\right)}
& & \text{by~(\ref{EqnSuperB})}
\displaybreak[0] \\
& = \dlsupercom{Y}{i}{I}{\lambda_i}{h(w_i)}
&& \text{by~(\ref{EqnDefh}).}
\end{align*}
For uniqueness, suppose we have a superaffine $g \colon FX \to Y$
satisfying $f = g \circ \eta$. Then
\begin{align*}
g(w) & = g\left(x \, \mapsto \, \sum_{y \in X} w(y) \cdot \delta_{yx}\right) \\
& = g\left(x \, \mapsto \, \sum_{y \in X} w(y) \cdot \eta(y)(x)\right)
& & \text{definition of $\eta$}
\displaybreak[0]  \\
& = g\left(\supercom{y}{X}{w(y)}{\eta(y)}\right)
   & & \text{by~(\ref{EqnDefFreeSuper})}
\displaybreak[0] \\
& = \dlsupercom{Y}{y}{X}{w(y)}{g(\eta(y))}
 & &  \text{$g$ superaffine}
\displaybreak[0] \\
& = \dlsupercom{Y}{y}{X}{w(y)}{f(y)}
 & &  \text{$f = g \circ \eta$}
\displaybreak[0] \\
& = h(w)
 & & \text{by~(\ref{EqnDefh}).}
\end{align*}
\end{proof}
\noindent
The two corollaries below, which, because they are central to the paper, we phrase for midpoint-convex bodies rather than superconvex sets, follow as  immediate consequences.

\begin{corollary} \label{n-simplex}
  The free iterative midpoint-convex body  over the $n+1$ element set,
$\{0, \dots, n\}$, is the
$n$-dimensional simplex.
\end{corollary}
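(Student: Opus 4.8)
The plan is to specialise the explicit description of the free iterative superconvex set, just established, to the finite generating set $\{0,\dots,n\}$, and then transport the conclusion across the category isomorphism of Theorem~\ref{ThmSuperconvex}. First I would invoke the preceding theorem, which exhibits the free iterative superconvex set over a set $X$ as the set $FX$ of superconvex weight functions $w \colon X \to [0,1]$ under the pointwise operation~(\ref{EqnDefFreeSuper}), and which records that $FX$ is a cancellative (hence iterative) superconvex set. By Theorem~\ref{ThmSuperconvex} the isomorphism between iterative superconvex sets and iterative midpoint sets carries $FX$ to the free iterative midpoint set over $X$; as this isomorphism restricts to one between the cancellative objects on either side, that free iterative midpoint set is in fact an m-convex body. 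Its universal property against all iterative midpoint sets then restricts, along the inclusion of the full subcategory of cancellative objects in which $FX$ already lies, to the universal property of an m-convex body freely generated by $X$.

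It then remains to identify $F\{0,\dots,n\}$ with the $n$-dimensional simplex. As the index set $\{0,\dots,n\}$ is finite, the support of any $w \colon \{0,\dots,n\} \to [0,1]$ is automatically countable, so a superconvex weight function over $\{0,\dots,n\}$ is nothing but a tuple $(w_0,\dots,w_n) \in [0,1]^{\,n+1}$ with $\sum_{i=0}^{n} w_i = 1$; that is, a point of the standard simplex $\Delta^n \subseteq \mathbb{R}^{n+1}$. Under this bijection the pointwise combination~(\ref{EqnDefFreeSuper}) becomes the ordinary convex-combination formula $\sum_{i} \lambda_i w_i$ evaluated coordinatewise in $\mathbb{R}^{n+1}$, which is exactly the canonical superconvex structure carried by $\Delta^n$ as a convex subset of the topological vector space $\mathbb{R}^{n+1}$. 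Hence $F\{0,\dots,n\}$ and $\Delta^n$ coincide as superconvex sets, the induced midpoint operation is the usual $m(x,y) = \tfrac12 x + \tfrac12 y$, and the insertion of generators $\eta(i) = (y \mapsto \delta_{iy})$ sends $i$ to the $i$-th vertex of $\Delta^n$. Assembling these observations, $\Delta^n$ with $m(x,y) = (x+y)/2$ is isomorphic to $F\{0,\dots,n\}$ and is therefore the free m-convex body, equivalently the free iterative midpoint-convex body, on the $n+1$ generators $0,\dots,n$.

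I expect the argument to be essentially bookkeeping, with no serious obstacle. The one point that genuinely deserves care is the transfer of freeness: I must confirm that a free object of the category of iterative midpoint sets which happens to be cancellative is automatically free in the subcategory of m-convex bodies. This holds precisely because that subcategory is full and contains the free object, so its universal property is inherited by restricting the class of test objects. A second, purely verificatory, point is checking that the pointwise operation~(\ref{EqnDefFreeSuper}) on $FX$ is genuinely the restriction to $\Delta^n$ of the ambient convex-combination operation of $\mathbb{R}^{n+1}$, and not some reparametrised variant; this is immediate upon comparing the two formulas.
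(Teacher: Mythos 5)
Your argument is correct and is exactly the route the paper intends: the paper states the corollary as an "immediate consequence" of the free-superconvex-set theorem, and your fleshed-out version — identifying $F\{0,\dots,n\}$ with $\Delta^n$ as superconvex sets, transporting freeness across the category isomorphism of Theorem~\ref{ThmSuperconvex}, and noting that freeness restricts to the full subcategory of cancellative objects — supplies precisely the bookkeeping the paper leaves implicit.
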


\begin{corollary}
\label{ThmSet}
For $a < b \in \mathbb{R}$, the interval
$[a,b]$, with $\oplus$ as midpoint, is an interval object in
the category of sets.
\end{corollary}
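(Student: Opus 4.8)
The plan is to deduce the corollary from the preceding theorem --- that the free iterative superconvex set over a set $X$ is $FX$ --- together with the category isomorphism of Theorem~\ref{ThmSuperconvex}. The first observation is that the universal property in Definition~\ref{DefIO} says exactly that an interval object is a free m-convex body on the two-element set $\{a,b\}$: a function from $\{a,b\}$ into an m-convex body $A$ is the same as a choice of two global points of $A$, and the unique extending midpoint homomorphism is the required $h$. So it suffices to exhibit $([a,b],\oplus)$, with its two endpoints, as such a free object.

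Next I would compute the free m-convex body on two generators. Since the $FX$ produced by the preceding theorem is cancellative, its universal property restricts to the full subcategory of cancellative objects, and via Theorem~\ref{ThmSuperconvex} (which matches superaffine maps with midpoint homomorphisms) it is equally the free m-convex body over $X$. Taking $X=\{0,1\}$ gives $F\{0,1\}=\{w:\{0,1\}\to[0,1]\mid w(0)+w(1)=1\}$. I would identify this with $[0,1]$ through the superaffine map $h\colon F\{0,1\}\to[0,1]$ induced, by freeness, from $f\colon\{0,1\}\to[0,1]$ with $f(0)=0$ and $f(1)=1$; explicitly $h(w)=w(1)$. This $h$ is a bijection with inverse $t\mapsto(1-t,t)$, whose inverse is readily checked to be superaffine, so $h$ is an isomorphism. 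Consequently $([0,1],\oplus)$, with $\eta(0),\eta(1)$ carried to the points $0,1$ and midpoint $x\oplus y=(x+y)/2$, is a free m-convex body over $\{0,1\}$, and hence an interval object by the first paragraph. (One can alternatively verify directly that $([a,b],\oplus)$ is an m-convex body: it is a cancellative superconvex set by Lemma~\ref{LemSuperCan}, hence iterative by Proposition~\ref{prop:css-iterative}, and so a cancellative iterative midpoint set by Theorem~\ref{ThmSuperconvex}.)

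Finally, to reach an arbitrary $a<b$, I would transport along the affine bijection $\psi\colon[0,1]\to[a,b]$, $\psi(t)=(1-t)a+tb$, which is a midpoint isomorphism with $\psi(0)=a$ and $\psi(1)=b$; since isomorphisms of m-convex bodies preserve the defining universal property together with its distinguished points, $([a,b],\oplus,a,b)$ is an interval object, with the iterated midpoint forced by Proposition~\ref{PropSetIt} to be $\bigoplus(x_i)=\sum_{i\ge0}2^{-(i+1)}x_i$. I do not anticipate a genuine obstacle here, since all the analytic and combinatorial work has already been carried out in the preceding theorem and in Theorem~\ref{ThmSuperconvex}; the only care needed is the bookkeeping that matches the abstract free object on two generators with the concrete structure $([a,b],\oplus)$ and its endpoints.
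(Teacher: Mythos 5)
Your proposal is correct and takes essentially the same route as the paper, which presents the corollary as an immediate consequence of the free iterative superconvex set theorem together with the category isomorphism of Theorem~\ref{ThmSuperconvex}. You have simply made explicit the routine bookkeeping the paper leaves implicit: that an interval object is the free m-convex body on two generators, that the cancellativity of $F\{0,1\}$ lets its universal property descend to the cancellative subcategory, the identification $F\{0,1\} \cong [0,1]$ via $w \mapsto w(1)$, and the transport to $[a,b]$ along the affine bijection.
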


As another application of Theorem~\ref{ThmSuperconvex}, we can, by
Proposition~\ref{prop:css-iterative}, directly import the known
examples of cancellative superconvex sets as examples of cancellative
iterative midpoint sets.  Topological vector spaces provide a rich
source of such examples. The following is taken
from~\cite[Example~1.6]{koenig:superconvex}.  It will be applied in
Section~\ref{spaces} to show that the unit interval with its Euclidean
topology is an interval object in the category of topological spaces.
\begin{prop} {(K\"onig~\cite[Example~1.6]{koenig:superconvex})}
\label{ExKoenig}
Let $E$ be a Hausdorff real topological vector space.
A nonempty subset $X \subseteq E$ is said to be
\emph{$\sigma$-convex} if, for each
superconvex combination $(\SetNat, \lambda, x)$, the
series $\sum_{l = 0}^\infty\, \lambda_l x_l$ converges in the topology
of $E$ to some element $\supercom{i}{\SetNat}{\lambda_i}{x_i} \, \in \, X$.
\begin{enumerate}
\item \label{ExKoenig:1} If $X$ is $\sigma$-convex then it is a cancellative superconvex set.
\item \label{ExKoenig:2} If $X$ is $\sigma$-convex then it is convex and bounded.
\item \label{ExKoenig:3} If $\mathrm{dim} \,E$ is finite and $X$ is convex and bounded
      then it is $\sigma$-convex.
\item If $X$ is convex, bounded and sequentially complete
      then it is $\sigma$-convex.
\item If $X$ is convex, bounded and open, and if
      $\overline{X}$ is sequentially complete
      then $X$ is $\sigma$-convex.
\end{enumerate}
\end{prop}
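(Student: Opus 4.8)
The plan is to separate the two ``soft'' directions (parts~\ref{ExKoenig:1} and~\ref{ExKoenig:2}), which extract structure out of $\sigma$-convexity, from the three converse directions (parts~\ref{ExKoenig:3}--5), which manufacture $\sigma$-convexity from geometric and completeness hypotheses. For part~\ref{ExKoenig:1}, I would take $\sigma$-convexity to \emph{define} the combination operation: a superconvex weight function has countable support, so the series $\sum_l \lambda_l x_l$ is meaningful and, by hypothesis, converges to a point of $X$, which I declare to be the value of the superconvex combination. Equation~(\ref{EqnSuperA}) is then immediate, and~(\ref{EqnSuperB}) reduces to a rearrangement of a convergent series in $E$, justified by continuity of addition and scalar multiplication together with the vanishing of weighted tails. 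Cancellativity is cheapest via Lemma~\ref{LemSuperCan}: it suffices to verify the binary instance $\tfrac12 x + \tfrac12 z = \tfrac12 y + \tfrac12 z \Rightarrow x = y$, which in a vector space is just $\tfrac12(x-y)=0$.

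For part~\ref{ExKoenig:2}, convexity follows by realising an arbitrary combination $t x + (1-t) y$ with $t \in [0,1]$ as the superconvex combination $\sum_l 2^{-(l+1)} z_l$, where $z_l \in \{x,y\}$ is dictated by the binary expansion of $t$; each $z_l \in X$, so $\sigma$-convexity places the result in $X$. Boundedness I would prove by contraposition: if $X$ is not bounded, choose a balanced neighbourhood $U$ of $0$ that does not absorb $X$, so that for each $l$ there is $x_l \in X$ with $2^{-(l+1)} x_l \notin U$; then the general term of the legitimate superconvex combination $\sum_l 2^{-(l+1)} x_l$ does not tend to $0$, the series cannot converge, and $\sigma$-convexity fails.

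The heart of the proposition is part~\ref{ExKoenig:3}, and this is the step I expect to be the main obstacle. In $\mathbb{R}^d$, boundedness together with $\sum_l \lambda_l < \infty$ makes the partial sums Cauchy, so $s := \sum_l \lambda_l x_l$ exists and lies in $\overline X$; the difficulty is upgrading $s \in \overline X$ to $s \in X$, since a bounded convex set need not be closed. I would do this by induction on $\dim \mathrm{aff}(X)$ using faces of the compact convex set $\overline X$. After discarding zero weights, if $s$ lies in the relative interior of $\overline X$ then $s \in \mathrm{ri}(X) \subseteq X$ and we are done; otherwise $s$ lies in a proper face $F$ of $\overline X$, and the defining property of a face (a point expressed as a convex combination with strictly positive weights lies in $F$ only if all the combined points do) forces every $x_l$ into $F$. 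Thus $s$ becomes a superconvex combination of points of the strictly lower-dimensional convex bounded set $X \cap F$, and the induction hypothesis applies.

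Parts~4 and~5 rest on a single completeness argument. For part~4 I would show that the normalised partial sums $a_N = p_N^{-1}\sum_{l \le N}\lambda_l x_l$, which lie in $X$ by convexity (with $p_N = \sum_{l\le N}\lambda_l$), form a Cauchy sequence: writing $a_N$ as a convex combination of $a_M$ and a tail average $c_{MN} \in X$ exhibits $a_N - a_M$ as a scalar multiple, tending to $0$, of the bounded set $X - X$, which with a base of balanced neighbourhoods gives the Cauchy property; sequential completeness then yields $a_N \to a \in X$, whence $s = \lim p_N a_N = a \in X$. Part~5 follows by applying part~4 to $\overline X$, which is convex, bounded and, by hypothesis, sequentially complete, to obtain $s \in \overline X$, and then invoking openness: the decomposition $s = (1-q_N) a_N + q_N b_N$ with $a_N \in X = \mathrm{int}\,\overline X$, $b_N \in \overline X$ and $q_N = 1 - p_N < 1$ places $s$ on a half-open segment $[a_N, b_N)$, which the standard fact that such a segment from an interior point remains in the interior confines to $X$. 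Throughout these converse directions I must be careful to work with balanced rather than convex neighbourhood bases, since $E$ is not assumed locally convex.
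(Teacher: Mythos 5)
The paper gives no proof of this proposition at all: it is imported verbatim from K\"onig's Example~1.6, with the reader referred to that source. So there is no in-paper argument to compare yours against, and your reconstruction has to be judged on its own terms; as far as I can check, it is correct and complete in outline. The soft directions are handled the standard way (defining the operation as the limit of the series, reducing cancellativity to the binary midpoint instance via Lemma~\ref{LemSuperCan}, realising $tx+(1-t)y$ by the binary expansion of $t$, and refuting boundedness failure by producing a superconvex series whose terms do not tend to $0$), and parts~4 and~5 correctly exploit that $a_N - a_M$ lies in a vanishing scalar multiple of the bounded set $X-X$, followed by the interior-segment fact $[a,b)\subseteq\operatorname{int}\overline{X}$ for $a\in\operatorname{int}\overline{X}=X$. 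The genuinely nontrivial step is your part~\ref{ExKoenig:3}, and the face induction on $\dim\operatorname{aff}(X)$ is a sound way to upgrade $s\in\overline{X}$ to $s\in X$: a supporting functional argument does show that a positively weighted (countable) combination landing in an exposed face forces every term into that face. Two points deserve explicit care in a full write-up. First, the verification of equation~(\ref{EqnSuperB}) in part~\ref{ExKoenig:1} needs the convexity and boundedness of part~\ref{ExKoenig:2} to control the tails (a tail of total weight $\epsilon$ is $\epsilon$ times a convex combination of points of $X$, hence lies in $\epsilon X$, which is eventually inside any given neighbourhood of $0$); this is harmless because your proofs of convexity and boundedness do not use that associativity, but the logical order should be made explicit. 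Second, in part~\ref{ExKoenig:3} the supporting hyperplane must be taken relative to $\operatorname{aff}(X)$, so that the face has strictly smaller affine dimension and the induction terminates; with that said, the argument goes through.
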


\noindent
The above list gives us a bountiful supply of $\sigma$-convex sets, hence of
cancellative superconvex sets, hence, by Theorem~\ref{ThmSuperconvex} and Proposition~\ref{prop:css-iterative}, of midpoint-convex bodies in the category of sets.


As a final application of Theorem~\ref{ThmSuperconvex}, we
strengthen~\cite[Theorem~3.1]{koenig:superconvex} to
apply to midpoint sets rather than convex sets.
\begin{prop}
\label{prop:one-extension}
If $X$ is a cancellative midpoint set then there is at
most one superconvex structure $\supersym$ on
$X$ satisfying
$\frac{1}{2} \cdot x_0 + \frac{1}{2} \cdot x_1 = m(x_0,x_1)$.
\end{prop}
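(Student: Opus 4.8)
The plan is to show that the mere \emph{existence} of a superconvex structure extending the cancellative midpoint operation $m$ already forces the midpoint set $(X,m)$ to be iterative; once this is known, uniqueness follows at once from results already established for iterative midpoint sets.

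First I would take an arbitrary superconvex structure $\supersym$ on $X$ satisfying $\frac{1}{2} \cdot x_0 + \frac{1}{2} \cdot x_1 = m(x_0,x_1)$ and observe that, as a superconvex set, $(X,\supersym)$ is automatically \emph{cancellative}. Indeed, Lemma~\ref{LemSuperCan} reduces cancellativity of a superconvex set to the single implication that $\frac{1}{2} \cdot x + \frac{1}{2} \cdot z = \frac{1}{2} \cdot y + \frac{1}{2} \cdot z$ entails $x=y$; since the binary $\frac{1}{2},\frac{1}{2}$-combination is exactly $m$, this implication is precisely the cancellation law for $m$, which holds by hypothesis.

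The key step is then to upgrade this cancellativity into iterativity of the midpoint set. Applying Proposition~\ref{prop:css-iterative}, the cancellative superconvex set $(X,\supersym)$ is iterative, and so by Theorem~\ref{ThmSuperconvex}(\ref{ThmSuperconvex:1}) its associated midpoint set $(X,m)$ is an iterative midpoint set (note that the induced midpoint is precisely $m$, since $\supersym$ was assumed to extend it). The point I want to stress is that a \emph{single} superconvex extension is enough to promote cancellativity of $m$ to full iterativity of $(X,m)$.

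Finally, once $(X,m)$ is iterative, the uniqueness clause of Theorem~\ref{ThmSuperconvex}(\ref{ThmSuperconvex:2}) (equivalently, Lemma~\ref{LemUniqueConab}) guarantees that there is exactly one superconvex structure on $X$ extending $m$; hence any two superconvex structures satisfying the stated identity must coincide. When no such structure exists at all, the claim is vacuously true. The main obstacle is the bootstrapping observation at the heart of the argument: all the earlier uniqueness results presuppose iterativity, so the real work lies in deriving iterativity of $(X,m)$ from the bare hypotheses of cancellativity together with the existence of one superconvex extension.
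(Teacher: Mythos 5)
Your proposal is correct and follows essentially the same route as the paper's own proof: use Lemma~\ref{LemSuperCan} to transfer cancellativity of $m$ to the superconvex structure, invoke Proposition~\ref{prop:css-iterative} and Theorem~\ref{ThmSuperconvex}(\ref{ThmSuperconvex:1}) to conclude that $(X,m)$ is iterative, and then apply the uniqueness clause of Theorem~\ref{ThmSuperconvex}(\ref{ThmSuperconvex:2}). The ``bootstrapping'' step you highlight is exactly the crux of the paper's argument as well.
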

\begin{proof}
If $X$ has a superconvex structure satisfying
$\frac{1}{2} \cdot x_0 +\frac{1}{2} \cdot x_1 = m(x_0,x_1)$ then,
by Lemma~\ref{LemSuperCan}, this is cancellative hence iterative.
By Theorem~\ref{ThmSuperconvex}(\ref{ThmSuperconvex:1}), $X$ is an
iterative midpoint set. Thus the uniqueness of the superconvex
structure follows from Theorem~\ref{ThmSuperconvex}(\ref{ThmSuperconvex:2}).
\end{proof}
Although the assumption of cancellativity is used crucially
in the proof,
we do not know
of any non-cancellative midpoint set that admits more than
one superconvex structure extending the midpoint operation.

We remark that the analogue of Proposition~\ref{prop:one-extension} fails for convex structure.
That is, a cancellative midpoint
structure may have more than one convex extension.
Note that, in view of Lemma~\ref{LemUniqueConab},
this can only happen when no superconvex extension of the
midpoint structure exists. One example of such a phenomenon can be obtained by using a Hamel base for $\mathbb{R}$ as a vector space over $\mathbb{Q}$ to construct a nonstandard
convex structure extending the standard midpoint structure on $\mathbb{R}$.


\section{Interval objects in topological spaces}
\label{SecIntInTop} \label{spaces}

In this section we
investigate midpoint-convex bodies and interval objects in the category $\Top$
of topological spaces.  In  $\Top$, the discrete natural numbers  $\mathbb{N}$ are an exponentiable nno, with 
the exponential $A^\mathbb{N}$ of a topological space $A$ given by the countable power $A^\omega$ with the product topology. 
Proposition~\ref{PropSetIt} thus applies to $\Top$, with the map  $M:A^{\omega} \to A$ being continuous
with respect to the product topology.

\begin{theorem}
Let $A \subseteq \SetReal^n$ be closed under $\oplus$.
Then  $(A, \oplus)$, endowed with the relative Euclidean topology, is a 
midpoint-convex body in $\Top$ if and only if $A$ is a bounded convex subset of $\SetReal^n$.
\end{theorem}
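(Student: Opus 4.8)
The plan is to prove the two implications separately, with the Euclidean topology entering essentially only in the forward (convexity) direction.

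For the ``if'' direction, suppose $A \subseteq \SetReal^n$ is bounded and convex. Since $\SetReal^n$ is finite-dimensional, Proposition~\ref{ExKoenig}(\ref{ExKoenig:3}) shows that $A$ is $\sigma$-convex, so $M(x_0,x_1,\dots) = \sum_{i \geq 0} 2^{-(i+1)} x_i$ converges to a point of $A$ for every sequence $(x_i)$ in $A$. I would take this as the candidate infinitary operation $M \colon A^\omega \to A$ and verify it gives the iterative structure. The binary $m = \oplus$ is plainly continuous. Continuity of $M$ follows from boundedness: writing $s_N$ for the $N$-th partial sum, each $s_N$ depends on finitely many coordinates, hence is continuous on $A^\omega$, and $\|M - s_N\| \leq 2^{-N} \sup_{a \in A}\|a\| \to 0$ uniformly, so $M$ is a uniform limit of continuous maps into $\SetReal^n$ with image in $A$. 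Unfolding is the identity $\sum_i 2^{-(i+1)} x_i = \tfrac{1}{2} x_0 + \tfrac{1}{2} \sum_i 2^{-(i+1)} x_{i+1}$, and canonicity holds because any $A$-valued $(y_i)$ with $y_i = \oplus(x_i, y_{i+1})$ satisfies $y_0 = s_N + 2^{-N} y_N$, whence $y_0 = \lim_N s_N = M(x)$ using boundedness of $A$. By Proposition~\ref{PropSetIt}(\ref{iter:1}), applied in $\Top$, $(A,\oplus)$ is iterative; cancellation is automatic since $\oplus(x,y) = \oplus(x,z)$ forces $y = z$ in $\SetReal^n$. Hence $(A,\oplus)$ is a midpoint-convex body in $\Top$.

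For the ``only if'' direction, suppose $(A,\oplus)$ is a midpoint-convex body in $\Top$, and let $M \colon A^\omega \to A$ be the continuous operation from Proposition~\ref{PropSetIt}(\ref{iter:1}). I would first establish \emph{convexity}, and this is where the topology is used. Fix $x,y \in A$ with $x \neq y$. The subspace $\{x,y\}^\omega \subseteq A^\omega$ is homeomorphic to Cantor space, hence compact, so its continuous image $M(\{x,y\}^\omega)$ is bounded, say by $R$. Iterating unfolding $N$ times gives $M(z) = \sum_{i<N} 2^{-(i+1)} z_i + 2^{-N} M(\sigma^N z)$ for every $z \in \{x,y\}^\omega$, where $\sigma$ is the shift; since $\sigma^N z \in \{x,y\}^\omega$ the error term is bounded by $2^{-N} R \to 0$, forcing $M(z) = \sum_i 2^{-(i+1)} z_i$. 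Choosing $z_i \in \{x,y\}$ according to the binary digits of an arbitrary $\lambda \in [0,1]$ yields $\lambda x + (1-\lambda) y = M(z) \in A$, so $[x,y] \subseteq A$ and $A$ is convex.

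It remains to prove \emph{boundedness}, which I expect to be the main obstacle, and which uses the uniqueness half of iterativity together with convexity. After restricting to the affine hull of $A$ (iterativity is intrinsic to $(A,\oplus)$ and independent of the ambient space) and translating an interior point to the origin --- translation being a midpoint isomorphism and a homeomorphism --- I may assume $0 \in \mathrm{int}(A)$. Suppose, for contradiction, that $A$ is unbounded. Then there are $a_k \in A$ with $\|a_k\| \to \infty$; passing to a subsequence so that $a_k / \|a_k\| \to u$, convexity gives $t\,(a_k/\|a_k\|) \in [0,a_k] \subseteq A$ for $t \leq \|a_k\|$, whence $tu \in \overline{A}$ for all $t \geq 0$, and the standard fact that $[0,2tu) \subseteq \mathrm{int}(A)$ when $0 \in \mathrm{int}(A)$ yields the ray $\{tu : t \geq 0\} \subseteq A$. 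Now consider the coalgebra $c = (h,t') \colon \mathbb{N} \to A \times \mathbb{N}$ with $h(n) = 0$ and $t'(n) = n+1$; a map $u' \colon \mathbb{N} \to A$ makes diagram~(\ref{DiagIter}) commute exactly when $u'(n) = \oplus(0, u'(n+1))$, i.e.\ $u'(n) = 2^n u'(0)$. Both $u' \equiv 0$ and $u'(n) = 2^n u$ take values in $A$ (the latter by the ray), are continuous on the discrete $\mathbb{N}$, and satisfy the recurrence, contradicting the uniqueness of $u$ demanded by iterativity. Hence $A$ is bounded, completing the proof.
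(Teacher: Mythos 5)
Your proof is correct, and while your ``if'' direction is essentially the paper's (both go through K\"onig's $\sigma$-convexity of bounded convex sets in finite dimensions, Proposition~\ref{ExKoenig}(\ref{ExKoenig:3}), and then check continuity of $\bigoplus$ using boundedness --- your uniform-convergence-of-partial-sums argument is just a repackaging of the paper's explicit $\varepsilon$-ball estimate), your ``only if'' direction takes a genuinely different route. The paper observes that the normal-form representation of superconvex combinations from equation~\eqref{equation:define-superconvex} makes every superconvex combination operation $A^I \to A$ continuous, concludes that $A$ is $\sigma$-convex, and then cites K\"onig's result (Proposition~\ref{ExKoenig}(\ref{ExKoenig:2})) that $\sigma$-convex sets are convex and bounded. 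You instead prove the two conclusions separately and self-containedly: convexity by restricting the continuous $M$ to the compact Cantor space $\{x,y\}^\omega$, using compactness to bound the tail in the iterated unfolding identity and hence to identify $M$ with the binary-expansion convex combination; and boundedness by a contradiction that exploits the \emph{uniqueness} half of iterativity --- if $A$ contained a ray through an interior point, the coalgebra $n \mapsto (0, n+1)$ on discrete $\mathbb{N}$ would admit the two distinct solutions $u' \equiv 0$ and $u'(n) = 2^n u$. This buys you independence from the unproved-in-this-paper K\"onig machinery in the converse direction, and it isolates exactly which parts of the m-convex-body structure are responsible for which geometric conclusion (continuity plus unfolding give convexity; uniqueness of coalgebra-to-algebra maps gives boundedness), at the cost of some hands-on convex geometry (relative interiors, the ray extraction) that the paper outsources. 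Both arguments are sound; I see no gap in yours.
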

\begin{proof} Suppose $A$ is a bounded convex subset. By properties~\ref{ExKoenig:1} and~\ref{ExKoenig:3} of Proposition~\ref{ExKoenig}, 
$A$ is a cancellative superconvex set. By Proposition~\ref{prop:css-iterative} and Theorem~\ref{ThmSuperconvex},
$(A, \oplus)$ is a midpoint-convex body in the category of sets. Since $\oplus : A \times A \to A$ is trivially continuous,
By Proposition~\ref{PropSetIt}, $(A, \oplus)$ is a midpoint-convex body in $\Top$ if the function 
$\bigoplus: A^\omega \to A$ defined below is continuous.
\begin{align}
\label{equation:bigoplus}
\bigoplus (x_0,x_1,x_2, \dots) \; & = \;
\sum_{i \geq 0} 2^{-(i+1)}x_i \, .
\end{align}
To show continuity, as $A$ is bounded, let $r>0$ be such that, for all $\Vec{x} \in A$,
$|\Vec{x}| < r$.  Consider any open
$\epsilon$-ball, $B_\epsilon(\Vec{z}) \cap A$, centred at
$\Vec{z} = \bigoplus_i(\Vec{x}_i)$ where $(\Vec{x}_i) \in A^\omega$.
Take any $n$ such that $2^{-n} < \epsilon/r$.  Then, for all
$(\Vec{y}_i)$ in the neighbourhood
$\{(\Vec{y}_i) \in A^\omega \, \mid\,
\mbox{$\Vec{y}_j \in B_{\epsilon/2}(\Vec{x}_j) \cap A$ and, for all
  $0 \leq j \leq n$}\}$ of $(\Vec{x}_i)$ in $A^\omega$, we have
$\bigoplus_i \Vec{y_i} \in B_\epsilon(\Vec{z}) \cap A$.  Thus
$\bigoplus$ is indeed continuous.

Conversely, suppose $(A, \oplus)$ is a midpoint-convex body in $\Top$. By Proposition~\ref{PropSetIt}, there is a unique
continuous function $M: A^\omega \to A$ satisfying the unfolding property. Moreover, it is easy to show that continuity together with the unfolding property imply $M = \bigoplus$. Equation~\eqref{equation:define-superconvex}, in the proof of 
Theorem~\ref{ThmSuperconvex}, which defines the superconvex set structure on $A$ in terms of the supermidpoint structure $\bigoplus$, expresses each superconvex combination  using a normal form consisting of one outside application of the infinitary $\bigoplus$ operation with finitely many nested applications of the binary $\oplus$ operation in each argument 
position. It follows that, for any superconvex weight function $(I, \lambda)$, the associated superconvex combination function $A^I \to A$ is continuous. This means that the $\sigma$-convexity condition of Proposition~\ref{ExKoenig} is satisfied. Therefore, by 
property~\ref{ExKoenig:2} of Proposition~\ref{ExKoenig}, $A$ is convex and bounded. 
%
%
%
\end{proof}
In certain cases, e.g. when $n = 1$ or when $A$ is a
Euclidean-open subset of $\SetReal^n$, one can show that
the Euclidean topology on $A$ is the finest such that
$(A, \oplus)$ is an m-convex body.
The following example shows that
this is not the case for an arbitrary $A$.
\begin{ex}
{\em
Let $A$ (resp. $\overline{A}$) be the open (resp. closed)
unit ball (the circle) in $\mathbb{R}^2$. Let $\CU$ be the
following topology: $U \subseteq \overline{A}$ is open if and only
if $U \cap A$ is Euclidean open and every point of $U$ is
a Euclidean accumulation point of $U \cap A$. Then
$(A, \oplus)$ is an m-convex body under $\CU$.
}
\end{ex}

Certain other basic information about m-convex bodies in $\Top$ can be
inferred using Proposition~\ref{PropAdj}. The forgetful functor
$U: \Top \to \Set$ has both a left adjoint $\Delta$ (giving the
discrete toplogy) and a right adjoint $\nabla$ (giving the indiscrete
topology). Thus, both $U$ and $\nabla$ preserve m-convex bodies. As
$U$ does, we see that, by Proposition~\ref{ExKoenig}, under any topology
whatsoever, for a standard midpoint subalgebra $A$ of $\SetReal^n$ to
be an m-convex body in $\Top$, it must be a bounded convex set. Also,
for any bounded convex set, $(A, \oplus)$ with the indiscrete topology
is an m-convex body in $\Top$. (N.b.\ it is {\em not}, in general, an
m-convex body when given the discrete topology, as then $\bigoplus$ is
not continuous.)


\begin{theorem}
\label{ThmTop}
$(\SetIval, \oplus, -1, 1)$ with the Euclidean topology
is a parametrised interval object in $\Top$.
\end{theorem}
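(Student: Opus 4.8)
The plan is to separate the statement into a set-theoretic part, which is already settled by earlier results, and a purely topological continuity part, which is the real content. The only genuinely new work is establishing joint continuity of a map whose existence and uniqueness as a bare function are automatic.

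First I would fix the underlying function. Since $\Set$ is cartesian closed and $(\SetIval,\oplus,-1,1)$ is an interval object in $\Set$ by Corollary~\ref{ThmSet}, Proposition~\ref{PropParam} shows it is a \emph{parametrised} interval object in $\Set$. Hence, for every space $P$ and every m-convex body $A$ in $\Top$ with continuous $x_a,x_b\colon P\to A$, applying the set-level universal property to the underlying m-convex body of $A$ yields a unique function $h\colon P\times\SetIval\to A$ satisfying the three defining equations. Uniqueness at the level of $\Top$ is then immediate, since any continuous solution underlies this set-theoretic $h$; likewise the homomorphism equation in the second argument, being an identity of functions, transfers verbatim. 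Everything therefore reduces to showing that this particular $h$ is continuous.

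The crux is a descent-along-a-quotient argument that sidesteps the (discontinuous) binary-digit expansion. Consider the Cantor space $\{-1,1\}^\omega$ and the map $q\colon\{-1,1\}^\omega\to\SetIval$ defined by $q((d_i))=\bigoplus_i d_i=\sum_{i\geq 0}2^{-(i+1)}d_i$; it is continuous (as a restriction of the continuous $\bigoplus$ on $\SetIval$) and surjective onto $[-1,1]$. Because its domain is compact and its codomain Hausdorff, $q$ is universally closed, so $\mathrm{id}_P\times q\colon P\times\{-1,1\}^\omega\to P\times\SetIval$ is a closed continuous surjection, hence a quotient map, \emph{for every} space $P$. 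Thus $h$ is continuous as soon as $h\circ(\mathrm{id}_P\times q)$ is. Now for fixed $p$ the map $x\mapsto h(p,x)$ is a midpoint homomorphism between iterative midpoint sets, so by Proposition~\ref{PropSetIt}(\ref{iter:2}) it commutes with $\bigoplus$; since $q((d_i))=\bigoplus_i d_i$ with each $d_i\in\{-1,1\}$, this gives $h(p,q((d_i)))=\bigoplus_i h(p,d_i)=\bigoplus_i e(p,d_i)$, where $e\colon P\times\{-1,1\}\to A$ is the continuous map with $e(p,-1)=x_a(p)$ and $e(p,1)=x_b(p)$. The assignment $(p,(d_i))\mapsto(e(p,d_i))_i$ is continuous into $A^\omega$, and postcomposing with the continuous $\bigoplus\colon A^\omega\to A$ exhibits $h\circ(\mathrm{id}_P\times q)$ as continuous, which finishes the argument.

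The step I expect to be the main obstacle — and the reason the theorem is not a formality given Proposition~\ref{PropParam} — is the passage to an \emph{arbitrary} parameter space $P$. Proposition~\ref{PropParam} only handles exponentiable $P$, and in $\Top$ most spaces are not exponentiable; a direct continuity proof is blocked because there is no continuous section of the digit map (the connected interval admits no continuous map onto the totally disconnected $\{-1,1\}^\omega$, and the naive head-selection is discontinuous at dyadic points). The device that overcomes this is the universal closedness (properness) of $q$: properness is stable under arbitrary base change, which is exactly what makes $\mathrm{id}_P\times q$ a quotient map for every $P$, and hence lets the manifest continuity of $\bigoplus$ on $A$ descend to $h$. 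I would therefore take care to prove, or cite as standard, that a continuous surjection from a compact space to a Hausdorff space is universally closed.
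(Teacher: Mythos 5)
Your proposal is correct and follows the same overall strategy as the paper: reduce to the unique set-level right-homomorphism $h$ supplied by Corollary~\ref{ThmSet} and Proposition~\ref{PropParam}, then establish continuity of $h$ by descending along $\Id_P \times \bigoplus \colon P \times \{-1,1\}^\omega \to P \times \SetIval$, using the fact that $h \circ (\Id_P \times \bigoplus)$ factors as the continuous infinitary operation $M$ of $A$ postcomposed with a visibly continuous map into $A^\omega$. The one place where you genuinely diverge is the justification that $\Id_P \times \bigoplus$ is a quotient map for an \emph{arbitrary} parameter space $P$. The paper proves this via the Day--Kelly characterisation of stable quotient maps (Proposition~\ref{PropDayKelly}), verifying the finite-subcover condition fibrewise with an explicit case split on dyadic versus non-dyadic points (Lemma~\ref{stability}). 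You instead observe that a continuous surjection from a compact space onto a Hausdorff space is proper, hence universally closed, so its base change $\Id_P \times \bigoplus$ is a closed continuous surjection and therefore a quotient map. This is correct and standard (Bourbaki: a map is proper, i.e.\ universally closed, iff it is closed with compact fibres, and a continuous map from a compact space to a Hausdorff space has both properties), and it replaces the paper's hands-on Lemma~\ref{stability} with a softer, more portable argument; what you lose is only the explicit description of how images of basic neighbourhoods cover dyadic points, which the paper's proof makes visible but does not otherwise use. Two cosmetic points: the map you call $\bigoplus \colon A^\omega \to A$ at the end is the infinitary operation $M$ of the m-convex body $A$, not the interval's $\bigoplus$; and your $e(p,d_i)$ agrees with the paper's $d_z(q_i)$ restricted to $\{-1,1\}$, so the two continuity arguments for the lifted map coincide.
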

\begin{proof}
Let $(A,m)$ be any m-convex body in $\Top$ and $f,g:X \to A$ be continuous
functions from a space $X$. We must show that there is a unique
right-homomorphim $h : X \times \SetIval \to A$ in $\Top$.
By Corollary~\ref{ThmSet} and Proposition~\ref{PropParam}, there is
a unique such right-homomorphism $h$ in $\Set$. It suffices to
show that this function $h$ is continuous.

For each $z \in X$, write $d_z: \Confine(\SetRatD) \to A$,
for the unique bipointed homomorphism from $\Confine(\SetRatD)$ to
$A$, as in the proof of Corollary~\ref{ThmSet}. Then, by the
proof of Corollary~\ref{ThmSet}, we have
\begin{eqnarray*}
h\left(z, \bigoplus_i(q_i)\right) & = & M_i(d_z(q_i)).
\end{eqnarray*}
To see that this is continuous, consider
the function $h': X \times \{-1,1\}^\omega \to A^\omega$ defined
by
\begin{eqnarray*}
h'(z,(q_i)_i) & = & (d_z(q_i))_i.
\end{eqnarray*}
It is easily verified that this is continuous with respect to
the product topologies on its domain and codomain.
Also, consider the function $\bigoplus : \{-1,1\}^\omega \to \SetIval$,
the restriction of $\bigoplus : \SetIval^\omega \to \SetIval$.
This is a topological quotient map, as it is a surjective continuous
function between compact Hausdorff spaces. By Lemma~\ref{stability} below, the function
$\Id \times \bigoplus : X \times \{-1,1\}^\omega \to X \times \SetIval$,
is also a quotient map.

Now, we have the following commuting diagram in $\Set$ (where $M$
is the infinitary operation associated with $(A,m)$).
\[
\begin{diagram}
X \times \{-1,1\}^\omega & \rTo^{h'} & A^\omega \\
\dTo^{\Id \times \bigoplus} & & \dTo_M \\
X \times \SetIval & \rTo^h & A.
\end{diagram}
\]
We already know that all maps other than $h$ are continuous.  But then
$h$ is continuous too because $\Id \times \bigoplus$ is a quotient
map.
\end{proof}

It follows by Proposition~\ref{PropAdj}(\ref{PropAdj:1}) that
$(\SetIval, \oplus, -1, 1)$ with the Euclidean topology is a
parametrised interval object in any full reflective subcategory
of $\Top$ that contains the closed Euclidean interval.
Thus, for example, it is a parametrised interval object
in the category of compact Hausdorff spaces.

To conclude the proof of Theorem~\ref{ThmTop}, recall that a
topological quotient map $q: A \to B$ is called {\em stable} if, for
every space $B'$ and continuous function $f: B \to B'$, the pullback
$q':A' \to B'$ of $q$ along $f$ in $\Top$ is a topological quotient
map. In particular, stable quotient maps are preserved under products.
\begin{prop}[Day\&Kelly~\cite{day:kelly}]
\label{PropDayKelly}
A topological quotient map $q: A \to B$ is stable if and only if, for
every $y \in B$, and an open covering $\{U_i\}_{i \in I}$ of
$q^{-1}(y)$, there is a finite set $I' \subseteq I$ such that
$\bigcup_{i' \in I'} q(U_{i'})$ is a neighbourhood of $y$.
\end{prop}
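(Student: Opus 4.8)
The plan is to prove the two implications separately. The reverse implication (the covering condition implies stability) is a direct point-set argument, whereas the forward implication (stability implies the covering condition) is the main obstacle, and I would handle it by contraposition, manufacturing an explicit pullback that fails to be a quotient map. This is the classical characterisation of biquotient maps, so the argument is essentially that of Day and Kelly.

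For the reverse direction, suppose the covering condition holds and let $f \colon B' \to B$ be any continuous map, with pullback $A' = \{(a,b') \in A \times B' \mid q(a) = f(b')\}$ and projection $q' \colon A' \to B'$. The map $q'$ is a continuous surjection (surjectivity because $q$ is onto), so it remains to show that $q'^{-1}(W)$ open forces $W$ open. Given $b_0 \in W$ with $y = f(b_0)$, each point $a$ of the fibre $q^{-1}(y)$ yields, via openness of $q'^{-1}(W)$ in the subspace $A' \subseteq A \times B'$, a basic box $(U_a \times V_a) \cap A' \subseteq q'^{-1}(W)$ with $a \in U_a$ and $b_0 \in V_a$. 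The family $\{U_a\}$ covers $q^{-1}(y)$, so the covering condition supplies a finite $F$ with $\bigcup_{a \in F} q(U_a)$ containing an open $O \ni y$. I would then verify that $f^{-1}(O) \cap \bigcap_{a \in F} V_a$ is a neighbourhood of $b_0$ contained in $W$: any $b'$ in it has $f(b') = q(a')$ for some $a' \in U_a$ with $a \in F$, whence $(a',b') \in (U_a \times V_a) \cap A'$ projects to $b' \in W$.

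For the forward direction I would argue by contraposition, converting a failure of the covering condition into a failure of stability. Assume that for some $y_0$ and some open cover $\{U_i\}_{i \in I}$ of $q^{-1}(y_0)$ no finite $\bigcup_{i \in F} q(U_i)$ is a neighbourhood of $y_0$. Direct the set $D$ of pairs $(F,N)$, with $F \subseteq I$ finite and $N$ an open neighbourhood of $y_0$, by inclusion in the first coordinate and reverse inclusion in the second; for each $d = (F,N)$ choose $y_d \in N \setminus \bigcup_{i \in F} q(U_i)$, which exists precisely because that union is not a neighbourhood of $y_0$. The resulting net $(y_d)$ converges to $y_0$, so the map $f \colon B' \to B$ on the convergent-net space $B' = D \cup \{\infty\}$ (points of $D$ isolated, basic neighbourhoods of $\infty$ being the tails $\{\infty\} \cup \{d \mid d \geq d_0\}$) sending $\infty \mapsto y_0$ and $d \mapsto y_d$ is continuous.

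The crux of the whole argument is the observation that in the pullback $A'$ the fibre $q'^{-1}(\infty) = q^{-1}(y_0) \times \{\infty\}$ is \emph{open}. Indeed, given $(a_0,\infty)$ with $a_0 \in q^{-1}(y_0)$, choose $i_0$ with $a_0 \in U_{i_0}$ and set $d_0 = (\{i_0\}, B)$; then for every $d = (F,N) \geq d_0$ we have $i_0 \in F$, so no point $(a,d) \in A'$ can satisfy $a \in U_{i_0}$, as that would force $y_d = q(a) \in q(U_{i_0}) \subseteq \bigcup_{i \in F} q(U_i)$. Hence the basic neighbourhood $(U_{i_0} \times (\{\infty\} \cup \{d \geq d_0\})) \cap A'$ of $(a_0,\infty)$ meets no fibre over $D$ and lies inside $q^{-1}(y_0) \times \{\infty\}$. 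Taking $W = \{\infty\}$ then gives a set with $q'^{-1}(W)$ open but $W$ not open (every neighbourhood of $\infty$ contains points of $D$), so $q'$ is not a quotient map and $q$ is not stable. I expect the delicate points to be arranging the directed set so that the net converges while still avoiding all the finite unions, and checking the openness of the fibre over $\infty$; everything else is routine bookkeeping.
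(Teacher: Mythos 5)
Your proof is correct, and both directions are carried out soundly: the box-neighbourhood argument for the reverse implication and, for the forward implication, the convergent-net space $B' = D \cup \{\infty\}$ on which the fibre over $\infty$ in the pullback becomes open while $\{\infty\}$ is not open. Note, however, that the paper does not prove this proposition at all — it is quoted from Day and Kelly and used only as a black box to establish Lemma~\ref{stability} — so there is no in-paper argument to compare against; what you have written is essentially the classical Day--Kelly proof of the characterisation of biquotient maps, correctly reconstructed.
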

\begin{lemma} \label{stability} The quotient map
  $\bigoplus : \{-1,1\}^\omega \to \SetIval$ is stable.
\end{lemma}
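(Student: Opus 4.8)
The plan is to verify the criterion of Day and Kelly (Proposition~\ref{PropDayKelly}) directly, exploiting the fact that the fibres of $\bigoplus$ are extremely small. Writing $q_i = 2b_i - 1$ turns $\bigoplus$ into an affine reparametrisation of the standard binary-expansion map $\{-1,1\}^\omega \to \SetIval$ built from $(b_i) \mapsto \sum_i 2^{-(i+1)} b_i$. Since a real in $[0,1]$ has two binary expansions exactly when it is a dyadic rational strictly between $0$ and $1$, and a unique expansion otherwise, each fibre $\bigoplus^{-1}(y)$ has at most two points. Concretely, a dyadic $y \in (-1,1)$ has one preimage $p^-$ whose coordinates are eventually $-1$ and one preimage $p^+$ whose coordinates are eventually $+1$, while every other $y$ (non-dyadic, or an endpoint) has a unique preimage.

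The geometric input is the behaviour of $\bigoplus$ on basic cylinders. For the cylinder $C = [q_0, \dots, q_{n-1}]$ fixing the first $n$ coordinates, the free tail $\sum_{i \geq n} 2^{-(i+1)} q_i$ ranges over all of $[-2^{-n}, 2^{-n}]$, so the image $\bigoplus(C)$ is the closed interval $[c - 2^{-n},\, c + 2^{-n}]$ centred at $c = \sum_{i<n} 2^{-(i+1)} q_i$. The key point is where $y$ sits in this interval. When the chosen preimage has tail eventually equal to $-1$ (the point $p^-$), taking $n$ beyond the index of constancy places $y = c - 2^{-n}$ at the \emph{left} endpoint, so $\bigoplus(C)$ is a right-neighbourhood of $y$; symmetrically $p^+$ yields a left-neighbourhood; and when the preimage is not eventually constant (the non-dyadic interior case) $y$ lies strictly interior, so $\bigoplus(C)$ is a genuine two-sided neighbourhood.

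With these facts the verification is a short case analysis. Given $y$ and an open cover $\{U_i\}_{i \in I}$ of the finite fibre $\bigoplus^{-1}(y)$, I would choose, for each fibre point, one member of the cover containing it, and take $I'$ to be this finite index set. Each chosen $U_i$ is open, hence contains a cylinder around its fibre point, so $\bigoplus(U_i)$ contains the corresponding neighbourhood described above. For a non-dyadic interior $y$ a single member already contributes a two-sided neighbourhood; for an endpoint a single member contributes a one-sided neighbourhood, which is already a neighbourhood in the subspace $\SetIval$; and for a dyadic interior $y$ the two members covering $p^-$ and $p^+$ contribute a right- and a left-neighbourhood, whose union is a full neighbourhood. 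In every case $\bigcup_{i \in I'} \bigoplus(U_i)$ is a neighbourhood of $y$, which is precisely the Day--Kelly condition. The quotient-map hypothesis needed to apply Proposition~\ref{PropDayKelly} is already in hand, since $\bigoplus$ is a continuous surjection between compact Hausdorff spaces and hence closed.

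I expect the only delicate point to be the dyadic interior case: one must check that the two preimages genuinely approach $y$ from opposite sides, and that a deep enough cylinder around each places $y$ \emph{exactly} at an endpoint of the image interval rather than merely near it. This is where the ``eventually $\pm1$'' description of the two expansions is essential, and it is what forces the passage to a cylinder beyond the coordinate at which the representation becomes constant; combining the resulting one-sided neighbourhoods is then what makes the finite union a two-sided neighbourhood.
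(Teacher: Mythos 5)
Your proposal is correct and follows essentially the same route as the paper's proof: verify the Day--Kelly criterion by computing the images of basic cylinders and splitting into cases according to whether $y$ is non-dyadic (unique preimage, two-sided image) or dyadic interior (two preimages, one contributing a left- and one a right-neighbourhood). The only difference is cosmetic: you treat the endpoints $\pm 1$ as an explicit separate case, whereas the paper's dyadic/non-dyadic dichotomy glosses over the fact that they have a single preimage.
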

\begin{proof}
  We use Proposition~\ref{PropDayKelly}.  Take any $y \in
  \SetIval$. If $y$ is non-dyadic then there exists a unique sequence
  $(d_i) \in \{-1,1\}^\omega$ with $\bigoplus_i d_i = y$.  Take any
  basic neighbourhood
  $U_n = \{ (e_i) \in \{-1,1\}^\omega \mid \mbox{for all $i<n$,
    $e_i = d_i$}\}$ of $(d_i)$.  Then $q(U_n)$ is a neighbourhood of
  $y$ as each of $-1$ and $1$ occurs infinitely often in the sequence.
  If $y$ is dyadic, then there exist exactly two sequences in
  $q^{-1}(y)$. These have the form
  $x = d_0 \dots d_n (-1) 1 1 1 \dots$ and
  $x' = d_0 \dots d_n 1 (-1) (-1) (-1) \dots$.  It is easily seen
  that, for any basic neighbourhood $U$ of $x$, it holds that $q(U)$
  restricts to a neighbourhood of $y$ in the relative topology on
  $[-1,y]$. Similarly, for any basic neighbourhood $U'$ of $x'$, we
  have that $q(U')$ restricts to a neighbourhood of $y$ in the
  relative topology on $[y,1]$. Thus $q(U) \cup q(U')$ is a
  neighbourhood of $y$, as required.
\end{proof}

Similarly, it is easy to show that the $n$-simplex endowed with the
Euclidean topology is freely generated by the $(n+1)$-point discrete
space, adapting Corollary~\ref{n-simplex} from the category of sets to
the category of topological spaces.


 \newcommand{\ACNN}{\mathrm{AC_{\Nat\Nat}}}
\newcommand{\ACN}{\mathrm{(XXXXX)}}
\newcommand{\ACNT}{\mathrm{(XXXXX)}}

\section{Interval objects in an elementary topos}
\label{SecIntInTopos} \label{topos}

Let $\CE$ be an elementary topos with natural numbers
object $\Nat$. Our references for topos theory
are~\cite{johnstone:topostheory,MACMOER}.
Our notation is standard, in particular we write $\Power(X)$ for
the powerobject of $X$.
We shall make substantial use the (intuitionistic) internal logic of $\CE$,
the Mitchell-B{\'e}nabou language, using it informally, but
taking care to distinuish between
ordinary (external) mathematical statements and
statements that are to
be interpreted internally in $\CE$. We shall use the following (standard)
definition.
\begin{defn}[Separated object]
\label{DefSep}
An
object $X$ of $\CE$ is said to be \emph{(double-negation) separated}
if, internally in $\CE$,
\[
\text{for all $x,x' \in X$, $\lnot\lnot(x = x')$ implies $x = x'$.}
\]
\end{defn}
It is easily seen and standard that any subobject of a separated object is
itself separated.

It is a well-known fact that different constructions
of the real numbers, which are equivalent to each other using
classical logic, give different notions of real number
when interpreted within the intuitionistic internal logic
of a topos. Two notions are
particularly prevalent, the
{\em Dedekind} reals and the {\em Cauchy} (or {\em Cantor}) reals.
Both are defined using the object of rationals $\Rat$ and its
associated ordering. We give brief reviews of the definitions.
For more details see~\cite{johnstone:topostheory}.

One way of defining the object of
Dedekind reals in $\CE$ is as the subobject
$\Real_D \subseteq \mathcal{P}(\Rat) \times \mathcal{P}(\Rat)$ of
pairs $(L, U)$ of sets
of rationals that satisfy: $L$ and $U$ are disjoint
inhabited sets; $L$ is a down-closed set, each element of which
has a strictly greater element in $L$; $U$
is an upper-closed set, each element of which
has a strictly lower element in $U$; and $(L,U)$ satisfies
the {\em locatedness} property that  $x<y$ in $\Rat$ implies
either $x \in L$ or $y \in U$. We write $\Real_D$ for the object
of Dedekind reals, and use standard notation for the
usual arithmetic operations on it.
We also identify
$\Rat$ explicitly as a subobject of $\Real_D$ (via the
embedding $q \mapsto (\{r \mid r < q\} , \{s \mid s > q\})$). It is not hard to
show that $\Real_D$ is a separated object in $\CE$
\cite[Lemma 6.63.iii]{johnstone:topostheory}.

One direct way of defining the Cauchy reals is as a quotient
of the object of all Cauchy sequences of rationals (where the
notion of Cauchy sequence must be phrased in an appropriate
constructive way, see below).
For our purposes, it is more convenient
to adopt an alternative equivalent definition, identifying
the Cauchy reals as particular Dedekind reals. First, we
recall the relevant intuitionistic definition of a Cauchy sequence
of Dedekind reals, which requires an explicit modulus of convergence.
A sequence $\alpha_{(-)} \in {\Real_D}^\Nat$ of Dedekind reals
is {\em Cauchy} if
\[
\exists m\in \Nat^{\Rat^+}.\;
\forall \epsilon \in \Rat^+.\;
\forall i,j \geq m(\epsilon),\;
|\alpha_i - \alpha_j| \leq \epsilon \,,
\]
where $\Rat^+$ is the object of strictly positive rationals.
We say that $x \in \Real_D$ is the {\em limit} of the Cauchy sequence $\alpha$
if
\[
\exists m\in \Nat^{\Rat^+}.\;
\forall \epsilon \in \Rat^+ .\;
\forall i \geq m(\epsilon),\;
|\alpha_i - x| \leq \epsilon.
\]
Here it is not strictly necessary to require the
modulus function to exist, as the assumed modulus of $\alpha$
can be used instead.
The Cauchy reals are defined explicitly by
\begin{eqnarray*}
\Real_C & = & \{x \in \Real_D \mid
\mbox{$\exists \alpha \in \Rat^\Nat$ s.t.
$\alpha$ is a Cauchy sequence and $x = \mathit{\:\lim\:} \alpha$}\},
\end{eqnarray*}
where we are viewing $\Rat$ as a subobject of $\Real_D$ as above.
By the remark after Definition~\ref{DefSep}, $\Real_C$ is
a separated object.

We say that a subobject $X \subseteq \Real_D$
is {\em Cauchy complete} if every Cauchy sequence in $X^\Nat$ has
a (necessarily unique) limit in $X$. It is not hard to show that
$\Real_D$ is Cauchy complete. Obviously $\Rat$ is not Cauchy complete.
$\Real_C$ partially rectifies the non-completeness of $\Rat$ by
adding all limits of Cauchy sequences of rationals.
If $\CE$ satisfies the choice principle,
\begin{align*}
& (\forall n \in \Nat.\,  \exists m \in \Nat.\:
\phi(n,m)) \text{\:implies\:}
 (\exists f \in \Nat^\Nat.\, \forall n \in \Nat. \:
\phi(n,f(n)))
& & (\ACNN)
\end{align*}
which holds in any boolean topos, then it follows that
$\Real_C$ is itself Cauchy complete. However, in general,
$\Real_C$ is {\em not} Cauchy complete, see \cite{Lubarsky2007}.

The possible failure of Cauchy completeness for $\Real_C$ makes it
natural to introduce another
object of reals, namely the Cauchy completion of $\Rat$
within $\Real_D$,
\begin{eqnarray*}
\Real_E & = & \bigcap \{ X \subseteq \Real_D \mid
\mbox{$\Rat \subseteq X$ and $X$ is Cauchy complete}\}\, .
\end{eqnarray*}
For reasons explained below,
we shall refer to $\Real_E$ the object of {\em Euclidean} reals.


By definition, $\Real_E$ is Cauchy complete. Also, by the
remark after Definition~\ref{DefSep},
$\Real_E$ is separated.
In contrast to the Cauchy reals, we do not have a direct method
of constructing the Euclidean reals without first going via
the Dedekind reals.

So far, we have identified three objects of reals
\[
\begin{array}{ccccc}
\Real_C & \subseteq & \Real_E & \subseteq & \Real_D \, .
\end{array}
\]
In the case that $\CE$ satisfies $\ACNN$, both
inclusions are equalities. The Grothendieck
topos $\mathit{Sh}(\mathbb{R})$
(sheaves over Euclidean space $\mathbb{R}$) is a simple example
sin which the second inclusion is strict.
An example showing that the first inclusion need not be an isomorphism
has been given by Lubarsky \cite{Lubarsky2007}.

Each kind of real number object determines a corresponding
of interval; for example:
\[
\begin{array}{ccccc}
\Ival_D & = & \{x \in \Real_D \mid -1 \leq x \leq 1 \} & & \\
\Ival_E & = & \{x \in \Real_E \mid -1 \leq x \leq 1 \} & = &
 \Real_E \, \cap \, \Ival_D
\end{array}\]
Again, by the remark after Definition~\ref{DefSep},
each of these intervals is a  separated object of $\CE$.
Our motivation for introducing the Euclidean reals
is that, as we shall see, it
is the Euclidean interval $\Ival_E$ that gives
an interval object in $\CE$.
Thus closed Euclidean intervals in $\CE$  enjoy the same universal property as closed intervals with the
Euclidean topology in $\Top$. Moreover, this universal property is motivated by constructions from Euclidean geometry,
as discussed in Section~\ref{SecIntro}. It is for these reasons that we refer to
$\Real_E$ as the Euclidean reals in  $\CE$.

For $\Ival_E$ to be an interval object it must
\emph{a fortiori} carry a midpoint algebra structure.
That it does so is not immediately obvious.
Indeed, as part of the proof that
$\Ival_E$ is an interval object, we shall prove
that $\Ival_E$ is  closed under several important operations
on $\Ival_D$, including  $\oplus$.
We next define the closure properties to be established.

We say that a subobject $X \subseteq \Ival_D$ is an
{\em (endpointed) subalgebra} of $\Ival_D$ if $-1,1 \in X$ and $X$
is closed under $\oplus: \Ival_D \times \Ival_D \to \Ival_D$.
We say that a subalgebra
$X \subseteq \Ival_D$ is
{\em symmetric} if $X$ is closed under
$-: \Ival_D \to \Ival_D$.

Next we observe that there is a
unique morphism $\Confine : \Real_D \to \Ival_D$ in $\CE$ satisfying,
internally in $\CE$,
\begin{equation}
\label{EqnConfine}
\Confine(x) \: =  \:
\left \{ \begin{array}{ll}
          1 & \mbox{if $1 \leq x$} \\
          x & \mbox{if $-1 \leq x \leq 1$} \\
          -1 & \mbox{if $x \leq -1$}.
\end{array} \right.
\end{equation}
There is some subtlety here.
The above does not, in itself, constitute a definition
of a morphism as it is not necessarily true
in $\CE$ that
\[
\text{for all $x \in \Real_D$, $1 \leq x$ or
$-1 \leq x \leq 1$ or $-1 \leq x$.}
\]
Instead,~(\ref{EqnConfine}) should be read as a constraint
on $\Confine$. A morphism satisfying
the constraint is defined explicitly by
$\Confine(x)  =  \Min(1, \Max(x, -1))$.
The uniqueness of the morphism
a routine consequence
of $\Real_D$ being a separated object.
The following is easy to show.
\begin{lemma}
\label{LemDouble}
For a symmetric subalgebra $X \subseteq \Ival_D$,
the following are equivalent.
\begin{enumerate}
\item For all $x \in X$, $\Confine(2x) \in X$.

\item For all $x,y \in X$, $\Confine(x+y) \in X$.

\item For all $x,y \in X$, $\Confine(x-y) \in X$.
\end{enumerate}
\end{lemma}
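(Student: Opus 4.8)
The plan is to prove the two equivalences $\text{1} \Leftrightarrow \text{2}$ and $\text{2} \Leftrightarrow \text{3}$ directly, from which the three-way equivalence follows. Throughout, I would work in the internal logic of $\CE$, and the point to keep in mind is that the argument must avoid case distinctions on the sign or magnitude of real numbers, since these are not intuitionistically available; happily, all the reasoning below is purely equational together with instantiation of universally quantified hypotheses, so it is valid in the internal logic. Recall that the midpoint on $\Ival_D$ is $x \oplus y = (x+y)/2$, and that by hypothesis $X$ is a symmetric subalgebra, i.e.\ $-1,1 \in X$, $X$ is closed under $\oplus$, and $X$ is closed under negation.

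The two easy parts I would dispatch first. For $\text{2} \Rightarrow \text{1}$, simply instantiate condition~2 at $y = x$, giving $\Confine(2x) = \Confine(x+x) \in X$. For $\text{2} \Leftrightarrow \text{3}$, I would use only that $X$ is symmetric: given $x,y \in X$ we have $-y \in X$, and since $x - y = x + (-y)$ in $\Real_D$, applying condition~2 to the pair $(x,-y)$ yields $\Confine(x-y) \in X$, which is condition~3; conversely, applying condition~3 to $(x,-y)$ and using $x - (-y) = x + y$ recovers condition~2. No property of $\Confine$ beyond these arithmetic identities is needed here.

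The only substantive step is $\text{1} \Rightarrow \text{2}$, and the key observation is that closure under $\oplus$ lets us absorb the addition. Given $x,y \in X$, closure gives $x \oplus y \in X$, and in $\Real_D$ we have
\[
2(x \oplus y) \;=\; (x \oplus y) + (x \oplus y) \;=\; 2 \cdot \tfrac{x+y}{2} \;=\; x+y .
\]
Hence $\Confine(x+y) = \Confine\bigl(2(x\oplus y)\bigr)$, and applying condition~1 to the element $x \oplus y \in X$ gives $\Confine(2(x\oplus y)) \in X$, so $\Confine(x+y) \in X$ as required. The step one might naively expect to be the obstacle is precisely this one: clamping $x+y$ into $[-1,1]$ seems to invite a case split according to whether $x+y$ exceeds $1$, falls below $-1$, or lies in between, which would be illegitimate intuitionistically. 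The $\oplus$ trick sidesteps this entirely by reducing $\Confine(x+y)$ to an instance of condition~1, so that the clamping is handled uniformly by a single hypothesis rather than by a disjunction. With $\text{1} \Leftrightarrow \text{2}$ and $\text{2} \Leftrightarrow \text{3}$ established, all three conditions are equivalent.
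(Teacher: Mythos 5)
Your proof is correct, and the argument via $\Confine(x+y)=\Confine(2(x\oplus y))$ together with symmetry for $2\Leftrightarrow 3$ is exactly the kind of purely equational reasoning, valid in the internal logic, that the lemma calls for. The paper itself omits the proof entirely (stating only that it "is easy to show"), so there is nothing to compare against; your write-up fills that gap correctly.
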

We say that a symmetric subalgebra
$X$ of $\Ival_D$ is {\em magnifiable}
if any of the  equivalent
conditions of Proposition~\ref{LemDouble} hold.

\begin{theorem}
\label{ThmTopos}
\leavevmode
\begin{enumerate}
\item $\Ival_E$ is a magnifiable symmetric subalgebra of $\Ival_D$.
\item \label{ThmTopos:ii} $(\Ival_E, \oplus, -1, 1)$ is an interval object in $\CE$.
\end{enumerate}
\end{theorem}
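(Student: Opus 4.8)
The plan is to treat the three closure properties first, then verify that $(\Ival_E,\oplus)$ is an m-convex body, and finally the universal property, carrying everything out in the internal logic of $\CE$ and exploiting the Cauchy completeness of $\Real_E$. For part~1 I would isolate the closure principle that, for any operation $f$ on $\Real_D$ that preserves Cauchy sequences with an explicit modulus and maps rationals into $\Real_E$, the subobject $\{x\in\Real_D\mid f(x)\in\Real_E\}$ contains $\Rat$ and is Cauchy complete, so contains $\Real_E$ by minimality of the latter. Applied successively to $x\mapsto x+q$ and $x\mapsto x+y$, to negation, to multiplication by a fixed rational, and to $\Max(-,q)$ and $\Min(-,q)$, this shows $\Real_E$ is closed under $+$, $-$, rational scaling, $\Min$ and $\Max$, hence under $\oplus$ and under $\Confine=\Min(1,\Max(-,-1))$. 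Intersecting with $\Ival_D$ and using that $[-1,1]$ is preserved by these operations (and that $\Confine(2x)$ lands in $\Ival_D$) shows that $\Ival_E$ contains $\pm1$, is closed under $\oplus$ and $-$, and satisfies $\Confine(2x)\in\Ival_E$; magnifiability is then the first clause of Lemma~\ref{LemDouble}.

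Next I would verify that $(\Ival_E,\oplus)$ is an m-convex body. Idempotency, commutativity, transposition, and cancellation are immediate internal identities in the ordered field $\Real_D$; for instance cancellation is just $(x+z)/2=(y+z)/2\Rightarrow x=y$. For iterativity I would invoke Proposition~\ref{PropSetIt}, which applies since the topos $\CE$ is cartesian closed with exponentiable parametrised nno $\Nat$, and exhibit the infinitary operation $\bigoplus(x_0,x_1,\dots)=\sum_{i\ge0}2^{-(i+1)}x_i$. Its partial sums give a morphism $\Ival_E^{\Nat}\times\Nat\to\Ival_E$ that is uniformly Cauchy in the iteration parameter, since the tails are bounded by $2^{-n}$ because each $|x_i|\le1$; as $\Ival_E=\Real_E\cap\Ival_D$ is Cauchy complete (being the intersection of the Cauchy complete $\Real_E$ with the Cauchy-closed condition $-1\le x\le1$), the limit defines $\bigoplus\colon\Ival_E^{\Nat}\to\Ival_E$. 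Unfolding is a direct rearrangement, and canonicity follows from $y_0-\bigoplus(x_i)=2^{-n}(y_n-\text{tail})$ together with the Archimedean principle that $|z|\le\varepsilon$ for every $\varepsilon>0$ forces $z=0$. Proposition~\ref{PropSetIt}(\ref{iter:1}) then yields iterativity.

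For the universal property I would exploit the superconvex structure that every m-convex body carries internally. An arbitrary m-convex body $A$ with global points $x_a,x_b$ is, in the internal logic, an iterative cancellative midpoint set, so by Theorem~\ref{ThmSuperconvex} it carries a unique iterative superconvex structure; the same applies to $\Ival_E$, whose structure is the ambient $\sum_i\lambda_i x_i$. Since every $x\in\Ival_E$ satisfies $x=\tfrac{1-x}{2}\cdot(-1)+\tfrac{1+x}{2}\cdot1$ as a real binary convex combination, I would simply set $h(x)=\tfrac{1-x}{2}\cdot x_a+\tfrac{1+x}{2}\cdot x_b$ using the superconvex operation of $A$. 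This $h$ is superaffine by construction, hence a midpoint homomorphism by Proposition~\ref{prop-affine}, and clearly $h(-1)=x_a$ and $h(1)=x_b$. Uniqueness is immediate: any midpoint homomorphism $h'$ taking these endpoint values is superaffine (Proposition~\ref{prop-affine}), so $h'(x)=\tfrac{1-x}{2}\,h'(-1)+\tfrac{1+x}{2}\,h'(1)=h(x)$.

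The main obstacle is that all of part~2 must be conducted choice-freely in the internal logic, which is exactly why the Euclidean reals $\Real_E$, and not the Cauchy reals $\Real_C$, are the right object: without countable choice one cannot extract a signed-digit (or rational Cauchy) representation of a given $x\in\Ival_E$, so the map $\bigoplus\colon\{-1,1\}^{\Nat}\to\Ival_E$ need not be an internal surjection and $h$ cannot be obtained by factoring it through the infinitary operation $M_A$ of $A$. The genuine work is therefore to confirm that the superconvex development of Section~\ref{sets} --- Theorem~\ref{ThmSuperconvex} and Proposition~\ref{prop-affine}, which rest on the approximation property of Proposition~\ref{PropSetApprox} --- reinterprets soundly in $\CE$; the excerpt already flags that Proposition~\ref{PropSetApprox} holds in any topos, and it is this internal validity, together with the verification that the real-weighted combinations used are well-defined and land in $A$ and in $\Ival_E$, that underpins the argument.
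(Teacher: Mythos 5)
Your part~1 is sound and in fact more direct than the paper's: the paper obtains magnifiability of $\Ival_E$ only indirectly, by identifying $\Ival_E$ with the smallest $\bigoplus$-closed subalgebra $\Ival$ and extracting magnifiability from a fixed-point construction, whereas your Cauchy-completion induction on subobjects of the form $\{x \in \Real_D \mid f(x) \in \Real_E\}$ is a legitimate shortcut. The verification that $(\Ival_E,\oplus)$ is an m-convex body is also fine. The gap is in the universal property, and it is not a loose end but the whole content of the theorem. You define $h(x)=\tfrac{1-x}{2}\cdot x_a+\tfrac{1+x}{2}\cdot x_b$ by appealing to an internal superconvex structure on an arbitrary m-convex body $A$ in $\CE$, obtained by internalizing Theorem~\ref{ThmSuperconvex}. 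But Theorem~\ref{ThmSuperconvex} does not internalize: its proof (Lemmas~\ref{LemFindNormal}, \ref{LemNormalize}, \ref{LemNormalComplete}) repeatedly selects least integers $t$ witnessing inequalities $2^{-t}\le\mu^l_i$ between reals and proves termination by contradiction; the paper flags only Proposition~\ref{PropDyadic} and Proposition~\ref{PropSetApprox} as constructive, and those alone do not provide real-weighted combinations in $A$. Worse, your formula for $h$ nowhere uses that $x$ is a Euclidean rather than a Dedekind real: if binary combinations with weights $\tfrac{1\pm x}{2}$ were available in every m-convex body for arbitrary $x\in\Ival_D$, the identical argument would make $\Ival_D$ an interval object, forcing $\Ival_E=\Ival_D$ by uniqueness of interval objects --- which fails, e.g., in $\mathit{Sh}(\mathbb{R})$. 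So the argument proves too much and cannot be correct as stated.

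What is actually required --- and what the paper does --- is an induction matching the inductive definition of $\Real_E$ as the least Cauchy complete subobject containing $\Rat$. For each $(A,m,x_a,x_b)$ the paper forms the family $\CF$ of magnifiable symmetric subalgebras $X\subseteq\Ival$ admitting a \emph{unique} endpoint-preserving homomorphism to $A$, shows $\CF$ is a dcpo whose least element is the dyadic rational interval (where the constructive Proposition~\ref{PropDyadic} applies), shows that $X\mapsto\{\bigoplus_i x_i\mid x_{(-)}\in X^\Nat\}$ is a monotone inflationary endomap of $\CF$ --- the crux being well-definedness of $f'(\bigoplus_i x_i)=M_i f(x_i)$ in the face of non-unique representations, which is exactly where the internalized approximation property (Proposition~\ref{PropSetApprox}) and magnifiability are needed --- and then invokes Pataraia's intuitionistic fixed-point theorem to conclude $\Ival\in\CF$, i.e.\ that the homomorphism extends to all of $\Ival=\Ival_E$. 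Your proposal contains none of this machinery, and the formula-based shortcut it substitutes cannot be repaired without it.
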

%
%
%

Crucial to the proof of Theorem~\ref{ThmTopos} is an
alternative description of the Euclidean interval, which
is better adapted to establishing the universal property
of an interval object. To motivate its definition,
we first observe that $(\Ival_D, \oplus)$ is an m-convex body.
This follows from Proposition~\ref{PropSetIt} (interpreted
in  $\CE$)
on account of the morphism
$\bigoplus: {\Ival_D}^\Nat \to \Ival_D$ defined by:
\[
\bigoplus_i x_i \:  = \:
\sum_{i \geq 0} 2^{-(i+1)} x_i \, ,
\]
using the Cauchy completeness of $\Real_D$.
The alternative description of the interval object
is as the smallest subalgebra of $(\Ival_D, \oplus)$
that is closed under $\bigoplus$ and contains $1$ and $-1$.
Explicitly, define
\begin{eqnarray*}
\Ival & = &
  \bigcap \{ X \subseteq \Ival_D \mid
               \text{$-1,1 \in X$ and $\bigoplus_i x_i \in X$
                     for all $x_{(-)} \in X^\Nat$}
                     \}.
\end{eqnarray*}
Then $\Ival$ is itself closed under $\bigoplus$, hence also
under binary $\oplus$,
because $x \oplus y = \bigoplus(x,y,y,y, \dots)$. Also, $\Ival \subseteq \Ival_E$
because, by the Cauchy completeness of $\Real_E$, it holds
that $\Ival_E$ is closed under $\bigoplus$.

Theorem~\ref{ThmTopos} is an immediate consequence of
the proposition below.
\begin{prop}
\label{PropThreeLines}
\leavevmode
\begin{enumerate}
\item $\Ival$ is a magnifiable symmetric subalgebra of $\Ival_D$.

\item
$(\Ival, \oplus,-1,1)$ is an interval object in $\CE$.

\item $\Ival = \Ival_E$.
\end{enumerate}
\end{prop}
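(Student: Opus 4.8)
The plan is to prove the three statements in the order (2), (3), (1), since the magnifiability in part~(1) will fall out cheaply once $\Ival = \Ival_E$ is known. Throughout I work in the internal logic of $\CE$, using freely that $(\Ival_D,\oplus)$ is an m-convex body, that $\Ival$ is closed under $\bigoplus$ and $\oplus$, and that $\Ival \subseteq \Ival_E$, all established before the statement. Two easy preliminaries come first. For \emph{uniqueness} in the universal property, if $h_1,h_2 \colon \Ival \to A$ are midpoint homomorphisms agreeing on $-1,1$, then by Proposition~\ref{PropSetIt}(\ref{iter:2}) they are also $\bigoplus$-homomorphisms, so the subobject $\{x \in \Ival \mid h_1(x)=h_2(x)\}$ contains $-1,1$ and is closed under $\bigoplus$; by minimality of $\Ival$ it is all of $\Ival$. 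For later use I record that $\bigoplus \colon \{-1,1\}^\Nat \to \Ival$ is internally surjective: its image contains $-1,1$ and, by flattening a nested $\bigoplus$ to a single one (Proposition~\ref{PropFlatten}) and re-expanding each dyadic leaf as an eventually-constant $\pm 1$ sequence, is closed under $\bigoplus$, hence equals $\Ival$. Finally, $\Ival$ is \emph{symmetric}: the subobject $\{x \mid -x \in \Ival\}$ contains $\pm 1$ and, since $\bigoplus$ is linear, is closed under $\bigoplus$, so contains $\Ival$.

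The heart of part~(2) is \emph{existence}. Using surjectivity, I define $h$ as the unique factorisation through $\bigoplus$ of the map $(d_i) \mapsto M_i\, x_{d_i} \colon \{-1,1\}^\Nat \to A$, where $x_{-1},x_{1}$ are the chosen points of $A$ and $M$ is the infinitary operation of $A$; that $h$ is then a midpoint homomorphism sending $-1 \mapsto x_{-1}$ and $1 \mapsto x_1$ is routine (idempotency of $M$ handles the endpoints, and surjectivity with flattening handles the homomorphism property). The real content is \emph{well-definedness}: if $\bigoplus_i d_i = \bigoplus_i e_i$ in $\Ival_D$ then $M_i\, x_{d_i} = M_i\, x_{e_i}$ in $A$. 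This is precisely the step for which the internal validity of the Approximation property (Proposition~\ref{PropSetApprox}, which holds in any topos) is indispensable: it reduces the equality of infinitary values to matching finite approximations of the form $m_n(x_{d_0},\dots,x_{d_{n-1}},z_n)=m_n(x_{e_0},\dots,x_{e_{n-1}},w_n)$, and the point is that these can be produced \emph{uniformly}, without the constructively illegitimate case split on whether the common real value is dyadic; the ``carry'' ambiguity of binary expansions is absorbed using the supermidpoint commutativity law $M(x,y,y,\dots)=M(y,x,x,\dots)$ of Definition~\ref{def:supermidpoint}. I expect this to be the main obstacle of the whole proof, exactly because the naive digit-by-digit argument relies on deciding equalities of reals that are not decidable in $\CE$.

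For part~(3), the inclusion $\Ival \subseteq \Ival_E$ is already known, so it remains to prove $\Ival_E \subseteq \Ival$. First I show $\Ival$ is \emph{Cauchy complete}: given a Cauchy sequence in $\Ival$ with a chosen modulus, one writes its limit as a single $\bigoplus$ of elements of $\Ival$ by passing to dyadic increments --- scaling a small dyadic increment back into $[-1,1]$ keeps it dyadic, hence manifestly in $\Ival$ --- so the limit lies in $\Ival$; here the one-sided form of the Approximation property used in the proof of Proposition~\ref{PropSetApprox} is what lets one conclude membership without deciding. Since every rational of $[-1,1]$ is a Cauchy limit of such dyadics, this also gives $\Rat \cap \Ival_D \subseteq \Ival$. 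Now consider $Z = \{x \in \Real_D \mid \Confine(x) \in \Ival\}$: as $\Confine$ is non-expansive and $\Ival$ is Cauchy complete, $Z$ is Cauchy complete, and $\Rat \subseteq Z$ because $\Confine(q) \in \Rat \cap \Ival_D \subseteq \Ival$; by the defining minimality of $\Real_E$ we get $\Real_E \subseteq Z$, whence for $x \in \Ival_E = \Real_E \cap \Ival_D$ we have $\Confine(x) = x \in \Ival$. This proves $\Ival = \Ival_E$. Finally, part~(1): $\Ival$ is a symmetric subalgebra by the above, and it is \emph{magnifiable} because $\Real_E$, being the Cauchy completion of $\Rat$, is a subring of $\Real_D$ closed under $\min$ and $\max$, so for $x,y \in \Ival = \Real_E \cap \Ival_D$ we have $\Confine(x+y) = \min(1,\max(-1,x+y)) \in \Real_E \cap \Ival_D = \Ival$, which is the second condition of Lemma~\ref{LemDouble}. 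Since part~(2) exhibits an interval object and $\CE$ is cartesian closed, Proposition~\ref{PropParam} moreover makes it parametrised.
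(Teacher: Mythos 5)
Your uniqueness argument and your proof that $\Ival$ is symmetric are both correct and, if anything, more direct than the paper's. But the existence half of part~(2) rests on a claim that is false in a general elementary topos: that $\bigoplus \colon \{-1,1\}^\Nat \to \Ival$ is internally surjective. Your argument for this --- that the image contains $\pm 1$ and is closed under $\bigoplus$ --- requires, at the closure step, simultaneously choosing a signed-binary representative for each term $y_l$ of a sequence valued in the image; that is an instance of countable choice, which is not available in $\CE$. Worse, the conclusion itself fails: every element of the image of $\bigoplus$ restricted to $\{-1,1\}^\Nat$ is the limit of a Cauchy sequence of dyadic rationals with an explicit modulus, hence lies in $\Real_C \cap \Ival_D$; since Lubarsky's example (cited in the paper) gives a topos with $\Real_C \subsetneq \Real_E$, the image can be a proper subobject of $\Ival = \Ival_E$. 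This is precisely the obstruction that forces the paper to construct $\Ival$ as the least fixed point of the inflationary operator $\Phun$ via Pataraia's theorem, carrying $(A,m,a,b)$-initiality (together with magnifiability, which is needed at each stage to manufacture the tails $z_n, w_n$ fed to the approximation property) as an invariant through the induction, rather than reaching $\Ival$ after one application of $\Phun$ to $\{-1,1\}$.

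There is a second, independent problem in your part~(3): the Cauchy completeness of $\Ival$. The increments $\alpha_{n+1}-\alpha_n$ of a Cauchy sequence in $\Ival$ are not dyadic rationals, and you cannot constructively replace the $\alpha_n$ by dyadic approximants (that again needs choice, and comparison with a dyadic rational is undecidable in $\CE$). The paper's Lemma~\ref{lemma:Ical:cc} writes the limit as $2\bigoplus(\alpha_0,\, 2(\alpha_1-\alpha_0),\, 4(\alpha_2 - \alpha_1),\, \dots)$ and needs the \emph{magnifiability} of $\Ival$ to see that the scaled truncated differences lie in $\Ival$. In your ordering (2), (3), (1), magnifiability is derived only at the very end from $\Ival = \Ival_E$, which itself depends on Cauchy completeness --- a circularity. (Your closing observation, that magnifiability of $\Ival_E$ follows from $\Real_E$ being a subring of $\Real_D$ closed under $\min$ and $\max$, is fine as far as it goes, but it cannot break the circle since it presupposes $\Ival = \Ival_E$.) The paper breaks the circle by establishing magnifiability of $\Ival$ directly, inside the Pataraia induction (Lemma~\ref{LemMag}), before touching Cauchy completeness.
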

We now embark on the proof of the above proposition.
The whole proof is structured around its crucial use of Pataraia's
fixed-point theorem \cite{pataraia:pssl}, which we now
recall.

Let $(X , \leq)$ be a poset in $\CE$
(i.e. $\leq$ is a subobject
of $X \times X$ satisfying the usual axioms for a non-strict
partial order, expressed in the internal logic of $\CE$). Internally in $\CE$,
we say that a subobject $D \subseteq X$
(i.e. an element $D \in \CP X$) is
{\em directed} if it is inhabited (i.e. there exists some $x \in D$)
and, for any $x, y \in D$ there exists $z \in D$
with $x \leq z \geq y$.
(Because we are working intuitionistically,
the condition of being inhabited is stronger than the condition
of being nonempty.)
We say that $X$ is a {\em directed-complete
partial order (dcpo)}
if, internally in $\CE$,  every directed subobject $D \subseteq X$
has a least upper bound (lub) in $X$.
We say that an endofunction $f$ on a poset $X$
\emph{monotone} if $x \leq y$ implies
$f(x) \leq f(y)$ and {\em inflationary} if $x \leq f(x)$.
\begin{theorem}[Pataraia \cite{pataraia:pssl}]
\label{PropInfFix}
If $X$ is an inhabited dcpo then, internally in $\CE$,
every monotone inflationary function on $X$
has a fixed point.
\end{theorem}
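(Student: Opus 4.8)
The plan is to follow Pataraia's argument, which is constructive throughout and hence interpretable directly in the internal logic of $\CE$. Rather than work with $f$ in isolation, I would consider the collection of \emph{all} monotone inflationary endofunctions on $X$ and exploit the remarkable fact that this collection, ordered pointwise, is itself a dcpo that is moreover \emph{directed}.

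Concretely, I would introduce the subobject $M \subseteq X^X$ of monotone inflationary endofunctions, with the pointwise order $g \leq h \iff \forall x.\, g(x) \leq h(x)$. Three facts about $M$ are needed. (i) $M$ is inhabited and closed under composition: $\Id \in M$ by reflexivity, and if $g,h \in M$ then $g \circ h$ is monotone and inflationary, since $x \leq h(x) \leq g(h(x))$, using inflationarity of $h$ and then of $g$. (ii) $M$ is a dcpo: for an internally directed $D \subseteq M$ and each $x \in X$, the set $\{g(x) \mid g \in D\}$ is inhabited and directed in $X$, so has a lub by the dcpo assumption on $X$, and the pointwise assignment $x \mapsto \bigsqcup_{g \in D} g(x)$ is the lub of $D$ in $M$. (iii) --- the crucial point --- $M$ is itself directed: given $g,h \in M$, the composite $g \circ h \in M$ dominates both, as $g \leq g\circ h$ (monotonicity of $g$ applied to $x \leq h(x)$) and $h \leq g \circ h$ (inflationarity of $g$ at $h(x)$).

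From (ii) and (iii), $M$ is an inhabited directed subobject of the dcpo $M$, so it has a lub $t \in M$, which is therefore the greatest element of $M$. I would then extract the fixed point as follows: since $f$ is monotone and inflationary, $f \in M$, so $f \circ t \in M$ by (i); maximality of $t$ gives $f \circ t \leq t$, while inflationarity of $f$ gives $t \leq f \circ t$; hence $f \circ t = t$, that is $f(t(x)) = t(x)$ for all $x \in X$. Choosing any $a \in X$ (possible since $X$ is inhabited), the element $t(a)$ satisfies $f(t(a)) = t(a)$ and is the desired fixed point.

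Every step is intuitionistically valid and phrasable in the Mitchell-B{\'e}nabou language, so the whole argument interprets internally in $\CE$. I expect the main subtlety to be step (iii) together with the scrupulous use of the internal notions of \emph{inhabited} and \emph{directed} (rather than merely nonempty) --- in particular, carving out $M$ as a genuine subobject of $X^X$ and checking that the pointwise family $\{g(x)\mid g\in D\}$ is internally directed --- rather than any difficulty with the fixed-point extraction itself, which is immediate once $M$ is known to possess a top element.
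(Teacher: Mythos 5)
Your proposal is correct and follows essentially the same argument as the paper: both form the poset of monotone inflationary endofunctions under the pointwise order, observe that it is a dcpo and is itself directed (via composition), extract its top element $t$, and conclude $f \circ t = t$ so that $t(a)$ is a fixed point for any $a \in X$. The only difference is that you spell out the verifications (closure under composition, monotonicity of the pointwise lub, the two inequalities giving directedness) in slightly more detail than the paper does.
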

\begin{proof}
Let $I$ be the object of monotone inflationary endofunctions on $X$
ordered pointwise. It is easily checked that $I$ is a dcpo:
lubs can be constructed pointwise because the image of a directed set
(of functions) under a monotone function (application to
an argument) is directed, and the lub so constructed is indeed
inflationary. Moreover, $I$ is itself directed: the identity is in $I$;
and, given $f,g \in I$,
it holds that $f \leq f \circ g \geq g$. Therefore $I$ considered as
a subobject of itself has a lub, i.e. I has a maximum element, $t$.
Then, for any $f \in I$ we have that $f \circ t = t$. Thus, given
any element $x \in X$, it holds that $t(x)$ is a common fixed point for
all monotone inflationary functions on $X$.
\end{proof}
Pataraia's result has  many applications. For example,
the authors have previously applied it to
the construction of initial algebras of endofunctors
\cite[Theorem~5]{simpson:classes}, and to a localic version
of the Hofmann--Mislove Theorem \cite{escardo:hofmann}.
We now proceed with our third application, a proof
of Proposition~\ref{PropThreeLines}.

\subsection{Proof of Proposition~\ref{PropThreeLines}}
\label{SubSecPropThreeLines}

Let $(A,m)$ be an m-convex body in $\CE$ with
global points $a,b \in A$.
Internally in $\CE$, we say that a subalgebra
$X \subseteq \Ival_D$ is \emph{$(A,m,a,b)$-initial}
if there exists a unique
homomorphism $f_X \colon X \to A$ from
$(X, \oplus)$ to $(A,m)$ such that $f_X(-1) = a$ and
$f_X(1) = b$.
Consider the subobject $\CF_{(A,m,a,b)} \subseteq \Power(\Ival)$
(henceforth just $\CF$) consisting (internally)
of those  subobjects $X$ of $\Ival$ that satisfy:
\begin{enumerate}
\item $X$ is a magnifiable symmetric subalgebra of $\Ival_D$; and
\item $X$ is $(A,m,a,b)$-initial.
\end{enumerate}

\begin{lemma}
Internally in $\CE$,
we have that $(\CF, \subseteq)$ is a dcpo with least element.
\end{lemma}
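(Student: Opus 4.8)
The plan is to produce the least element explicitly and to obtain directed joins by taking directed unions, the only genuine work being the internal verification that the two defining clauses of $\CF$ are inherited. First I would identify the least element of $\CF$ as the object of dyadic rationals in the interval, $\Confine(\SetRatD)$. That this lies in $\CF$ splits along the two clauses. It is a magnifiable symmetric subalgebra because it contains $-1$ and $1$ and is visibly closed under $\oplus$, under negation, and under $x \mapsto \Confine(2x)$ (doubling a dyadic of $[-1,1]$ and then applying $\Confine$ again lands in the dyadics of $[-1,1]$). It is $(A,m,a,b)$-initial because $\Confine(\SetRatD)$ is the free midpoint algebra on the two generators $-1,1$: existence of the homomorphism $f\colon \Confine(\SetRatD)\to A$ with $f(-1)=a$ and $f(1)=b$ comes from this free structure (this is exactly the map denoted $d_z$ in the proof of Theorem~\ref{ThmTop}, and the relevant facts from Proposition~\ref{PropDyadic} are constructive, hence valid internally in $\CE$), while uniqueness holds because every element of $\Confine(\SetRatD)$ is an $m_n$-combination of copies of $-1$ and $1$, so any two such homomorphisms agree. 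Finally, $\Confine(\SetRatD)$ sits below every $X \in \CF$, since any such $X$ is a subalgebra containing $-1,1$ and closed under $\oplus$ and therefore contains the $\oplus$-closure of $\{-1,1\}$, which is precisely $\Confine(\SetRatD)$.

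For directed completeness, given internally a directed $\mathfrak{D} \subseteq \CF$, I would take as candidate least upper bound the union $U = \{\, x \in \Ival \mid \exists X \in \mathfrak{D}.\, x \in X \,\}$. The closure properties transfer routinely: the unary operations (negation and $x \mapsto \Confine(2x)$) are inherited pointwise, and $U$ contains $-1,1$ because $\mathfrak{D}$ is inhabited; closure under the binary $\oplus$ is where directedness enters, for given $x,y \in U$ one finds $X,X' \in \mathfrak{D}$ with $x \in X$ and $y \in X'$ and then a common $X'' \in \mathfrak{D}$ above both, so that $x \oplus y \in X'' \subseteq U$. Hence $U$ is again a magnifiable symmetric subalgebra of $\Ival_D$, and it is plainly an upper bound of $\mathfrak{D}$ contained in every upper bound lying in $\CF$.

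The substantive point, which I expect to be the main obstacle, is $(A,m,a,b)$-initiality of $U$, since it must be carried out in the intuitionistic internal logic rather than by choosing representatives. Each $X \in \mathfrak{D}$ carries its \emph{unique} homomorphism $f_X$; by that uniqueness, whenever $X \subseteq X'$ the restriction of $f_{X'}$ to $X$ is a homomorphism fixing the endpoints and hence coincides with $f_X$, which gives compatibility across $\mathfrak{D}$. Internal unique choice, available in any topos, converts the unique-existence statement into an actual family $(f_X)_{X \in \mathfrak{D}}$, and I would then define $f\colon U \to A$ by $f(x) = f_X(x)$ for any $X \ni x$, well-definedness being exactly the directed compatibility just noted. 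The homomorphism law $f(x \oplus y) = m(f(x),f(y))$ is verified by pulling $x$ and $y$ into a common member of $\mathfrak{D}$ (directedness once more), and $f(-1)=a$, $f(1)=b$ are immediate; uniqueness of $f$ follows since any competitor restricts on each $X$ to a homomorphism fixing the endpoints and therefore to $f_X$. This places $U$ in $\CF$ and exhibits it as the least upper bound, so that, together with the least element $\Confine(\SetRatD)$, $(\CF,\subseteq)$ is an inhabited dcpo. The care needed throughout is simply to phrase the union, the glued map, and the two appeals to directedness as legitimate internal constructions; once that is arranged the remaining checks are routine.
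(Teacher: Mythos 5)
Your proposal is correct and follows essentially the same route as the paper: the least element is $\Confine(\SetRatD)$, shown to be $(A,m,a,b)$-initial by internalising the constructive Proposition~\ref{PropDyadic}, and directed joins are computed as unions, with the unique homomorphisms $f_Y$ glued together using directedness for both well-definedness and the homomorphism law, and uniqueness obtained by restricting any competitor to each member of the directed family. Your remarks on internal unique choice and on why $\Confine(\SetRatD)$ lies below every element of $\CF$ merely make explicit points the paper leaves implicit.
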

\begin{proof}
Let $\Rat_d$ be the subobject of $\Rat$ of dyadic rationals.
We write $\Confine(\Rat_d)$ for $\Rat_d \cap \Ival_D$.
Then $\Confine(\Rat_d)$ is the least element of $\CF$. In
particular, it
is $(A,m,a,b)$-initial, using Proposition~\ref{PropDyadic}
internalized in $\CE$, which is possible as the proof is constructive.

Suppose $\CD$ is a directed subset of $\CF$. We show that
$X = \bigcup \CD$ is in $\CF$.
Easily, $X$ is a magnifiable symmetric subalgebra
of $\Ival_D$ (it is closed under $\oplus$ because $\CD$ is directed).
It remains to show that it is $(A,m,a,b)$-initial.

Define $f_X: X \to A$ by mapping $x \in X$ to $f_Y(x)$ where
$Y$ is any element of $\CD$ containing $x$
and $f_Y:Y \to A$ is the unique homomorphism
from $(Y,\oplus,-1,1)$ to $(A,m,a,b)$. This is uniquely
determined because, if $x \in Y \in \CD$ and $x \in Y' \in \CD$,
then, as $\CD$ is directed, there exists $Z \in \CD$ with
$Y \subseteq Z \supseteq Y'$. But then $f_Z: Z \to A$ restricts to
homomorphisms from $Y$ to $A$ and from $Y'$ to $A$.
But $f_Y$ and $f_{Y'}$ are the unique such, so
$f_Y(x) = f_Z(x) = f_{Y'}(x)$.

Similarly, $f_X$ is unique, because given any
homomorphism $g: X \to A$ and $x \in X$, we have
$x \in Y$ for some $Y \in \CD$. Then $g$ restricts to a
homomorphism from $Y$ to $A$, and $f_Y$ is the unique such,
so indeed $g(x) = f_Y(x) = f_X(x)$.
\end{proof}

Define $\Phun : \Power(\Ival_D) \to \Power(\Ival_D)$ by
\begin{equation}
\label{EqnPhun}
\Phun(X) \: =  \: \left\{\bigoplus_i  x_i \mid
   x_{(-)} \in X^\Nat\right\}\, .
\end{equation}

\begin{lemma}
\label{LemPIF}
Internally in $\CE$, the map
$\Phun$ restricts to a monotone inflationary function
on $(\CF, \subseteq)$.
\end{lemma}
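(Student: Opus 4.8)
The plan is to check the three requirements separately, treating monotonicity and the inflationary law as immediate and putting all the effort into showing that $\Phun$ sends a member of $\CF$ to a member of $\CF$. Monotonicity is clear: if $X \subseteq Y$ then every $x_{(-)} \in X^\Nat$ is also an element of $Y^\Nat$, whence $\Phun(X) \subseteq \Phun(Y)$. For the inflationary law $X \subseteq \Phun(X)$, each $x \in X$ satisfies $x = \bigoplus_i x$ by the idempotency of the supermidpoint operation of $(\Ival_D,\oplus)$ (Proposition~\ref{PropSetEqns}(\ref{iter:eqn})), so $x$ is the iterated midpoint of the constant sequence and lies in $\Phun(X)$.

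Next I would record the easy closure properties of $\Phun(X)$. Since $X \subseteq \Ival$ and $\Ival$ is by definition closed under $\bigoplus$, we have $\Phun(X) \subseteq \Ival$. The points $-1,1$ lie in $\Phun(X)$ again by idempotency, and closure under $\oplus$ follows from the supermidpoint interchange identity $\bigoplus_i x_i \oplus \bigoplus_i y_i = \bigoplus_i (x_i \oplus y_i)$ (as used in the proof of Lemma~\ref{LemNormalize}) together with $X$ being closed under $\oplus$. Symmetry holds because negation is a midpoint homomorphism on $\Ival_D$ and hence, by Proposition~\ref{PropSetIt}(\ref{iter:2}), commutes with $\bigoplus$, giving $-\bigoplus_i x_i = \bigoplus_i(-x_i)\in\Phun(X)$. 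Moreover, the Flattening identity (Proposition~\ref{PropFlatten}) rewrites a doubly indexed $\bigoplus_i\bigoplus_j x_{ij}$ as a single $\bigoplus_l$ over finite $\oplus$-combinations of the $x_{ij}$, each of which lies in $X$; hence $\Phun(\Phun(X))\subseteq\Phun(X)$, so $\Phun(X)$ is itself closed under $\bigoplus$ and, inheriting cancellation as a subobject of the cancellative $\Ival_D$, is an m-convex body.

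The two genuine obstacles are (a) magnifiability of $\Phun(X)$ and (b) $(A,m,a,b)$-initiality of $\Phun(X)$. For (b), uniqueness is straightforward: any homomorphism $g\colon\Phun(X)\to A$ with $g(-1)=a$ and $g(1)=b$ restricts along $X\subseteq\Phun(X)$ to the unique homomorphism $f_X$ supplied by initiality of $X$, and, being a $\bigoplus$-homomorphism by Proposition~\ref{PropSetIt}(\ref{iter:2}), is forced to satisfy $g(\bigoplus_i x_i)=M^A_i f_X(x_i)$. The content is therefore \emph{existence}, i.e.\ that $f'(\bigoplus_i x_i):=M^A_i f_X(x_i)$ is well defined: whenever $\bigoplus_i x_i=\bigoplus_i x'_i$ in $\Ival_D$ with $x_{(-)},x'_{(-)}\in X^\Nat$, one must derive $M^A_i f_X(x_i)=M^A_i f_X(x'_i)$. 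Here I would invoke the Approximation property (Proposition~\ref{PropSetApprox}), valid internally in any topos and applicable to the cancellative m-convex body $A$, which reduces this equality of iterated midpoints in $A$ to a family of finite $m_n$-equations between $f_X$-images.

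The delicate point, and the step I expect to be the main obstacle, is that unfolding the hypothesis in $\Ival_D$ yields finite equations $m_n(x_0,\dots,x_{n-1},t_n)=m_n(x'_0,\dots,x'_{n-1},t'_n)$ whose remainder terms $t_n=\bigoplus_{i\ge n}x_i$ and $t'_n$ lie in $\Phun(X)$ rather than $X$, so $f_X$ cannot be applied to them directly; reconciling these tails—exploiting both that the Approximation property permits \emph{arbitrary} remainder terms in $A$ and that $X$ contains every dyadic point of $\Ival_D$, since $X$ dominates the least element $\Confine(\Rat_d)$ of $\CF$—is what makes existence work. For (a) I would, for $x=\bigoplus_i x_i\in\Phun(X)$, construct an explicit sequence in $X$ whose iterated midpoint is $\Confine(2x)$, building it by a carry-style recursion on the unfolding $2x=x_0+\bigoplus_i x_{i+1}$ and using that $X$ is magnifiable (closed under $z,z'\mapsto\Confine(z+z')$) together with dyadic density in $X$. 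I expect this explicit representation, rather than any single algebraic identity, to be the hardest routine computation, since $\Confine$ is not $\oplus$-linear and therefore does not commute with $\bigoplus$ termwise.
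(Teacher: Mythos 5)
Your decomposition matches the paper's exactly: the monotone/inflationary claims and the symmetric-subalgebra closure of $\Phun(X)$ are indeed immediate (the paper's Lemmas~\ref{LemInflate} and~\ref{LemTAZ}), your uniqueness argument for initiality is a legitimate variant of the paper's (which instead unfolds $g(\bigoplus_i x_{i+n}) = m(f(x_n), g(\bigoplus_i x_{i+n+1}))$ and applies canonicity), and you have correctly isolated the two steps that carry all the weight: well-definedness of $f'(\bigoplus_i x_i) := M_i f(x_i)$, and magnifiability of $\Phun(X)$. But neither of these is actually discharged, and for the first the tool you reach for is the wrong one. The mechanism that makes well-definedness work is not dyadic density of $X$ but \emph{magnifiability of $X$} (which is available because $X \in \CF$). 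Concretely: if $\bigoplus_i x_i = \bigoplus_i x'_i$, set $d_n = \oplus_n(x_0,\dots,x_{n-1},0) - \oplus_n(x'_0,\dots,x'_{n-1},0)$, so $|d_n| \leq 2^{-(n-1)}$. Magnifiability (closure under truncated differences and doubling) puts $\pm 2^{n-1} d_n$ in $X$, and these are exactly the remainder terms for which
\[
\oplus_n(x_0,\dots,x_{n-1}, -2^{n-1} d_n) \: = \: \oplus_n(x'_0,\dots,x'_{n-1}, 2^{n-1} d_n)
\]
holds with \emph{all} arguments in $X$; applying $f$ yields the exact $m_n$-equalities in $A$ that Proposition~\ref{PropSetApprox} needs. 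Dyadic density cannot do this job: the approximation property requires the finite $m_n$-expressions to be \emph{exactly} equal for every $n$, and a dyadic remainder can only compensate the discrepancy $d_n$ exactly when $d_n$ happens to be dyadic, which it need not be for arbitrary $x_i, x'_i \in X$.

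The magnifiability step is likewise only a declaration of intent. Your ``carry-style recursion'' is the right shape, but the entire difficulty lives in the details you defer: the paper first proves the intermediate closure property that $\Confine(u + 2y) \in \Phun(X)$ for $u \in \Phun(X)$, $y \in X$, via the recursion $y_0 = y$, $y_{n+1} = \Confine\bigl((x_n + 4y_n) \oplus -\Confine(x_n+4y_n)\bigr)$ and the identity~(\ref{EqnFurtherEnRoute}), verified by a five-way case split justified by separatedness of $\Ival_D$; only then does a second identity~(\ref{EqnAlmostThere}) reduce $\Confine(2u)$ to this. Since $\Confine$ does not commute with $\bigoplus$ termwise (as you note), there is no shortcut around producing and verifying such identities, so as it stands this part of the argument is a genuine gap rather than a routine computation.
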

The somewhat technical proof of this lemma is
delayed to {\S}\ref{SubSecPIF} below.

\begin{lemma}
\label{LemApplyFix} \label{LemIsMag}
$\Ival \in \CF$, hence $\Ival$ is magnifiable.
\end{lemma}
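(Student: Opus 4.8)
The plan is to realize $\Ival$ as a fixed point of $\Phun$ via Pataraia's theorem, and then to pin down this fixed point as $\Ival$ itself by appealing to the minimality built into the definition of $\Ival$. All of the reasoning takes place internally in $\CE$.

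First I would assemble the hypotheses of Pataraia's theorem (Theorem~\ref{PropInfFix}). We have already shown that $(\CF,\subseteq)$ is a dcpo with least element $\Confine(\Rat_d)$, so in particular it is inhabited, and Lemma~\ref{LemPIF} shows that $\Phun$ restricts to a monotone inflationary endofunction on $(\CF,\subseteq)$. Pataraia's theorem therefore yields, internally in $\CE$, an element $X\in\CF$ with $\Phun(X)=X$.

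It then remains to identify this fixed point. Since $X\in\CF\subseteq\Power(\Ival)$ we have $X\subseteq\Ival$ immediately. For the reverse inclusion, note that $X$, being a member of $\CF$, is an endpointed subalgebra of $\Ival_D$, so $-1,1\in X$, and the equality $\Phun(X)=X$ says exactly, by the definition~\eqref{EqnPhun} of $\Phun$, that $X$ is closed under $\bigoplus$. Hence $X$ is one of the subobjects of $\Ival_D$ over which the intersection defining $\Ival$ is taken, and therefore $\Ival\subseteq X$. Combining the two inclusions gives $X=\Ival$, so $\Ival\in\CF$; and since every element of $\CF$ is a magnifiable symmetric subalgebra of $\Ival_D$, it follows that $\Ival$ is in particular magnifiable.

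I expect no serious obstacle here: the content of the lemma is entirely the application of the (already internally proved) fixed-point theorem, and the only point needing care is the two-sided inclusion. The containment $X\subseteq\Ival$ is forced because $\Phun$ was arranged in Lemma~\ref{LemPIF} to restrict to $\CF$, whose members are by definition subobjects of $\Ival$, while the reverse containment $\Ival\subseteq X$ uses nothing more than that $X$ is a $\bigoplus$-closed endpointed subalgebra together with the minimality of $\Ival$. In effect, every fixed point of $\Phun$ lying in $\CF$ must equal $\Ival$, and Pataraia's theorem guarantees that at least one such fixed point exists.
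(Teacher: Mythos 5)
Your proof is correct and follows exactly the paper's own argument: apply Pataraia's theorem to the monotone inflationary map $\Phun$ on the inhabited dcpo $(\CF,\subseteq)$ to obtain a fixed point $X\in\CF$, deduce $X\subseteq\Ival$ from $\CF\subseteq\Power(\Ival)$ and $\Ival\subseteq X$ from the minimality of $\Ival$ among $\bigoplus$-closed subobjects containing $\pm 1$, and conclude $\Ival=X\in\CF$. No further comment is needed.
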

\begin{proof}
By Pataraia's Theorem~\ref{PropInfFix}, the function
$\Phun: \CF \to \CF$ has a fixed point $X \in \CF$.
Thus $X \subseteq \Ival$, by the definition of $\CF$.
Also $X = \Phun(X)$. Therefore $X$ is a subobject of $\Ival_D$ that
contains $1,-1$ and is closed under $\bigoplus$. But
$\Ival$ was defined as the smallest such subset of $\Ival_D$.
Therefore $\Ival \subseteq X$. Thus $\Ival = X$ and hence
$\Ival \in \CF$.
\end{proof}

\begin{lemma} \label{lemma:Ical:cc}
  $\Ival$ is Cauchy complete.
\end{lemma}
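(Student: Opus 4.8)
The plan is to prove that the limit of any Cauchy sequence drawn from $\Ival$ can be exhibited as $\bigoplus_n y_n$ for some sequence $(y_n)$ of elements of $\Ival$; since $\Ival$ is by definition closed under $\bigoplus$, this forces the limit into $\Ival$. So I start from a Cauchy sequence $(a_k)$ in $\Ival$ equipped with its modulus and, reindexing by the modulus, pass to a sequence with $|a_k - a_{k+1}| \le 2^{-(k+3)}$. As $\Real_D$ is Cauchy complete and $-1 \le a_k \le 1$, this sequence has a limit $L \in \Ival_D$ with $|L - a_k| \le 2^{-(k+2)}$, and the whole problem is to show $L \in \Ival$. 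Throughout I reason inside the internal (Mitchell--B\'enabou) logic of $\CE$, and every sequence below is defined by primitive recursion over $\Nat$.

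The engine is the binary shift on $\Ival_D$. Writing $\sigma(w) = 2w - \Confine(2w)$, one has the identity $w = \Confine(2w) \oplus \sigma(w)$ and the bound $\sigma(w) \in \Ival_D$, both by a three-case check on the value of $2w$. Iterating the identity and tracking the exact tails $z_n := 2^n\bigl(L - \sum_{k<n} 2^{-(k+1)} y_k\bigr)$ of whatever digits $(y_k)$ we produce gives $z_{n+1} = 2 z_n - y_n$ together with $z_0 = L$; hence, as soon as every $z_n$ stays in $[-1,1]$, the remainder $2^{-n} z_n$ is negligible and $\bigoplus_n y_n = L$ automatically. Thus \emph{exactness} of the representation is reduced to the single invariant $z_n \in [-1,1]$, while \emph{membership} $L \in \Ival$ is reduced to producing each digit $y_n$ inside the subalgebra $\Ival$.

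Both requirements are met by a \emph{robust} digit rule run against an $\Ival$-valued approximant. The key computation is: if $R \in \Ival$ satisfies $|R - z_n| \le \tfrac14$, then the digit $y_n := \Confine(2R)$ satisfies $2 z_n - y_n \in [-1,1]$ for the true tail $z_n$; this is again a short case analysis on $2R$, the point being that wherever $\Confine$ clips it does so identically on $R$ and on $z_n$, so no error survives near the dangerous ``digit boundaries''. Crucially $y_n = \Confine(2R) \in \Ival$ whenever $R \in \Ival$, directly by magnifiability of $\Ival$ (Lemma~\ref{LemIsMag}). It therefore remains to manufacture, for each $n$, an approximant $R_n \in \Ival$ with $|R_n - z_n| \le \tfrac14$. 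I obtain $R_n$ as $u_n$, where $u_0 := a_{m_n}$, $u_{l+1} := \Confine(2 u_l - y_l)$, anchored at the index $m_n := n+2$: since each $u_l$ stays within $\tfrac1{16}$ of the corresponding exact tail $z_l \in [-1,1]$, no intermediate escapes the range, so each step is a confined double-and-subtract of elements of $\Ival$ and stays in $\Ival$ by magnifiability. The linear growth of the anchor index $m_n$ is exactly what compensates for the error-doubling caused by the shift.

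The main obstacle --- and the reason the statement is delicate rather than routine --- is that in the internal logic one cannot decide the sign of, or choose a rational approximation to, a Dedekind real, so the naive binary expansion of $L$ is simply not available as a morphism. This is circumvented on two fronts: the rule $y_n = \Confine(2R_n)$ is a \emph{total, continuous} selection that tolerates the bounded error $|R_n - z_n| \le \tfrac14$ without any case split, and magnifiability is precisely the closure property that keeps the approximants $R_n$ and the digits $y_n$ inside $\Ival$ rather than merely in $\Ival_D$, since a single unguarded scaling by $2^n$ would leave $\Ival$. Granting the invariant, the partial sums converge to $L$, so $L = \bigoplus_n y_n$ with every $y_n \in \Ival$, and hence $L \in \Ival$, completing the proof.
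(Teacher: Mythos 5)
Your argument is correct in outline, but it takes a much longer road than the paper does. The paper's proof is a two-line telescoping trick: after extracting a subsequence with $|\alpha_{i+1}-\alpha_i|\le 2^{-(i+1)}$, it writes the limit directly as
\[
\lim \alpha \;=\; 2\bigoplus\bigl(\alpha_0,\; 2(\alpha_1-\alpha_0),\; 4(\alpha_2-\alpha_1),\;\dots\bigr),
\]
observes that each entry $2^n(\alpha_n-\alpha_{n-1})$ lies in $[-1,1]$ and belongs to $\Ival$ by magnifiability (truncated subtraction and repeated truncated doubling are exact here because of the $2^{-(i+1)}$ bound), and concludes since $\Ival$ is closed under $\bigoplus$. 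No digit extraction, no invariant, no approximants. Your robust signed-digit construction --- producing $y_n=\Confine(2R_n)$ from an $\Ival$-valued approximant $R_n$ of the exact tail $z_n$, with the error budget controlled by anchoring at $a_{n+2}$ --- is sound: the $\tfrac14$-tolerance computation and the invariant $z_n\in[-1,1]$ both check out, and the case splits are legitimate because the goals are $\lnot\lnot$-stable inequalities between Dedekind reals, the same device the paper uses in Lemma~\ref{LemMag}. But it re-proves from scratch a representability fact that the telescoping identity gets for free: the paper only needs to exhibit \emph{some} $\bigoplus$-expression for the limit with entries in $\Ival$, and the consecutive differences of a fast subsequence already supply one. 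What your approach buys in exchange is a genuine intrinsic signed-binary representation of the limit by digits taken from $\Ival$, which is more than the lemma asks for.

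One step you state too casually: you claim $u_{l+1}=\Confine(2u_l-y_l)\in\Ival$ ``by magnifiability''. Magnifiability (Lemma~\ref{LemDouble}) gives closure under $x\mapsto\Confine(2x)$ and $(x,y)\mapsto\Confine(x-y)$ \emph{separately}, and these do not compose to $(x,y)\mapsto\Confine(2x-y)$: in general $\Confine(\Confine(2x)-y)\ne\Confine(2x-y)$ when $2x$ overflows (take $2x=\tfrac32$, $y=1$, giving $0$ versus $\tfrac12$). The step is rescued by the identity $\Confine(2x-y)=\Confine\bigl(2\,\Confine(x-(y\oplus 0))\bigr)$, which holds because $y\oplus 0=y/2\in\Ival$ and $\Confine(2\Confine(v))=\Confine(2v)$ for every $v\in\Real_D$; you should make this explicit, since getting the interleaving of truncations right is precisely the technical content of the paper's proof of Lemma~\ref{LemMag}, and a reader will not grant it silently.
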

\begin{proof}
For any Cauchy sequence in $\Ival$ we can extract a subsequence $\alpha_{(-)} \in \Ival^\Nat$ with
\(
\text{$|\alpha_{i+1} -  \alpha_i | \leq 2^{-(i+1)}$ for all $i$}.
\)
Then the limit of $\alpha$ can be calculated as
\[
\mathit{\:\lim\:} \alpha \: = \:
2 \bigoplus (\alpha_0,\, 2(\alpha_1 - \alpha_0),\, 4(\alpha_2 - \alpha_1),\, \dots ,\, 2^n(\alpha_{n}-\alpha_{n-1}) ,\, \dots).
\]
By magnifiability, $\Ival$ is closed under (truncated) subtraction
and multiplication by any $2^n$. It is also closed under
$\bigoplus$. Thus indeed $\mathit{\:\lim\:} \alpha  \in \Ival$.
\end{proof}

\begin{lemma}
$\Ival = \Ival_E$.
\end{lemma}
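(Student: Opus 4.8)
The inclusion $\Ival \subseteq \Ival_E$ has already been observed (immediately after the definition of $\Ival$), so the task is to prove the reverse inclusion $\Ival_E \subseteq \Ival$. The plan is to exploit the minimality built into the definition of $\Real_E$: since $\Real_E$ is the intersection of all Cauchy complete subobjects of $\Real_D$ containing $\Rat$, it suffices to exhibit one such subobject $Y$ with the additional property that $Y \cap \Ival_D \subseteq \Ival$; then $\Ival_E = \Real_E \cap \Ival_D \subseteq Y \cap \Ival_D \subseteq \Ival$. Working internally in $\CE$, I would take
\[
Y \: = \: \{ x \in \Real_D \mid \Confine(x) \in \Ival \}\, ,
\]
a subobject of $\Real_D$ defined by comprehension. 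The property $Y \cap \Ival_D \subseteq \Ival$ is then immediate: if $x \in Y$ with $-1 \leq x \leq 1$ then $\Confine(x) = x$, so $x = \Confine(x) \in \Ival$.

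It remains to check that $Y$ contains $\Rat$ and is Cauchy complete. For $\Rat \subseteq Y$, note that for $q \in \Rat$ the value $\Confine(q)$ is a rational lying in $\Ival_D$; since $\Ival \in \CF$ (Lemma~\ref{LemApplyFix}), it contains the least element $\Confine(\Rat_d)$ of $\CF$, i.e.\ all dyadic rationals of $\Ival_D$, and every rational of $\Ival_D$ is the limit, with an explicit and computable modulus, of a sequence of such dyadics; Cauchy completeness of $\Ival$ (Lemma~\ref{lemma:Ical:cc}) then gives $\Confine(q) \in \Ival$, whence $q \in Y$. For Cauchy completeness of $Y$, let $\alpha_{(-)} \in Y^\Nat$ be Cauchy; since $\Real_D$ is Cauchy complete it has a limit $x \in \Real_D$, and because $\Confine = \Min(1,\Max(x,-1))$ is $1$-Lipschitz the sequence $(\Confine(\alpha_i))$ is Cauchy in $\Ival$ with the same modulus and converges in $\Real_D$ to $\Confine(x)$. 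By Cauchy completeness of $\Ival$ this sequence has a limit in $\Ival$, and by separatedness of $\Real_D$ (uniqueness of limits) that limit equals $\Confine(x)$; hence $\Confine(x) \in \Ival$, i.e.\ $x \in Y$.

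With these three facts, minimality of $\Real_E$ yields $\Real_E \subseteq Y$, and the computation above concludes $\Ival_E \subseteq \Ival$, giving $\Ival = \Ival_E$. I expect no serious obstacle at this point, since the genuine difficulty has already been discharged in establishing that $\Ival$ is magnifiable and Cauchy complete; the only thing requiring care is that the whole argument be read constructively and internally in $\CE$. In particular, the dyadic approximation of a rational and the preservation of Cauchy limits by $\Confine$ must both be supplied with explicit moduli, which is routine since $\Confine$ is $1$-Lipschitz and the dyadic approximants of a given rational are computable.
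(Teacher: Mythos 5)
Your proof is correct, and it follows the same overall strategy as the paper's --- exploit the minimality in the definition of $\Real_E$ by exhibiting a Cauchy complete subobject of $\Real_D$ containing $\Rat$ that is built from $\Ival$ --- but with a genuinely different choice of witness. The paper takes $\Real_\Ival = \{2^n x \mid x \in \Ival,\, n \in \Nat\}$, establishes $\Rat \subseteq \Real_\Ival$ via signed-binary expansions $\bigoplus_i x_i$ with $x_i \in \{-1,1\}$, proves Cauchy completeness by uniformising the exponent $n$ over a Cauchy sequence and descaling into $\Ival$, and then needs one further application of magnifiability at the end to pass from $y = 2^n x \in \Ival_E$ with $x \in \Ival$ back down to $y \in \Ival$. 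Your witness $Y = \Confine^{-1}(\Ival)$ makes that final descent step trivial (on $\Ival_D$ the map $\Confine$ is the identity), at the price of verifying $\Rat \subseteq Y$ via dyadic density plus Lemma~\ref{lemma:Ical:cc} rather than directly, and of checking that $\Confine$ is $1$-Lipschitz so that it transports Cauchy sequences with their moduli --- both of which are constructively unproblematic for Dedekind reals. Note that magnifiability of $\Ival$ is not actually avoided: it is what powers the proof of Lemma~\ref{lemma:Ical:cc}, on which both arguments lean for the Cauchy completeness of the witness, so the two proofs have the same logical dependencies; yours merely concentrates all use of magnifiability in that one lemma. (A tiny remark: uniqueness of limits of Cauchy sequences in $\Real_D$ follows directly from $\forall \epsilon > 0.\, |y - y'| \leq \epsilon \Rightarrow y = y'$, so the appeal to separatedness there, while harmless, is not needed.)
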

\begin{proof}
We know that $\Ival \subseteq \Ival_E$, so we prove
the converse inclusion.

For that purpose, we first show that $\Real_E \subseteq \Real_\Ival$, where
\[ \Real_\Ival =
\{2^n x \in \Real_D \mid \text{$x \in \Ival$ and $n \in \Nat$}\}.\]
As every rational in $\Ival_D$
can be defined by a binary sequence $\bigoplus_i x_i$ where
each $x_i$ is $-1$ or $1$, it follows that
$\Real_\Ival$ contains all rationals.
Also, for any Cauchy
sequence in $\Real_\Ival$,
we can obtain every element of the sequence as $2^n x_i$ for
a fixed~$n$, yielding an associated Cauchy sequence $(x_i)$ in $\Ival$.
By the Cauchy completeness of $\Ival$ (Lemma~\ref{lemma:Ical:cc}), we have that
$\lim_i X_i \in \Ival$, hence $\lim_i 2^n x_i \in \Real_\Ival$. That is, $\Real_\Ival$ is Cauchy complete.
Therefore, by the definition of $\Real_E$, it holds
that $\Real_E \subseteq \Real_\Ival$.

Finally, to show that $\Ival_E \subseteq \Ival$, take any
$y \in \Ival_E$. By the above, $y \in \Real_\Ival$, so
there exists $x \in \Ival$ such that
$y = 2^n x$ for some $n$. But then $y \in \Ival$ because
$\Ival$ is magnifiable.
\end{proof}

\begin{lemma}
\label{LemIsIO}
$(\Ival, \oplus,-1,1)$ is an interval object in $\CE$.
\end{lemma}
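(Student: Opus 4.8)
The plan is to obtain the result as an essentially immediate consequence of Lemma~\ref{LemApplyFix}, which asserts $\Ival \in \CF_{(A,m,a,b)}$ for \emph{every} m-convex body $(A,m)$ with global points $a,b$, once we have separately verified that $(\Ival,\oplus)$ is itself an m-convex body. I would treat these two ingredients in turn.

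First I would check that $(\Ival,\oplus)$ is an m-convex body, i.e.\ a cancellative iterative midpoint object. Recall that $(\Ival_D,\oplus)$ is already known to be an m-convex body, with infinitary operation $\bigoplus$. Since $\Ival \subseteq \Ival_D$ is closed under both $\oplus$ and $\bigoplus$, the equational midpoint axioms (idempotency, commutativity, transposition) and the cancellation property are inherited verbatim, being universally quantified conditions that pass to subobjects. For iterativity I would invoke Proposition~\ref{PropSetIt}(\ref{iter:1}), interpreted internally in $\CE$ (where $\Nat$ is an exponentiable parametrised nno): the restriction $\bigoplus\colon \Ival^{\Nat} \to \Ival$ is well defined precisely because $\Ival$ is closed under $\bigoplus$, and it satisfies the unfolding and canonicity properties because these already hold for $\bigoplus$ on $\Ival_D$ and all the sequences involved lie in $\Ival$. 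Hence $\Ival$ is iterative, and so $(\Ival,\oplus)$ is an m-convex body.

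Second, I would deduce the universal property of Definition~\ref{DefIO}. Fix an arbitrary m-convex body $(A,m)$ in $\CE$ together with global points $a,b\colon \One \to A$, and form $\CF_{(A,m,a,b)}$ exactly as above. Lemma~\ref{LemApplyFix} gives $\Ival \in \CF_{(A,m,a,b)}$, so by the definition of $\CF$ the object $\Ival$ is $(A,m,a,b)$-initial: internally in $\CE$ there is a unique homomorphism $f\colon \Ival \to A$ with $f(-1)=a$ and $f(1)=b$. It remains only to externalise this. Since $\CE$ is cartesian closed, the exponential $A^{\Ival}$ exists, and the internal unique existence $\exists! f.\,\phi(f)$ — where $\phi$ is the conjunction of the equations $f(x\oplus y)=m(f(x),f(y))$, $f(-1)=a$ and $f(1)=b$ — is validated at the terminal object. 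This yields a genuine global element of $A^{\Ival}$, i.e.\ an actual morphism $f\colon \Ival \to A$ in $\CE$; and because $\phi$ is a conjunction of equations between morphisms, internal validity of $\phi(f)$ coincides with the external assertion that $f$ is a midpoint homomorphism sending the endpoints to $a$ and $b$. Internal uniqueness likewise gives external uniqueness. As $(A,m,a,b)$ was arbitrary, this is precisely the universal property required of an interval object.

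The only delicate point is this last externalisation step: one must be sure that the internal predicate ``$X$ is $(A,m,a,b)$-initial'', evaluated at the global element $\Ival$ of $\Power(\Ival_D)$, genuinely translates into an external morphism with the stated equational properties and external uniqueness. This is the standard correspondence between internally-unique global sections and external morphisms in a topos, and it applies here exactly because $\phi$ is purely equational; no choice principle or further hypothesis on $\CE$ is needed. Everything else is inheritance of algebraic structure along the mono $\Ival \hookrightarrow \Ival_D$, which is routine.
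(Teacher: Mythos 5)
Your proof is correct and follows essentially the same route as the paper's: the paper's entire proof is to fix an arbitrary m-convex body $(A,m)$ with global points $a,b$, invoke Lemma~\ref{LemApplyFix} to get $\Ival \in \CF_{(A,m,a,b)}$, and read off $(A,m,a,b)$-initiality from the definition of $\CF$. Your two additional verifications --- that $(\Ival,\oplus)$ is itself an m-convex body (inherited from $(\Ival_D,\oplus)$ via closure under $\oplus$ and $\bigoplus$), and the internal-to-external translation of the unique homomorphism --- concern points the paper leaves implicit, and your treatment of both is sound.
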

\begin{proof}
Let $(A,m)$ be any m-convex body in $\CE$ with points
$a,b \in A$. Then, by Lemma~\ref{LemApplyFix}, we have
that $\Ival \in \CF_{(A,m,a,b)}$. Thus, by the definition
of $\CF_{(A,m,a,b)}$ there indeed exists a unique
homomorphism $f \colon \Ival \to A$ from
$(\Ival, \oplus)$ to $(A,m)$ such that $f(-1) = a$ and
$f(1) = b$.
\end{proof}
Together, Lemmas~\ref{LemIsMag}--\ref{LemIsIO}
prove Proposition~\ref{PropThreeLines}, and hence
Theorem~\ref{ThmTopos}, that is, $(\Ival_E, \oplus, -1, 1)$ is an interval object in $\CE$, and, moreover, is magnifiable.

\subsection{Proof of Lemma~\ref{LemPIF}}
\label{SubSecPIF}

We now give the postponed proof of Lemma~\ref{LemPIF}, namely that internally in $\CE$, the map
$\Phun$ restricts to a monotone inflationary function
on $(\CF, \subseteq)$.
All lemmas and proofs in this subsection are
to be interpreted in the internal logic of $\CE$.

\begin{lemma}
\label{LemInflate}
$\Phun$ is a monotone inflationary
endofunction on
$(\Power(\Ival_D), \subseteq)$, and restricts to a monotone inflationary
endofunction on $(\Power(\Ival), \subseteq)$.
\end{lemma}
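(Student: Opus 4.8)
The plan is to verify, working throughout in the internal logic of $\CE$, that each of the required properties follows directly from the definition of $\Phun$; since every step is constructively valid, the arguments interpret internally without complication. First I would record that $\Phun$ is a well-defined endofunction on $\Power(\Ival_D)$: this is precisely the assertion that $\bigoplus_i x_i \in \Ival_D$ for every $x_{(-)} \in {\Ival_D}^\Nat$, which holds because $\bigoplus \colon {\Ival_D}^\Nat \to \Ival_D$ is a morphism, as established above using the Cauchy completeness of $\Real_D$.

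For monotonicity, suppose $X \subseteq Y$. Every sequence $x_{(-)} \in X^\Nat$ is \emph{a fortiori} a sequence in $Y^\Nat$, so its value $\bigoplus_i x_i$ lies in $\Phun(Y)$; hence $\Phun(X) \subseteq \Phun(Y)$. For the inflationary property, given $x \in X$ I would use the constant sequence $(x,x,x,\dots) \in X^\Nat$: since $\bigoplus(x,x,\dots) = \sum_{i \geq 0} 2^{-(i+1)} x = x$ (equivalently, by idempotency of the supermidpoint operation $\bigoplus$), we have $x \in \Phun(X)$, and therefore $X \subseteq \Phun(X)$.

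Finally, for the claim that $\Phun$ restricts to $(\Power(\Ival), \subseteq)$, the only thing to check is that $\Phun$ sends subobjects of $\Ival$ to subobjects of $\Ival$, as monotonicity and the inflationary property are then inherited from the ambient poset $\Power(\Ival_D)$. So I would take $X \subseteq \Ival$ and any $x_{(-)} \in X^\Nat \subseteq \Ival^\Nat$; since $\Ival$ was defined to be closed under $\bigoplus$, it follows that $\bigoplus_i x_i \in \Ival$, whence $\Phun(X) \subseteq \Ival$. There is no substantial obstacle in this lemma; the only point demanding care is that each appeal---to the morphism $\bigoplus$, to idempotency, and to the closure of $\Ival$ under $\bigoplus$---be constructively sound so as to be valid in the internal logic, and each of these holds internally by construction. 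The genuine technical difficulty is deferred to the subsequent lemma, where one must show that $\Phun$ further preserves membership in $\CF$, i.e.\ magnifiability, symmetry, subalgebra structure and $(A,m,a,b)$-initiality.
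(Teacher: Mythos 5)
Your proposal is correct and matches the paper, which simply dismisses this lemma as immediate from the definitions of $\Phun$ and $\Ival$; you have merely spelled out the routine details (well-definedness via the morphism $\bigoplus$, monotonicity from inclusion of sequence objects, inflation via the constant sequence and $\bigoplus(x,x,\dots)=x$, and restriction via the closure of $\Ival$ under $\bigoplus$). Nothing further is needed.
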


\begin{proof} Immediate from the definitions of $\Phun$
and $\Ival$.
\end{proof}

\begin{lemma}
\label{LemTAZ}
If $X \subseteq \Ival_D$ is a symmetric subalgebra then
so is $\Phun(X) \subseteq \Ival_D$.
\end{lemma}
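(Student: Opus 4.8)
The plan is to verify directly the three defining conditions of a symmetric subalgebra for $\Phun(X)$ — that it contains the endpoints $-1,1$, that it is closed under $\oplus$, and that it is closed under negation — reasoning internally in $\CE$ throughout, as stipulated for this subsection. The work rests on two homomorphism identities relating the binary operations $\oplus$ and $-$ to the infinitary operation $\bigoplus$ on the m-convex body $(\Ival_D, \oplus)$, and I would obtain both uniformly from Proposition~\ref{PropSetIt}(\ref{iter:2}), which is available in $\CE$ since $\Nat$ is an exponentiable parametrised nno in any topos. The endpoints are immediate: because $-1,1 \in X$, the constant sequences lie in $X^\Nat$, and $\bigoplus_i c = \sum_{i \geq 0} 2^{-(i+1)} c = c$, so $-1 = \bigoplus_i(-1) \in \Phun(X)$ and likewise $1 \in \Phun(X)$.

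For closure under $\oplus$, I would take two elements $\bigoplus_i x_i$ and $\bigoplus_i y_i$ of $\Phun(X)$, witnessed by $x_{(-)}, y_{(-)} \in X^\Nat$, and use the identity $\bigoplus_i x_i \oplus \bigoplus_i y_i = \bigoplus_i (x_i \oplus y_i)$. This identity holds because the binary midpoint $m = \oplus$ is itself a midpoint homomorphism $\Ival_D \times \Ival_D \to \Ival_D$ (this is the transposition law reformulated, as observed after the definition of midpoint object), and $\Ival_D \times \Ival_D$ is again an m-convex body whose infinitary operation is $\bigoplus$ computed coordinatewise; Proposition~\ref{PropSetIt}(\ref{iter:2}) then forces $\oplus$ to commute with $\bigoplus$. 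Since $X$ is closed under $\oplus$, the sequence $(x_i \oplus y_i)_i$ lies in $X^\Nat$, whence $\bigoplus_i x_i \oplus \bigoplus_i y_i = \bigoplus_i(x_i \oplus y_i) \in \Phun(X)$.

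For closure under negation I would argue the same way: the map $x \mapsto -x$ is a midpoint endomorphism of $\Ival_D$, since $-(x \oplus y) = (-x) \oplus (-y)$, so by Proposition~\ref{PropSetIt}(\ref{iter:2}) it commutes with the associated infinitary operation, giving $-\bigoplus_i x_i = \bigoplus_i(-x_i)$. As $X$ is closed under $-$, the sequence $(-x_i)_i$ lies in $X^\Nat$, and therefore $-\bigoplus_i x_i \in \Phun(X)$.

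I do not anticipate a genuine obstacle in this lemma; the one point deserving care is the internal justification of the two homomorphism identities, which I handle by appealing to Proposition~\ref{PropSetIt}(\ref{iter:2}) rather than by manipulating the defining series $\sum_i 2^{-(i+1)} x_i$ by hand. A direct verification from the series formula is equally available and equally routine, but invoking the iteration result keeps the argument purely algebraic and manifestly valid in the internal logic, which is what the surrounding development requires.
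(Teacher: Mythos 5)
Your proof is correct and follows the same route as the paper: the endpoints via constant sequences, and closure under $\oplus$ and $-$ via the identities $\bigoplus_i x_i \oplus \bigoplus_i y_i = \bigoplus_i(x_i \oplus y_i)$ and $-\bigoplus_i x_i = \bigoplus_i(-x_i)$, which the paper also uses (leaving their justification implicit where you invoke Proposition~\ref{PropSetIt}(\ref{iter:2})). No gaps.
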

\begin{proof}
Trivially, $1,-1 \in \Phun(X)$. For closure under $\oplus$,
suppose $x = \bigoplus_i x_i$ and $y = \bigoplus_i y_i$
(here $x,y \in \Phun(X)$ and $x_i,y_i \in X$). Then
$x \oplus y = \bigoplus_i (x_i \oplus y_i)$, so indeed
$x \oplus y \in \Phun(X)$ because each $x_i \oplus y_i \in X$.
Similarly, ${-x} = \bigoplus_i {-x_i}$, so $\Phun(X)$ is
a symmetric subalgebra of $\Ival_D$.
\end{proof}

\begin{lemma}
\label{LemTAB}
If $X \subseteq \Ival_D$ is an
$(A,m,a,b)$-initial magnifiable symmetric subalgebra
then the subalgebra
$\Phun(X) \subseteq \Ival_D$ is $(A,m,a,b)$-initial.
\end{lemma}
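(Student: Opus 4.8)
The plan is to show that the unique homomorphism $f_X\colon X \to A$ extends to $\Phun(X)$, and that this extension is forced; throughout we work internally in $\CE$ and write $M\colon A^\Nat \to A$ for the infinitary operation on $(A,m)$ supplied by Proposition~\ref{PropSetIt}. Every element of $\Phun(X)$ has the form $\bigoplus_i x_i$ with $x_{(-)} \in X^\Nat$, so the only possible extension is
\[
\hat f\Big(\bigoplus\nolimits_i x_i\Big) \: = \: M_i\, f_X(x_i).
\]
I would first record that this formula is \emph{forced}, which simultaneously yields uniqueness. If $g\colon \Phun(X) \to A$ is any homomorphism with $g(-1)=a$ and $g(1)=b$, then $g$ restricts on $X$ to the unique $f_X$, and for $z = \bigoplus_i x_i$ the tails $s_n = \bigoplus_i x_{n+i} \in \Phun(X)$ satisfy $s_n = m(x_n, s_{n+1})$ and $s_0 = z$. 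Applying $g$ gives $g(s_n) = m(f_X(x_n), g(s_{n+1}))$, so the canonicity clause of Proposition~\ref{PropSetIt}(\ref{iter:1}), valid since $A$ is iterative, forces $g(z) = M_i\, f_X(x_i)$.

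The homomorphism property and endpoint conditions for $\hat f$ are then routine. For $x \in X$ we have $x = \bigoplus(x,x,x,\dots)$, so idempotency of $M$ gives $\hat f(x) = f_X(x)$; in particular $\hat f$ extends $f_X$ and sends $-1,1$ to $a,b$. Given $u = \bigoplus_i x_i$ and $v = \bigoplus_i y_i$, the identity $u \oplus v = \bigoplus_i (x_i \oplus y_i)$ (as in Lemma~\ref{LemTAZ}), together with $f_X$ being a midpoint homomorphism and the supermidpoint identity $m(M_l\, y_l, M_l\, z_l) = M_l\, m(y_l,z_l)$ in $A$, yields $\hat f(u \oplus v) = M_i\, m(f_X(x_i), f_X(y_i)) = m(\hat f(u), \hat f(v))$.

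The crux, and the step I expect to be the main obstacle, is well-definedness: if $\bigoplus_i x_i = \bigoplus_i y_i$ in $\Ival_D$ with $x_i,y_i \in X$, then $M_i\, f_X(x_i) = M_i\, f_X(y_i)$ in $A$. The plan is to apply the Approximation property (Proposition~\ref{PropSetApprox}), which holds internally in $\CE$, to the sequences $(f_X(x_i))$ and $(f_X(y_i))$ in $A$. It suffices to produce, for each $n$, elements $c_n, d_n \in X$ with
\[
m_n(x_0, \dots, x_{n-1}, c_n) \: = \: m_n(y_0, \dots, y_{n-1}, d_n)
\]
in $\Ival_D$, for then applying $f_X$ (which commutes with the derived operations $m_n$) gives the corresponding equation in $A$, and Approximation delivers $M_i\, f_X(x_i) = M_i\, f_X(y_i)$. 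In $\Real_D$ the displayed equation reduces, after cancelling the common prefix weights against $z = \bigoplus_i x_i = \bigoplus_i y_i$, to the single linear constraint $c_n - d_n = s_n - t_n$, where $s_n = \bigoplus_i x_{n+i}$ and $t_n = \bigoplus_i y_{n+i}$ are the two tails; since $s_n,t_n \in [-1,1]$ this difference lies in $[-2,2]$.

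Realising this difference as $c_n - d_n$ with $c_n,d_n \in X$, rather than merely in $\Phun(X)$ where the tails naturally live, is exactly where magnifiability of $X$ enters: by Lemma~\ref{LemDouble} a magnifiable $X$ is closed under truncated doubling and truncated sums and differences, which is what lets one solve the constraint inside $X$. I expect the careful bookkeeping of the factors of two and the truncations to be the genuinely delicate part, best handled by peeling the heads $x_0,\dots,x_{n-1}$ and $y_0,\dots,y_{n-1}$ off $z$ one level at a time, in the same spirit as the Cauchy-completeness argument of Lemma~\ref{lemma:Ical:cc}. Once well-definedness is secured, $\hat f$ is the required (and, by the first paragraph, unique) homomorphism, so $\Phun(X)$ is $(A,m,a,b)$-initial.
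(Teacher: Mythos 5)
Your proposal follows the paper's proof step for step: the same defining formula $M_i\,f_X(x_i)$, uniqueness via the canonicity clause of Proposition~\ref{PropSetIt}, endpoints and the homomorphism property via $u \oplus v = \bigoplus_i(x_i \oplus y_i)$ together with the supermidpoint identity in $A$, and well-definedness by feeding $f_X$-images of suitable witnesses into the Approximation property (which, as the paper notes after Proposition~\ref{PropSetApprox}, is valid internally in $\CE$). The one step you leave as a plan --- producing $c_n, d_n \in X$ with $c_n - d_n = s_n - t_n$ --- is shorter than you anticipate and needs no level-by-level peeling. Set $\delta_n = \oplus_n(x_0,\dots,x_{n-1},0) - \oplus_n(y_0,\dots,y_{n-1},0)$; this is a difference of two elements of $X$ (note $0 = -1 \oplus 1 \in X$), hence lies in $X$ by magnifiability, and no truncation actually occurs because both partial sums are within $2^{-n}$ of the common value $z$, so $|\delta_n| \leq 2^{-(n-1)}$. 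Then $c_n := -2^{n-1}\delta_n$ and $d_n := 2^{n-1}\delta_n$ are reached by $n-1$ applications of $\Confine(2\,\cdot\,)$, none of which truncates, so both lie in $X$, and $c_n - d_n = -2^{n}\delta_n = s_n - t_n$ as your linear constraint demands. This is exactly the paper's construction, so your reduction was the right one and the ``delicate bookkeeping'' you feared evaporates.
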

\begin{proof}
Let $f\colon X \to A$ be the unique homomorphism
from $X$.
We obtain the required
$f' \colon \Phun(X) \to A$ by defining,
for $x_0,x_1, \dots \in X$,
\begin{eqnarray*}
f'\left(\bigoplus_i (x_i)\right) & = & M_i(f(x_i)).
\end{eqnarray*}
To show $f'$ that is well-defined,
we must show that
$\bigoplus_i (x_i) = \bigoplus_i (x'_i)$ implies
$M_i(f(x_i)) = M_i(f(x'_i))$. Suppose that
$\bigoplus_i (x_i) = \bigoplus_i (x'_i) = x$.
Then, for any $n \geq 0$,
$|\oplus_n(x_0, \dots, x_{n-1}, 0) - x|  \leq  2^{-n}$ and
$|\oplus_n(x'_0, \dots, x'_{n-1}, 0) - x|   \leq  2^{-n}$.
Thus, defining $d_n = \oplus_n(x_0, \dots, x_{n-1}, 0) -
                     \oplus_n(x'_0, \dots, x'_{n-1}, 0)$, it holds that
$|d_n| \leq 2^{-(n-1)}$. Because $X$ is magnifiable,
$d_n \in X$ and hence
$2^{-(n-1)} d_n \in X$.
Let $m_n$ and $\oplus_n$ be the $n$-ary operations derived from the binary $m$ and $\oplus$, as in Section~\ref{SecItCan}.
Then
\[
\oplus_n(x_0,\dots, x_{n-1}, -2^{-(n-1)} d_n) \: = \:
\oplus_n(x'_0,\dots,x'_{n-1}, 2^{-(n-1)} d_n),
\]
where all values are in $X$. As $f$ is a homomorphism
from $X$ to $A$,
\[
m_n(f(x_0),\dots, f(x_{n-1}), f(-2^{-(n-1)} d_n)) \: = \:
m_n(f(x'_0),\dots, f(x'_{n-1}), f(2^{-(n-1)} d_n)),
\]
and this holds for any $n$. Thus, by  the approximation
property of $A$ (Proposition~\ref{PropSetApprox}),
$M_i(f(x_i)) = M_i(f(x'_i))$ as required.

Trivially $f'(-1) = f(-1) = a$ and $f'(1) = f(1) = b$.
To show that $f'$ is a homomorphism,
take any $x = \bigoplus_i(x_i)$
and $y = \bigoplus_i(y_i)$ in $\Phun(X)$ (with each $x_i, x'_i \in
X$). Then indeed
\begin{align*}
f'(x \oplus y) & =  f'\left(\left(\bigoplus_i x_i\right) \oplus \left(\bigoplus_i y_i\right)\right)
\\
& =  f'\left(\bigoplus_i \left(x_i \oplus y_i\right)\right)  \\
& =  M_i(f(x_i \oplus y_i)) & &  \text{definition of $f'$} \\
& =  M_i(m(f(x_i),\, f(y_i)))
   & &  \text{homomorphism property of  $f$} \\
& =  m(M_i(f(x_i)),\,M_i(f(y_i)))
   & &  \text{Proposition~\ref{PropSetEqns}} \\
& =  m\left(f'\left(\bigoplus_i x_i\right),\,f'\left(\bigoplus_i y_i\right)\right)
   & & \text{definition of $f'$} \\
& =  m(f'(x),f'(y)).
\end{align*}

It remains to show that $f'$ is unique.
Suppose that $g: \Phun(X) \to A$ is another homomorphism
with $g(-1) = a$ and $g(1)=b$.
Then the restriction of $g$ to $X$ is
also such a homomorphism, so $g(x) = f(x)$ for any
$x \in X$.
Thus, for $x_0, x_1, \dots \in X$,
\begin{align*}
g\left(\bigoplus_{i} x_{i+n}\right) & =
g\left(x_n \oplus \left(\bigoplus_{i} x_{i+n+1}\right)\right)  \\
& =  m\left(g(x_n), g\left( \bigoplus_{i} x_{i+n+1}\right)\right)
& & \text{homomorphism property of $g$} \\
& =  m\left(f(x_n), g\left(\bigoplus_{i} x_{i+n+1}\right)\right),
\end{align*}
and this holds for every $n \geq 0$. So
\begin{align*}
g\left(\bigoplus_{i} x_i\right) & =
M_i \, f(x_i) & & \text{canonicity property of Proposition~\ref{unfolding:canonicity}} \\
& =  f'\left(\bigoplus_{i} x_i\right)
   & &  \text{definition of $f'$.}
\end{align*}
Thus indeed $g=f'$.
\end{proof}

\begin{lemma}
\label{LemMag}
If $X \subseteq \Ival_D$ is a magnifiable symmetric subalgebra then
so is $\Phun(X) \subseteq \Ival_D$.
\end{lemma}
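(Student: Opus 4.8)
By Lemma~\ref{LemTAZ} the object $\Phun(X)$ is already a symmetric subalgebra of $\Ival_D$, so by Lemma~\ref{LemDouble} it remains only to verify that $\Phun(X)$ is closed under $z \mapsto \Confine(2z)$. Everything that follows is to be read internally in $\CE$, where all the sequences used below are definable by primitive recursion from the nno. Fix $z = \bigoplus_i x_i \in \Phun(X)$ with each $x_i \in X$, and write $v = \Confine(2z)$; the goal is to exhibit $v$ as $\bigoplus_j w_j$ for suitable $w_j \in X$.

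The first step is to approximate $v$ by elements of $X$. Since $2z = \sum_i 2^{-i} x_i$, the truncations $\widehat{P}_n := \Confine\big(\sum_{i<n} 2^{-i} x_i\big)$ satisfy $|v - \widehat P_n| \le 2^{1-n}$, because $\Confine$ is $1$-Lipschitz. Moreover $\sum_{i<n}2^{-i}x_i = x_0 + \oplus_{n-1}(x_1,\dots,x_{n-1},0)$ is a sum of two elements of $X$ (the second being a finite midpoint combination, with $0 = -1\oplus 1 \in X$), so $\widehat P_n \in X$ by magnifiability (condition~(2) of Lemma~\ref{LemDouble}). Setting $a_k := \widehat P_{k+2}$ we obtain a sequence in $X$ with $|v - a_k| \le 2^{-(k+1)}$.

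The heart of the argument is a redundant signed-digit expansion of $v$ whose digits are forced to lie in $X$. Define simultaneously, by recursion on $j$, partial sums $\sigma_j := \oplus_j(w_0,\dots,w_{j-1},0) \in X$ and digits $w_j \in X$: given $\sigma_j$, put $p_j := 2^j\,\Confine(a_{j+1}-\sigma_j)$, computed as the $j$-fold confined doubling of $\Confine(a_{j+1}-\sigma_j)$, so that $p_j \in X$ by magnifiability (conditions~(3) and~(1) of Lemma~\ref{LemDouble}); then set $w_j := \Confine(2p_j) \in X$. Writing $v_j := 2^j(v-\sigma_j)$ for the exact remainders, one checks that $v_0 = v \in \Ival_D$ and, using that $\Confine$ is $1$-Lipschitz, fixes $\Ival_D$, and that every intermediate exact remainder $2^i(v-\sigma_j)$ lies in $\Ival_D$, that $|v_j - p_j| \le 1/4$. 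A direct estimate from $\Confine = \min(1,\max(\,\cdot\,,-1))$ then shows that $|v_j|\le 1$ together with $|v_j - p_j|\le 1/2$ force $|2v_j - \Confine(2p_j)| \le 1$; hence the next remainder $v_{j+1} := 2v_j - w_j$ again lies in $\Ival_D$, so $v_j \in \Ival_D$ for all $j$ by induction, and by construction $v_j = w_j \oplus v_{j+1}$.

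Finally, since $(\Ival_D,\oplus)$ is an m-convex body and hence iterative, the relations $v_j = w_j \oplus v_{j+1}$ together with the canonicity clause of Proposition~\ref{unfolding:canonicity} give $v = v_0 = \bigoplus_j w_j$ with every $w_j \in X$, so $v \in \Phun(X)$, as required. The main obstacle is the construction of the third paragraph: one cannot extract ordinary binary digits of $v$ constructively, and the naive ``exact'' recursion $w_j = \Confine(2v_j)$ produces digits only in the Cauchy closure of $X$ rather than in $X$ itself. The two devices that resolve this are the redundancy of midpoint representations (digits may be arbitrary elements of $\Ival_D$, so no sign decision is ever needed) and magnifiability (which keeps the approximate digits $\Confine(2p_j)$ inside $X$), with the modulus $|v-a_k|\le 2^{-(k+1)}$ guaranteeing the slack $|v_j-p_j|\le 1/2$ that makes each chosen digit admissible.
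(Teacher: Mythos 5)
Your proof is correct, but it takes a genuinely different route from the paper's. The paper reduces the problem to the auxiliary closure property that $\Confine(u+2y)\in\Phun(X)$ for $u\in\Phun(X)$ and $y\in X$, which it establishes by an exact ``carry-propagation'' recursion $y_{n+1}=\Confine((x_n+4y_n)\oplus{-\Confine(x_n+4y_n)})$ together with two algebraic identities for $\Confine$, each verified by overlapping case analysis justified by separatedness of $\Ival_D$; iterativity then yields $\Confine((\bigoplus_i x_i)+2y)=\bigoplus_i\Confine(x_i+4y_i)$, and a second identity handles $\Confine(2u)$ itself. You instead run the standard redundant signed-digit extraction: truncations $\Confine(\sum_{i<n}2^{-i}x_i)\in X$ serve as approximants to $v=\Confine(2z)$, each digit is computed from an approximation $p_j\in X$ to the exact remainder $v_j$, and the error budget $|v_j-p_j|\leq 1/4\leq 1/2$ keeps all remainders in $\Ival_D$, after which canonicity identifies $v$ with $\bigoplus_j w_j$. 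What the paper's route buys is an exact, purely equational description of the witnessing sequence, at the price of the rather opaque identities~(\ref{EqnFurtherEnRoute}) and~(\ref{EqnAlmostThere}) and their case-by-case verification; what yours buys is conceptual transparency (it is the familiar algorithm from exact real arithmetic, with redundancy of digits replacing any sign decision), at the price of inequality bookkeeping --- which is constructively harmless here, since all your bounds are non-strict (hence $\lnot\lnot$-stable for Dedekind reals), the error propagation through the iterated confined doublings follows from the $1$-Lipschitz/projection property alone, and the key implication from $|v_j|\leq 1$ and $|v_j-p_j|\leq 1/2$ to $|2v_j-\Confine(2p_j)|\leq 1$ can even be derived from $\min/\max$ monotonicity without case splitting. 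You also correctly isolate the one genuinely delicate point, namely that the approximate digits must land in $X$ itself rather than in its Cauchy closure, which is exactly where the magnifiability of $X$ does its work in both arguments.
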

\begin{proof}
By Lemma~\ref{LemTAZ}, we just have to show that
$\Phun(X)$ is magnifiable. The necessity of truncating all operations
to the interval makes the proof somewhat technical.

In this proof, the symbol $\oplus$ denotes the average operation on Dedekind reals, to begin with in the definition of $y_{n+1}$ below.

We first establish:
\begin{equation}
\label{EqnEnRoute}
\mbox{for all $u \in \Phun(X)$ and $y \in X$,
it holds that $\Confine(u + 2y) \in \Phun(X)$}
\end{equation}
For this, take any $u = \bigoplus_i x_i$ in $\Phun(X)$, with
each $x_i \in X$,  and $y \in X$.
Define
\begin{align*}
y_0 & = y \\
y_{n+1} & = \Confine((x_n + 4y_n) \oplus - \Confine(x_n + 4y_n)).
\end{align*}
Then each $y_n$ above is in $X$.
Indeed, for $z,w \in X$, we have
\[ \Confine(z + 4w) = \Confine (8 (z/4 \oplus w)),\]
so $\Confine(z + 4w) \in X$ by magnifiability, whence, by
a similar argument using magnifiability,
$\Confine((z + 4w) \oplus {-\Confine(z + 4w)}) \in X$.

To prove (\ref{EqnEnRoute}), it suffices to verify
that
$\Confine((\bigoplus_{i} x_i) + 2y) =
\bigoplus_i \Confine(x_i + 4y_i)$, as the right-hand-side is
clearly in $\Phun(X)$.
Below we show that the equation
\begin{eqnarray}
\label{EqnFurtherEnRoute}
\Confine((x \oplus z) + 2y) & = &
\Confine(x + 4y) \: \oplus \:
\Confine(z + 2\Confine((x + 4y) \oplus {-\Confine(x + 4y)})),
\end{eqnarray}
holds for all $x,y,z \in \Ival_D$. By this equation, we have,
for any $j \geq 0$,
\begin{align*}
\Confine & \left( \left(\bigoplus_{i} x_{i+j}\right) + 2y_j\right) \\
& = \Confine\left(\left(x_j \oplus \bigoplus_{i} x_{i+j+1}\right) + 2y_j\right) \\
& = \Confine(x_j + 4y_j) \: \oplus \:
\Confine\left(\left(\bigoplus_{i} x_{i+j+1}\right)+
2\Confine((x_j + 4y_j) \oplus {-\Confine(x_j + 4y_j)})\right)
& & \text{by~(\ref{EqnFurtherEnRoute})} \\
& = \Confine(x_j + 4y_j) \: \oplus \:
\Confine\left(\left(\bigoplus_{i} x_{i+j+1}\right) + 2y_{j+1}\right).
\end{align*}
Thus, by iterativity,
\[ \Confine\left(\left(\bigoplus_{i} x_{i}\right) + 2y_0\right) =
\bigoplus_j \Confine(x_j + 4y_j),\]
i.e.\
$ \Confine\left(\left(\bigoplus_{i} x_i\right) + 2y\right) =
\bigoplus_i \Confine(x_i + 4y_i)$ as required.

To verify (\ref{EqnFurtherEnRoute}), we use the separatedness
of $\Ival_D$,  which allows us to check the equation by
splitting into cases: (i) $3 \leq x + 4y \leq 5$,
(ii) $1 \leq x + 4y \leq 3$,
(iii) $-1 \leq x + 4y \leq 1$,
(iv) $-3 \leq x + 4y \leq -1$,
(v) $-5 \leq x + 4y \leq -3$.
In each case, the verification is routine. We check case (ii).
Referring to (\ref{EqnFurtherEnRoute}), we have
\begin{align*}
\text{r.h.s.} & =
1 \: \oplus \: \Confine(z + 2 \Confine((x + 4y) \oplus -1))
& & \text{ because $1 \leq x+4y$} \\
& =  1 \: \oplus \: \Confine(z + 2 ((x + 4y) \oplus -1))
& & \text{because $x+4y \leq 3$} \\
& =  1 \: \oplus \: \Confine(x + z + 4y -1)  \\
& = 1 \: \oplus \: \Confine(2((x \oplus z) + 2y) - 1).
\end{align*}
There are now two subcases.
If $(x \oplus z) + 2y \geq 1$ then
\[
1  \oplus  \Confine(2((x \oplus z) + 2y) - 1)   = 1 =
\Confine((x \oplus z) + 2y).
\]
If instead
$0 \leq (x \oplus z) + 2y \leq 1$ then
\begin{align*}
1  \oplus  \Confine(2((x \oplus z) + 2y) - 1) & =
1  \oplus  (2((x \oplus z) + 2y) - 1) \\
& = (x \oplus z) + 2y \\
& = \Confine((x \oplus z) + 2y).
\end{align*}
Together with cases (i) and (iii)--(v),
this completes the verification of (\ref{EqnFurtherEnRoute})
and hence of (\ref{EqnEnRoute}).

Finally, we show that $\Phun(X)$ is indeed magnifiable.
Suppose that $u \in \Phun(X)$. Then
$u = \bigoplus_{i} x_i$ where $x_i \in X$.
We must show that $\Confine(2u) \in \Phun(X)$.

Below we verify that the equation
\begin{eqnarray}
\label{EqnAlmostThere}
\Confine( x + (y \oplus z)) & = &
\Confine(2x + y) \: \oplus \:
\Confine(z + 2\Confine((2x + y) \oplus {-\Confine(2x + y)}))
\end{eqnarray}
holds for all $x,y,z \in \Ival_D$. By this equation, we have:
\begin{align*}
\Confine(2u) & =  \Confine(2 \bigoplus_{i} x_i)  \\
& = \Confine(x_0 + (x_1 \oplus \bigoplus_{i} x_{i+2}))  \\
& =  \Confine(2x_0 + x_1) \: \oplus \:
\Confine((\bigoplus_{i} x_{i+2}) +
2\Confine((2x_0 + x_1) \oplus {-\Confine(2x_0 + x_1)})) & &
\text{by~(\ref{EqnAlmostThere}).}
\end{align*}
Using the magnifiability of $X$, it is straightforward to show
that $\Confine(2x_0 + x_1) \in X$
and then that
$\Confine((2x_0 + x_1) \oplus {-\Confine(2x_0 + x_1)}) \in X$.
Hence, by (\ref{EqnEnRoute}), it holds that
$\Confine((\bigoplus_{i} x_{i+2}) +
2\Confine((2x_0 + x_1) \oplus {-\Confine(2x_0 + x_1)})) \in \Phun(X)$.
So, by the above expansion of  $\Confine(2u)$,
indeed $\Confine(2u) \in \Phun(X)$.

It remains to verify (\ref{EqnAlmostThere}). This is done by
splitting into
three cases: (i) $1 \leq 2x + y \leq 3$;
(ii) $-1 \leq 2x + y \leq 1$; (iii) $-3 \leq 2x + y \leq -1$.
Again, the verification is routine. This time we check case (i).
Referring to (\ref{EqnAlmostThere}), we have
\begin{align*}
\text{r.h.s.} & =
1 \: \oplus \: \Confine(z + 2 \Confine((2x + y) \oplus -1))
& & \text{because $1 \leq 2x + y$} \\
& = 1 \: \oplus \: \Confine(2x + y + z -1)
& & \text{ because $0 \leq (2x + y) \oplus -1 \leq 1$} \\
& = 1 \: \oplus \: \Confine(2(x + (y \oplus z)) - 1).
\end{align*}
There are now two subcases. When $x + (y \oplus z) \geq 1$
then $1 \oplus  \Confine(2(x + (y \oplus z)) -1)  = 1 =
\Confine(x + (y \oplus z))$. If instead
$0 \leq x + (y \oplus z) \leq 1$ then
$1 \oplus  \Confine(2(x + (y \oplus z)) -1) =
1 \oplus  (2(x + (y \oplus z)) -1) = x + (y \oplus z)
= \Confine(x + (y \oplus z))$.
\end{proof}

Taken together, Lemmas~\ref{LemInflate}--\ref{LemMag}
provide a proof of Lemma~\ref{LemPIF}.

\section{Discussion, questions and open problems} \label{conclusion}

Our notion of interval object is motivated by Lawvere's concept of
\emph{natural numbers object (nno)}, see Definition~\ref{DefNno}. The
notion of nno gives a way of identifying the correct natural numbers
structure in an arbitrary category.  Our definitions perform the same
task for the interval.  Such an enterprise has been attempted before
in many different ways.  In categories with rich \emph{internal
  logics}, such as \emph{elementary toposes}, various (non-equivalent)
logical definitions of the real line are available, see, for example,
\cite[{\S}D4.7]{johnstone:elephant:2}.  The idea of using algebraic
structure instead was first investigated by
Higgs~\cite{higgs:universal}, who defined \emph{magnitude modules}
(with a nullary operation $0$, a unary operation $h$, and an
$\omega$-ary operation $\sum$ subject to suitable axioms) and
characterised the interval $[0,\infty]$ in the category of sets as the
free magnitude module on one generator, with $h(x)=x/2$ and sums as
ordinary sums. Other authors have proposed \emph{coalgebraic}
definitions; for example,~\cite{MR2000d:03174}.  Peter Freyd, in
particular, obtained a characterisation, valid in any elementary
topos, of a closed interval of reals as a final
coalgebra~\cite[Theorem D4.7.17]{johnstone:elephant:2}.  Although his
theorem succeeds in characterising intervals of Dedekind reals, which
are often considered the real numbers of choice in a topos, its very
formulation relies on strong properties of the internal logic of a
topos.  In contrast, as we have emphasised throughout the paper, the
formulation of our universal property requires only a category with
finite products. Also, our approach has the distinctive property of
being based on direct geometric conceptions of the interval and real
line. Furthermore, our universal property provides a direct means for
defining the arithmetic operations and other functions between
intervals of real numbers, as described in
Section~\ref{interval:objects:section}.  This line was pursued further
in the original conference version of the
paper~\cite{escardo:simpson:interval:lics}, where we proposed a notion
of \emph{primitive interval function}, a sort of primitive recursive
function on the interval~$[-1,1]$, based on our universal
property. In~\cite{escardo:simpson:tlca2014}, we developed this
further, in the context of an extension of G\"odel's system T with a
primitive interval type, showing that the resulting type theory provides an extensional characterisation of the
class of all functions on the interval that are definable in system T
using an intensional representation of reals (the signed digit
representation).

One can consider Higgs' magnitude modules~\cite{higgs:universal} in a
topological setting, in which the unary and $\omega$-ary operations
are required to be continuous, and realising that the continuity
requirement on the countable-sum operation rules out the Euclidean
topology.  The free topological magnitude algebra on one generator is
presumably $[0,\infty]$ with the topology of lower semicontinuity, but
we have not checked the details.  The failure of continuity with
respect to the Euclidean topology is repaired, in this paper, by the
use of the iterated midpoint operation, leading to the results of
Section~\ref{spaces}.

It seems that useful topologies, different from
the Euclidean topology, can be obtained in our setting by considering
free midpoint-convex bodies over more exotic generating spaces. For
example, the midpoint-convex body freely generated by the Sierpinki
space (with one isolated point and one limit point) should be the unit
interval with the topology of lower semicontinuity (which is the same
as the Scott topology), and the midpoint-convex body freely generated
by the poset with one minimal and two maximal elements, with the Scott
topology, should be the unit interval domain (closed subintervals of
the unit interval ordered by reverse inclusion) with the Scott
topology. Again, we haven't checked the details. It would be nice to
have a general treatment of these and similar examples.

%

Section~\ref{sets} of this paper characterises the interval in the
category of sets (under classical logic), not just as the free
midpoint-convex body on two generators, but more generally as the free
interative midpoint object on two generators. That is, the
cancellation property of m-convex bodies does not need to be
assumed. This is achieved via Theorem~\ref{ThmSuperconvex}, which
shows an equivalence between iterative midpoint sets and iterative
superconvex sets. This result implies that supermidpoint sets are
equivalent to superconvex sets if either iterativity or cancellativity
is assumed on both sides.  It would be interesting to know whether, in
fact, supermidpoint sets are equivalent to superconvex sets in full
generality, without either of the two non-equational assumptions.  This
can only be true if equation~\eqref{equation:flatten}
from~Proposition~\ref{PropFlatten} holds for arbitrary supermidpoint
objects. Independently of this, we conjecture that
if~\eqref{equation:flatten} is added as an additional equation to the
supermidpoint axioms (Definition~\ref{def:supermidpoint}) then the
resulting structures are equivalent, in the category of sets, to
superconvex sets.

As mentioned
in Section~\ref{SecIntro}, our characterisation of interval objects in
elementary toposes, Theorem~\ref{ThmTopos}(\ref{ThmTopos:ii}), can be
used to identify interval objects in several categories of interest
that come with natural product-preserving embeddings in toposes.
Several motivating such examples come from computability theory.  For
example, in the category of \emph{modest sets}
over Kleene's first partial combinatory algebra $K_1$~\cite{vanOosten}, which is a
category modelling computability, the interval object is given by an
interval $[a,b]$ of computable real numbers, equipped with its natural
modest set structure.  This follows as a consequence of
Theorem~\ref{ThmTopos}, because closed intervals $[a,b]$ of Cauchy
reals in Hyland's effective topos~\cite{HylandEff}, which validates countable choice,
have exactly this form, and their universal property as an interval
object in the topos is necessarily retained in the subcategory of
modest sets.
Similarly, the interval object in the category of continuous maps
between \emph{Baire space representations}, considered in Weihrauch's
theory of \emph{Type Two Effectivity}~\cite{weihrauch:2000}, is a closed interval $[a,b]$ of
real numbers equipped with an \emph{admissible representation}. Once
again, this is a consequence of Theorem~\ref{ThmTopos}, this time via
an embedding in the realizability topos over Kleene's second partial
combinatory algebra $K_2$~\cite{vanOosten}. Furthermore, if one restricts to the
subcategory of computable maps between Baire space representations,
then the interval object is a closed interval $[a,b]$ of real numbers
with a \emph{computably admissible representation}~\cite{weihrauch:2000}; as shown, this
time, by an embedding in the \emph{relative realizability topos} of
computable maps over $K_2$~\cite{bauer}.

We conjectured in~\cite{escardo:simpson:interval:lics} that the
localic unit interval should be an interval object in the category of
locales. This was proved to hold for locales internal to any
elementary topos (with natural numbers object) by Vickers~\cite{vickers:localic:es:reals}.
This result provides
a way of reconciling our notion of interval object with the Dedekind reals in any elementary topos; namely,
the Dedekind interval arises as the points of the locale characterised as an interval object.

The HoTT Book~\cite{hottbook} gives a higher-inductive-recursive
definition of real numbers, and conjectures that their unit interval
is an interval object in our sense, which was proved by
Booij~\cite{Booij17}, who also showed in this setting that the
Euclidean reals are the smallest Cauchy complete subset of the
Dedekind reals containing the rationals. By Shulman's result that Book
HoTT can be interpreted in any
$\infty$-topos~\cite{shulman2019infty1toposesstrictunivalentuniverses},
it follows that this holds in any $\infty$-topos, generalizing our
results of Section~\ref{topos}.



\section*{Acknowledgements}

We thank Phil Scott for providing connections with this work, by
drawing our attention Higgs' paper~\cite{higgs:universal}, which
motivated our definition~\ref{def:supermidpoint} of supermidpoint
object. We would also like to thank Klaus Keimel and Thomas Streicher
for numerous discussions from which this work greatly benefited. In
particular, Klaus drew our attention to the work of
K\"onig~\cite{koenig:superconvex} on superconvex spaces, after he saw
our definition~\ref{def:supermidpoint}, of which we made profitable
use in this paper.

\bibliographystyle{plain}
\bibliography{references}

\newpage



\end{document}